\title{Automorphisms of Partially Commutative Groups II: Combinatorial Subgroups
}
\author{ \textsf{Andrew J. Duncan}
 \and \textsf{Vladimir N. Remeslennikov}}
\def\nul{\emptyset }
\def\D{\Delta }
\def\d{\delta }
\def\b{\beta }
\def\i{\iota }
\def\l{\lambda }
\def\e{\varepsilon }
\def\G{\Gamma }
\def\g{\gamma }
\def\a{\alpha }
\newcommand{\al}{\nu}
\def\s{\sigma }
\def\W{\Omega}
\def\w{\omega }
\newcommand{\vat}{\breve}
\def\cD{{\cal{D}}}
\def\cH{{\cal{H}}}
\def\cJ{{\cal{J}}}
\def\cK{{\cal{K}}}
\def\cL{{\cal{L}}}
\def\cP{{\cal{P}}}
\def\cR{{\cal{R}}}
\def\cS{{\cal{S}}}
\def\cW{{\cal{W}}}
\def\cQ{{\cal{Q}}}
\newtheorem{theorem}{Theorem}[section]
\newtheorem{lemma}[theorem]{Lemma}
\newtheorem{corol}[theorem]{Corollary}
\newtheorem{prop}[theorem]{Proposition}
\newtheorem{defn}[theorem]{Definition}
\newtheorem*{defn*}{Definition}
\newtheorem{exam}[theorem]{Example}
\newenvironment{expl}{\begin{exam} \rm}{\end{exam}}
\newtheorem{remk}[theorem]{Remark}
\newenvironment{rem}{\begin{remk} \rm}{\end{remk}}
\newtheorem{que}[theorem]{Question}
\newtheoremstyle{citing}
{6pt}{6pt}
{\itshape}
{0\parindent}{\bfseries}
{.}
{ }
{\thmnote{#3}}
\theoremstyle{citing}
\newtheorem*{varthm}{}
\numberwithin{equation}{section}
\numberwithin{figure}{section}
\newcommand{\gd}{\mathop{{\rm gd}}}
\newcommand{\FR}{\operatorname{FR}}
\newcommand{\WH}{\operatorname{WH}}
\newcommand{\sym}{{\operatorname{symm}}}
\newcommand{\Iso}{\operatorname{Aut}}
\newcommand{\Aut}{\operatorname{Aut}}
\newcommand{\Isol}{\operatorname{Dom}}
\newcommand{\Sol}{\operatorname{Sol}}
\newcommand{\out}{{\operatorname{{out}}}}
\newcommand{\oAut}{\Aut^{*}}
\newcommand{\Out}{\operatorname{Out}}
\newcommand{\oOut}{\overline{\oAut}}
\newcommand{\act}{{\operatorname{act}}}
\renewcommand{\AA}{\ensuremath{\mathbb{A}}}
\newcommand{\ZZ}{\ensuremath{\mathbb{Z}}}
\newcommand{\la}{\langle}
\newcommand{\ra}{\rangle}
\newcommand{\cl}{\operatorname{cl}}
\newcommand{\ca}{\operatorname{cl}_\ad}
\newcommand{\maps}{\rightarrow}
\newcommand{\bs}{\backslash}
\newcommand{\St}{\operatorname{St}}
\newcommand{\conj}{\operatorname{conj}}
\newcommand{\cSt}{\St^{\conj}}
\newcommand{\Conj}{\operatorname{Conj}}
\newcommand{\iConj}{\Conj_{\operatorname{I}}}
\newcommand{\aConj}{\Conj_{\operatorname{A}}}
\newcommand{\CConj}{\Conj_{\operatorname{C}}}
\newcommand{\NConj}{\Conj_{\operatorname{N}}}
\newcommand{\VConj}{\Conj_{\operatorname{V}}}
\newcommand{\aOonj}{\overline{\aConj}}
\newcommand{\sConj}{\Conj_{\operatorname{S}}}
\renewcommand{\int}{{\operatorname{int}}}
\newcommand{\ext}{{\operatorname{ext}}}
\newcommand{\LInn}{{\operatorname{LInn}}}
\newcommand{\LInni}{\LInn_{\int}}
\newcommand{\LInne}{\LInn_{\ext}}
\newcommand{\CLInn}{\operatorname{LInn}_C}
\newcommand{\ILInn}{\operatorname{LInn}_I}
\newcommand{\NLInn}{\operatorname{LInn}_N}
\newcommand{\RLInn}{\operatorname{LInn}_R}
\newcommand{\SLInn}{\operatorname{LInn}_S}
\newcommand{\TLInn}{\operatorname{LInn}_T}
\newcommand{\ULInn}{\operatorname{LInn}_U}
\newcommand{\VLInn}{\operatorname{LInn}_V}
\newcommand{\WLInn}{\operatorname{LInn}_W}
\newcommand{\Inn}{\operatorname{Inn}}
\newcommand{\Tr}{\operatorname{Tr}}
\newcommand{\Tri}{{\Tr_{\int}}}
\newcommand{\Tre}{{\Tr_{\ext}}}
\newcommand{\Trt}{\tilde{\operatorname{Tr}}}
\newcommand{\tr}{\tau}
\newcommand{\atl}{\tilde\a}
\newcommand{\trt}{\tilde{\tr}}
\newcommand{\Inv}{\operatorname{Inv}}
\newcommand{\Invi}{\Inv_{\int}}
\newcommand{\cmp}{{\operatorname{{comp}}}}
\newcommand{\GL}{\operatorname{GL}}
\newcommand{\ad}{\mathfrak{a}}
\newcommand{\lk}{\diamondsuit}
\newcommand{\ado}{\operatorname{out}}
\newcommand{\be}{\begin{enumerate}}
\newcommand{\ee}{\end{enumerate}}
\newcommand{\bd}{\begin{description}}
\newcommand{\ed}{\end{description}}
\newlength{\nts}
\newlength{\rts}
\newlength{\lts}
\newenvironment{aj}{\noindent\color{red} AJD }{}
\begin{document}
\maketitle
\section{Introduction}
Partially commutative groups are a class of groups widely studied on account 
both of their intrinsically rich structure and their natural appearance in
many diverse branches of mathematics and computer science (see \cite{Charney07} or \cite{EKR} for example.) 
It is therefore natural that the pace of study of  their automorphism groups
should be gaining  momentum, as it has been recently. 

A {\em partially commutative group} $G(\G)$ (also known 
as a {\em right-angled Artin group}, a {\em trace group}, a {\em semi-free group} or a {\em graph group}) is a group given by a finite presentation
$\la X| R\ra$, where $X$ is the vertex set of a simple graph $\G$ and $R$ is
the set consisting of precisely those commutators $[x,y]$ of elements 
of $X$ such that $x$ and $y$ are joined by an edge of $\G$. (A simple graph
is one without multiple edges or self-incident vertices. Our convention
is that $[x,y]=x^{-1}y^{-1}xy$.) 

Initial work by Servatius \cite{servatius89} and Laurence \cite{Laurence95}
established a finite generating set for the automorphism group of a partially
commutative group. In a resurgence of interest over the last few years 
considerably more has been discovered: for 
example, Bux, Charney, Crisp and Vogtmann 
\cite{CharneyCrispVogtmann07, CharneyVogtmann09, BuxCharneyVogtmann09} 
have shown that these groups are virtually torsion-free and have finite 
virtual cohomological dimension and  Day has shown how peak reduction techniques may
be used on certain subsets of the generators and thereby has given
a presentation for the automorphism group \cite{Day09}. 
Moreover these groups have a very rich subgroup structure: Gutierrez, 
Piggott and Ruane \cite{GutierrezPiggotRuane07} have constructed a semi-direct
 product decomposition 
for the more general case of  automorphism groups of graph products of 
groups. Duncan, Remeslennikov and Kazachkov \cite{DKR5} describe several
arithmetic subgroups of the automorphism group of a partially commutative
group; while different arithmetic subgroups have been found by Noskov \cite{Noskov10}. 
Under certain conditions on the graph $\G$,  Charney and Vogtmann have shown 
\cite{CharneyVogtmann10} that the Tits alternative holds for the outer 
automorphism group of $G(\G)$ and moreover 
   Day \cite{Day10} has shown that in all cases this group  contains 
either a finite-index nilpotent subgroup or a
non-Abelian free subgroup.  Minasyan has
shown \cite{Minasyan} that partially commutative groups are conjugacy
separable, from which (loc. cit.) it follows that their outer automorphism
groups are residually finite. By reduction to the compressed word problem in $G(\G)$, 
Lohrey and Schleimer have shown that  the word problem
in $\Aut(G(\G))$ has polynomial time complexity \cite{LohreySchleimer}. 
 Charney and Faber \cite{CharneyFarber}, and subsequently Day \cite{Day11}, 
have studied 
automorphism groups of partially 
commutative groups associated to random graphs, of Erd\"os-R\'enyi type, 
 and found bounds on the edge probabilities so that,  
with probability tending to one as the number of vertices tends to $\infty$, 
such groups have finite outer automorphism groups. 

In this paper we  continue the investigation of \cite{DKR5} into
the structure of the automorphism group and its subgroups.  
We  introduce several standard
automorphisms of a partially commutative group and describe how
an arbitrary automorphism may be decomposed  as a product of these
standard automorphisms. This reduces the study of the automorphism
group to the study of subgroups generated by particular types of 
standard automorphism. 
 We  then define  subgroups of a geometric character 
  and use these to analyse
the group structure. Note that if $R$ is the ring of integers or a field
of characteristic $0$ and $G$  is a partially commutative
group in the class of $2$-nilpotent $R$-groups,  the structure of 
$\Aut(G)$ has been completely described, by Remeslennikov and Treier
\cite{RemeslennikovTreier}; and  decomposes as  an extension of 
an Abelian group by a 
 subgroup of $\GL(n,R)$. 

With this program in mind  we   define certain 
automorphisms, based on the combinatorial
properties of the graph $\G$,  and these  form our stock of standard 
automorphisms. The idea   is to emulate the theory of automorphisms of 
algebraic  and Chevalley groups. There is extensive literature
 on abstract isomorphisms of
the classical linear groups and algebraic groups, 
over fields and special classes of
rings, in which the fundamental results are  theorems on splitting
of  arbitrary automorphisms into special automorphisms (such as algebraic,
semialgebraic, simple, central, etc.) \cite{BorelTits73,Golubchik92} and
representations of the group of automorphisms as products of
 the corresponding subgroups. Similar  splitting theorems have also 
been established for  Chevalley groups.   
Steinberg \cite{steinberg60} and Humphreys \cite{Humphreys69} established such
results for Chevalley groups over 
fields and Bunina \cite{bunina10} has defined several special types
of automorphism (Central, Ring, Inner and Graph automorphism) and shown  that,
if $G$ is a Chevalley group over a commutative local ring (subject
to certain restrictions) then an arbitrary automorphism of 
$G$ decomposes as a product of such automorphisms.  Moreover similar
results have been obtained for Kac-Moody groups (see \cite{caprace} and 
the references therein). 

In \cite{DKR5} we obtained certain decomposition  theorems  
for the automorphism group of a partially commutative grouup which we extend in 
this work.  We use the 
 orthogonalisation operator $Y^\bot$  
and a closure operator
$\cl(Y)$ both defined on subsets $Y \subseteq X$ in \cite{DKR3}. 
In particular, for $x\in X$, the set $\{x\}^\bot$ consists 
 of all vertices incident to $x$, as 
well as $x$ itself: so is the ``star'' of $x$; and the closure $\cl(\{x\})$ of
$\{x\}$ is the intersection of the stars of all elements of $\{x\}^\bot$.   
The closure operator
$\cl$ defines a lattice of ``closed'' subsets $\cL=\cL(X)$ of $X$ and 
the results of \cite{DKR5} were obtained by considering the 
action of automorphisms on this lattice. 
In this paper we consider a similar lattice $\cK=\cK(X)$ 
of ``admissible'' subsets
of $X$ and the action of automorphisms on $\cK$. 
 In particular there is an admissible set $\ad(x)$ associated to each
element of $X$: namely the intersection of the stars of all elements
of $\{x\}^\bot\bs \{x\}$. 
We consider the following
subgroups of the automorphism group 
$\Aut(G)$ of the partially commutative group $G$.
\begin{itemize}
\item The subgroup $\Aut^\G(G)$ of automorphisms induced by automorphisms
of the graph $\G$ (Definition \ref{defn:graphaut}). 
\item The subgroup $\Aut^\G_{\cmp}(G)$ of $\Aut(G)$, which is
isomorphic to the automorphism group of the graph $\G^{\cmp}$, the compressed
graph of $\G$ (Definition \ref{defn:graphaut}). 
\item The subgroup $\Conj (G)$ of basis-conjugating automorphisms: those
which map each generator $x$ to $x^{f_x}$, for some $f_x\in G$ (Definition 
\ref{defn:conj}).
\item The subgroup $\NConj(G)$ of $\Conj(G)$, of automorphisms such that, 
for all $x\in X$, there exists $g_x$ in $G$ with the property that
$z$ maps to $z^{g_x}$, for all $z\in \ad(x)$
(Definition \ref{defn:NConj}).
\item The subgroup $\St(\cK)$, elements of which  stabilise subgroups
generated by subsets $A$, where $A$
is an element of the lattice $\cK$ (Definition \ref{defn:st}).     
\item  The subgroup $\cSt(\cK)$, elements of  which map each subgroup $\la A\ra$,
where $A\in \cK$, 
to $\la A\ra^{g_{A}}$, for some $g_A\in G$ (Definition \ref{defn:stconj}).
\item Various subgroups  illustrated in Figure 
\ref{fig:subgplattice} below.
\end{itemize}
(Several of these groups are well-known: some are defined for example
in \cite{Laurence95} and others  in \cite{DKR5}.)

The first step in our decomposition of $\Aut(G)$ is to separate
out the automorphisms induced by automorphisms of the compressed graph.

\begin{varthm}[\textrm{\textbf{Theorem \ref{theorem:Stconj}}}] 
The group $\Aut(G)$
can be decomposed into the internal semi-direct product of the subgroup
$\cSt(\cK)$ and the finite subgroup $\Aut^\G_\cmp(G)$,  
 i.e. 
\[\Aut(G)=\cSt(\cK)\rtimes \Aut^\G_\cmp(G).\]
\end{varthm}
This theorem essentially reduces the problem of studying $\Aut (G(\G))$ to the study
of the group $\cSt(\cK)$. 

We may also decompose the automorphism group
using the connected components of $\G$. If $\G$ has connected 
components $\G_1, \ldots ,\G_n$ then the partially commutative group
determined by $\G$ is the free product of those determined by the $\G_i$. 
The group of
automorphisms of a free product of groups has been  completely  described (from the
point of view of generators and defining relations) in
papers \cite{FR1,FR2,Gilbert87,CollinsGilbert}. We specialise these
results to the case under consideration to give generators and relations
for the full automorphism group in terms of presentations for the 
automorphism groups of the factors. 

However, here we encounter the first of 
 two main obstructions to identifying the 
structure of $\Aut(G(\G))$. 
 The problem arises when there are isolated vertices in the graph $\G$ (vertices of 
valency zero). In this case the automorphism group does not have
a natural semi-direct product decomposition in terms of  the 
automorphism groups of the factors. Nonetheless, in the special case
where there are no isolated vertices the quoted results give  
the following theorem, where $\LInne$ is a subset of $\Conj(G)$, which
 is empty unless $\G$ is disconnected and is 
defined in Definition \ref{defn:ourgens}, 
$\G$ has connected components $\G_1,\ldots ,\G_n$  and 
$G_i=G(\G_i)$. 
\begin{varthm}[\textrm{\textbf{Theorem \ref{theorem:FRker}}} 
\normalfont{({\it cf.} \cite{CollinsGilbert}, Theorem C])}]
Suppose that no component of $\G$ is an isolated vertex. 
Define $\bar G=G_1\times \cdots \times G_n$ and $\FR(G)= \la \LInne\ra$. 
Then 
$\FR(G)$ is the kernel of the
canonical map from $\Aut(G)$ to $\Aut(\bar G)$. Moreover $\FR(G)$ has a normal 
series 
\[
1<P_{n-1}<\cdots <P_2<\FR(G)
\]
such that, setting $\FR_i(G)=\FR(G)/P_i$, 
\be[(i)]
\item $\FR(G)=P_i \rtimes \FR_i(G)$, 
\item $\FR_i(G)=\FR(G_1\ast \cdots \ast G_i)$ and
\item all the $P_i$ are finitely generated.
\ee
\end{varthm}
The last theorem reduces analysis of the structure of $\Aut(G)$, in  the 
case when $\G$ has no isolated vertices, to analysis of $\Aut(G(\G_i)), i = 1, \ldots n$,
and 
of the Fouxe-Rabinovitch kernel $\FR(G)$.

In the light of these results we may often reduce to the study
of $\cSt(\cK)$ where $\G$ is a connected graph. 
First of all we have the following theorem.  
\begin{varthm}[\textrm{\textbf{Theorem \ref{theorem:nconjnorm}}}]
The subgroup $\NConj(G)$ is a normal subgroup of $\cSt(\cK)$ and
therefore of $\Conj(G)$.
\end{varthm}

The next step might appear to be to give an affirmative answer to the 
following question.
\begin{varthm}[\textrm{\textbf{Question \ref{que:aut*decomp}}}] Let $\G$ be a connected graph. Is 
$\cSt(\cK)=\St(\cK)\NConj(G)$?  
\end{varthm}
However as examples show the answer to this question is negative; and 
this brings us to the 
 second major obstruction to the description of the structure of $\Aut(G)$. 
This is the existence of vertices $x$ and $y$ such that $\{y\}^\perp\bs\{y\}$ is
contained in $\{x\}^\perp$. When this occurs we say that 
$x$ dominates $y$ (Definition \ref{defn:isols}).  
If there are no  such vertices $x$ and  $y$ in $\G$
  then we obtain a clear description
of the structure of $\cSt(\cK)$ in terms of $\Conj(G)$ and the stabiliser
$\St(\cL)$ of the lattice of closed sets (studied in detail in \cite{DKR5}).  
 In fact, in this case $\Conj(G)=\NConj(G)$ and $\St(\cK)=\St(\cL)$.
\begin{varthm}[\textrm{\textbf{Theorem \ref{theorem:nolocisol}}}]
The following are equivalent for a graph $\G$. 
\be[(i)]
\item $G$ has no dominated vertices.
\item  
$\cSt(\cK)=\NConj(G)\rtimes \St(\cL).$
\item  
$\cSt(\cK)=\Conj(G)\rtimes \St(\cL).$
\item
 $\cSt(\cK)=\Conj(G)\rtimes \St(\cK)$. 
\ee
\end{varthm}

Therefore, in the case where there are no dominated vertices the structure of $\cSt(\cK)$ is determined by the
structure of $\NConj(G)$ and $\St(\cL)$ and, as we have shown in \cite{DKR5},
$\St(\cL)$ is an arithmetic group for which we have a complete structural
decomposition. 

We conclude by establishing conditions under which 
$\cSt(\cK)=\Conj(G)\St(\cK)$,
even though there are dominated vertices (and this product may not 
be semi-direct).  
In Section \ref{section:bal} we introduce balanced graphs, which include those without
dominated vertices, and prove the following theorem.

\begin{varthm}[\textrm{\textbf{Theorem \ref{theorem:balstcon}}}]
Let $\G$ be a connected graph and $G=G(\G)$. Then $\cSt(\cK)=\St(\cK)\Conj(G)$
if and only if $\G$ is a balanced graph. 
\end{varthm}

Therefore in many cases the structure of $\cSt(\cK)$ is determined
by the structure of  $\St(\cK)$, $\Conj(G)$ and $\NConj(G)$.  
In this paper we 
find generators for (most of) the subgroups discussed above, as well as those
appearing in the diagram below, 
 establish some of their basic properties and 
investigate the decomposition of the automorphism group  of $G$,  
in terms of these subgroups, in the simplest cases, leaving
the case where there are dominated vertices, and the 
structure of $\St(\cK)$ to later papers.

The structure of the paper is as follows. In Section \ref{sec:preliminaries}
we introduce partially commutative groups, admissible sets, the lattices
$\cK$ and $\cL$ and describe an ordering on the vertex set of a graph 
induced from the lattice $\cK$. In Section \ref{sec:generators} we 
turn to the automorphism groups of partially commutative groups,   
show how they may be decomposed using the subgroups $\Aut^\G(G)$ and
$\Aut^\G_\cmp(G)$, mentioned above, and, using the results of Fouxe-Rabinovitch
and Gilbert and Collins, show how the connected components of the 
graph $\G$ determine generators and relations 
for the automorphism group of $G(\G)$. In Section \ref{sec:conj} we 
define a collection of subgroups of the basis-conjugating automorphism
subgroup $\Conj(G)$, to be used to decompose   $\cSt(\cK)$, and show how these relate to 
each other (see Figure \ref{fig:subgplattice} below). In Section \ref{section:st}
we consider the subgroups $\St(\cK)$ and $\cSt(\cK)$, show that 
$\NConj(G)$ is normal in $\cSt(\cK)$, describe the intersection 
$\St(\cK)\cap \NConj(G)$ and give an example to show that, in general,
$\cSt(\cK)\neq \St(\cK)\Conj(G)$. Finally in Section \ref{section:bal} we 
define balanced graphs and show that the equality 
$\cSt(\cK)=\St(\cK)\Conj(G)$ holds for $G=G(\G)$ if and only if 
$\G$ is balanced. In the version of the paper on \texttt{arxiv} we 
include an appendix with details of the construction of a presentation
of $\Aut^\G_\cmp(G)$ and the proof of the Theorem \ref{prop:presentation}.

For reference purposes Figure \ref{fig:subgplattice} shows 
a diagram of the lattice of the main
 subgroups
which we define in the paper. The figure covers the 
case where $\G$ has no isolated vertices (that is vertices of valency zero).
If $\G$ does have isolated vertices then the subgroups $\NConj(G)$ and
$\NConj(G)\cap \St(K)$ are removed from the diagram, which otherwise
remains the same. (In this case $\NConj(G)=\Inn(G)$.)   
Subgroups $\aConj(G)$, $\VConj(G)$, 
$\sConj(G)$ and $\CConj(G)$ are defined in definitions 
\ref{defn:aConj}, \ref{defn:VConj}, \ref{defn:singular} and 
\ref{defn:collectedconj}, respectively.   
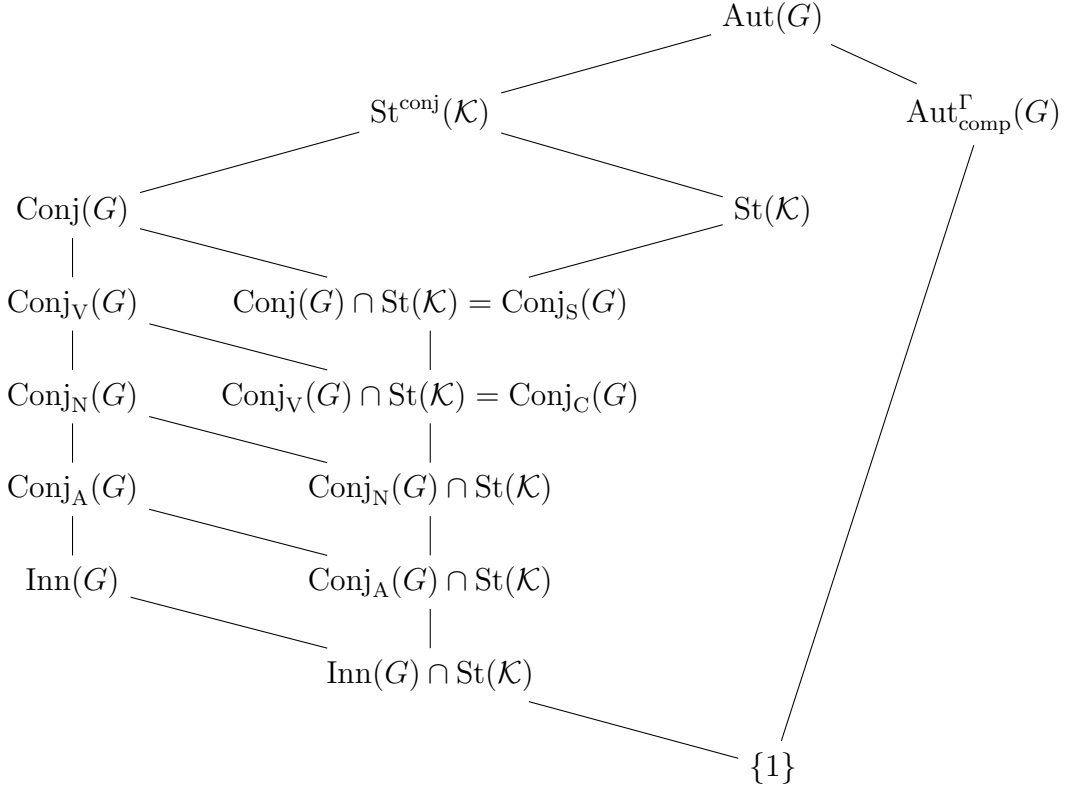
\begin{figure}
\begin{center}
\begin{tikzpicture}
  \matrix (m) [matrix of math nodes,row sep=1.3em,column sep=2em,minimum width=2em]
  {
     & &\Aut(G) &  \\
     &\cSt(\cK)& & \Aut^\G_\cmp(G)\\
\Conj(G)& &\St(\cK)& \\
\VConj(G)&\Conj(G)\cap \St(\cK)=\sConj(G)&&\\
\NConj(G)&\VConj(G)\cap \St(\cK)=\CConj(G)&&\\
\aConj(G)&\NConj(G)\cap \St(\cK)&&\\
\Inn(G) & \aConj(G)\cap\St(\cK)&&\\
&\Inn(G)\cap \St(\cK)&&\\
&&\{1\}&\\
};
  \path
   (m-1-3) edge  (m-2-4)
   (m-2-2) edge  (m-1-3)
           edge  (m-3-1)
           edge  (m-3-3)
   (m-2-4) edge  (m-9-3)
   (m-3-1) edge  (m-4-1)
           edge  (m-4-2)
   (m-3-3) edge  (m-4-2)
   (m-4-1) edge  (m-5-1)
           edge  (m-5-2)
   (m-5-1) edge  (m-6-1)
           edge  (m-6-2)
   (m-6-1) edge  (m-7-1)
           edge  (m-7-2)
   (m-7-1) edge  (m-8-2)
   (m-8-2) edge  (m-9-3)
   (m-4-2) edge  (m-5-2)
   (m-5-2) edge  (m-6-2)
   (m-6-2) edge  (m-7-2)
   (m-7-2) edge  (m-8-2)
;  
\end{tikzpicture}
\end{center}
\caption{Subgroups of $\Aut(G(\G))$, where $\G$ has no isolated vertices}\label{fig:subgplattice}
\end{figure}


\section{Preliminaries}\label{sec:preliminaries}
Graph will mean undirected, finite, simple graph throughout this paper. 
A subgraph $S$ of a graph $\G$ is called a {\em full} subgraph 
if vertices $a$ and $b$ of $S$ are joined by an edge of $S$ 
whenever they are joined by an edge of $\G$.  
 If $S$ is a subset
of $V(\G)$ we shall write $\G(S)$ for the full subgraph of $\G$ with
vertices $S$.

If $a$ and $b$ are elements of a group then  $[a,b]$ denotes $a^{-1}b^{-1}ab$. If $A$ and $B$ are subsets of a group then
$[A,B]$ denotes $\{[a,b]:a\in A, b\in B\}$.

For the remainder of the paper let
 $\G$ be a finite, undirected, simple graph. Let
$X=V(\G)=\{x_1,\dots, x_n\}$ be the set of vertices of $\G$ and let
$F(X)$ be the free group on $X$. Let
\[
R=\{[x_i,x_j]\in F(X)\mid x_i,x_j\in X \textrm{ and there is an edge of }
\G \textrm{ joining }
x_i \textrm{ to } x_j \}.
\]
We define the {\em partially commutative group} with ({\em commutation}) 
{\em graph}
$\G$ to be the group $G(\G)$ with presentation $ \left< X\mid
R\right>$. 
When the underlying graph is clear from the context we
write simply $G$.

The subgroup generated by a subset $Y\subseteq X$ is called a {\em canonical parabolic subgroup} of $G$
and denoted  $G(Y)$. This subgroup is equal to the partially commutative group
with commutation graph the full subgraph of $\G$ with vertices $Y$ 
(see \cite{baudisch77} or \cite{EKR}).

By a \emph{word} over $X$ is meant an element of the free monoid $(X\cup X^{-1})^\ast$.
We identify elements of $F(X)$ with reduced words (that is those have
no subwords of the form $x^\e x^{-\e}$, where $x\in X$ and $\e=\pm 1$). The
\emph{length} of a word $w$ is its length as an element of $(X\cup X^{-1})^\ast$
and is denoted $|w|$. 
Denote by $\lg(g)$ the minimum of the lengths of words that represent the
element $g$ of $G(X)$. If $w$ is a word representing $g$ and $w$ has length
$\lg(g)$ we call $w$ a {\em minimal form} for $g$. If $w$ is a 
minimal form for some element of $G$ then we say that $w$ is a 
\emph{geodesic word}.  
When the meaning is clear we shall say that $w$ is a 
minimal element of $G$ when we mean that $w$ is a minimal form of an
element of $G$.
We say that   $h \in G$
 is {\em cyclically minimal} if and only if
\[
\lg(g^{-1}hg) \ge \lg(h)
\]
for every $g \in G$.

The  \emph{support} of a word $w$ over $X$ is the set of elements
of $X$ such that $x$ or $x^{-1}$ occurs in $w$. If $u$ and $v$ are 
minimal forms of an element $g\in G$ then both $u$ and $v$ have 
the same support (see for example \cite{EKR}). Therefore we may define
the \emph{support} $\al(g)$ of an element $g\in G$ to be the support
of a minimal form of $g$. If $w\in G$ define $A(w)=G(Y)$, 
where $Y$ is the set of elements
of $X\backslash \al(w)$ which commute with every element of $\al(w)$. If
$S\subseteq G$ then we define $A(S)=\cap_{w\in S}A(w)$.

From now on we regard words as representing elements of $G$, 
so when we write $u=v$, where $u$ and $v$ are words, we mean
that $u$ and $v$ represent the same element of $G$. 
We write $u\circ w$ to express the fact that
$\lg(uw)=\lg(u)+\lg(w)$, where $u,w\in G$.
Let $u$ and $w$ be elements of $G$. We say that $u$ is a {\em left }
({\em right}) {\em divisor} of $w$ 
if there exists $v\in G$ such that $w= u
\circ v$ ($w=  v\circ u$).
We partially order the set of all left (right) divisors of a word $w$ as
follows. We say that $u_2$ is greater than $u_1$ if and only if
$u_1$ is a left (right) divisor of $u_2$.
It is shown in \cite{EKR} that, for any $w\in G$ and $Y\subseteq
X$, there exists a  unique maximal left  divisor   of $w$ 
which belongs to the subgroup $G(Y)$ of $G$: called 
the {\em greatest left divisor} $\gd^{l}_Y(w)$ of $w$ in $Y$.
  The {\em greatest right divisor} of $w$ in $Y$ is defined analogously.

The {\em non-commutation} graph of 
the partially commutative  group $G(\G)$ is the graph $\Delta$,
dual to $\Gamma$, with vertex set $V(\Delta)=X$ and 
an edge connecting $x_i$ and $x_j$ if and only if $\left[x_i, x_j
\right] \ne 1$. The graph $\D$ is the union of its connected
components $\D_1, \ldots , \D_k$ and if $u$ and $v$ are  words
such that $\nu(u)\subseteq \D_i$ and $\nu(v)\subseteq \D_j$, with 
$i\neq j$, then  $u$ and $v$ represent commuting elements of $G$. 
 Thus, if
the vertex set of 
$\D_j$ is $I_j$ and $\G_j=\G(I_j)$, the full subgraph of $\G$ on $I_j$,  
then 
$G=
G(\G_1) \times \cdots \times G(\G_k)$.

Let $g\in G$ and suppose that  the full subgraph $\Delta (\al(g))$ 
of $\Delta$ with vertices $\al(g)$ has connected components
$\D^\prime_1,\ldots, \D^\prime_l$ and let the vertex set of $\D^\prime_j$ be $I^\prime_j$.
If $w$ is a minimal form of $g$ then, 
since $[I^\prime_j,I^\prime_k]=1$, 
we can factor $w$ as a product of commuting words,
$w=w_1\circ \cdots \circ w_l$, where $w_j\in G(\G(I^\prime_j))$, so $[w_j,w_k]=1$
for all $j,k$.
If $g$ is cyclically minimal then
we call this expression for $g$ a
 {\em block decomposition} of $g$ and
say $w_j$ is a {\em block} of $g$, for $j=1,\ldots ,l$. Thus $w$ itself is
 a block if and only if  $\D(\al(w))$ is connected. 
  Moreover it follows (see \cite{EKR} for example)  that if $g$ has
another minimal form $u$ with a block  decomposition 
$u=u_1\circ \cdots\circ u_k$ then $k=l$ (as $\D(\al(u))=\D(\al(g))=
\D(\al(w))$) and after reordering the
$u_s$'s if necessary $w_s=u_s$, for $s=1,\ldots ,l$.

In general let 
$v$ be an element of $G$, not necessarily cyclically minimal. We may write 
$v=u^{-1}\circ w \circ u$, where $w$ is cyclically minimal and then
$w$ has a block decomposition $w=w_1\cdots w_l$, say. 
We call the expression $v=w_1^u \cdots w_l^u$ 
a  {\em block decomposition}
of $v$ and say that $w_j^u$ is a {\em block}
of $v$, for $j=1,\ldots , l$. Note that this definition is slightly different
from that given in \cite{EKR}. 

The {\em centraliser} of a subset $S$ of  $G$  is 
\[C(S)=C_G(S)=\{g\in G: gs=sg,
\textrm{ for all } s\in S\}.\] 
An element $g \in G$ is called a {\em root element} 
if $g$ is not a proper
power of any element of $G$. If $h=g^n$, where $g$ is a root element
and $n\ge 1$, then $g$ is said to be a {\em root} of $h$.
As shown in \cite{baudisch77} (and also \cite{dk93b}) 
every element of the partially commutative group
 $G$ has a unique root, which
we denote $r(g)$. 
Let $w$ be a cyclically
minimal element of $G$ with block decomposition $w=w_1\cdots w_k$
and let $v_i=r(w_i)$. Then from \cite{baudisch77} (and also \cite[Theorem 3.10]{dk93b}),
\begin{equation}\label{eq:centraliser}
C(w)=\langle v_1\rangle \times \cdots \times \langle v_k
\rangle\times A(w).
\end{equation}

 The following lemma will be useful. 
\begin{lemma}\label{lem:comm}
Let $x,y\in X$ and $f,g\in G$ such that $[x,y]=[x^f,y^g]=1$ and 
\be[(i)]
\item\label{it:comm1}
 $x^f=f^{-1}\circ x\circ f$, $y^g=g^{-1}\circ y\circ g$ and 
\item\label{it:comm2} $\gd_X^r(f,g)=1$.
\ee
Then $[\al(f),\al(g)]=\{1\}$ and $[f,y]=[g,x]=1$.
\end{lemma}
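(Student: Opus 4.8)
The plan is to read the commuting hypothesis as a centraliser membership and then strip away, one at a time, the pieces of $f$ and $g$ that could destroy orthogonality of supports, the coprimality hypothesis (ii) being exactly what forbids the surviving pieces from overlapping. Assume first $x\neq y$ (the case $x=y$ is handled by the same method, with the integer $k$ below equal to $1$ rather than $0$). Since $[x^f,y^g]=1$ we have $y^g\in C(x^f)$. As $x$ is cyclically minimal with $r(x)=x$, the centraliser formula \eqref{eq:centraliser} gives $C(x)=\la x\ra\times A(x)$ with $A(x)=G(\{x\}^\bot\setminus\{x\})$, and conjugating, $C(x^f)=\la x^f\ra\times A(x)^f$. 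Write $y^g=(x^f)^k w$ with $w\in A(x)^f$. Passing to the abelianisation $G^{ab}\cong\ZZ^X$: the element $y^g$ is conjugate to $y$ so maps to the basis vector $y$, while $(x^f)^k$ maps to $k\cdot x$ and $w$ maps into $\ZZ^{\{x\}^\bot\setminus\{x\}}$; comparing the $x$-coordinate (and using $x\notin\{x\}^\bot\setminus\{x\}$) forces $k=0$. Hence $y^g\in A(x)^f$, i.e. $f\,y^g f^{-1}\in A(x)$, and symmetrically $g\,x^f g^{-1}\in A(y)$.

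Next I would extract the two ``star'' conditions and trim the conjugators. Note first that condition (i) is equivalent to $\gd^l_{\{x\}^\bot}(f)=1$ (a nontrivial left divisor $c$ of $f$ with $c\in C(x)=G(\{x\}^\bot)$ would cancel in $f^{-1}xf$, contradicting that $f^{-1}\circ x\circ f$ is geodesic), and likewise $\gd^l_{\{y\}^\bot}(g)=1$. Now factor $f=f_1\circ f_2$, where $f_2$ is the largest right divisor of $f$ commuting with $y^g$ (so $f_2\in C(y^g)$ and $f_1$ has no nontrivial right divisor in $C(y^g)$). Then $f\,y^g f^{-1}=f_1\,y^g f_1^{-1}$, and a normal-form computation shows this last conjugate is already reduced, so its support equals $\al(f_1)\cup\al(y^g)=\al(f_1)\cup\{y\}\cup\al(g)$. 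Since it lies in $A(x)\le G(\{x\}^\bot)$, this whole set is contained in $\{x\}^\bot$; in particular $\al(g)\subseteq\{x\}^\bot$, that is $[g,x]=1$, and $\al(f_1)\subseteq\{x\}^\bot$. But $f_1$ is a left divisor of $f$ with $\al(f_1)\subseteq\{x\}^\bot$, so $f_1\le\gd^l_{\{x\}^\bot}(f)=1$ and $f_1=1$. Thus $f=f_2\in C(y^g)$; symmetrically $g\in C(x^f)$ and $[f,y]=1$.

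Finally I would deduce the orthogonality $[\al(f),\al(g)]=\{1\}$. From $f\in C(y^g)=\la y^g\ra\times A(y)^g$, the same abelianisation argument as above kills the $y^g$-component, giving $f\in A(y)^g$, i.e. $g\,f\,g^{-1}\in A(y)=G(\{y\}^\bot\setminus\{y\})$; symmetrically $f\,g\,f^{-1}\in A(x)$. It remains to combine these with hypothesis (ii). Because $\gd_X^r(f,g)=1$ there is no cancellation in $f g^{-1}=f\circ g^{-1}$, and I would use this to read off from $g f g^{-1}\in A(y)$ and $f g f^{-1}\in A(x)$ that the supports $\al(f)$ and $\al(g)$ consist of mutually commuting vertices, which is the remaining assertion. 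I expect this last step to be the main obstacle: one must control precisely the cancellation in the conjugates $g f g^{-1}$ and $f g f^{-1}$ and use coprimality to separate the contribution of $f$ from that of $g$. That (ii) cannot be dropped is clear, since a common right divisor $u$ of $f$ and $g$ could be conjugated away (replacing $f,g$ by $f u^{-1},g u^{-1}$ fixes $x^f,y^g$ up to a common conjugation) while leaving the supports of $f$ and $g$ overlapping and non-orthogonal; the coprimality hypothesis is exactly the normalisation that removes this freedom.
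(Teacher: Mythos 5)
Your opening move (reading $[x^f,y^g]=1$ as $y^g\in C(x^f)=\la x^f\ra\times A(x)^f$ and abelianising to kill the $\la x^f\ra$-component) is sound, and your reformulation of (i) as $\gd^l_{x^\perp}(f)=1$ is correct. But the proof is not complete, and the incompleteness is not only at the point you flag. The paper's proof works from the outset with the \emph{single} conjugator: it rewrites the hypothesis as $[x^{fg^{-1}},y]=1$, notes that (ii) makes $fg^{-1}=f\circ g^{-1}$ geodesic, and applies the conjugator-decomposition lemma (\cite{DKR2}, Lemma 2.3) to $f\circ g^{-1}$, obtaining $fg^{-1}=x^k\circ c\circ u$ with $c\in A(x)$ and $x^{fg^{-1}}=u^{-1}\circ x\circ u$. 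Condition (i) then forces this factorisation to line up with $f\circ g^{-1}$ as $f=u$ and $g^{-1}=c$, so in particular $[f,g]=1$, and a final application of (ii) upgrades commutation of the elements $f$ and $g$ to commutation of their supports. You never reach $[f,g]=1$: what your argument yields (granting the earlier steps) is $gfg^{-1}\in C(y)$ and $fgf^{-1}\in C(x)$ together with $\al(f)\subseteq y^\perp\bs y$ and $\al(g)\subseteq x^\perp\bs x$, and from these facts alone $[\al(f),\al(g)]=\{1\}$ does not follow — you would have to redo precisely the divisor analysis you defer, which is the actual content of the lemma. So the main assertion is not proved.

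There is also a gap in your middle step. To conclude that $f_1\,y^g\,f_1^{-1}$ is geodesic with support $\al(f_1)\cup\{y\}\cup\al(g)$ you need, among other things, that $f_1$ and $g$ have no common right divisor (otherwise $f_1g^{-1}$ collapses); hypothesis (ii) gives $\gd^r_X(f,g)=1$, but $f_1$ is a \emph{left} divisor of $f$, and a right divisor of $f_1$ need not be a right divisor of $f=f_1\circ f_2$, so $\gd^r_X(f_1,g)=1$ does not follow. In addition, the existence of a well-defined ``largest right divisor of $f$ commuting with $y^g$'' needs justification, since $C(y^g)$ is a conjugate of a parabolic rather than a canonical parabolic $G(Y)$, and the greatest-divisor machinery is only set up for the latter. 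Both problems disappear if you follow the paper and conjugate first: $f\,y^g\,f^{-1}=y^{gf^{-1}}$ with $gf^{-1}=g\circ f^{-1}$ geodesic by (ii), so the decomposition lemma applies directly to one geodesic conjugator and conditions (i) and (ii) can be brought to bear on it simultaneously.
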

\begin{vnr}
\begin{proof}
By hypothesis $[x^{fg^{-1}},y]=1$. 
From condition \ref{it:comm2} $fg^{-1}=f\circ g^{-1}$ and 
from \cite{DKR2}, Lemma 2.3, there exist 
$a,b,u \in G$ such that $fg^{-1}=a\circ b\circ u$, $x^{fg^{-1}}=u^{-1}\circ
 x \circ u$, $a=x^n$, for some $n\in \ZZ$, and $[b,x]=1$. From 
\eqref{eq:centraliser},   
$b=x^m\circ c$, for some $c\in A(x)$; so $a\circ b=x^k\circ c$, for
some $k\in \ZZ$. 
 Now $a\circ b$ is a left divisor of $f\circ g^{-1}$ and it follows
from condition \ref{it:comm1} that $u=u_1\circ u_2$, where 
$f=u_1$ and  $g^{-1}=a\circ b\circ u_2=x^k\circ c\circ u_2$, for
some words $u_1, u_2$ with $[u_1, x^k\circ c]=1$. 
From condition \ref{it:comm1} again, $k=0$, $a=1$ and $b=c\in A(x)$. 
 
From \cite{DKR2} Corollary 2.6, $[x,y]=[x^u,y]=1$ implies
that $u\in C(y)$. As $u$ is a right divisor of $f\circ g^{-1}$ it
follows, from condition \ref{it:comm1} again, that $u_2=1$; so  
$f= u_1=u$, and $g=b$.  It follows, using condition \ref{it:comm2} once more, 
that $[\al(b),\al(u)]=1$ and this gives the result.
\end{proof}
\end{vnr}

\subsection{Admissible sets}\label{subsec:admiss}

In this section we establish some properties of graphs
 which we shall apply to the study of the automorphism group
of $G(\G)$.   
If $x$ and $y$ are vertices of a  graph $\G$ then we define the {\em distance} 
$d(x,y)$ from $x$ to $y$ to be the minimum of the lengths of all paths 
from $x$ to $y$ in $\G$. 
Given a subset
$Y$ of $X$  the {\em orthogonal complement}
of $Y$ is defined to be
\[Y^\perp=\{u\in X|d(u,y)\le 1, \textrm{ for all } y\in Y\}.\]
For a set $\{x\}$ of one element we write $x^\perp$ instead of $\{x\}^\perp$
 and in general often write $x$ in place of $\{x\}$. 
For any set $Y\subseteq X$ we write $Y^{\perp \perp}$ for $(Y^\perp)^\perp$. 
By convention we set $\nul^\perp=X$. 

We define the {\em closure} of $Y$ to be
$\cl(Y)=Y^{\perp\perp}$.
The closure operator in $\G$ satisfies, among others, the properties that
$Y\subseteq \cl(Y)$,
$\cl(Y^\perp)=Y^\perp$ and
$\cl(\cl(Y))=\cl(Y)$ \cite[Lemma 2.4]{DKR3}.  Moreover if $Y_1\subseteq Y_2\subseteq X$
then $\cl(Y_1)\subseteq \cl(Y_2)$.
\begin{defn}
A subset $Y$ of $X$ is called {\em closed} {\rm(}with respect to $\G${\rm)}
if $Y=\cl(Y)$.
Denote by $\cL=\cL(\G)$ the set of all closed subsets of $X$.

For non-empty 
$Y\subseteq X$ define $\ad(Y)=\cap_{y\in Y}(y^\perp\backslash y)^\perp$. 
Define $\ad(\nul)=X$. 
Subsets of the form $\ad(Y)$, where $Y\subseteq X$ are called 
{\em admissible} sets.  Let $\cK=\cK(\G)$ denote the set of admissible subsets of $X$. 
\end{defn}

Properties of the set $\cL$ are considered in detail in \cite{DKR3} and applied
to the study of centralisers and 
automorphisms of partially commutative groups in \cite{DKR2}, \cite{DKR4} and 
\cite{DKR5}.
 We shall see, in Section \ref{sec:generators}, that distinct elements
$x$ and $y$ of $X$, such that $x^\perp\bs x\subseteq y^\perp$, give
rise to a particular type of automorphism of $G$. 
 The motivation for the definition of an admissible set is then clear
from the first part of the following lemma. 
\begin{lemma}\label{lem:admot}
For all $x\in X$, 
\be[(i)]
\item \label{it:admot1}
the set  $\ad(x)=\{y\in X: x^\perp\bs x\subseteq y^\perp\}$ and 
\item \label{it:admot2}
$y\in \ad(x)$ if and only if $\cl(y)\subseteq \ad(x)$, for all $y\in X$.  
\ee
\end{lemma}
\begin{proof}
\be[(i)]
\item
$y\in \ad(x)$ if and only if  $[y,v]=1$, for all $v\in x^\perp\bs x$, if
and only if $x^\perp\bs x\subseteq y^\perp$. 
\item
For all $y\in X$ we have $y\in \cl(y)$, so the ``if'' clause 
follows. On the other hand if  
$y\in \ad(x)$ then, from \ref{it:admot1}, $x^\perp\bs x\subseteq y^\perp$;
so $y^{\perp \perp}\subseteq (x^\perp\bs x)^\perp$, as required.
\ee
\end{proof}

\begin{exam}\label{ex:adex}
In the graph $\G$ of Figure \ref{subf:adexa}
\begin{itemize}
\item
$\ad(a)=\{b,c,d,e,g,h,i\}^\perp=\{a\}=\cl(a)$; 
 \item
$d^\perp=g^\perp=\{a,c,d,e,g,h\}$ and $\ad(d)=\ad(g)=\{a,d,g\}=\cl(d)=\cl(g)$;
\item
$\cl(b)=\{a,b,c,h\}^\perp=\{a,b\}$, $\cl(i)=\{a,c,h,i\}^\perp=\{a,i\}$, 
$i^\perp\bs i=b^\perp\bs b$ and 
$\ad(i)=\ad(b)=\{a,c,h\}^\perp=\{a,b,d,g,i\}= \cl(b)\cup \cl(d)\cup \cl(i)$;
\item
$\cl(c)=\{a,c\}$, $\cl(h)=\{a,h\}$, $c^\perp\bs c=h^\perp\bs h$ and 
$\ad(c)=\ad(h)=\{a,c,h\}=\cl(c)\cup \cl(h)$; 
\item 
$\ad(e)=\{a,d,f,g\}^\perp=\{e\}=\cl(e)$ and 
\item
$\cl(f)=\{e,f\}^\perp=\{e,f\}$ and 
$\ad(f)=\{e\}^\perp=\{a,d,e,f,g\}=\cl(d)\cup \cl(f)$.  
\end{itemize}
\end{exam}
\begin{figure}
\begin{center}
\psfrag{a}{$a$}
\psfrag{b}{$b$}
\psfrag{c}{$c$}
\psfrag{d}{$d$}
\psfrag{e}{$e$}
\psfrag{f}{$f$}
\psfrag{g}{$g$}
\psfrag{h}{$h$}
\psfrag{i}{$i$}
\psfrag{aa}{$\ad(a)$}
\psfrag{ba}{$\ad(b)$}
\psfrag{ca}{$\ad(c)$}
\psfrag{da}{$\ad(d)$}
\psfrag{ea}{$\ad(e)$}
\psfrag{fa}{$\ad(f)$}
\psfrag{ga}{$\ad(g)$}
\psfrag{ha}{$\ad(\nul)$}
\psfrag{ia}{$\ad(X)$}
\subfloat[A graph $\G$\label{subf:adexa}]{
\includegraphics[scale=0.4]{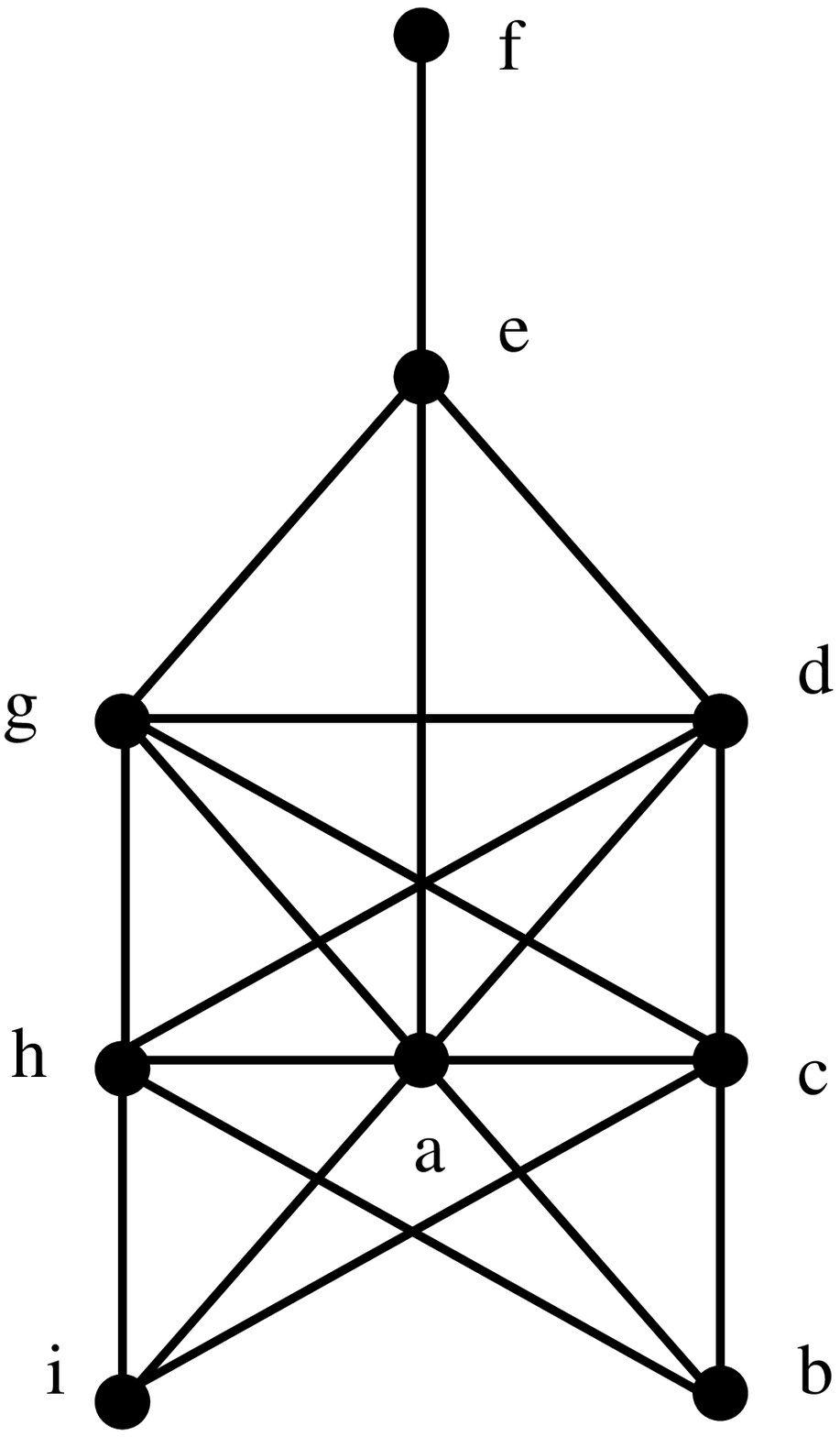}
}
\quad\quad
\subfloat[The lattice $\cK(\G)$\label{subf:adexb}]{
\includegraphics[scale=0.4]{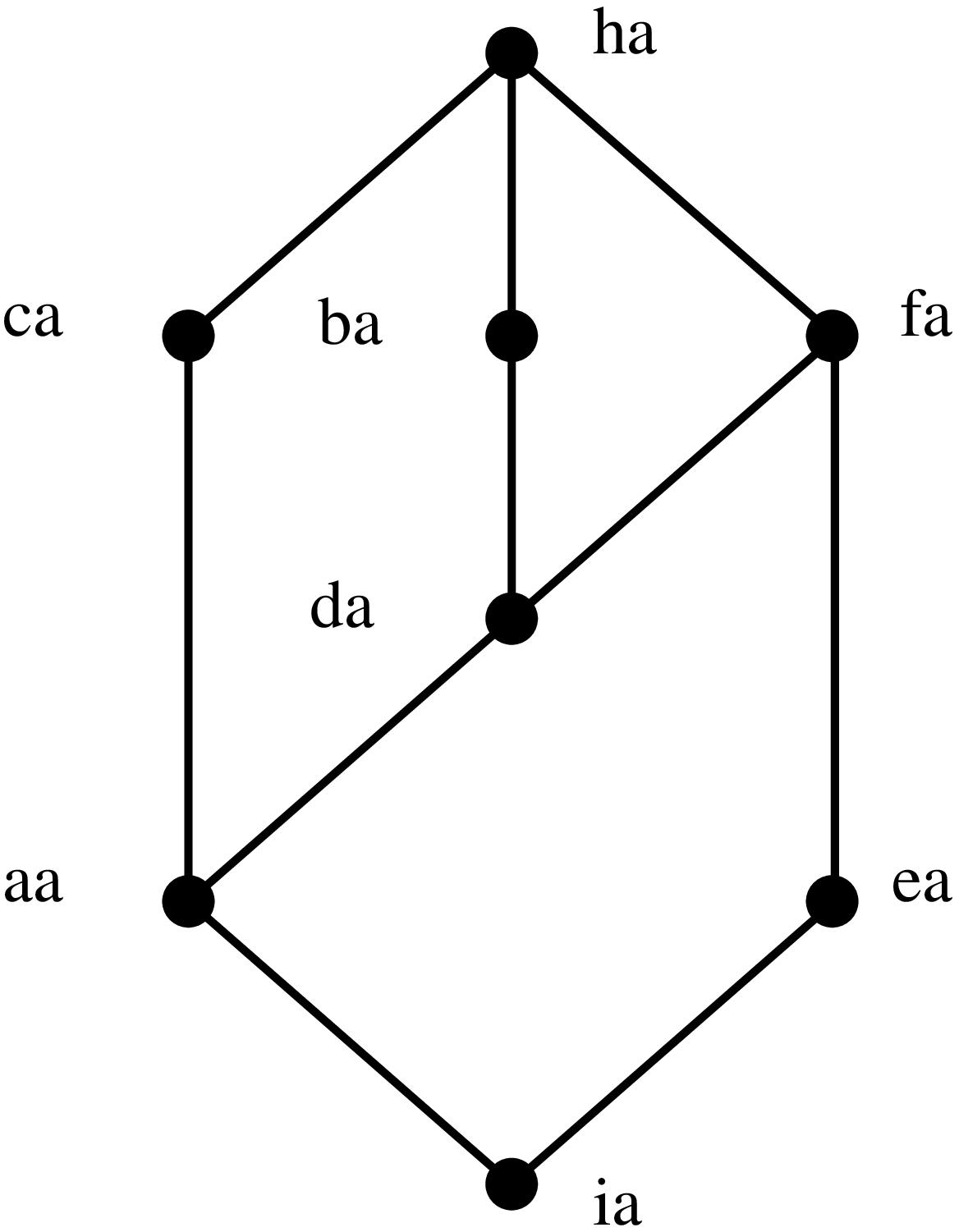}
}
\end{center}
\caption{A graph and it's lattice of admissible sets}\label{fig:adex}
\end{figure}
For sets $U, V$ we write $U<V$ to indicate that $U \subseteq V$ and 
$U\neq V$. 
 A subset $Y$ of $X$ is called a 
{\em simplex} if the full subgraph of $\G$ with vertices $Y$ is isomorphic 
to  a
complete graph.
\begin{lemma}\label{lem:ad0}
 For $x\neq z\in X$  
and subsets $U$ and $V$ of $X$ the 
following hold. 
\be[(i)]
\item\label{it:ad1} If $U\subseteq V$ then $\ad(V)\subseteq \ad(U)$.
\item\label{it:ad10} $\ad(U)\cap \ad(V)=\ad(U\cup V)$. 
\item\label{it:ad2} $\cl(x)=\ad(x) \cap x^\perp$ so 
$\ad(x)=\cl(x)$ if and only if $\ad(x) \subseteq x^\perp$.
\item\label{it:ad3} $x^\perp\subseteq \ad(x)$ if and only if $x^\perp$ 
generates
a complete subgraph.
\item\label{it:ad5} If $x^\perp\backslash x\subseteq z^\perp\backslash z$ 
then $\ad(z)\subseteq \ad(x)$.
\item\label{it:ad6} If $x^\perp\subseteq z^\perp$ then 
$\ad(z)\subseteq \ad(x)$.
\item\label{it:ad12}  $\ad(z)\subseteq \ad(x)$ if and only if
$x^\perp\bs x \subset z^\perp$.
\item\label{it:ad7} $\ad(x)=\ad(z)$ if and only if either $x^\perp=z^\perp$ or 
$x^\perp\bs x=z^\perp\bs z$. 
\item\label{it:ad8} If $z\in \ad(x)$ then $\ad(z)\subseteq \ad(x)$.
\item\label{it:ad9} $\ad(U)=\cup_{y\in \ad(U)} \ad(y)$.
\item\label{it:ad13} If $\cl(x)=\ad(x)$ then $\cl(y)=\ad(y)$, for
all $y\in \ad(x)$. 
\item\label{it:ad11} If $[x,z]=1$ then $[G(\ad(x)),G(\ad(z))]=1$.
\ee
\end{lemma}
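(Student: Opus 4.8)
The plan is to prove the twelve parts in an order that lets the later, less trivial items rest on the earlier ones, with part (vii) serving as the engine for several others. Parts (i) and (ii) are immediate from the definition $\ad(U)=\bigcap_{u\in U}(u^\perp\bs u)^\perp$: shrinking the index set enlarges the intersection, and the intersection over $U\cup V$ splits as the intersection over $U$ meeting that over $V$ (with the convention $\ad(\nul)=X$ acting as the identity for $\cap$). For part (iii) I would use $x^\perp=(x^\perp\bs x)\cup\{x\}$ together with the identity $(A\cup B)^\perp=A^\perp\cap B^\perp$ to get $\cl(x)=x^{\perp\perp}=(x^\perp\bs x)^\perp\cap x^\perp=\ad(x)\cap x^\perp$, whence $\ad(x)=\cl(x)$ iff $\ad(x)\subseteq x^\perp$. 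Part (iv) unwinds the definition: $x^\perp\subseteq\ad(x)$ says every $u\in x^\perp$ is orthogonal to every vertex of $x^\perp\bs x$, which is exactly the assertion that $\G(x^\perp)$ is complete (the vertex $x$ being orthogonal to all of $x^\perp$ automatically). Part (v) is just the inclusion-reversing property of $\perp$ applied to $x^\perp\bs x\subseteq z^\perp\bs z$.

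The heart of the lemma is part (vii), that $\ad(z)\subseteq\ad(x)$ iff $x^\perp\bs x\subseteq z^\perp$ (reading $\subset$ as $\subseteq$). For the forward direction I would note $z\in\ad(z)$, since $z$ is orthogonal to all of $z^\perp\bs z$, so $z\in\ad(x)$ and Lemma~\ref{lem:admot}(i) gives $x^\perp\bs x\subseteq z^\perp$. For the converse I take $u\in\ad(z)$, i.e. $z^\perp\bs z\subseteq u^\perp$, and a vertex $w\in x^\perp\bs x$, and must show $w\in u^\perp$. If $w\ne z$ then $w\in z^\perp\bs z\subseteq u^\perp$ directly; the only genuine case is $w=z$, where I would exploit the symmetry of $\perp$: since $z\in x^\perp$ forces $x\in z^\perp\bs z\subseteq u^\perp$, hence $u\in x^\perp$, and then either $u=x$ (so $z\in x^\perp=u^\perp$) or $u\in x^\perp\bs x\subseteq z^\perp$, giving $z\in u^\perp$ as needed. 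Once part (vii) is in hand, part (vi) is immediate because $x^\perp\bs x\subseteq x^\perp\subseteq z^\perp$; part (ix) is just part (vii) rephrased through Lemma~\ref{lem:admot}(i); and part (viii) follows by applying part (vii) in both directions and splitting on whether $z\in x^\perp$: if it is, the two containments upgrade to $x^\perp=z^\perp$, and if not, to $x^\perp\bs x=z^\perp\bs z$, while the converse directions follow from parts (v) and (vi).

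For part (x) I would prove both inclusions: $\ad(U)\subseteq\bigcup_{y\in\ad(U)}\ad(y)$ because every $y\in\ad(U)$ lies in $\ad(y)$, and the reverse because for $y\in\ad(U)$ we have $y\in\ad(u)$ for each $u\in U$, so part (ix) gives $\ad(y)\subseteq\ad(u)$ and hence $\ad(y)\subseteq\bigcap_{u\in U}\ad(u)=\ad(U)$ by part (ii). Part (xi) is where I expect to reinterpret the hypothesis: $\cl(x)=\ad(x)$ means, by part (iii), that $\ad(x)=x^{\perp\perp}$, so every element of $\ad(x)$ is orthogonal to all of $x^\perp$. Given $y\in\ad(x)$, part (ix) yields $\ad(y)\subseteq\ad(x)=x^{\perp\perp}$; since $y\in\ad(x)\subseteq x^\perp$, each $w\in\ad(y)\subseteq x^{\perp\perp}$ is orthogonal to $y$, so $\ad(y)\subseteq y^\perp$ and part (iii) gives $\cl(y)=\ad(y)$. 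Finally, for part (xii) it suffices to check that generators commute: for $a\in\ad(x)$ and $b\in\ad(z)$, using that $[x,z]=1\iff z\in x^\perp$ when $x\ne z$ together with Lemma~\ref{lem:admot}(i), I get $z\in x^\perp\bs x\subseteq a^\perp$ and $x\in z^\perp\bs z\subseteq b^\perp$, and a short case split on whether $a=z$ or $b=x$ shows $[a,b]=1$ in every case.

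The main obstacles are the boundary cases: the equality $w=z$ in the converse of part (vii), and the symmetric coincidences $a=z$ and $b=x$ in part (xii), which are the only places where a one-line containment argument fails and where the symmetry of $\perp$ and the hypothesis $[x,z]=1$ must be brought in. The other subtlety is recognising, in part (xi), that the hypothesis should be read as $\ad(x)=x^{\perp\perp}$, so that orthogonality to the whole of $x^\perp$ can be transported to orthogonality to the particular vertex $y$.
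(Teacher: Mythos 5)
Your proof is correct and follows essentially the same route as the paper: parts (i)--(v) from the definitions and the inclusion-reversing property of $\perp$, Lemma \ref{lem:admot} driving the key equivalence in \ref{it:ad12}, which then powers \ref{it:ad7}--\ref{it:ad9}, and the same simplex/orthogonality observation for \ref{it:ad13} and case analysis for \ref{it:ad11}. The only (harmless) difference is local: you prove \ref{it:ad12} directly by an element-wise argument with the $w=z$ case split and deduce \ref{it:ad6} from it, whereas the paper proves \ref{it:ad6} by a set identity and obtains the converse of \ref{it:ad12} from \ref{it:ad5} and \ref{it:ad6}.
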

\begin{vnr}
\begin{proof}
Statements \ref{it:ad1} to \ref{it:ad5} 
follow directly 
from the definitions and the fact that if $S\subseteq T$ then
$T^\perp \subseteq S^\perp$, for all subsets $S,T$ of $X$.  
For \ref{it:ad6}  note that in
this case $z\in x^\perp$, so as $x\neq z$,  
$\ad(x)
=(x^\perp\backslash x)^\perp
=((x^\perp\bs\{x,z\})\cup \{z\})^\perp
=(x^\perp\bs\{x,z\})^\perp\cap z^\perp
\supseteq (z^\perp\bs\{x,z\})^\perp\cap x^\perp
=\ad(z)$.

The right to left implication of \ref{it:ad12} is a consequence of  
 \ref{it:ad5} and \ref{it:ad6}, and the fact that 
if $x^\perp\bs x\subseteq z^\perp$
 then $x^\perp \subseteq z^\perp$ or $x^\perp\bs x\subseteq z^\perp\bs z$. 
 To see the opposite implication:  
 if $\ad(z)\subseteq \ad(x)$ then, as $z\in \ad(z)$, we have
$z\in \ad(x)$, so $x^\perp\bs x\subseteq z^\perp$, from Lemma \ref{lem:admot}.

To see \ref{it:ad7} suppose first that $\ad(x)=\ad(z)$. Then, from
\ref{it:ad12}, we have $x^\perp\bs x \subseteq z^\perp$ and 
$z^\perp\bs z\subseteq x^\perp$. If $x\in z^\perp$ then $z\in x^\perp$, and
in this case $x^\perp =z^\perp$. Otherwise $x\notin z^\perp$  and 
$z\notin x^\perp$ in which case $x^\perp\bs x = z^\perp\bs z$. Conversely,
if either $x^\perp =z^\perp$ or  $x^\perp\bs x = z^\perp\bs z$ then it
follows, from \ref{it:ad5} and \ref{it:ad6}, that $\ad(x)=\ad(z)$.

Statement \ref{it:ad8} follows immediately from \ref{it:ad12} and 
Lemma \ref{lem:admot}. 
Statement \ref{it:ad9} follows from
\ref{it:ad8} as if $y\in \ad(U)$ then $\ad(y)\subseteq \ad(U)$.

%

To see statement \ref{it:ad13} observe that $\cl(x)$ is a simplex so
if $\cl(x)=\ad(x)$ and $y\in \ad(x)$ then $\ad(y)\subseteq \ad(x)$ implies
that $\ad(y)$ is a simplex. Therefore $\ad(y)\subseteq y^\perp$ and 
the result follows from \ref{it:ad2}.

For  \ref{it:ad11} suppose that 
$u\in \ad(x)$ and $v\in \ad(z)$. Since $z\in x^\perp\bs x$ we have
$u\in z^\perp$ and similarly $v\in x^\perp$. Since $[u,y]=1$ for
all $y\in x^\perp$, except possibly $x$, it follows that $u$ commutes
with $v$, unless $v=x$. However if $v=x$ then, since 
$v\in (z^\perp\bs z)^\perp$, $v$  commutes with all elements of $z^\perp$,
including $u$.

\end{proof}
\end{vnr}

Let $\sim_\perp$ be the relation on $X$ given by $x\sim_\perp y$ if
and only if $x^\perp=y^\perp$ and let $\sim_\lk$ be the relation given by
$x\sim_\lk y$ if
and only if 
$x^\perp\bs x=y^\perp\bs y$. These are equivalence relations
and the equivalence classes of $x$ under $\sim_\perp$ and 
$\sim_\lk$  are denoted by $[x]_\perp$ and $[x]_\lk$, respectively.
Note that if $|[x]_\perp|>1$ then $[x]_\lk=\{x\}$ and 
the same is true on interchanging $\perp$ and $\lk$. Therefore the
relation 
$\sim$,
given by $x\sim y$ if and only if $x\sim_\perp y$ or $x\sim_\lk y$, 
is an equivalence relation. Denote the equivalence class of 
$x$ under $\sim$ by $[x]$. Then $x\sim y$ if and
only if $x\sim_\perp y$ or $x\sim_\lk y$, and $[x]=[x]_\perp\cup[x]_\lk$.
It follows  that 
 $x\sim y$ if and only if $x^\perp\bs \{x,y\}=y^\perp\bs \{x,y\}$. 
\begin{lemma}\label{lem:ad1}
For all $x,z\in X$, 
\be[(i)]
\item\label{it:adcl1} $\ad(x)=\ad(z)$ if and only if  $z\in [x]$, 
\item\label{it:adcl2}
$[x]=
\ad(x)\bs(\cup\{\ad(y)|y\in \ad(x) \textrm{ and }\ad(y) < \ad(x)\})$.
\ee 

\end{lemma}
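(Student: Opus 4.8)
The plan is to obtain part \ref{it:adcl1} immediately from the earlier results and then to prove part \ref{it:adcl2} by a short argument about the partial order on admissible sets. For part \ref{it:adcl1}, note that $z\in[x]$ means, by definition of $\sim$, that either $x^\perp=z^\perp$ or $x^\perp\bs x=z^\perp\bs z$; and by Lemma \ref{lem:ad0}\ref{it:ad7} this is exactly the condition $\ad(x)=\ad(z)$. So there is nothing further to do for \ref{it:adcl1}, and it may be freely used in the proof of \ref{it:adcl2}.

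For part \ref{it:adcl2}, write $R$ for the right-hand side. First I would record two facts. One: every element lies in its own admissible set, $z\in\ad(z)$, which is immediate from Lemma \ref{lem:admot}\ref{it:admot1} since $z^\perp\bs z\subseteq z^\perp$. Two: by Lemma \ref{lem:ad0}\ref{it:ad8}, if $y\in\ad(x)$ then $\ad(y)\subseteq\ad(x)$, and by part \ref{it:adcl1} this inclusion is an equality precisely when $y\in[x]$. Consequently each $y\in\ad(x)$ falls into exactly one of two cases: $\ad(y)=\ad(x)$ when $y\in[x]$, and $\ad(y)<\ad(x)$ when $y\notin[x]$.

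With this in hand the two inclusions follow. For $[x]\subseteq R$: if $z\in[x]$ then $\ad(z)=\ad(x)$ by \ref{it:adcl1}, so $z\in\ad(z)=\ad(x)$; and if some $y\in\ad(x)$ with $\ad(y)<\ad(x)$ contained $z$, then Lemma \ref{lem:ad0}\ref{it:ad8} would give $\ad(x)=\ad(z)\subseteq\ad(y)<\ad(x)$, a contradiction, so $z$ lies in none of the sets deleted from $\ad(x)$. For $R\subseteq[x]$: if $z\in R$ then in particular $z\in\ad(x)$, whence $\ad(z)\subseteq\ad(x)$; were this strict, the choice $y=z$ would exhibit $z$ as an element of one of the excised sets $\ad(y)$ via $z\in\ad(z)$, contradicting $z\in R$, so $\ad(z)=\ad(x)$ and hence $z\in[x]$ by \ref{it:adcl1}.

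The argument is essentially bookkeeping, and I do not expect a serious obstacle. The only point needing care is the self-indexing step $y=z$ in the second inclusion, which is legitimate precisely because $z\in\ad(x)$; this, together with the constant use of part \ref{it:adcl1} to translate between equality of admissible sets and membership in the class $[x]$, is what makes the two inclusions match up.
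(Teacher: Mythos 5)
Your proof is correct and follows the same route as the paper: part (i) is exactly the paper's argument via Lemma \ref{lem:ad0}\ref{it:ad7} and the definition of $[x]$, and part (ii) is the same deduction from part (i) together with Lemma \ref{lem:ad0}\ref{it:ad8}, which the paper states without spelling out the two inclusions. Your careful handling of the self-indexing step $y=z$ is precisely the detail the paper leaves implicit.
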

\begin{vnr}
\begin{proof}
The second statement  follows directly from the first, together with
 \ref{it:ad8} of Lemma \ref{lem:ad0}. Therefore 
it suffices to show that $\ad(x)=\ad(z)$ if and only if $z\in [x]$.
By definition, $z\in [x]$ if and only if $x^\perp=z^\perp$ or $x^\perp\bs x=z^\perp\bs z$. 
 From Lemma \ref{lem:ad0} \ref{it:ad7},
 this holds 
if and only if 
$\ad(x)=\ad(z)$.
\end{proof}
\end{vnr}
\begin{lemma}\label{lem:ad2}
\[[x]=\left\{
\begin{array}{ll}
[x]_\perp,& \textrm{ if } \ad(x)=\cl(x)\\
\left[x\right]_\lk,& \textrm{ if } \ad(x) > \cl(x)
\end{array}
\right.
.
\]
\end{lemma}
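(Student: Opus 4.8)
The plan is to split along the two clauses of the piecewise formula, having first observed that these cases are exhaustive and disjoint: by Lemma \ref{lem:ad0}\ref{it:ad2} we have $\cl(x)=\ad(x)\cap x^\perp\subseteq\ad(x)$, so exactly one of $\ad(x)=\cl(x)$ and $\cl(x)<\ad(x)$ holds. In each case I will show that one of the two constituent classes $[x]_\perp$, $[x]_\lk$ reduces to $\{x\}$; since $x$ lies in both and $[x]=[x]_\perp\cup[x]_\lk$, this immediately identifies $[x]$ with the surviving class.

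For the first clause, assume $\ad(x)=\cl(x)$, which by Lemma \ref{lem:ad0}\ref{it:ad2} is equivalent to $\ad(x)\subseteq x^\perp$. I aim to show $[x]_\lk=\{x\}$. Suppose, for contradiction, that $y\in[x]_\lk$ with $y\neq x$, so that $x^\perp\bs x=y^\perp\bs y$. Since $y^\perp\bs y\subseteq y^\perp$, the description of $\ad(x)$ in Lemma \ref{lem:admot}\ref{it:admot1} forces $y\in\ad(x)$, and hence $y\in x^\perp$ by the case hypothesis. As $y\neq x$ this gives $y\in x^\perp\bs x=y^\perp\bs y$, contradicting the fact that $y^\perp\bs y$ excludes $y$. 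Thus $[x]_\lk=\{x\}$, and $[x]=[x]_\perp$.

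For the second clause, assume $\cl(x)<\ad(x)$, i.e. $\ad(x)\not\subseteq x^\perp$. I aim to show $[x]_\perp=\{x\}$, and I will do so by proving the contrapositive, namely that $|[x]_\perp|>1$ forces $\ad(x)=\cl(x)$. If there is some $y\neq x$ with $x^\perp=y^\perp$, then $y\in x^\perp\bs x$, so every $u\in\ad(x)=(x^\perp\bs x)^\perp$ satisfies $u\in y^\perp=x^\perp$; hence $\ad(x)\subseteq x^\perp$ and $\ad(x)=\cl(x)$ by Lemma \ref{lem:ad0}\ref{it:ad2}, contrary to assumption. Therefore $[x]_\perp=\{x\}$, and $[x]=[x]_\lk$.

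Both arguments are short once the set-up is in place, so I do not anticipate a serious obstacle; the only point requiring care — and the one I would check most closely — is the bookkeeping around the deletions ``$\bs x$'' and ``$\bs y$'' and their interaction with membership in $\ad(x)$ and $x^\perp$, which is exactly what produces the two contradictions. The remark preceding the lemma, that $|[x]_\perp|>1$ entails $[x]_\lk=\{x\}$ (and symmetrically), is consistent with this dichotomy and could serve as a sanity check, but each clause is settled directly by the computations above.
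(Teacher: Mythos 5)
Your proof is correct and follows essentially the same route as the paper's: split on whether $\ad(x)=\cl(x)$, use Lemma \ref{lem:ad0}\ref{it:ad2} to translate this into $\ad(x)\subseteq x^\perp$ or not, and derive a contradiction from a nontrivial element of the ``wrong'' class via the characterisation $\ad(x)=(x^\perp\bs x)^\perp$. The only cosmetic difference is that you show the other constituent class collapses to $\{x\}$ and invoke $[x]=[x]_\perp\cup[x]_\lk$, whereas the paper argues directly that every $z\in[x]$ lies in the surviving class; these are interchangeable.
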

\begin{vnr}
\begin{proof}
First suppose $\ad(x)=\cl(x)$. Then, from Lemma \ref{lem:ad0} \ref{it:ad2}, 
 $\ad(x)\subseteq x^\perp$. If $z\in [x]$ then, 
from Lemma \ref{lem:ad1} \ref{it:adcl2}, $z\in [x]\cap x^\perp$ and 
so
 $z\sim_\perp x$.

Now suppose that $\cl(x)<\ad(x)$. If $z\in [x]$, $z\neq x$, and 
 $z^\perp= x^\perp$ then $x\in z^\perp\bs
z$, so $\ad(x)=\ad(z)\subseteq x^\perp$, contradicting Lemma \ref{lem:ad0} 
\ref{it:ad2}.
Hence $z\in [x]$ implies $z\sim_\lk x$. 
\end{proof}
\end{vnr}

In view of  Lemma \ref{lem:ad0} \ref{it:ad10} above, for an arbitrary subset $Y$ of $X$ the
$\ad${\em -closure} of $Y$ may be defined to be the admissible
set \[
\ca(Y)=\cap\{U\subseteq X|Y\subseteq U \textrm{ and } U=\ad(V),
\textrm{ for some } V\subseteq X\}.\]
Then $\ca(Y)$ is the smallest admissible set containing $Y$ and 
$Y$ is admissible if and only if $Y=\ca(Y)$. 
\begin{comment}
\begin{lemma}\label{lem:clcont}
$[x]\cap \cl(x)=[x]_\perp$ and $[x]\cap \ca(x)=[x]$.
\end{lemma} 
\begin{vnr}
\begin{proof} 
If $x^\perp=z^\perp$ then
$\cl(x)=\cl(z)$ so $z\in \cl(x)$ and therefore $[x]_\perp\subseteq 
\cl(x)$. Moreover if $x\in \ad(u)$ then $(u^\perp\bs u)\subseteq x^\perp
=z^\perp$, so also $z\in \ad(u)$. Therefore $[x]_\lk\subseteq 
\ca(x)$. Now if $x^\perp\bs x=z^\perp\bs z$ the same argument
shows that $[x]_\lk \subseteq \ca(x)$
 (since $u$ commutes with $x$ if and only if it commutes with $z$). 
However, if $z\in \cl(x)$ then
$z\in x^\perp$ so $z\notin [x]_\lk$ unless $z=x$.
\end{proof} 
\end{vnr}
\end{comment}

 It is shown in \cite{DKR3} that  $Y\in \cL$ if and only if $Y=U^\perp$,
for some $U\subseteq X$. Therefore 
$\cK \subset \cL$. 
The set $\cK$, partially ordered by inclusion, with infimum $U\wedge V=U\cap V$ and supremum
$U\vee V=\ca(U\cup V)$ forms a lattice. The lattice $\cK$ has
 maximal element $X=\ca(X)=\ad(\nul)$ and  minimal element 
$\ad(X)$. The lattice of admissible sets for the graph of Example \ref{ex:adex}
is shown in Figure \ref{subf:adexb}.

Although the lattice $\cK(\G)$ of the graph $\G$ in Example \ref{ex:adex} 
consists of $\ad(\nul)$, $\ad(X)$ and sets of the form $\ad(x)$, where $x\in X$, this
is atypical. For example consider the path graph of length three: that 
is the tree with vertices $a$, $b$, $c$ and $d$ and edges $\{a,b\}$, 
$\{b,c\}$ and $\{c,d\}$. In this case $\ad(a)=\{a,b,c\}$ and $\ad(b)=\{b\}$.
It follows that  $|\ad(x)|=1$ or $3$, for all vertices $x$. 
However $\ad(\{a,d\})=\{b,c\}$; so $\ad(\{a,d\})\neq \ad(x)$, for all vertices
$x$. Moreover $\ca(\{a,d\})=X\neq \ad(\{a,d\})$. 
We shall be mostly interested here in the admissible sets 
$\ad(x)$, for $x\in X$; for which we can say the following. 
\begin{lemma}
For $x\in X$, $\ca(x)=\ad(x)$. 
\end{lemma}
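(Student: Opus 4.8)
The plan is to prove the equality $\ca(x)=\ad(x)$ by establishing the two inclusions separately, exploiting the fact (noted just before the statement) that $\ca(x)$ is the \emph{smallest} admissible set containing $x$.

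First I would dispose of the inclusion $\ca(x)\subseteq\ad(x)$. For this it suffices to check that $\ad(x)$ is itself an admissible set containing $x$; minimality of $\ca(x)$ then forces $\ca(x)\subseteq\ad(x)$ at once. That $\ad(x)=\ad(\{x\})$ is admissible is immediate from the definition of $\cK$. The only point requiring a word is that $x\in\ad(x)$: since every $v\in x^\perp\bs x$ satisfies $d(x,v)\le 1$, we have $x\in(x^\perp\bs x)^\perp=\ad(x)$ directly from the definition of the orthogonal complement (equivalently, $x^\perp\bs x\subseteq x^\perp$, so $x\in\ad(x)$ by Lemma \ref{lem:admot} \ref{it:admot1}).

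For the reverse inclusion $\ad(x)\subseteq\ca(x)$ I would use that $\ca(x)$ is admissible (it is a finite intersection of admissible sets, which is admissible by Lemma \ref{lem:ad0} \ref{it:ad10}, and this is also asserted in the remark preceding the statement), so $\ca(x)=\ad(V)$ for some $V\subseteq X$. Since $x\in\ca(x)=\ad(V)$, the self-covering identity $\ad(V)=\cup_{y\in\ad(V)}\ad(y)$ of Lemma \ref{lem:ad0} \ref{it:ad9} (equivalently, part \ref{it:ad8}, that $z\in\ad(x)$ implies $\ad(z)\subseteq\ad(x)$) yields $\ad(x)\subseteq\ad(V)=\ca(x)$. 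Combining the two inclusions gives the result.

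There is no serious obstacle here: the whole argument is an application of the minimality characterisation of $\ca$ together with the monotonicity and self-covering properties of $\ad$ already established in Lemma \ref{lem:ad0}. The one step that must not be overlooked is the verification $x\in\ad(x)$, without which the minimality argument for $\ca(x)\subseteq\ad(x)$ would fail; and it is worth noting that this is exactly where the single-vertex hypothesis is used, since for a general subset $Y$ the set $\ad(Y)$ need not contain $Y$. This is illustrated by the path-graph example preceding the statement, where $\ad(\{a,d\})=\{b,c\}$ does not contain $\{a,d\}$ and indeed $\ca(\{a,d\})=X\neq\ad(\{a,d\})$.
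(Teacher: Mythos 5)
Your proof is correct and follows essentially the same route as the paper: the inclusion $\ca(x)\subseteq\ad(x)$ comes from minimality once one notes $x\in\ad(x)$, and the reverse inclusion uses the fact that membership of $x$ in an admissible set forces containment of $\ad(x)$ (Lemma \ref{lem:ad0} \ref{it:ad8}/\ref{it:ad9}); the paper merely phrases the second step by writing the admissible set as $\cap_{y\in Y}\ad(y)$ and applying \ref{it:ad8} to each factor, while you apply \ref{it:ad9} to $\ca(x)$ directly. Your explicit remarks on why $x\in\ad(x)$ and on the failure of this for general subsets $Y$ are accurate but not needed beyond what the paper already records.
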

\begin{proof}
By definition $\ca(x)\subseteq \ad(x)$. If $U$ is admissible 
then $U=\cap_{y\in Y}\ad(y)$, for some $Y\subseteq X$.  If $x\in U$
this  means that 
$x\in \ad(y)$, so $\ad(x)\subseteq \ad(y)$, for all $y\in Y$. Hence
$\ad(x)\subseteq U$. It follows that $\ad(x)\subseteq \ca(x)$. 
\end{proof}

\begin{comment}
\begin{exam}
In the graph of Figure \ref{fig:claex} we have $\ad(a)=b^\perp$ and 
$\ad(b)=\{b\}$. Therefore $|\ad(v)|=1$ or $4$, for all vertices $v$ of the
graph. However $\ad(\{a,d\})=b^\perp\cap c^\perp$ is an admissible
set of  two elements.
Therefore $\ad(\{a,d\})$ is not equal to $\ad(v)$, for all vertices $v$. 
In this example $\ca(\{a,d\})=X$. 
\end{exam}
\begin{figure}\label{fig:claex}
\begin{center}
\psfrag{a}{$a$}
\psfrag{b}{$b$}
\psfrag{c}{$c$}
\psfrag{d}{$d$}
\includegraphics[scale=0.4]{claex.eps}
\caption{}\label{fig:claex}
\end{center}
\end{figure}
\end{comment}

\subsection{Ordering $X$}
The next goal is to define a total ordering on $X$ which reflects
the structure of the lattice $\cK$. 
First define a partial 
order $<_{\cK}$ on $X$ by $x<_{\cK} y$ if and only if $\ad(x)<\ad(y)$. 
If $\ad(x) =\ad(y)$ we write $x=_\cK y$. We say $x$ is $\cK${\em -minimal}
if $y\le_\cK x$ implies $y=_\cK x$. The definition of 
$\cK${\em -maximal} is then the obvious one. 
Recall that in \cite{DKR5} the analogous ordering using $\cL$ instead
of $\cK$ was defined, and the definitions of $\cL$-minimal and $\cL$-maximal
were also defined using the closure operator instead of the $\ad$ operator.
\begin{lemma}\label{lem:admin}
An element $x\in X$ is $\cK$-minimal if and only if  $[x]=\ad(x)$.
If $x$ is $\cK$-minimal then
\be[(i)]
\item\label{it:Kmin2}
$x$ is $\cL$-minimal and
\item\label{it:Kmin3} 
$\cl(y)=[y]_\perp$, for all  $y\in \ad(x)$.
\ee 
\end{lemma}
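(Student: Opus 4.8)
The plan is to prove the three assertions in turn, treating the equivalence as the foundation on which the other two rest. The whole argument is driven by one small combinatorial fact that I isolate at the outset: if $z\neq y$ and $z\sim_\lk y$, then $z\notin y^\perp$, since otherwise $z\in y^\perp\bs y=z^\perp\bs z$, which is absurd. In words, no proper $\sim_\lk$-partner of a vertex is adjacent to it. This is what couples the $\ad$-structure to the star $y^\perp$, and hence to the closure.

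First I would establish the equivalence ``$x$ is $\cK$-minimal if and only if $[x]=\ad(x)$''. Unravelling the definition, $x$ is $\cK$-minimal precisely when no $y\in X$ satisfies $\ad(y)<\ad(x)$. Since $y\in\ad(y)$ always (Lemma \ref{lem:admot} \ref{it:admot1}), any such witness $y$ automatically lies in $\ad(x)$; so the condition is equivalent to the absence of $y\in\ad(x)$ with $\ad(y)<\ad(x)$. Now Lemma \ref{lem:ad1} \ref{it:adcl2} writes $[x]$ as $\ad(x)$ with all such sets $\ad(y)$ deleted, and $[x]=\ad(x)$ holds exactly when there is nothing to delete. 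The only point requiring care is the backward direction: if a witness $y$ exists, then $y\in\ad(y)$ lies in the deleted union, so $y\notin[x]$ even though $y\in\ad(x)$, forcing $[x]\neq\ad(x)$.

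For \ref{it:Kmin2}, assume $x$ is $\cK$-minimal, so $[x]=\ad(x)$ by the first part. To prove $\cL$-minimality it suffices to show $\cl(y)=\cl(x)$ for every $y\in\cl(x)$ (monotonicity and idempotence of $\cl$ make $\cl(y)\subseteq\cl(x)$ equivalent to $y\in\cl(x)$). Since $\cl(x)=\ad(x)\cap x^\perp\subseteq\ad(x)=[x]$ by Lemma \ref{lem:ad0} \ref{it:ad2}, any such $y$ lies in $[x]=[x]_\perp\cup[x]_\lk$ and in $x^\perp$. Applying the isolated combinatorial fact rules out $y\sim_\lk x$ when $y\neq x$; hence $y\sim_\perp x$, so $y^\perp=x^\perp$ and $\cl(y)=y^{\perp\perp}=x^{\perp\perp}=\cl(x)$.

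For \ref{it:Kmin3}, fix $y\in\ad(x)$. As $x$ is $\cK$-minimal, $\ad(y)=\ad(x)$, so $y$ is itself $\cK$-minimal and $[y]=\ad(y)$ by the first part. I then split via Lemma \ref{lem:ad2}. If $\ad(y)=\cl(y)$ then $[y]=[y]_\perp$ and $\cl(y)=\ad(y)=[y]=[y]_\perp$. If $\ad(y)>\cl(y)$ then $[y]=[y]_\lk$; the combinatorial fact shows $y$ is the only vertex of $[y]_\lk$ inside $y^\perp$, and since $\cl(y)=\ad(y)\cap y^\perp\subseteq[y]_\lk\cap y^\perp$ by Lemma \ref{lem:ad0} \ref{it:ad2}, this gives $\cl(y)=\{y\}$; finally $[y]_\perp\subseteq[y]=[y]_\lk$ meets $[y]_\lk$ only in $y$, so $[y]_\perp=\{y\}=\cl(y)$. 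The main obstacle throughout is precisely the correct, repeated deployment of this $\sim_\lk$-versus-$y^\perp$ fact together with the bookkeeping of when $[x]$ collapses to $[x]_\perp$ or to $[x]_\lk$; once that is secure, each part follows by combining the first equivalence with Lemmas \ref{lem:ad1}, \ref{lem:ad2} and Lemma \ref{lem:ad0} \ref{it:ad2}.
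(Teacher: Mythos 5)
Your proof is correct and rests on the same ingredients as the paper's: the first equivalence via Lemma \ref{lem:ad1}\ref{it:adcl2}, and parts \ref{it:Kmin2} and \ref{it:Kmin3} via Lemma \ref{lem:ad0}\ref{it:ad2} and Lemma \ref{lem:ad2}, with the observation that distinct $\sim_\lk$-equivalent vertices are non-adjacent (which the paper uses implicitly in computing $[x]_\lk\cap x^\perp=\{x\}$). The only organizational differences — handling \ref{it:Kmin2} uniformly without the case split, and routing \ref{it:Kmin3} through the $\cK$-minimality of each $y\in\ad(x)$ — are cosmetic.
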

\begin{vnr}
\begin{proof}
The first statement follows immediately from the definitions
and 
 Lemma \ref{lem:ad1}. For the second suppose $x$ is $\cK$-minimal.
If $[x]=\ad(x)=\cl(x)=[x]_\perp$ then $\cl(y)=\cl(x)$, for
 all $y\in \cl(x)$, so $x$ is $\cL$-minimal and \ref{it:Kmin2} and 
\ref{it:Kmin3} hold. If 
$[x]=\ad(x)>\cl(x)$ then $[x]_\lk=\ad(x)$,  so
$\cl(x)=[x]_\lk\cap x^\perp=\{x\}$, from Lemma \ref{lem:ad0}\ref{it:ad2},
 so again $x$ is $\cL$-minimal. 
To see \ref{it:Kmin3} in this case 
note that $y\in \ad(x)$ implies $y\in [x]_\lk=[y]_\lk$ and 
$\cl(y)=y^\perp\cap \ad(y)=y^\perp \cap \ad(x)=y^\perp\cap [y]_\lk=\{y\}=
[y]_\perp$.
\begin{comment}
suppose $x$ is $\cK$-minimal, so from \ref{it:Kmin2}
every $y\in \ad(x)$ is $\cL$-minimal. If $\ad(x)$ has only one element there
is nothing further to prove. Suppose $\ad(x)$ has more than one element. If,
in this case, $[x]=[x]_\perp$ then $[x]_\lk=\{x\}\neq \ad(x)=[x]_\perp$. Then
$\{x\}$ must be $\cK$-minimal and the elements of $\ad(x)$ not equal to $x$
are therefore not $\cK$-minimal, a contradiction. Hence $[x]=[x]_\lk=\ad(x)$ and 
$[y]_\perp=\{y\}=\cl(y)$, for all $y\in \ad(x)$.
\end{comment}
\end{proof}
\end{vnr}
As in Example \ref{ex:ord} below,  there may be
elements which are $\cL$-minimal but are not $\cK$-minimal.

We  now define a total order $\prec$ on $X$, which will have the properties that
\be
    \item\label{it:prec1} if $x<_\cK y$ then $y\prec x$ and
    \item\label{it:prec2} if $z\prec y\prec x$ and $z\in [x]$ then 
$y\in [x]$.
\ee
\begin{vnr}
Define $\cK_X$ to be the subset of $\cK$ consisting of sets
$\ad(x)$, for $x\in X$. 
To begin with let 
\[
B_0=\{Y\in \cK_X:Y=\ad(x), \textrm{ where } x \textrm{ is } \cK\textrm{-minimal}\}.
\]
Suppose that $B_0$ has $k$ elements and choose an ordering
$Y_1<\cdots <Y_k$ of these elements.
If $i\neq j$ then 
it follows from Lemma \ref{lem:admin}
that $Y_i\cap Y_j=\emptyset$. Therefore we may
define the ordering $\prec$
 on $\cup_{i=1}^k Y_i$ in such a way that if $x_i\in Y_i$ and $x_j\in Y_j$
and $Y_i<Y_j$ then $x_j\prec x_i$: merely by choosing an ordering for
elements of each $Y_i$. (For instance, in Example \ref{ex:ord} we can
choose $f\prec  e\prec d\prec c$.)

We recursively define sets $B_i$ of elements of $\cK_X$, for $i\ge 0$,
as follows.
Assume that we have defined sets
$B_0,\ldots ,B_i$, set $U_i=\cup_{j=0}^i B_j$ and
define $X_i=\{u\in X: u\in Y, \textrm{ for some } Y\in U_i\}.$
If $U_i\neq \cK_X$
define $B_{i+1}$ by
\[B_{i+1}=\{Y=\ad(x)\in \cK_X:
 Y\notin U_i, \textrm{ and } y<_\cK x \textrm{ implies that } \ad(y)\in U_i
\}.\]
If $U_i\neq \cK_X$ then $X_i\neq X$ and $B_{i+1}\neq \nul$.
We assume inductively that we have ordered the set $X_i$ in such a way
that if 
\be[(i)]
\item 
$0\le a<b\le i$,
\item  
 $x_a\in Y_a$ where $Y_a\in B_a$ and
\item 
 $x_b\in Y_b$ where $Y_b\in B_b$;
\ee 
then 
$x_b\prec x_a$. 
If $Y=\ad(x)\in B_{i+1}$
then
\begin{align*}
[x]
&=Y\bs \{u\in \ad(y):y<_\cK x\}\\
&=Y\bs \{u\in X_i\}.
\end{align*}
Therefore we have defined $\prec$ on the set $Y\bs [x]$.
Moreover, if $Y_1\neq Y_2$ and $Y_1,Y_2\in B_{i+1}$ then
$Y_1\cap Y_2\in \cK$ so $z\in Y_1\cap Y_2$ implies $\ad(z)\subseteq Y_1\cap Y_2$.
As $Y_1\neq Y_2$ this implies that $\ad(z)$ is strictly contained in
$Y_i$, $i=1,2$. If $Y_i=\ad(x_i)$ then $z<_\cK x_i$ and so $z\notin [x_i]$,
$i=1,2$. That is, $[x_1]\cap [x_2]=\emptyset$.
Now choose an ordering on the set of elements of $B_{i+1}$:
$Z_1<\cdots <Z_k$ say, where $Z_j=\ad(x_j)$.
Then $Z_j\bs [x_j]\subseteq X_i, j=1,\ldots ,k$.
We can extend the  total
order $\prec$ on $X_i$ to
\[
X_{i+1}=X_i\cup \cup_{j=1}^k Z_{j} =X_i\cup\cup_{j=1}^k[x_j]
\]
as follows.
Assume the order has already been extended to
$X_i\cup_{j=1}^{s-1} [x_{j}]$.
Extend the order further by choosing the ordering $\prec$ on
the elements of $[x_s]$ and then defining the  greatest element
 of $[x_s]$ to be less
than the least element of $X_i\cup_{j=1}^{s-1}
[x_j]$ (as in the last step of Example \ref{ex:ord}).
At the final stage $s=k$ and the order on $X_i$ is extended to $X_{i+1}$.
We continue until $U_i=\cK_X$, at
which point $X=X_i$ and we have the required total order on $X$.
Note that, by construction,
if $x,y\in X$ and $x<_\cK y$ then
$y\prec x$. Also, if
$x\prec y\prec z$ and $[z]=[x]$ then $[y] =[x]$.
\end{vnr}
Thus \ref{it:prec1} and \ref{it:prec2} above hold.
If $\ad(x)$ belongs to $B_i$ we shall say that $x$, $\ad(x)$ and
$[x]$ have {\em height} $i$ and write $h(x)=h(\ad(x))=h([x])
=i$.
\begin{exam}\label{ex:ord}
Let $\G$ be the graph of Figure \ref{fig:ord}. Then 
\begin{align*}
\ad(a)  =\ad(b)&=\{a,b,e,f\},\\
\ad(c)=\ad(d)&=\{c,d\}, \\
\ad(e)=\ad(f)&=\{e,f\}\textrm{ and }\\
\ad(g)&=\{c,d,g\}.
\end{align*}
In the partial order $<_\cK$ we have 
\begin{itemize}
\item 
$x<_\cK a$ and $x<_\cK b$, if $x\in \{e,f\}$, and
\item
$c<_\cK g$ and $d<_\cK g$.
\end{itemize}
Elements $c$, $d$, $e$ and $f$ are $\cK$-minimal while $a$, $b$ and $g$ 
are $\cK$-maximal. (For all $x\in X$, we have $\cl(x)=\{x\}$, so 
$a$, $b$ and $g$ are $\cL$-minimal but not $\cK$-minimal.) 

We have $B_0=\{\ad(c), \ad(e)\}$ and we must choose an order on this
set: say $\ad(c)<\ad(e)$. Next choose orders on $\ad(c)$ and $\ad(e)$:
 say $d\prec c$ and $f\prec e$. The construction now gives the order
\[
f\prec e\prec d\prec c
\]
on $\ad(c)\cup \ad(e)$. 

Now $U_0=B_0$ and $X_0=\ad(c)\cup \ad(e)$; so $B_1=\{\ad(a),\ad(g)\}$. 
 We must
choose an ordering on $B_1$: say $\ad(a)<\ad(g)$. 
\begin{align*}
\ad(a) &= \{a,b\} \cup \{e,f\}\textrm{ and }\\
\ad(g) &= \{g\}\cup \{e,f\},
\end{align*} 
where $\{a,b\}=[a]$, $\{g\}=[g]$ and $\{e,f\}\subseteq X_0$. We must
choose an ordering on $[a]$: say $b\prec a$ and then extend the order
on $X_0$ to $X_0\cup [a]$ by 
\[
b\prec a\prec f\prec e\prec d\prec c.
\] 
Finally extend this order to $[g]$ to obtain 
\[
g\prec b\prec a\prec f\prec e\prec d\prec c.
\]
\end{exam}
\begin{figure}
\begin{center}
\psfrag{a}{$a$}
\psfrag{b}{$b$}
\psfrag{c}{$c$}
\psfrag{d}{$d$}
\psfrag{e}{$e$}
\psfrag{f}{$f$}
\psfrag{g}{$g$}
\includegraphics[scale=0.4]{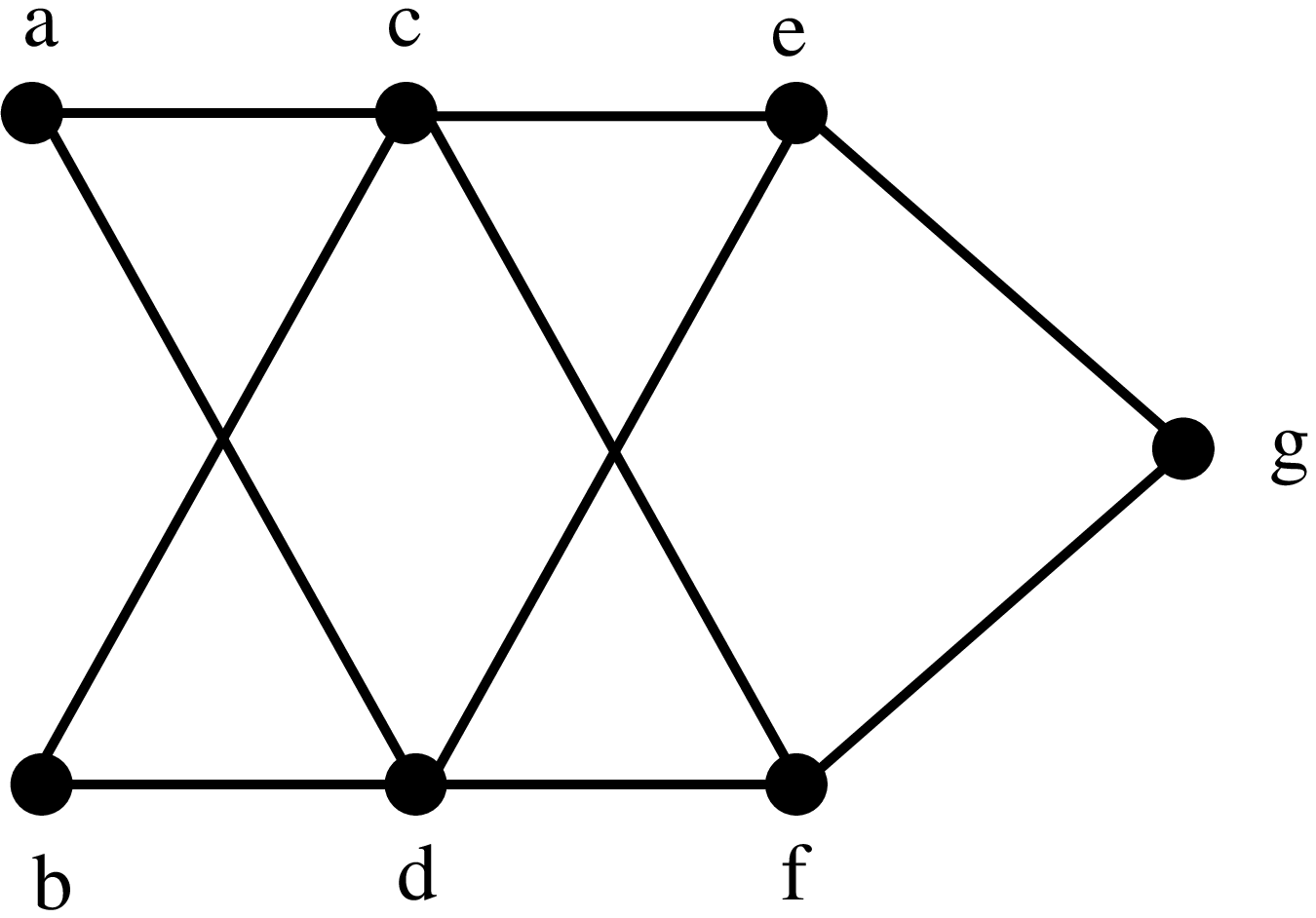}
\caption{}\label{fig:ord}
\end{center}
\end{figure}

\section{Generators for $\Aut(G)$ and Decomposition over Graph Automorphisms}
\label{sec:generators} 
Recall the convention of Section \ref{sec:preliminaries}:  $G$ denotes 
a partially commutative
group with commutation graph $\G$, and $X=V(\G)$. \\[1em]
\textbf{Notation.} We shall often abuse notation when discussing
elements of $X^{-1}$ by referring to $x^{-1}$ as a ``vertex'' of $\G$, when
we really mean that $x$ is a vertex. In particular, for $z=x^{-1}$
we refer to $z^\perp$ and  $\ad(z)$ when we mean $x^\perp$ and $\ad(x)$, 
respectively.

\subsection{Graph Automorphisms}\label{subsec:graphaut}
For any graph $\W$ let $\Iso(\W)$ denote the group
of graph automorphisms of $\W$. If $\W$ is labelled then
by an automorphism of $\W$ we mean a graph automorphism which preserves labels. 
We shall use the equivalence $\sim$ of Section \ref{subsec:admiss} 
to define a quotient graph of $\G$. 
Let $u, v\in X$. 
In \cite{DKR2} it is shown  that there is an edge of $\G$ joining
$u$ to $v$ if only if there is an edge joining $a$ to $b$, for all
$a\in [u]$ and $b\in [v]$. 
Therefore
there is a well-defined  graph with vertex set 
consisting of the equivalence classes of $\sim$
and an edge joining vertices $[u]$ and $[v]$ if and only if 
there is an edge of $\G$ joining $u$ and $v$. The resulting graph
has no multiple edges but may have loops. 

\begin{defn}\label{defn:comp}
The {\em compression} of the graph $\G$ is the labelled 
graph $\G^{\cmp}$ with
vertices $X^\cmp=\{[v]:v\in X\}$ and an edge joining
$[u]$ to $[v]$ if and only if $u$ is joined to $v$ by an edge of
$\G$. 
Vertices of $\G^{\cmp}$ are labelled as follows.
Let  $u\in X$ and $|[u]|=d$. 
\be
\item 
If $d=1$ then $[u]$ is labelled $(1,1)$.
\item 
If $d>1$ and $[u]=[u]_\perp$ then $[u]$ is labelled $(\perp,d)$.
\item
If $d>1$ and $[u]=[u]_\lk$ then $[u]$ is labelled $(\lk,d)$.
\ee
\end{defn}

We shall express each   automorphism $\phi$ of $\G$ as a product
$\phi=\a\b$, where $\a$ corresponds to a certain automorphism of $\G^\cmp$ and 
$\b$ is an automorphism of $\G$ which maps $[u]$ to itself, for all 
$u\in X$. First we make some definitions. 
If $\W$ and $\W^\prime$ are graphs without multiple
edges and
$f$ is a map from $V(\W)$ to $V(\W^\prime)$ then we say that $f$ 
{\em induces
a graph  homomorphism} if, for all $u,v\in V(\W)$, 
 $uf$ is joined to $vf$ whenever $u$ is joined to $v$.
It is easy to see (for details see  \cite{DKR2}) that 
the map from $X$ to $X^\cmp$  sending $u$ to $[u]$ 
induces a homomorphism $\cmp:\G\maps \G^\cmp$. Every automorphism
$\phi\in \Iso(\G)$ induces a label preserving automorphism $\phi^\cmp \in 
\Iso(\G^\cmp)$: sending
$[u]$ to $[u\phi]=[u]\phi$. 
In fact the map $\pi_\cmp:\Iso(\G)\maps \Iso(\G^\cmp)$, given by
$\phi\pi_\cmp=\phi^\cmp$, is a homomorphism (see \cite{DKR2}). 

On the other hand we may fix a total order on the 
elements of $[u]$, for all $u\in X$. Then, if $\phi^\cmp$ is an automorphism
of $\G^\cmp$, the label of $[u]$ is identical to the label of 
$[u]\phi^\cmp$, for all $u\in X$. Hence 
there is a unique order preserving bijection from $[u]$ to
$[u]\phi^\cmp$. The union of these bijections 
is  an automorphism  $\phi$ of $\G$; and we may define $\i_\cmp$ to 
be the map from $\Iso(\G^\cmp)$ to $\Iso(\G)$ given by $\phi^\cmp \i_\cmp
=\phi$. Then $\i_\cmp$ is a homomorphism and $\i_\cmp\pi_\cmp$ is the 
identity map of $\G^\cmp$. 
\begin{defn}\label{defn:Gcomp}
Define the \emph{compressed automorphism group} $\Iso_\cmp(\G)$ of $\G$ to be the 
subgroup $\Iso(\G^\cmp)\i_\cmp$ of $\Iso(\G)$. 

For $v\in X$ let $S_{[v]}$ denote the group of permutations
of $[v]$; so $S_{[v]}$ is a subgroup of $\Iso(\G)$ 
isomorphic to the symmetric group of degree $|[v]|$.
\end{defn}  
The definition of  $\Iso_\cmp(\G)$ depends on the choice of ordering of $[u]$, which
we regard as fixed for the remainder of the paper.  We then have the 
 following lemma.
\begin{lemma}[cf. {\cite[Proposition 2.52]{DKR2}}]\label{lemma:graphdecomp}
~
$\Iso(\G)=\left(\prod_{[v]\in X^\cmp} S_{[v]}
\right)
\rtimes  \Iso_\cmp(\G)$.
\end{lemma}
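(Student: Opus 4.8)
The plan is to show that the two subgroups $N=\prod_{[v]\in X^\cmp} S_{[v]}$ and $H=\Iso_\cmp(\G)$ together witness an internal semidirect product decomposition of $\Iso(\G)$. For this I must verify four things: that $N$ is a subgroup, that $N$ is normal in $\Iso(\G)$, that $N\cap H=\{1\}$, and that $NH=\Iso(\G)$. The homomorphism $\pi_\cmp:\Iso(\G)\maps\Iso(\G^\cmp)$ and its section $\i_\cmp$ (with $\i_\cmp\pi_\cmp=\id$) are already in hand from the discussion preceding Definition \ref{defn:Gcomp}, and these will do most of the work.

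First I would identify $N$ as $\ker\pi_\cmp$. An automorphism $\phi\in\Iso(\G)$ lies in $N=\prod S_{[v]}$ precisely when it permutes the elements within each class $[v]$ and fixes each class setwise; this is exactly the condition that $\phi^\cmp=\phi\pi_\cmp$ fixes every vertex $[v]$ of $\G^\cmp$, i.e. $\phi\in\ker\pi_\cmp$. The inclusion $N\subseteq\ker\pi_\cmp$ is immediate from the definition of $\pi_\cmp$; for the reverse, if $\phi^\cmp$ is the identity then $\phi$ maps each $[v]$ to itself, so its restriction to each $[v]$ is a permutation of $[v]$ and hence $\phi\in\prod S_{[v]}$. (One should check that an arbitrary product of permutations of the classes genuinely is a graph automorphism: this follows from the result quoted from \cite{DKR2} that $u$ is joined to $v$ iff every $a\in[u]$ is joined to every $b\in[v]$, so permuting within classes preserves all edges.) Once $N=\ker\pi_\cmp$ is established, normality of $N$ is automatic, and $N$ is visibly a subgroup isomorphic to the stated direct product since distinct classes are disjoint and their pointwise permutations commute.

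Next I would use the section to finish. Because $H=\Iso(\G^\cmp)\i_\cmp=\im\i_\cmp$ and $\i_\cmp$ is a homomorphism with $\i_\cmp\pi_\cmp=\id$, the restriction $\pi_\cmp|_H:H\maps\Iso(\G^\cmp)$ is an isomorphism. Consequently $H\cap\ker\pi_\cmp=\{1\}$, giving $N\cap H=\{1\}$. For surjectivity of the product map, given any $\phi\in\Iso(\G)$ set $\psi=(\phi\pi_\cmp)\i_\cmp\in H$; then $\psi\pi_\cmp=\phi\pi_\cmp$, so $\phi\psi^{-1}\in\ker\pi_\cmp=N$, whence $\phi=(\phi\psi^{-1})\psi\in NH$. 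Thus $\Iso(\G)=N\rtimes H$ as claimed.

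The routine verifications (that within-class permutations preserve edges and labels, and that the various maps are homomorphisms) are the only place where genuine care is needed, but these are either immediate from the definitions or already supplied by \cite{DKR2}. I do not expect a serious obstacle here: the decomposition is the standard split-extension statement attached to a retraction $\pi_\cmp$ with section $\i_\cmp$, and the main content is simply the identification $N=\ker\pi_\cmp$ together with the label-preservation property that makes $\i_\cmp$ well defined. The one point worth stating explicitly is why the product $\prod_{[v]} S_{[v]}$ lands inside $\Iso(\G)$ at all, and that is exactly the quoted characterisation of edges in terms of equivalence classes.
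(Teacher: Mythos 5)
Your argument is correct. The paper itself gives no proof of this lemma, citing \cite[Proposition 2.52]{DKR2} instead, but the decomposition is exactly the split-extension statement attached to the retraction $\pi_\cmp$ with section $\i_\cmp$ that the preceding discussion sets up, and your identification of $\prod_{[v]}S_{[v]}$ with $\ker\pi_\cmp$ together with the standard $\phi=(\phi\psi^{-1})\psi$ factorisation is the intended route. The only detail worth adding to your parenthetical check is the intra-class case: the quoted edge criterion handles edges between distinct classes, while for edges inside a single class one observes that $\G([u])$ is complete when $[u]=[u]_\perp$ and discrete when $[u]=[u]_\lk$, so any permutation of $[u]$ preserves it.
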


Now we focus attention on automorphisms which
interchange connected components of $\G$. 
 First of all we fix 
notation for these connected components.  
In the following definition we adopt the convention that, if $m$ is a 
non-negative integer and $\Omega$ is a graph then $\Omega^m$ denotes
the disjoint union of $m$ copies of $\Omega$ (and is empty if $m=0$). 
\begin{defn}\label{defn:comps}
Let $\Omega_0$ denote the graph  consisting of a single vertex and no
edges. Suppose that there exist 
pairwise non-isomorphic graphs $\Omega_1, \ldots ,\Omega_d$, such that
every connected component of $\G$ with at least two vertices is isomorphic
to $\Omega_i$, for some $i\ge 1$, and that $d$ is minimal with this property.
Then 
\begin{equation}\label{eq:isomtype}
\G\cong \Omega_0^{m_0}\cup \Omega_1^{m_1}\cup \cdots \cup \Omega_d^{m_d},
\end{equation}
for some $m_i\in \ZZ$, with $m_0\ge 0$ and $m_i\ge 1$, for $i\ge 1$. 
In this case we say that 
the right hand side of \eqref{eq:isomtype} is the {\em isomorphism type}
of $\G$. 
\end{defn}

Suppose that $\G$ has isomorphism type 
$\Omega_0^{m_0}\cup \Omega_1^{m_1}\cup \cdots \cup \Omega_d^{m_d}$. Identify each
connected component of $\G$ with a 
particular copy of $\Omega_j$ (to which it is isomorphic) in the disjoint 
union $\Omega_j^{m_j}$. 
 To be explicit, let 
\[\G=\displaystyle{\cup_{j=0}^d\cup_{k=1}^{m_j}\G_{j,k}},\]
where $\G_{j,k}\cong \Omega_j$, for $k=1,\ldots, m_j$. 
Fix  an isomorphism from $\G_{j,k}$ to $\Omega_j$, for all $j$ and $k$. 
For $0\le j\le d$ and $1\le a<b\le m_j$ there is an 
  isomorphism of 
 $\Omega_0^{m_0}\cup \Omega_1^{m_1}\cup \cdots \cup \Omega_d^{m_d}$ interchanging
the $a$th and $b$th copy of $\Omega_j$ and fixing all other 
connected components pointwise. This induces, via the fixed isomorphisms
 of $\G_{j,k}$ and $\Omega_j$, 
an isomorphism $\w^j_{a,b}$ of $\G$, 
which interchanges $\G_{j,a}$ and $\G_{j,b}$ and leaves
all other components fixed. The subgroup of $\Iso(\G)$ generated by $\{\w^j_{a,b}:
1\le a<b\le m_j\}$ is then isomorphic to the symmetric group $S_{m_j}$ of
degree $m_j$.
\begin{defn}\label{defn:compperm}
Denote by $\Iso_{\sym}(\G_{j,*})$ the  subgroup of $\Iso(\G)$ generated by $\{\w^j_{a,b}:
1\le a<b\le m_j\}$. Denote by $\Iso_\cmp(\G_{j,k})$  the subgroup of 
$\Iso_\cmp(\G)$ consisting of 
 compressed automorphisms $\phi$ such that $\phi|_{\G_{j,k}}$
is an automorphism  of $\G_{j,k}$ and $x\phi =x$, for all 
$x\in X\bs X_{j,k}$.
\end{defn}
  Thus  $\Iso_\cmp(\G_{j,k})\cong  \Iso(\Omega_j^\cmp)$. 
Note that $\G^\cmp$ has isomorphism type 
$\Omega_0^{n_0}\cup \cup_{i=1}^d(\Omega_i^\cmp)^{m_i}$, 
where $n_0=0$, if $m_0=0$, and $n_0=1$, if $m_0>0$; so 
we obtain 
 the following decomposition. 
\begin{lemma}\label{lemma:permauts}
Let $\G$ have isomorphism type given by \eqref{eq:isomtype}. Then 
\[\Iso_\cmp(\G)=\displaystyle{\prod_{j=1}^d \left(\prod_{k=1}^{m_j}
\Iso_\cmp(\G_{j,k})\rtimes \Iso_{\sym}(\G_{j,*})\right)},\]
with $\Iso_{\sym}(\G_{j,*})$  isomorphic to the symmetric group of degree $m_j$
and $\Iso_\cmp(\G_{j,k})$ isomorphic to $\Iso(\Omega_j^\cmp)$. 
\end{lemma}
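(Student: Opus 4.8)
The plan is to transport the computation to the compressed graph $\G^\cmp$ and there apply the classical description of the automorphism group of a disjoint union of connected graphs. Since $\i_\cmp\pi_\cmp$ is the identity on $\G^\cmp$, the map $\i_\cmp$ is an injective homomorphism and, by definition, $\Iso_\cmp(\G)=\Iso(\G^\cmp)\i_\cmp\cong\Iso(\G^\cmp)$. It therefore suffices to decompose $\Iso(\G^\cmp)$ and then to check that $\i_\cmp$ carries the resulting factors onto the subgroups $\Iso_\cmp(\G_{j,k})$ and $\Iso_\sym(\G_{j,*})$ of $\Iso(\G)$.

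First I would determine the connected components of $\G^\cmp$. Two vertices $u,v$ lying in distinct connected components of $\G$ can be identified by $\sim$ only when both are isolated: if $u$ has a neighbour $w$ then $w\in u^\perp\bs u$ while $w\notin v^\perp$, so neither $u\sim_\perp v$ nor $u\sim_\lk v$ can hold. Hence $\sim$ fuses the $m_0$ isolated vertices into a single class and otherwise acts within components, so the components of $\G^\cmp$ are one isolated vertex (present precisely when $m_0>0$) together with $m_j$ copies of the compression $\Omega_j^\cmp$ of each non-trivial type, recovering the isomorphism type $\Omega_0^{n_0}\cup\bigcup_{i=1}^d(\Omega_i^\cmp)^{m_i}$ recorded before the lemma.

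The key structural input, and the step I expect to be the main obstacle, is to show that $\Omega_1^\cmp,\dots,\Omega_d^\cmp$ are pairwise non-isomorphic \emph{as labelled graphs} and that the isolated vertex is isomorphic to none of them. The point is that the labels record the size and type of each collapsed class, so a label-preserving isomorphism of compressions lifts to an isomorphism of the decompressed graphs: using the edge condition of \cite{DKR2}, a $(\perp,d)$-vertex decompresses uniquely up to isomorphism to a $d$-clique with common external neighbourhood and a $(\lk,d)$-vertex to an independent $d$-set with common external neighbourhood. Thus $\Omega_i^\cmp\cong\Omega_j^\cmp$ as labelled graphs would force $\Omega_i\cong\Omega_j$ and hence $i=j$, while the isolated vertex is a single-vertex component and so cannot match any $\Omega_j^\cmp$, which has at least two vertices.

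With the components identified and their pairwise non-isomorphism in hand, every $\phi\in\Iso(\G^\cmp)$ permutes connected components and, being label-preserving, can only permute components of a common isomorphism type; the standard disjoint-union description then gives $\Iso(\G^\cmp)=\prod_{j=1}^d\bigl(\Iso(\Omega_j^\cmp)\wr S_{m_j}\bigr)$, the isolated component contributing only the identity. Finally I would match the factors under $\i_\cmp$: the copy of $\Iso(\Omega_j^\cmp)$ acting on the $k$th component is carried to $\Iso_\cmp(\G_{j,k})$ (isomorphic to $\Iso(\Omega_j^\cmp)$ by the remark following Definition \ref{defn:compperm}), and the factor $S_{m_j}$ is carried to $\Iso_\sym(\G_{j,*})=\la\w^j_{a,b}:1\le a<b\le m_j\ra$, after checking that these component swaps are themselves compressed automorphisms. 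The semidirect product within each $j$ follows since $\Iso_\sym(\G_{j,*})$ permutes $\G_{j,1},\dots,\G_{j,m_j}$ and so conjugates $\Iso_\cmp(\G_{j,a})$ onto $\Iso_\cmp(\G_{j,b})$, normalising $\prod_k\Iso_\cmp(\G_{j,k})$ and meeting it trivially because its non-identity elements move components; the product over $j$ is direct because distinct $j$ act on disjoint sets of components. This yields the asserted decomposition.
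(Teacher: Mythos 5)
Your argument is correct and follows the same route the paper takes: the paper gives no formal proof of this lemma, deriving it directly from the observation that $\G^\cmp$ has isomorphism type $\Omega_0^{n_0}\cup\bigcup_{i=1}^d(\Omega_i^\cmp)^{m_i}$ together with the standard wreath-product decomposition of the automorphism group of a disjoint union, which is exactly what you do. Your additional checks (that $\sim$ only fuses isolated vertices across components, and that the labelled compressions $\Omega_i^\cmp$ are pairwise non-isomorphic because the labels allow unique decompression) are precisely the details the paper leaves implicit, and they are sound.
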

Each of the above subgroups of $\Aut(\G)$ is naturally isomorphic to
 a subgroup
of the automorphism group of $G$; which we shall now name. 
\begin{defn}\label{defn:graphaut}
Let $\Aut(G)$ be the automorphism group of the partially 
commutative group $G$ with commutation graph $\G$. 
An element $\phi\in \Aut(G)$ is  
\begin{itemize}
\item 
a {\em graph automorphism} if the 
restriction  $\phi{|_X}$ of $\phi$ to $X$ is an element of $\Iso(\G)$; and 
\item
a \emph{compressed graph automorphism} if  $\phi{|_X}$ is 
an element of $\Iso_\cmp(\G)$.
\item 
Denote by $\Aut^\G(G)$ and $\Aut^\G_\cmp(G)$  the subgroups of $\Aut(G)$ consisting of graph
automorphisms and compressed graph automorphisms, respectively. 
\item
For $v\in X$,
 denote by $S_{[v]}(G)$ the subgroup of $\Aut^\G(G)$ consisting of elements 
$\phi$ such that $\phi|_X\in S_{[v]}$. 
\item
Denote by $\Aut^\G_{\sym}(G_{j,*})$ the subgroup of automorphisms  $\phi$ of $\Aut(G)$ such
that $\phi|_X$ is an element of $\Iso_{\sym}(\G_{j,*})$; and   
\item
by 
$\Aut^\G_\cmp(G_{j,k})$ the subgroup of automorphisms $\phi$ such that $\phi|_X$ is an element
of $\Iso_\cmp(\G_{j,k})$. 
\end{itemize}
\end{defn}
\begin{rem}
The subgroup $\prod_{j=0}^{d} \Aut^\G_{\sym}(G_{j,*})$  of $\Aut(G)$ 
 is denoted $\Pi$ and called the 
 group of permutation automorphisms in \cite{Gilbert87}. 
\end{rem}

The following
proposition follows from Lemmas \ref{lemma:graphdecomp} and 
\ref{lemma:permauts}.
\begin{prop}\label{prop:graphaut}
Let $\G$ have isomorphism type given by \eqref{eq:isomtype}. Then 
\be[(i)] 
\item
$\Aut^\G(G)= (\prod_{[v]\in X^\cmp} S_{[v]}(G)
)\rtimes  \Aut^\G_\cmp(G)$, with $S_{[v]}(G)$ isomorphic to the symmetric group
of degree $|[v]|$,  and 
\item\label{it:graphaut2}
\[
\Aut^\G_\cmp(G)= \prod_{j=1}^d \left(\prod_{k=1}^{m_i}
\Aut^\G_\cmp(G_{j,k})
\rtimes \Aut^\G_{\sym}(G_{j,*})\right).\]
Moreover $\Aut^\G_{\sym}(G_{j,*})$  is isomorphic to the 
symmetric group of degree $m_j$
and $\Aut^\G_\cmp(G_{j,k})$ is isomorphic to $\Aut^{\Omega_j}_\cmp(G(\Omega_j))$.
\ee
\end{prop}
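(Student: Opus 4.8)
The plan is to transport the two purely graph-theoretic decompositions, Lemma \ref{lemma:graphdecomp} and Lemma \ref{lemma:permauts}, across the natural isomorphism between a subgroup of $\Iso(\G)$ and the corresponding subgroup of $\Aut^\G(G)$. The key observation is that restriction to $X$ gives a map $\rho\colon \Aut^\G(G)\maps \Iso(\G)$, $\phi\mapsto \phi|_X$, and that this map is an isomorphism: it is a homomorphism because $(\phi\psi)|_X=(\phi|_X)(\psi|_X)$, it is injective since $X$ generates $G$ (so an automorphism is determined by its action on $X$), and it is surjective because any $\sigma\in\Iso(\G)$ respects the defining relations $R$ (it preserves edges, hence commutators $[x,y]$ with $[x,y]\in R$) and therefore extends to an automorphism of $G$. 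The same argument shows $\rho$ restricts to isomorphisms $S_{[v]}(G)\cong S_{[v]}$, $\Aut^\G_\cmp(G)\cong\Iso_\cmp(\G)$, $\Aut^\G_{\sym}(G_{j,*})\cong\Iso_{\sym}(\G_{j,*})$, and $\Aut^\G_\cmp(G_{j,k})\cong\Iso_\cmp(\G_{j,k})$, since each of these is by Definition \ref{defn:graphaut} the preimage under $\rho$ of the corresponding subgroup of $\Iso(\G)$.

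First I would verify in a single lemma (or inline remark) that $\rho$ is the desired isomorphism and that it identifies each named subgroup of $\Aut^\G(G)$ with the like-named subgroup of $\Iso(\G)$. Then part (i) follows immediately: applying $\rho^{-1}$ to the equation $\Iso(\G)=\bigl(\prod_{[v]\in X^\cmp}S_{[v]}\bigr)\rtimes\Iso_\cmp(\G)$ of Lemma \ref{lemma:graphdecomp} yields $\Aut^\G(G)=\bigl(\prod_{[v]\in X^\cmp}S_{[v]}(G)\bigr)\rtimes\Aut^\G_\cmp(G)$, because an isomorphism carries internal semidirect products to internal semidirect products (it preserves the normal subgroup, the complement, their intersection being trivial, and their product being the whole group). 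The statement that $S_{[v]}(G)\cong S_{|[v]|}$ is inherited from the corresponding claim in Lemma \ref{lemma:graphdecomp}. Part (ii) is obtained the same way from Lemma \ref{lemma:permauts}, with the final isomorphism $\Aut^\G_\cmp(G_{j,k})\cong\Aut^{\Omega_j}_\cmp(G(\Omega_j))$ coming from the remark following Definition \ref{defn:compperm} that $\Iso_\cmp(\G_{j,k})\cong\Iso(\Omega_j^\cmp)$, composed with the instance of $\rho^{-1}$ for the graph $\Omega_j$.

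The only genuine content is checking that $\rho$ respects the semidirect-product structure, and here the main point to be careful about is that Lemmas \ref{lemma:graphdecomp} and \ref{lemma:permauts} assert \emph{internal} semidirect decompositions of $\Iso(\G)$; so I must confirm that $\rho^{-1}$ sends the distinguished normal factor to a normal subgroup of $\Aut^\G(G)$ and the complement to a genuine complement. This is formal once $\rho$ is known to be an isomorphism, since both normality and the complement conditions (trivial intersection, product equals the ambient group) are preserved under any group isomorphism. I expect the main obstacle, such as it is, to be purely expository rather than mathematical: namely stating the identification $\rho$ cleanly enough that the transport of all five subgroups and both semidirect products is transparent, rather than repeating the same verification five times. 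Everything else is a direct translation of the two graph-theoretic lemmas into the language of $\Aut(G)$.
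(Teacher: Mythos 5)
Your proposal is correct and is essentially the paper's own argument: the paper simply states that the proposition ``follows from Lemmas \ref{lemma:graphdecomp} and \ref{lemma:permauts}'', the implicit mechanism being exactly the restriction isomorphism $\phi\mapsto\phi|_X$ identifying each named subgroup of $\Aut^\G(G)$ with its counterpart in $\Iso(\G)$ and transporting the internal semidirect decompositions. You have merely made explicit the routine verifications the paper leaves to the reader.
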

 In the sequel we shall the particular generators for $\Aut^\G_\cmp(G)$ 
of the next definition.  
\begin{defn}\label{defn:graphautgens}
Define the following sets of graph automorphisms. 
\be[(a)]
\item For $1\le j\le d$,  let $\cP^\G_{\cmp,j}
$ be a generating set for 
$\Aut^\G_\cmp(G_{j,1})$.
\item For $0\le j\le d$ and $1\le a<b\le m_j$, 
the group automorphism induced by the graph automorphism $\w^j_{a,b}$ (defined
above) is also
called $\w^j_{a,b}$ and we define 
\[\cP^\G_{\sym,j}
=\{\w^j_{a,b}|1\le a<b\le m_j\}\subseteq \Aut^\G(G).\] 
\item $\cP^\G_\cmp(G)=\cP^\G_{\sym,0}
\cup\cup_{j=1}^d(\cP^\G_{\cmp,j}
\cup \cP^\G_{\sym,j}
)$.
\ee
\end{defn}
When the meaning is clear we write $\cP^\G_\cmp$ instead of 
$\cP^\G_\cmp(G)$. 
\begin{lemma}\label{lem:autgen}
\be[(i)]
\item 
$\Aut^\G_{\sym}(G_{j,*})$ is generated by $\cP^\G_{\sym,j}
$.
\item\label{it:autgen2}
$\Aut^\G_\cmp(G)$ is generated by $\cP^\G_\cmp(G)$. 
\ee
\end{lemma}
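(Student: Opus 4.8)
The plan is to prove Lemma~\ref{lem:autgen} by unwinding the decomposition already established in Proposition~\ref{prop:graphaut}\ref{it:graphaut2} and matching each factor against the corresponding piece of the generating set $\cP^\G_\cmp(G)$ from Definition~\ref{defn:graphautgens}. Part (i) is essentially immediate: by definition $\Aut^\G_{\sym}(G_{j,*})$ is the subgroup of $\Aut(G)$ whose elements restrict on $X$ to $\Iso_{\sym}(\G_{j,*})$, and the latter was defined in Definition~\ref{defn:compperm} as the subgroup of $\Iso(\G)$ generated by $\{\w^j_{a,b}:1\le a<b\le m_j\}$. Since the group automorphisms $\w^j_{a,b}$ in $\cP^\G_{\sym,j}$ are exactly the lifts of these graph automorphisms, and the isomorphism between $\Aut^\G(G)$ and the relevant subgroup of $\Iso(\G)$ carries generators to generators, I would just remark that $\cP^\G_{\sym,j}$ generates $\Aut^\G_{\sym}(G_{j,*})$.

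For part~\ref{it:autgen2} the strategy is to feed the generating sets for the factors into the semidirect/direct product structure of Proposition~\ref{prop:graphaut}\ref{it:graphaut2}. First I would recall that
\[
\Aut^\G_\cmp(G)=\prod_{j=1}^d\left(\prod_{k=1}^{m_j}\Aut^\G_\cmp(G_{j,k})\rtimes \Aut^\G_{\sym}(G_{j,*})\right),
\]
so it suffices to show that the union of generating sets for all the factors $\Aut^\G_\cmp(G_{j,k})$ and $\Aut^\G_{\sym}(G_{j,*})$ generates the whole product. By part~(i), each $\Aut^\G_{\sym}(G_{j,*})$ is generated by $\cP^\G_{\sym,j}$. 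For the factors $\Aut^\G_\cmp(G_{j,k})$, I would observe that by Definition~\ref{defn:graphautgens}(a) the set $\cP^\G_{\cmp,j}$ generates $\Aut^\G_\cmp(G_{j,1})$, and that for each $k>1$ the group $\Aut^\G_\cmp(G_{j,k})$ is conjugate to $\Aut^\G_\cmp(G_{j,1})$ via the component-swapping automorphism $\w^j_{1,k}\in\cP^\G_{\sym,j}$. Hence a generating set for $\Aut^\G_\cmp(G_{j,k})$ is $(\w^j_{1,k})^{-1}\cP^\G_{\cmp,j}\,\w^j_{1,k}$, and all these conjugates lie in $\langle\cP^\G_\cmp(G)\rangle$. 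It remains to note that the $j=0$ part contributes only the permutation automorphisms $\cP^\G_{\sym,0}$ (since $\Omega_0$ is a single vertex, $\Aut^\G_\cmp(G_{0,k})$ is trivial), which is already included.

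The main obstacle, and the one point where a little care is genuinely needed, is the handling of the copies $\Aut^\G_\cmp(G_{j,k})$ for $k\ge 2$: the generating set $\cP^\G_\cmp(G)$ explicitly lists only $\cP^\G_{\cmp,j}$, which generates the single factor $\Aut^\G_\cmp(G_{j,1})$, and not generators for the other copies. The resolution is exactly the conjugation argument above — the semidirect product structure means that conjugation by the symmetric-group generators $\w^j_{a,b}$ permutes the factors $\Aut^\G_\cmp(G_{j,k})$ transitively, so generators for one copy together with the $\w^j_{a,b}$ suffice to generate all copies. I would make this explicit by checking that $(\w^j_{1,k})^{-1}\Aut^\G_\cmp(G_{j,1})\w^j_{1,k}=\Aut^\G_\cmp(G_{j,k})$, which follows since $\w^j_{1,k}$ swaps the components $\G_{j,1}$ and $\G_{j,k}$ and fixes the others. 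Granting this, the union $\cP^\G_{\sym,0}\cup\bigcup_{j=1}^d(\cP^\G_{\cmp,j}\cup\cP^\G_{\sym,j})=\cP^\G_\cmp(G)$ generates every factor of the product decomposition, hence generates $\Aut^\G_\cmp(G)$, completing the proof.
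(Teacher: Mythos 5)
Your proof is correct and follows exactly the route the paper intends: the paper's own proof is just the one-line remark that the lemma ``follows directly from the definitions and Proposition \ref{prop:graphaut}\ref{it:graphaut2}'', and your write-up simply makes explicit the details being elided there (in particular the conjugation $(\w^j_{1,k})^{-1}\Aut^\G_\cmp(G_{j,1})\w^j_{1,k}=\Aut^\G_\cmp(G_{j,k})$ needed to reach the factors with $k\ge 2$, which is implicit in the semidirect product structure already established). No gaps.
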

\begin{proof}
The lemma follows directly from the definitions and
Proposition \ref{prop:graphaut}\ref{it:graphaut2}.
\end{proof}
Clearly a presentation for $\Aut_\cmp^\G(G)$ may be constructed from the 
decomposition of Proposition \ref{prop:graphaut}, 
using the generators  $\cP^\G_\cmp(G)$, but we leave details to the appendix.
 
\subsection{Generators for $\Aut(G)$}
\label{section:gens}

\begin{defn}
Given $x\in X$, the automorphism of $G$ mapping $x$ to $x^{-1}$ and fixing 
all other generators is called an {\em inversion} and 
denoted $\i_x$. The set of all inversions is denoted $\Inv=\Inv(G)$. 
\end{defn}
\begin{defn}
For fixed $x,y\in X^{\pm 1}$ 
an automorphism sending $x$ to $xy$ and fixing all
elements of $X^{\pm 1}$ other than $x^{\pm 1}$ is denoted $\tr_{x,y}$ and called 
a {\em transvection}.   The set of all transvections $\tr_{x,y}$ such that
$x\in X^{\pm 1}$ and  $y\in X$ 
is denoted $\Tr=\Tr(G)$. 
\end{defn}

For distinct $x,y\in X$, there exists an element $\tr_{x^\e,y^\d}\in \Aut(G)$ if and only if $x^\perp\bs x
\subseteq y^\perp$.

\begin{defn}\label{defn:conj}
Let $\phi\in \Aut(G)$ be an automorphism and assume that, for 
all $x\in X$, there exists $g_x\in G$ such that $x\phi=x^{g_x}$. 
Then $\phi$   is called  a {\em basis-conjugating} 
automorphism.  
 The subgroup of $\Aut(G)$ consisting of   all basis-conjugating automorphisms 
is denoted $\Conj(G)$.
\end{defn}
The group of inner automorphisms $\Inn(G)$ is a normal subgroup of $\Conj(G)$.
\begin{defn}\label{defn:graphminus}
For $S\subseteq X$ define $\G_S$ to be $\G\bs S$, the graph obtained from $\G$ by removing all vertices of $S$ and all their incident edges. 
\end{defn}
\begin{defn}\label{defn:elia}
Let $x\in X$, let $C$ be the vertex set of 
a connected component of $\G_{x^\perp}$ and
let $\e=\pm 1$. 
The automorphism $\a_{C,x^\e}$ given by
\[
y\mapsto
\left\{
\begin{array}{ll}
y^{x^\e}, &\textrm{ if } y\in C\\
y, &\textrm{ otherwise}
\end{array}
\right.
\]
is called an {\em elementary conjugating automorphism} of $\G$.
 
The set of all  elementary conjugating automorphisms (over all connected components of $\G_{x^\perp}$ and all $x\in X$) is denoted $\LInn=\LInn(G)$.  
\end{defn}

\begin{theorem}[Laurence \cite{Laurence95}]\label{thm:laurence}
The group of basis-conjugating automorphisms 
$\Conj(G)$ is generated by the set $\LInn(G)$.
\end{theorem}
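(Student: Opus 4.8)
The plan is to argue by induction on a length invariant of $\phi\in\Conj(G)$, peeling off one elementary conjugating automorphism at a time until $\phi$ is reduced to the identity. Write $x\phi=x^{g_x}$ for each $x\in X$. Since $x^{g_x}$ depends only on the coset $C(x)g_x$, and the centraliser formula \eqref{eq:centraliser} gives $C(x)=G(x^\perp)$ for a single generator $x$, I would first normalise each conjugator, replacing $g_x$ by the minimal length representative of the coset $G(x^\perp)g_x$; equivalently, I take $g_x$ geodesic with $\gd^l_{x^\perp}(g_x)=1$, which also guarantees $x^{g_x}=g_x^{-1}\circ x\circ g_x$. Put $N(\phi)=\sum_{x\in X}\lg(g_x)$ for this canonical choice. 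If $N(\phi)=0$ then each $g_x\in G(x^\perp)=C(x)$, so $x\phi=x$ for all $x$ and $\phi=\id$. The inductive step is therefore the claim: if $N(\phi)>0$ then there is $\alpha\in\LInn(G)$ with $N(\phi\alpha^{-1})<N(\phi)$. Iterating then expresses $\phi$ as a product of elements of $\LInn(G)$, which is the assertion of the theorem.

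To locate a peelable $\alpha$, I would choose a vertex $x$ with $\lg(g_x)$ positive and maximal, and let $v^\e$ be the leading letter of its canonical conjugator, so $g_x=v^\e\circ g_x'$. Minimality modulo $G(x^\perp)$ forces $v\notin x^\perp$, hence $x$ survives as a vertex of $\G_{v^\perp}$; let $C$ be the connected component of $\G_{v^\perp}$ containing $x$, giving the candidate $\alpha=\alpha_{C,v^\e}\in\LInn(G)$. The relations satisfied by $\phi$ are the real source of control: for every edge $\{x,u\}$ of $\G$ we have $[x^{g_x},u^{g_u}]=1$, and applying Lemma \ref{lem:comm} after splitting off the common right divisor of $g_x$ and $g_u$ yields $[g_x,u]=1$ for all $u\in x^\perp\bs x$, together with the fact that the supports of conjugators of adjacent generators commute. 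Thus $\al(g_x)\subseteq(x^\perp\bs x)^\perp=\ad(x)$. I would use these constraints to prove that the set $\{z\in X:\ v^\e\text{ is a left divisor of }g_z\}$ is a union of connected components of $\G_{v^\perp}$, so that peeling off $\alpha$ strips the leading $v^\e$ from the conjugators of all $z\in C$ simultaneously while leaving the remaining generators untouched.

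The hard part will be the length bookkeeping, and this is the crux of the argument. Post-composing $\phi$ with $\alpha^{-1}=\alpha_{C,v^{-\e}}$ gives, for $z\in C$, that $z(\phi\alpha^{-1})=(z^{g_z})\alpha^{-1}=(z^{v^{-\e}})^{g_z\alpha^{-1}}$, where $g_z\alpha^{-1}$ deletes the leading $v^\e$ (since $v\notin C$ is fixed by $\alpha^{-1}$) but also replaces each interior letter $y\in C$ of $g_z'$ by $v^\e y v^{-\e}$. A priori these insertions increase length, so a naive peel need not lower $N$, and $\phi=(\phi\alpha^{-1})\alpha$ would not be a reduction. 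The resolution must show that, for the canonical conjugators and the extremal choice of $x$, these insertions cannot occur or cancel: concretely, I would combine the support restriction $\al(g_z)\subseteq\ad(z)$ with the component structure of $\G_{v^\perp}$ to prove that no letter of $g_z'$ simultaneously lies in $C$ and fails to commute with $v$, so that $\alpha^{-1}$ acts on $g_z'$ by a length-preserving rewriting and the deletion of $v^\e$ yields a genuine strict decrease of $N$. Establishing this non-increase cleanly is the main obstacle, and is precisely the point at which a Whitehead-type peak-reduction argument, organised by the total order $\prec$ on $X$, is needed to rule out the pathological configurations.
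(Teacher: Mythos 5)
First, note that the paper does not prove Theorem \ref{thm:laurence}: it is quoted from Laurence, and the relevant technique appears instead in the proofs of Propositions \ref{prop:vconjgens} and \ref{prop:nconjgens}, which run exactly the induction on $|\phi|=\sum_x\lg(g_x)$ that you set up. Your overall framework (normalise the conjugators, induct on total length, peel off one elementary conjugating automorphism) is therefore the right template. The gap is in the one step that carries all the content: producing an $\a\in\LInn(G)$ that strictly decreases the length. Your selection rule --- take $x$ with $\lg(g_x)$ maximal, let $v^\e$ be the \emph{leading} letter of $g_x$, and post-compose with $\a_{C,v^{-\e}}$ for $C$ the component of $\G_{v^\perp}$ containing $x$ --- provably fails. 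Take $G=F(a,b)$ (empty graph) and $\phi=\gamma_{ba}$, so that $g_a=ba$ and $g_b=a$ and $N(\phi)=3$. Your rule picks $x=a$, $v^\e=b$, $C=\{a\}$, $\a=\a_{\{a\},b}$; a direct computation gives $a(\phi\a^{-1})=a^{bab^{-1}}$ and $b(\phi\a^{-1})=b^{ab^{-1}}$, so $N(\phi\a^{-1})=5>3$. Moreover the repair you propose --- that no letter of $g_x'$ can lie in $C$ and fail to commute with $v$ --- is exactly what fails here ($g_a'=a\in C$ and $[a,b]\neq 1$), so it cannot be "established cleanly"; it is false. The appeal to "a Whitehead-type peak-reduction argument organised by $\prec$" is a placeholder, not an argument.

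What is actually needed, and what you never state or prove, is the right-divisor lemma (Lemma \ref{lem:laurlem}, i.e.\ Lemmas 2.5 and 2.8 of Laurence): for a non-trivial $\phi\in\Conj(G)$ there exist $x,y\in X$ and $\e$ such that the \emph{entire word} $x^\e g_x$ is a right divisor of $g_y$, and this divisibility then propagates to every $z$ in the component $C$ of $\G_{x^\perp}$ containing $y$. One then \emph{pre}-composes with $\a_{C,x^{-\e}}$: for $z\in C$ with $g_z=h\circ x^\e\circ g_x$ the new conjugator is $hg_x$, of length at most $\lg(g_z)-1$, while generators outside $C$ are untouched, giving $|\a_{C,x^{-\e}}\phi|<|\phi|$. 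In the example above this lemma correctly selects $x=b$, $y=a$ (since $b\cdot g_b=ba=g_a$), and $\a_{\{a\},b^{-1}}\phi$ has length $1$. The suffix structure of the conjugators, not the prefix, is what an elementary conjugating automorphism can cancel, and the existence of a shared suffix of the form $x^\e g_x$ is the genuinely non-trivial combinatorial fact at the heart of the theorem; without it your induction does not close.
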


In \cite{Laurence95} it is shown that $\Aut(G)$ is generated by the
following automorphisms.
\be[(i)]
\item A fixed choice $\cP^\G$ of 
generators for the graph automorphisms $\Aut^\G(G)$.
\item The set of inversions  $\Inv$.
\item The set of transvections $\Tr$.
\item The set of elementary conjugating automorphisms $\LInn$.
\ee


We shall construct  various decompositions of $\Aut(G)$ 
and in view of these decompositions we shall reduce to 
proper generating 
subsets of Laurence's generators. The 
first such reduction is the following.

\begin{prop}\label{prop:autgen1}
$\Aut(G)$ is generated by
$\Inv\cup\Tr\cup \LInn\cup \cP^\G_\cmp$. 
\end{prop}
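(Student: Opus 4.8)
The goal is Proposition \ref{prop:autgen1}: that $\Aut(G)$ is generated by $\Inv\cup\Tr\cup \LInn\cup \cP^\G_\cmp$. By Laurence's theorem (quoted just above) we already know $\Aut(G)$ is generated by $\cP^\G\cup \Inv\cup\Tr\cup \LInn$, where $\cP^\G$ is a generating set for the \emph{full} graph automorphism subgroup $\Aut^\G(G)$. Since $\cP^\G_\cmp$ generates only the \emph{compressed} graph automorphisms $\Aut^\G_\cmp(G)$, the entire content of the proposition is to show that the ``missing'' part of $\Aut^\G(G)$ --- namely the permutation automorphisms $S_{[v]}(G)$ that shuffle elements within each equivalence class $[v]$ --- is already expressible in terms of the remaining generators $\Inv\cup\Tr\cup \LInn$. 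By Proposition \ref{prop:graphaut}(i) we have $\Aut^\G(G)=\left(\prod_{[v]\in X^\cmp} S_{[v]}(G)\right)\rtimes \Aut^\G_\cmp(G)$, so it suffices to show that each $S_{[v]}(G)$ lies in the subgroup generated by $\Inv\cup\Tr\cup \LInn\cup\cP^\G_\cmp$; in fact I expect each $S_{[v]}(G)$ to be generated already by transvections and inversions.

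The plan is therefore as follows. First I would reduce, via the semidirect product decomposition of Proposition \ref{prop:graphaut}(i) and Lemma \ref{lem:autgen}, to proving that the group $\prod_{[v]\in X^\cmp} S_{[v]}(G)$ is contained in $\langle \Inv\cup\Tr\rangle$. Since this product is generated by the individual symmetric groups $S_{[v]}(G)$, and each $S_{[v]}(G)$ is generated by transpositions, it is enough to realise a single transposition swapping two generators $x,y$ in the same equivalence class $[x]=[y]$ as a product of transvections and inversions. Recall from Section \ref{subsec:admiss} that $x\sim y$ means either $x^\perp=y^\perp$ (so $x\sim_\perp y$) or $x^\perp\bs x=y^\perp\bs y$ (so $x\sim_\lk y$); in either case one checks from the criterion stated after the definition of $\Tr$ (that $\tr_{x^\e,y^\d}$ exists iff $x^\perp\bs x\subseteq y^\perp$) that \emph{both} transvections $\tr_{x,y}$ and $\tr_{y,x}$ are genuine automorphisms of $G$.

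The key step is then the standard ``elementary-matrix'' identity realising a swap. Using the four transvections $\tr_{x,y}$, $\tr_{y,x}$ and their inverses $\tr_{x,y^{-1}}$, $\tr_{y,x^{-1}}$, together with the inversions $\i_x,\i_y$, one forms the product that on the pair $(x,y)$ acts as $x\mapsto y$, $y\mapsto x^{-1}$ (or similar), exactly as $\begin{pmatrix}1&1\\0&1\end{pmatrix}\begin{pmatrix}1&0\\-1&1\end{pmatrix}\begin{pmatrix}1&1\\0&1\end{pmatrix}=\begin{pmatrix}0&1\\-1&0\end{pmatrix}$ in $\mathrm{SL}(2,\ZZ)$. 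Composing with an appropriate inversion $\i_x$ or $\i_y$ corrects the sign and produces the honest transposition $x\leftrightarrow y$ fixing all other generators. One must verify that each transvection used is legitimate --- this is where the hypothesis $x\sim y$ is essential, as it guarantees the relevant orthogonality containment holds in both directions --- and that the composite fixes every other generator and respects all the defining commutation relations, which it does because the two transvections only modify $x^{\pm1}$ and $y^{\pm1}$ while preserving their common perpendicular set.

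The main obstacle, and the only genuinely delicate point, is checking that the transvections involved are simultaneously admissible in \emph{both} directions and that the resulting automorphism really is the intended transposition as an automorphism of $G$ (not merely of the free group). The $\sim_\perp$ case is immediate since $x^\perp=y^\perp$ gives $x^\perp\bs x\subseteq y^\perp$ and $y^\perp\bs y\subseteq x^\perp$ directly. The $\sim_\lk$ case requires noting that $x^\perp\bs x=y^\perp\bs y$ forces $x\notin y^\perp$ and $y\notin x^\perp$ (otherwise $|[x]_\perp|>1$ collapses the class, as observed before Lemma \ref{lem:ad1}), whence again $x^\perp\bs x=y^\perp\bs y\subseteq y^\perp$ and symmetrically, so both transvections exist. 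Once these admissibility checks are in place, the $\mathrm{SL}(2,\ZZ)$ identity does the rest mechanically, and combining over all transpositions and all classes $[v]$ yields $\prod_{[v]} S_{[v]}(G)\le\langle\Inv\cup\Tr\rangle$, which together with Lemma \ref{lem:autgen}(\ref{it:autgen2}) and Laurence's generating set completes the proof.
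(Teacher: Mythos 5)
Your proposal is correct and follows essentially the same route as the paper: reduce via Proposition \ref{prop:graphaut} and Lemma \ref{lem:autgen} to showing $S_{[v]}(G)\subseteq\la\Inv,\Tr\ra$, check that the needed transvections within a class $[v]$ are genuine automorphisms (the paper phrases this as $[v]$ generating a free or free Abelian subgroup), and realise the transposition by the elementary-matrix identity, which the paper writes explicitly as $\s_{x,y}=\i_x\tr^{-1}_{x,y}\tr_{y,x}\tr_{x^{-1},y}$.
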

\begin{proof}
 To see that these automorphisms 
 generate $\Aut(G)$, it suffices, using Lemma \ref{lem:autgen}\ref{it:autgen2},  
 to show that every automorphism $\phi\in \Aut^\G(G)$ belongs
to the subgroup generated by $\Inv$, $\Tr$ and 
$\Aut^\G_\cmp(G)$. 
From 
Proposition \ref{prop:graphaut}, $\phi$ may be written as
$\phi=\a\b$, where 
$\a\in \prod_{[v]\in X^\cmp} S_{[v]}(G)$ and 
$\b\in \Aut^\G_\cmp(G)$. 
Hence it is enough to show that 
$S_{[v]}(G)\subseteq \la \Inv,\Tr\ra$. 
As $[v]$ generates a free or free Abelian subgroup of $G$, for
all $x,y\in [v]$ and $\e=\pm 1$, the transvections $\tr_{x^{\e},y}$ and
inversions $\i_x$ and $\i_y$ are automorphisms of $G$ and 
belong to $\Inv \cup \Tr$. The permutation $\s_{x,y}$ 
sending
$x$ to $y$ and $y$ to $x$ and fixing all other generators can be
obtained as a word in these generators; 
$\s_{x,y}=\i_x\tr^{-1}_{x,y} \tr_{y,x}\tr_{x^{-1},y}$. As $S_{[v]}(G)$ is
generated by such elements it follows that   
 $S_{[v]}(G)\subseteq \la \Inv,\Tr\ra$ as required.

\end{proof}

\subsection{Decomposition of $\Aut(G)$ over Graph Automorphisms}
\begin{defn}\label{defn:aut*}
Let $\oAut(G)$ denote the subgroup of $\Aut(G)$ generated
by the set $\cP_* = \Inv \cup \Tr \cup \LInn$. 
\end{defn}
Later we shall show that $\oAut(G)$ has a natural description in terms
of the stabiliser of the lattice $\cK$. Here we establish what we need
in order to obtain an initial decomposition of $\Aut(G)$ in terms of 
$\Aut^\G_\cmp(G)$. It is useful to establish the following
fact first.
\begin{lemma}\label{lem:fixad}
Let $x,y\in X$ and let $C$ be a connected component of 
$\G_{y^\perp}$. If $\ad(x)\nsubseteq C\cup y^\perp$ and 
$\ad(x)\cap C\neq \nul$ then $y\in \ad(x)$.  
\end{lemma}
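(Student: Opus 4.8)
The plan is to reduce everything to the characterisation $\ad(x)=\{z\in X: x^\perp\bs x\subseteq z^\perp\}$ supplied by Lemma \ref{lem:admot}\ref{it:admot1}, and then to run a short connectivity argument in the graph $\G_{y^\perp}$. First I would use the two hypotheses to extract witnesses: since $\ad(x)\cap C\neq\nul$ choose $v\in \ad(x)\cap C$, and since $\ad(x)\nsubseteq C\cup y^\perp$ choose $w\in\ad(x)$ with $w\notin C$ and $w\notin y^\perp$. By Lemma \ref{lem:admot}\ref{it:admot1} both memberships give $x^\perp\bs x\subseteq v^\perp$ and $x^\perp\bs x\subseteq w^\perp$, so in particular $x^\perp\bs x\subseteq v^\perp\cap w^\perp$.

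Next I would record the key structural fact about $v$ and $w$. Both lie in $X\bs y^\perp$ (the vertex $v$ because it lies in a component $C$ of $\G_{y^\perp}$, and $w$ by choice), and they lie in \emph{distinct} components of $\G_{y^\perp}$ (namely $v\in C$, $w\notin C$). Consequently $v\neq w$ and there is no edge of $\G$ joining $v$ to $w$ (such an edge would survive in $\G_{y^\perp}$, placing them in a common component); equivalently $v\notin w^\perp$.

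The goal $y\in\ad(x)$ is, again by Lemma \ref{lem:admot}\ref{it:admot1}, exactly the inclusion $x^\perp\bs x\subseteq y^\perp$, so it suffices to fix an arbitrary $t\in x^\perp\bs x$ and show $t\in y^\perp$. From the first step $t\in v^\perp\cap w^\perp$. I would argue by contradiction: suppose $t\notin y^\perp$, so $t$ is a vertex of $\G_{y^\perp}$. Then I split according to whether $t$ equals or is merely adjacent to each of $v,w$. If $t=v$ (resp.\ $t=w$) then $t\in v^\perp\cap w^\perp$ forces $v\in w^\perp$, contradicting $v\notin w^\perp$. Otherwise $t$ is adjacent in $\G$ to both $v$ and $w$; since $t,v,w$ all lie outside $y^\perp$, these edges survive in $\G_{y^\perp}$ and place $t$ in the component of $v$ and simultaneously in the component of $w$, contradicting that $v$ and $w$ lie in different components. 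Hence $t\in y^\perp$, and as $t$ was arbitrary we obtain $x^\perp\bs x\subseteq y^\perp$, i.e.\ $y\in\ad(x)$.

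The only place demanding care — and what I regard as the main (though minor) obstacle — is the bookkeeping with the convention that $u\in z^\perp$ means $d(u,z)\le 1$, so that the case $u=z$ is permitted. This is precisely why the three boundary cases $t=v$, $t=w$ and $t\notin\{v,w\}$ must be separated: the clean connectivity picture only applies once one knows the relevant incidences are genuine edges rather than coincidences of vertices. No deeper input (such as the centraliser formula \eqref{eq:centraliser} or any property of the group $G$ itself) is required; the statement is purely graph-theoretic.
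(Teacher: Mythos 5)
Your proof is correct and takes essentially the same route as the paper's: both rest on the observation that every $t\in x^\perp\bs x$ lies in $z^\perp$ for every $z\in \ad(x)$, so any such $t$ outside $y^\perp$ would connect the two distinct components of $\G_{y^\perp}$ met by $\ad(x)$, which is impossible. The paper organises this contrapositively (a single such $t$ would force $\ad(x)\bs y^\perp$ into one component), whereas you argue directly from the two witnesses $v$ and $w$; your explicit separation of the degenerate cases $t=v$ and $t=w$ is a harmless refinement of the same idea.
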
 
\begin{proof}
 Suppose  that
$y\notin \ad(x)$. 
Then there exists $u\in x^\perp\bs x$ such that $[y,u]\neq 1$.
Thus $u\notin y^\perp$ and $u\in x^\perp\bs x$; so $[u,v]=1$  for all
$v\in \ad(x)$. This means that $\ad(x)\bs y^\perp$ 
is contained in some connected
component of $\G_{y^\perp}$. If $\ad(x) \cap C\neq \nul$ it follows
that $\ad(x) \subseteq C\cup y^\perp$, 
%
and the result follows.
\end{proof}
\begin{prop}\label{prop:aut*}
Let $\phi\in \oAut(G)$. Then, for all $x\in X$, there exists $f_x\in G$ such
that $G(\ad(x))\phi=G(\ad(x))^{f_x}$.
\end{prop}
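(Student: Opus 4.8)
**Proposal for the proof of Proposition \ref{prop:aut*}:**

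The plan is to prove the statement for each of the generators of $\oAut(G)$ separately, and then to verify that the property is preserved under composition. Since $\oAut(G)=\la \Inv\cup\Tr\cup\LInn\ra$ by Definition \ref{defn:aut*}, it suffices to show that each inversion $\i_x$, each transvection $\tr_{x,y}$, and each elementary conjugating automorphism $\a_{C,x^\e}$ sends $G(\ad(x))$ to a conjugate $G(\ad(x))^{f_x}$, and that the collection of automorphisms with this property is closed under products and inverses. The closure-under-composition argument is the natural backbone: if $\phi$ sends each $G(\ad(x))$ to $G(\ad(x))^{f_x}$ and $\psi$ does likewise with conjugators $h_x$, then one must check that $\phi\psi$ sends $G(\ad(x))$ to a conjugate as well, which should follow once we know that each generator permutes the collection of subgroups $\{G(\ad(x))\}_{x\in X}$ up to conjugacy in a controlled way.

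First I would dispose of the inversions: $\i_x$ fixes $G(\ad(z))$ setwise for every $z$, since inverting a single generator preserves any canonical parabolic subgroup containing it and fixes those that do not, so here $f_z=1$. Next, for a transvection $\tr_{x,y}$ (which exists only when $x^\perp\bs x\subseteq y^\perp$, equivalently $y\in\ad(x)$ by Lemma \ref{lem:admot}\ref{it:admot1}), I would argue that $\tr_{x,y}$ maps $G(\ad(z))$ into itself for each $z$: the point is that $x$ and $y$ satisfy $y\in\ad(x)$, hence by Lemma \ref{lem:ad0}\ref{it:ad8} we have $\ad(y)\subseteq\ad(x)$, and one checks that whenever $x\in\ad(z)$ the relevant relations force $y\in\ad(z)$ as well, so the image $xy$ still lies in $G(\ad(z))$. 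This is the step where the combinatorial definition of $\ad$ does the real work, and I would lean on parts \ref{it:ad8} and \ref{it:ad12} of Lemma \ref{lem:ad0}.

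The main obstacle, and the heart of the argument, is the elementary conjugating automorphism $\a_{C,x^\e}$, where $C$ is a connected component of $\G_{x^\perp}$. Here Lemma \ref{lem:fixad} is clearly the designed tool: it tells us that if $\ad(z)$ meets $C$ but is not contained in $C\cup x^\perp$, then $x\in\ad(z)$. I expect the argument to split into cases according to how $\ad(z)$ sits relative to $C$ and $x^\perp$. In the case $\ad(z)\subseteq C\cup x^\perp$, every generator of $G(\ad(z))$ lying in $C$ is conjugated by $x^\e$ while those in $x^\perp$ commute with $x$ and are fixed, so that $G(\ad(z))\a_{C,x^\e}=G(\ad(z))^{x^\e}$, giving $f_z=x^\e$; in the case $\ad(z)\cap C=\nul$, the subgroup is fixed pointwise and $f_z=1$. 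The remaining case is exactly the one Lemma \ref{lem:fixad} addresses, forcing $x\in\ad(z)$, whence (using Lemma \ref{lem:ad0}\ref{it:ad8}) $\ad(x)\subseteq\ad(z)$, and I would then show that the component structure forces all of $\ad(z)\cap C$ together with $x$ into a single block, again yielding conjugation by a single $x^\e$ up to the commuting generators.

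Once all three generator types are shown to send each $G(\ad(z))$ to a conjugate, I would complete the proof by the closure argument: given $\phi,\psi$ with $G(\ad(z))\phi=G(\ad(z))^{f_z}$ and $G(\ad(z))\psi=G(\ad(z))^{h_z}$, applying $\psi$ to the first equation gives $G(\ad(z))\phi\psi=\bigl(G(\ad(z))^{h_z}\bigr)^{f_z\psi}=G(\ad(z))^{h_z(f_z\psi)}$, so $\phi\psi$ has the property with conjugator $h_z(f_z\psi)$. Since the generating set is symmetric up to inverses (inversions and the conjugating automorphisms have inverses of the same type, and transvection inverses are handled identically), this establishes the property for all of $\oAut(G)$ by induction on word length. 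The only delicate point in the closure step is ensuring $f_z\psi$ is a well-defined element of $G$, which is immediate as $\psi$ is an automorphism of $G$.
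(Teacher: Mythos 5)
Your proposal is correct and follows essentially the same route as the paper: verify the claim generator by generator (inversions trivially, transvections via $y\in\ad(x)\Rightarrow\ad(y)\subseteq\ad(x)$ from Lemma \ref{lem:ad0}, and elementary conjugating automorphisms via the same three-way case split with Lemma \ref{lem:fixad} handling the delicate case), then close under composition — a step the paper leaves implicit. The only cosmetic difference is in the third case for $\a_{C,x^\e}$, where the cleanest conclusion is simply that $x\in\ad(z)$ forces every generator of $G(\ad(z))$ to map to itself or to its conjugate by $x^\e$, both of which lie in $G(\ad(z))$, so the subgroup is preserved setwise.
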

\begin{proof}
It suffices to prove the statement in the case where $\phi$ is a generator
of $\oAut(G)$. First consider the case where $\phi$ is a elementary 
conjugating 
automorphism. 

 Suppose then that $y\in X$, $C$ is a connected
component of $\G_{y^\perp}$ and that  $\phi=\a_{C,y}$. 
 Now let $x\in X$. If  $C\cap \ad(x)=\nul$ then
$G(\ad(x))\phi=G(\ad(x))$, so we may assume that $C\cap \ad(x)\neq \nul$. 
If $\ad(x) \subseteq C\cup y^\perp$ then $G(\ad(x))\phi=G(\ad(x))^{y}$, as 
required. This leaves the case where   $C\cap \ad(x)\neq \nul$ and
$\ad(x) \nsubseteq C\cup y^\perp$. In this case 
%
%
$y\in \ad(x)$, from Lemma \ref{lem:fixad},  and  
$z\phi=z$ or $z\phi = z^y$, for all $z\in \ad(x)$, so 
$G(\ad(x))\phi=G(\ad(x))$. 
\begin{comment}
We consider the cases $x=y$, $y\in x^\perp\bs x$ and $y\in \ad(x)\bs x^\perp$ 
separately. If $x=y$ then $x\in \ad(x)$ implies that $G(\ad(x))\phi \subseteq
G(\ad(x))$; and as $\phi$ is an automorphism we have $G(\ad(x))\phi =G(\ad(x))$.
If $y\in x^\perp\bs x$ then $[u,y]=1$, for all $u\in \ad(x)$, so 
 again $G(\ad(x))\phi =G(\ad(x))$.
\end{comment}

 The case where $\phi\in \Inv$ is straightforward
(and $f_x=1$, for all $x$, in this case). Suppose then that $\phi$ is
 a transvection; more precisely let $y,z\in X$ with 
$y^\perp\bs y\subseteq z^\perp$
and $\phi=\tr_{y^{\e},z}$, where $\e\in \{\pm 1\}$. 
Let $x\in X$. If $y\notin \ad(x)$ then $\phi$ is the identity on $G(\ad(x))$ so
we may assume that $y\in \ad(x)$. In this case we have $z\in \cl(z)\subseteq
\ad(y)\subseteq \ad(x)$. Hence $G(\ad(x))\phi=G(\ad(x))$, as required.
\end{proof}
\begin{rem}\label{rem:aut*}
Note that the proof of this proposition shows that 
if $\phi$ is in the subgroup of $\oAut(G)$ generated by $\Inv$ and  $\Tr$ 
 then
$G(\ad(x))\phi=G(\ad(x))$, for all $x\in X$.
\end{rem}
\begin{prop}\label{prop:autdecomp}
$\oAut(G)$ is a normal subgroup of $\Aut(G)$ and the latter decomposes as a semi-direct
product $\Aut(G)\cong\oAut(G)\rtimes \Aut^\G_\cmp(G)$.
\end{prop}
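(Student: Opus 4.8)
The plan is to exploit the two generating sets already in hand: by Proposition \ref{prop:autgen1} the whole group is $\Aut(G)=\la \cP_*\cup\cP^\G_\cmp\ra$, while $\oAut(G)=\la\cP_*\ra$ by Definition \ref{defn:aut*} and $\Aut^\G_\cmp(G)=\la\cP^\G_\cmp\ra$ by Lemma \ref{lem:autgen}\ref{it:autgen2}. Three facts then suffice: (a) $\oAut(G)$ is normal in $\Aut(G)$; (b) $\oAut(G)\Aut^\G_\cmp(G)=\Aut(G)$; and (c) $\oAut(G)\cap\Aut^\G_\cmp(G)=\{1\}$. Once (a) is known, (b) is immediate, since $\oAut(G)\Aut^\G_\cmp(G)$ is then a subgroup containing both $\cP_*$ and $\cP^\G_\cmp$, hence all of $\Aut(G)$.

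For (a), since $\oAut(G)$ contains $\cP_*$ it is enough to show that conjugation by each $\sigma\in\cP^\G_\cmp$ and its inverse carries every element of $\cP_*$ back into $\cP_*$. As each such $\sigma$ is a graph automorphism, its action on $X$ is an edge-preserving bijection, and a routine computation on generators gives the conjugation formulas
\[
\sigma^{-1}\i_x\sigma=\i_{x\sigma},\qquad
\sigma^{-1}\tr_{x^\e,y}\sigma=\tr_{(x\sigma)^\e,\,y\sigma},\qquad
\sigma^{-1}\a_{C,x^\e}\sigma=\a_{C\sigma,\,(x\sigma)^\e}.
\]
The right-hand sides again lie in $\Inv$, $\Tr$ and $\LInn$ respectively: because $\sigma$ preserves $\perp$ and set complements, the inequality $x^\perp\bs x\subseteq y^\perp$ defining the transvection is preserved, and $C\sigma$ is a connected component of $\G_{(x\sigma)^\perp}$. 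Hence $\oAut(G)$ is invariant under conjugation by the generators of $\Aut(G)$, and so is normal.

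The main obstacle is (c). Let $\phi\in\oAut(G)\cap\Aut^\G_\cmp(G)$; I would show that $\phi$ fixes each class $[x]$ setwise and then invoke the structure of $\i_\cmp$. Since $\phi\in\oAut(G)$, Proposition \ref{prop:aut*} provides $f_x\in G$ with $G(\ad(x))\phi=G(\ad(x))^{f_x}$ for every $x\in X$; and since $\phi$ is a graph automorphism, $\ad(x)\phi=\ad(x\phi)$, so $G(\ad(x\phi))=G(\ad(x))^{f_x}$. The key point is that conjugate canonical parabolic subgroups are equal: passing to the abelianization $G^{ab}\cong\ZZ^X$, on which conjugation acts trivially, the image of $G(S)$ is the coordinate subgroup $\ZZ^S$, so the equality $G(\ad(x\phi))=G(\ad(x))^{f_x}$ forces $\ad(x\phi)=\ad(x)$, whence $x\phi\in[x]$ by Lemma \ref{lem:ad1}\ref{it:adcl1}. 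Thus $\pi_\cmp(\phi|_X)$ is the identity automorphism of $\G^\cmp$. Finally, writing $\phi|_X=\psi\i_\cmp$ with $\psi\in\Iso(\G^\cmp)$ and using $\i_\cmp\pi_\cmp=\id$ gives $\psi=\pi_\cmp(\phi|_X)=\id$; since $\i_\cmp$ lifts the identity of $\G^\cmp$ to the identity of $\G$, we get $\phi|_X=\id$ and hence $\phi=\id$. Combining (a)--(c) yields $\Aut(G)=\oAut(G)\rtimes\Aut^\G_\cmp(G)$.
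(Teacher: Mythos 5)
Your proof is correct, and its overall skeleton matches the paper's: normality is checked by conjugating the generators $\Inv\cup\Tr\cup\LInn$ by graph automorphisms (your three conjugation formulas are exactly the ones the paper uses), and the semi-direct product then follows from Proposition \ref{prop:autgen1} once the intersection is shown to be trivial. Where you genuinely diverge is in the trivial-intersection step. The paper argues via exponent sums: from Proposition \ref{prop:aut*} the exponent sum of $y$ in $x\phi$ vanishes for $y\notin\ad(x)$ when $\phi\in\oAut(G)$, and then it uses the height function $h$ and Lemma \ref{lem:ad1} to produce, for any non-trivial $\theta\in\Aut^\G_\cmp(G)$, a vertex $y=x\theta\notin\ad(x)$ whose exponent sum in $x\theta$ is non-zero. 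You instead observe that two canonical parabolic subgroups that are conjugate must be equal (seen in the abelianization $\ZZ^X$, where conjugation is invisible and $G(S)$ maps onto the coordinate subgroup $\ZZ^S$), so $G(\ad(x\phi))=G(\ad(x))^{f_x}$ forces $\ad(x\phi)=\ad(x)$, hence $x\phi\in[x]$ by Lemma \ref{lem:ad1}\ref{it:adcl1}, and the splitting $\i_\cmp\pi_\cmp=\id$ finishes the job. Your route avoids the height machinery entirely and isolates a clean, reusable rigidity fact about canonical parabolics; the paper's route stays closer to the combinatorics of the ordering on $X$ that it has already set up and will use elsewhere. Both are valid; yours is arguably the more transparent argument for this particular statement.
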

\begin{proof}
To see that $\oAut(G)$ is normal in $\Aut(G)$ it
   is only necessary to check that $\theta^{-1} \phi \theta\in \oAut(G)$
where $\theta \in \Aut^\G(G)$ and $\phi$ is a generator of $\oAut(G)$.
It is straightforward to check from the definitions that if 
$\theta \in \Aut^\G(G)$ then $\theta$ acts 
by conjugation 
on the generators 
of $\oAut(G)$ as follows. If $\i_z\in \Inv$ then
$\i_z^\theta=\i_{z\theta}$. If $x\in X^{\pm 1}$, $y\in Y$ and 
$\tr_{x,y}\in \Tr$  then
$\tr_{x,y}^\theta=\tr_{x\theta,y\theta}$. If $\a_{C,y}$ is an
elementary conjugating automorphism then, since $\theta$ restricts to
 a graph automorphism of $\G$, it follows that $C\theta$ is a connected 
component of $\G_{(y\theta)^\perp}$. Furthermore 
$\a_{C,y}^\theta=\a_{C\theta,y\theta}$. Therefore $\oAut(G)$ is normal. 

Next we show that $\oAut(G) \cap \la \cP^\G_\cmp\ra=\{1\}$. 
 From Proposition \ref{prop:aut*}, for all $x\in X$ and $\phi \in \Aut^*(G)$,
we have $x\phi=w^g$, for some $w\in G(\ad(x))$ and $g\in G$. 
This
means that the exponent sum of $y\in X$ in $x\phi$ 
is zero unless $y\in \ad(x)$.
\begin{comment}
From Remark \ref{rem:aut*} above, 
if $\phi$ is a transvection or inversion 
 then $\ad(x)\phi\in G(\ad(x))$, for
all $x\in X$. Moreover, if $\psi$ is a basis-conjugating automorphism then,
for all $y\in \ad(x)$, we have $y^\phi=w^g$, for some $w\in G(\ad(x))$ and
$g\in G$ (Proposition \ref{prop:aut*}).  
$x\psi$ belongs to a conjugate of $G(\ad(x))$, by some element of $G$.
Therefore, for any $x\in X$ and any $\theta\in \oAut(G)$ the image
$g=x\theta$ is a product of conjugates of elements of $G(\ad(x))$. This
means that the exponent sum of $y\in X$ in $g$ is zero unless $y\in \ad(x)$.
\end{comment}

We claim that if $\theta\in \Aut^\G(G)$ then, for all $x\in X$,
 $x\theta =y$ and 
 $\ad(x)\theta=\ad(y)\theta$, for some $y\in X$ with $h(y)=h(x)$. To see
this note that $\theta \in \Aut^\G(G)$ implies $\theta|_X$ is in
$\Iso(\G)$ so $\ad(x)\theta =\ad(x\theta)$. If $x\theta =y$ then
it follows from Lemma \ref{lem:ad1} and induction on the height $h(x)$ of
$x$ that $h(y)=h(x)$, and the claim follows.

Now if $\theta$ is a non-trivial element of $\Aut^\G_\cmp(G)$ there is 
$x\in X$ such that $[x]\theta=[y]$ with $[y]\neq [x]$. Without loss
of generality we may assume $x\theta =y$; so the exponent sum of
$y$ in $x\theta$ is non-zero. 
As $h(x)=h(y)$ it follows
 from Lemma \ref{lem:ad1} that $[y]\cap \ad(x)=\nul$ and 
so $y\notin \ad(x)$. 
Combining this with the above we have 
$\oAut(G)\cap\Aut^\G_\cmp(G) =1$. As $\Aut(G)$ is generated by
$\Inv\cup\Tr\cup \LInn\cup \cP^\G_\cmp$ 
 it follows not only that $\oAut(G)$ is normal 
but that $\Aut(G)$ decomposes
as a semi-direct product, as claimed.
\end{proof}

\subsection{Decomposition over Connected Components}

In this section we use the theory of automorphisms
of free products developed in \cite{FR1,FR2}, \cite{Gilbert87} and \cite{CollinsGilbert} to give a presentation
of $\Aut(G)$, and describe its structure, in terms of automorphisms  
of the groups corresponding to the connected components of $\G$. 
A presentation for the automorphism group of a 
free product in terms of presentations 
for automorphism groups of the factors  is given in \cite{FR1,FR2}  
and reformulated
in \cite{Gilbert87}. Using the latter we construct a 
presentation for $\Aut(G)$, in terms of presentations of automorphism groups
of factors of $G$,  appropriate to our particular setting.

\begin{comment}
We use a 
subset of the  set of generators of  \cite{Laurence95} described in Section
\ref{section:gens} above. 
 First we make a more explicit choice of the injective homomorphism 
$\i_\cmp$ defined in Proposition \ref{prop:autgen1}. 
\begin{defn}\label{defn:autcomp}
Fix a total order $<$ on $X$. An element $\phi\in \Aut^\G(G)$ is said
to be {\em admissible} if the following condition holds, for all $u\in X$.
If $v\in [u]$ then 
$u<v$ if and only if $\phi(u)<\phi(v)$. Let $\Aut^\cmp(\G)$ denote the
subgroup of admissible automorphisms of $\Aut^\G(G)$.
Then  the restriction of $\pi_\cmp$ to $\Aut^\G_\cmp(\G)$ 
is an automorphism onto $\Aut^\G(G^\cmp)$ and we 
may take $\i_\cmp$ to be the inverse of this restriction. 
\end{defn}
When 
$\Omega$ is a connected component of $\G$ we shall abuse notation by
regarding $\Aut^\cmp(\Omega)$ as a subgroup of $\Aut^\G(G)$. 
\end{comment}

\begin{defn}\label{defn:ourgens}
Let $\G$ have isomorphism type given by \eqref{eq:isomtype} and,
as in Section \ref{subsec:graphaut}, let 
$\G_{i,j}$ be the connected components of $\G$, where $\G_{i,j}\cong \Omega_i$,
for $1\le j\le m_i$ and  $0\le i\le d$. Let 
\begin{itemize}
\item $X_{i,j}$ be the vertex set
of $\G_{i,j}$, 
\item let $S=\{(0,j):1\le j\le m_0\}$, 
\item let 
$X_S=\cup_{s\in S}X_s$, the 
set of isolated vertices of $\G$,  and
\item  
let $J=\{(i,j):1\le i\le d, 1\le j\le m_i\}$. 
\end{itemize}
\be[(i)]
\item  Define the following sets of automorphisms which preserve 
subgroups generated by connected
components of $\G$ and fix all elements of $X_{i,j}$ when $j\neq 1$: let
\be[(a)]
\item 
$\Invi(G)=\{\i_x\in \Inv|x\in X_{i,1},0\le i\le d\}$;
\item  $\Tri(G)=\{\tr_{x,y}\in \Tr|x\in X_{i,1}^{\pm 1}, y\in X_{i,1}^{\pm 1}, 
1\le i\le d\}$;
\item  $\LInni(G)=\{\a_{C,x}\in \LInn| x\in X_{i,1}^{\pm 1},C\subseteq X_{i,1},
1\le i\le d\}$;
\item $\cP_{\int}(G)=\Invi(G)\cup \Tri(G)\cup \LInni(G)$.
\ee
\item Define the following sets of automorphisms which do not 
preserve subgroups generated by connected
components of $\G$. Let
\be
\item
 $\Tre(G)=\{\tr_{x,y}\in \Tr|x\in X_S^{\pm 1}, y\in X^{\pm 1}\}$; 
\item\label{it:LInn} $\LInne(G)=\{\a_{C,y}\in \LInn|C=X_j,y\in X_k^{\pm 1},  j\in J,k\in S\cup J, k\neq j\}$;
\item $\cP_{\ext}(G)=\Tre(G)\cup\LInne(G)$.
\ee
\ee
Finally define $\cP(G)=\cP^\G_\cmp(G)\cup \cP_{\int}(G)\cup \cP_{\ext}(G)$. 
\end{defn}
When the group $G$ in question is clear from the context we often 
drop the argument $G$ from
 these definitions, writing $\cP$ for $\cP(G)$, and so on.
\begin{rem}
\be
\item
 The conditions on $\Tr$ imply that $\Tre$ is empty unless there exists 
  an isolated vertex  $x$ 
 of $\G$, in which case there is an automorphism $\tr_{x,y}\in \Tre$, for
all $y\in X\bs \{x\}$.  
\item 
If $s\in S$ and $X_s=\{x\}$, $z\in X^{\pm 1}$, $z\neq x^{\pm 1}$,  
then $\Aut(G)$ contains
the automorphism $\a_{X_s,z}$, but also contains $\tr_{x^{\pm 1},z}$; and 
$\a_{X_s,z}=\tr_{x,z}\tr_{x^{-1},z}$. Hence we make the restriction $j\in J$;
i.e. 
$|X_j|\ge 2$, in Definition \ref{defn:ourgens} \ref{it:LInn} above.
\item In \cite{Gilbert87} elements of $\Tre\cup \LInne$ 
are called 
Whitehead automorphisms. 
\ee
\end{rem}
\begin{prop}\label{prop:autgen}
The set $\cP$ generates $\Aut(G)$.
\end{prop}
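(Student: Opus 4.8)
The plan is to deduce this from Proposition \ref{prop:autgen1}, which already gives that $\Aut(G)$ is generated by $\Inv\cup\Tr\cup\LInn\cup\cP^\G_\cmp$. Since $\cP^\G_\cmp(G)\subseteq\cP(G)$ by definition, it suffices to show that every inversion in $\Inv$, every transvection in $\Tr$, and every elementary conjugating automorphism in $\LInn$ lies in the subgroup $\la\cP\ra$. The essential tool is the conjugation calculus recorded in the proof of Proposition \ref{prop:autdecomp}: for $\theta\in\Aut^\G(G)$ one has $\i_z^\theta=\i_{z\theta}$, $\tr_{x,y}^\theta=\tr_{x\theta,y\theta}$ and $\a_{C,y}^\theta=\a_{C\theta,y\theta}$. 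I would apply this with $\theta$ one of the copy-permuting automorphisms $\w^i_{a,b}$, which lie in $\cP^\G_{\sym,i}\subseteq\cP^\G_\cmp\subseteq\cP$; these allow me to transport any generator supported on the $j$th copy $\G_{i,j}$ of a component to the first copy $\G_{i,1}$, where the reduced generating sets $\Invi$, $\Tri$, $\LInni$ live.

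First I would dispose of the inversions and transvections. Given $\i_x$ with $x\in X_{i,j}$, let $\w=\w^i_{1,j}$ (the identity if $j=1$) and let $x'\in X_{i,1}$ be the vertex with $x'\w=x$; then $\i_x=\w^{-1}\i_{x'}\w$ with $\i_{x'}\in\Invi\subseteq\cP$, covering all of $\Inv$, including inversions of isolated vertices via $i=0$. For a transvection $\tr_{x,y}$, I would split on whether $x$ and $y$ lie in the same component of $\G$. If they do, say in $\G_{i,j}$, conjugation by $\w^i_{1,j}$ produces a transvection supported on $\G_{i,1}$, hence an element of $\Tri$. If $x$ and $y$ lie in distinct components, the existence condition $x^\perp\bs x\subseteq y^\perp$ forces $x$ to be isolated: any neighbour of $x$ would lie in $x$'s component and so could not belong to $y^\perp$. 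Thus $x\in X_S^{\pm1}$ and $\tr_{x,y}\in\Tre\subseteq\cP$ directly.

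The main obstacle, and the step requiring real care, is the treatment of $\LInn$. For $\a_{C,x}$ I would analyse the connected components of $\G_{x^\perp}$. When $x$ is not isolated, $x^\perp$ is contained in $x$'s component, so deleting it leaves every other component of $\G$ intact; hence $C$ is either a piece of $x$'s own component $\G_{i,j}$ (the internal case) or an entire other component of $\G$ (the external case). In the internal case, conjugating by $\w^i_{1,j}$ moves $\a_{C,x}$ to an element $\a_{C\w,x\w}$ with $C\w\subseteq X_{i,1}$ and $x\w\in X_{i,1}$ (using that $\w$ carries components of $\G_{x^\perp}$ to components of $\G_{(x\w)^\perp}$), which lies in $\LInni$. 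In the external case, and also when $x$ itself is isolated, $C=X_{k,l}$ is a full component distinct from that of $x$: if this component is non-isolated it is exactly of the form required by $\LInne$ (with conjugating vertex $y=x$ in a different component, $k\neq j$), and if it is a single isolated vertex $z$ I would invoke the identity $\a_{\{z\},x}=\tr_{z,x}\tr_{z^{-1},x}$ noted in the remark following Definition \ref{defn:ourgens}, expressing it through $\Tre$. Assembling the three cases shows $\LInn\subseteq\la\cP\ra$, and combining with the previous paragraph and Proposition \ref{prop:autgen1} yields $\la\cP\ra=\Aut(G)$. The delicate point throughout is the bookkeeping of whether the conjugated set $C$ and the conjugating vertex $x$ are isolated, since this determines whether a generator is absorbed into $\LInne$ or must instead be rewritten as a product of external transvections.
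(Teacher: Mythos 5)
Your proof is correct and follows essentially the same route as the paper: reduce via Proposition \ref{prop:autgen1} and conjugate by the component-permuting automorphisms $\w^i_{1,j}\in\cP^\G_{\sym,i}$ to transport internal generators to the first copy $\G_{i,1}$. Your explicit case analysis of the external generators (cross-component transvections landing in $\Tre$, whole-component conjugations landing in $\LInne$, and the isolated-vertex identity $\a_{\{z\},x}=\tr_{z,x}\tr_{z^{-1},x}$) is left implicit in the paper's proof, since those elements belong to $\cP$ by definition, but your bookkeeping is accurate.
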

\begin{proof}
In the light of 
Proposition \ref{prop:autgen1}, 
it suffices to 
 show that every automorphism in $\Inv\cup \Tr\cup \LInn$ 
belongs to the subgroup generated by $\cP$.  
%
For all $i, j$ with $1\le i\le d$ and 
$1<j\le m_i$ the automorphism $\w^i_{1,j}$
belongs to $\cP^\G_{\sym,i}
\subseteq \cP^\G_\cmp$  
 and  for all $x_j, y_j\in X_{i,j}$ there are 
$x_1,y_1\in X_{i,1}$ such that $x_1=x_j\w^i_{1,j}$ and 
$y_1=y_j\w^i_{1,j}$. Then 
we have $\i_{x_j}=(\w^i_{1,j})^{-1}\i_{x_1}\w^i_{1,j}$,
$\tr_{x_j^{\e},y_j}=(\w^i_{1,j})^{-1}\tr_{x_1^{\e},y_1}\w^i_{1,j}$ and 
$\a_{C_j,x_j}= (\w^i_{1,j})^{-1}\a_{C_1,y_1}\w^i_{1,j}$, where 
$C_j=C_1\w^i_{1,j}$. As $\i_{x_1}$, $\tr_{x_1^{\e},y_1}$ and 
$\a_{C_1,y_1}$ are all elements of $\cP$ it follows that 
  $\Inv\cup \Tr\cup \LInn$ is contained in the 
subgroup generated by $\cP$, as required.
\end{proof}
We extend the notation of Definition \ref{defn:graphaut},  
for graph automorphisms of the factors
of $G$, to cover all automorphisms
of the factors. 
\begin{defn}\label{defn:factgen}
For $i=0,\ldots ,d$, let $\Aut(G_{i,1})$ denote the subgroup
of $\Aut(G)$ consisting of elements $\phi\in \Aut(G)$ such that 
$x\phi=x$, for all $x\in X\bs X_{i,1}$ and 
$G(\G_{i,1})\phi\subseteq G(\G_{i,1})$. 
\end{defn}
Before choosing generators and relators for $\Aut(G)$ note that 
\[\Aut^\G_\cmp(G_{i,1})\le \Aut(G_{i,1})\cong \Aut(G(\Omega_i))\]
and that, from Proposition \ref{prop:autgen1}, $\Aut(G_{i,1})$ is generated
by $\left(\cP_{\int}\cap \Aut(G_{i,1})\right)\cup \cP^\G_{\cmp,i}$. 
\begin{defn}\label{defn:factpres}
Choose presentations 
\begin{gather*}
\la \cP^\G_{\sym,i}
| \cR^\G_{\sym,i}
\ra 
\textrm{ for }\Aut^\G_{\sym}(G_{i,*}),\, 0\le i\le d,\textrm{ and } \\[.5em]
\la \cP^\G_{\cmp,i}|\cR^\G_{\cmp,i}\ra \textrm{ for }\Aut^\G_\cmp(G_{i,1}),\, 1\le i\le d. 
\end{gather*} 
(For notational convenience set $\cP^\G_{\cmp,0}=\cR^\G_{\cmp,0}=\nul$.)
For $0\le i\le d$, let 
\[\cP_i=\left(\cP_{\int}\cap\Aut(G_{i,1})\right)\cup \cP^\G_{\cmp,i},\] 
so $\cP_i$ is a set of generators
for $\Aut(G_{i,1})$. Choose a presentation $\la \cP_{i}|\cR_{i}\ra$ for $\Aut(G_{i,1})$ such
that $\cR_{i}\supseteq 
\cR^\G_{\cmp,i}$, the relators chosen (in the appendix) for $\Aut^\G_\cmp(G_{i,1})$. 
\end{defn}

\begin{prop}\label{prop:presentation}
$\Aut(G)$ has a presentation $\la \cP|\cR\ra$, where $\cP$ is 
given in Definition \ref{defn:ourgens} and $\cR$ is defined in
Definition \ref{defn:ourrels} below.
\end{prop}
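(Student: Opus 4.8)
The plan is to realise $G$ as a free product of the subgroups generated by the connected components of $\G$ and then to specialise the presentation of the automorphism group of a free product due to Gilbert \cite{Gilbert87}, which reformulates Fouxe--Rabinovitch \cite{FR1,FR2}. First I would record the decomposition $G=\ast_{(i,j)\in S\cup J} G(\G_{i,j})$, in which the factors indexed by $S$ are the infinite cyclic groups on the isolated vertices and those indexed by $J$ are the partially commutative groups on the components of $\G$ with at least two vertices. Since each $\G_{i,j}$ is connected, the corresponding factor $G(\G_{i,j})$ is freely indecomposable, so the decomposition is canonical and Gilbert's theorem applies.

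Next I would match the three families of generators in Gilbert's presentation against $\cP$. His factor automorphisms are supplied by the chosen presentations $\la \cP_i|\cR_i\ra$ of $\Aut(G_{i,1})$ from Definition \ref{defn:factpres}; since $\Aut(G_{i,j})$ is the $\w^i_{1,j}$-conjugate of $\Aut(G_{i,1})$ it suffices to present the $j=1$ copy and then conjugate. His factor-permuting automorphisms are exactly the generators $\cP^\G_{\sym,i}$, for $0\le i\le d$, all of which lie in $\cP^\G_\cmp$. His Whitehead (Fouxe--Rabinovitch) automorphisms correspond to $\cP_\ext=\Tre\cup\LInne$: the partial conjugations $\a_{X_j,y}$ of a whole factor by a generator $y$ of a different factor constitute $\LInne$, while the transvections $x\mapsto xy$ attached to the cyclic factors $\la x\ra$ constitute $\Tre$.

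I would then transcribe Gilbert's defining relations into the set $\cR$ of Definition \ref{defn:ourrels}. These split into: the internal relations $\cR_i$ of each $\Aut(G_{i,1})$; the relations $\cR^\G_{\sym,i}$ of the permutation groups, together with the conjugation formulas $\i_z^\theta=\i_{z\theta}$, $\tr_{x,y}^\theta=\tr_{x\theta,y\theta}$ and $\a_{C,y}^\theta=\a_{C\theta,y\theta}$ recorded in the proof of Proposition \ref{prop:autdecomp}; and the Fouxe--Rabinovitch relations among the elements of $\cP_\ext$ and between them and the factor automorphisms. The conclusion then follows by a Tietze-transformation argument showing that $\la\cP|\cR\ra$ and Gilbert's presentation define the same group.

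The main obstacle will be the bookkeeping at the cyclic factors, which is exactly the point where the automorphism group fails to split cleanly. Because an infinite cyclic factor surjects onto $\ZZ$, Fouxe--Rabinovitch theory equips it with transvection-type generators rather than mere conjugations, and the conjugation automorphisms there become redundant: for an isolated vertex $x$ and $z\ne x^{\pm1}$ one has $\a_{X_s,z}=\tr_{x,z}\tr_{x^{-1},z}$, which is why $\LInne$ is restricted to $j\in J$ while the cyclic case is carried by $\Tre$. Getting the generating set right across the cyclic/non-cyclic divide, reconciling the inversions $\i_x$ (which generate $\Aut(\ZZ)$ on a cyclic factor) with Gilbert's factor-automorphism generators, and pruning the resulting redundancies from $\cR$, is the delicate part that the full argument, deferred to the appendix, must carry out.
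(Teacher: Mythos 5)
Your strategy coincides with the paper's: the appendix proof likewise starts from the free product decomposition $G=\ast_{(i,j)\in S\cup J}G(\G_{i,j})$ and Gilbert's presentation $\la\cQ|\cS\ra$, and your identification of the three families of generators (factor automorphisms, permutation automorphisms, Whitehead automorphisms) with $\cP_{\int}$, the $\cP^\G_{\sym,i}$ and $\cP_{\ext}=\Tre\cup\LInne$ is exactly right, as is your diagnosis of the cyclic-factor subtlety (the redundancy $\a_{X_s,z}=\tr_{x,z}\tr_{x^{-1},z}$, whence the restriction $j\in J$ in $\LInne$).

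The gap is that the entire content of the proposition is that the \emph{specific} relator set $\cR$ of Definition \ref{defn:ourrels} suffices, and $\cR$ is not a transcription of Gilbert's relations: Gilbert's generators are Whitehead automorphisms $(A,a)$ with $A$ a subset of $J\cup X_S^{\pm1}$ and $a$ an \emph{arbitrary element} of a factor $G(\G_j)$, whereas $\cP_{\ext}$ contains only the elementary pieces $\a_{X_j,y}$ and $\tr_{x,y}$ with $y$ a single letter. One must therefore (i) express each $(A,a)$ as a word over $\cP$, which requires choosing a geodesic word for $a$ and proving from the relation $\cR$10 that the resulting element is independent of that choice, and (ii) derive each of Gilbert's relators $\cS$1--$\cS$9 from $\cR$1--$\cR$11, which is the bulk of the appendix and the only place the proof could actually fail. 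A related imprecision: the conjugation formulas you quote ($\tr_{x,y}^\theta=\tr_{x\theta,y\theta}$, $\a_{C,y}^\theta=\a_{C\theta,y\theta}$) hold only when $\theta$ is a graph automorphism; for a general factor automorphism $\theta$ the image $y\theta$ is a word $y_1\cdots y_k$, and the correct relation ($\cR$11) conjugates $\tr_{x,y}$ to the \emph{product} $\tr_{z,y_k}\cdots\tr_{z,y_1}$. As written, your argument establishes that $\Aut(G)$ is generated by $\cP$ and admits some presentation obtained from Gilbert's by Tietze moves, but not that $\la\cP|\cR\ra$ is such a presentation; the two-way verification (relators of $\cR$ hold in $\Aut(G)$, and relators of $\cS$ are consequences of $\cR$ under the substitution above) is the proof, and it is missing.
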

\begin{defn}\label{defn:ourrels}
Let $\cR$ be the union of the following sets.
\be[(i)]
\item\label{it:ourrels1} $\cR^\G_{\sym,i}
$, for $i=0,\ldots, d$.
\item\label{it:ourrels2} $\cR_{i}$, for $i=0,\ldots, d$. 
\item\label{it:ourrels3} The sets 
\begin{align*}
\cW_j&=\{[\w^j_{a,b},p]:p\in \cP_{j},2\le a<b\le m_j\}\\
&\cup \{[p, \w^j_{1,a}q\w^j_{1,a}]: p,q\in \cP_{j},2\le a\le m_j \}\\
&\cup \{[\w^j_{1,a}p\w^j_{1,a},\w^j_{1,b}q\w^j_{1,b}]: 
p,q\in \cP_{j},2\le a<b\le m_j  \},
\end{align*}
for $j=0,\ldots ,d$
\item \label{it:ourrels4}
$\cD=\{[p,q]|
p\in \cP_{i}\cup \cP^\G_{\sym,i}
, q\in \cP_{j}\cup \cP^\G_{\sym,j}
, 
\textrm{ with } 0\le i<j\le d\}$.
\begin{comment}
The sets 
\be[(i)]
\item $\cD_\cmp$ (Definition \ref{defn:graphaut}); 
\item  $\cD_*=\{[p,q]|p\in \cP_{*,i}, q\in \cP_{*,j}\textrm{ with } 1\le i<j\le d^\sharp\}$ and 
\item  $\cD_\act=\{[p,q]|
(p,q)\in \cP_{*,i}\times(\cP_{\cmp,j}\cup \cP_{\sym,j}
)
\textrm{ or } 
(p,q)\in (\cP_{\cmp,j}\cup \cP_{\sym,j}
) \times\cP_{*,i}
\textrm{ with } 1\le i<j\le d^\sharp\}$.
\ee
Define $\cD=\cD_\cmp\cup \cD_*\cup \cD_\act$. 
\end{comment}
\item The set of relations $\cR$i, for i=1,\ldots ,11, below. 
\ee
\end{defn}
Note that if $\tr_{x,y}\in \Tre$ then necessarily $x\in X_i^{\pm 1}$, 
for some $i\in S$  and 
$y\in X_j^{\pm 1}$, where $j\in S\cup J$ with  $j\neq i$. Similarly, if  $\a_{C,x}\in \LInne$ then
$x\in X_i^{\pm 1}$, for some $i\in S\cup J$ and $C=\G_j$, where $j\in J$,
with  $j\neq i$.
In the relations below all the  transvections $\tr_{\cdot,\cdot}$ and 
elementary conjugating automorphisms $\a_{\cdot,\cdot}$, that 
are  mentioned explicitly, belong to $\Tre$ or $\LInne$, respectively. 
The relations are defined for
all $u,v,x,y,z\in X^{\pm 1}$ and $i,j,k,l\in S\cup J$, for
which the preceding conditions hold. 
To ease the description of conditions placed on such automorphisms, 
for $a\in \cup_{j\in J} G(\G_j)\cup X_S^{\pm 1}$,  
we define
\begin{equation*}\label{eq:vat} 
\vat a= 
\left\{
\begin{array}{ll}
j & \textrm{ if } a\in G(\G_j)\\
x & \textrm{ if } a=x^\e, \textrm{ where } x\in X_S, \e={\pm 1}
\end{array}
\right. 
.
\end{equation*} 

For $y\in X_j^{\pm 1}$, where $j\in J$, we 
 write
$\g_y(j)$ for the automorphism conjugating every element of $X_j$  by $y$ 
and fixing all elements of $X\bs X_j$. That is
\[x\g_y(j)=
\begin{cases} 
x^y& \textrm{ if } x\in X_j\\ 
x &\textrm{ if } x\in X\bs X_j
\end{cases},\] so
$\g_y(j)$ is equal to the product over all connected components $C$ of
$(\G_j)_{y^\perp}$ of the automorphisms $\a_{C,y}$. 
\be[{$\cR$}1., ref={$\cR$}\arabic*]
\item\label{it:R1} $[\tr_{x,y},\tr_{u,v}]=1$, if either
\be[(i)]
\item $u=x^{-1}$ 
or 
\item 
$\vat x \neq \vat u$, $\vat x\neq \vat v$ and $\vat y\neq \vat u$.  
\ee 
\item\label{it:R2} $[\tr_{x,y}^{-1},\tr_{u,x}^{-1}]=\tr_{u,y}^{-1}$,  
if $\vat x\neq \vat u$ and $\vat y \neq \vat u$. 
\item\label{it:R3} $\tr_{x,y}^{-1}\tr_{y,x}\tr_{x^{-1},y}=
\w^0_{i,j}\w^0_{1,j}\i_z\w^0_{1,j}$, 
where $x\in X_{0,i}^{\pm 1}\subseteq X_S^{\pm 1}$ and $y\in X_{0,j}^{\pm 1}\subseteq X_S^{\pm 1}$, $i\neq j$ 
and $X_{0,1}=\{z\}$. (If $j=1$ the right hand side of this relation is
 replaced by  $\w^0_{i,1}\i_z$.)
\item\label{it:R4} $[\a_{X_i,x},\a_{X_j,y}]=1$, if 
$\vat x,\vat y\notin \{i,j\}$, $i\neq j$ and $i,j\in J$.
\item\label{it:R5} $[\a_{X_j,x},\a_{X_i,y}\a_{X_j,y}]=1$, if 
$i\neq j$ 
and 
$\vat x = i$
.
\item\label{it:R6} $[\tr_{x,y},\a_{X_l,z}]=1$, if $\vat y\neq l$ and 
$\vat x\neq \vat z$.
\item\label{it:R7} $[\tr_{x,y}^{-1},\a_{X_l,x}^{-1}]=\a_{X_l,y}^{-1}$, 
if $\vat y\neq l$.
\item\label{it:R8} $[\tr_{x,y},\a_{X_i,z}\tr_{x,z}]=1$, if $\vat y=i$.
\item\label{it:R9} $\tr_{x,y}\a_{X_i,x}=\a_{X_i,x}\tr_{x^{-1},y}^{-1}\g_y(i)^{-1}$,
if $\vat y=i$. 
\item \label{it:last}
Let $x\in X_S^{\pm 1}$, $y,z\in X^{\pm 1}$, $i\in S\cup J$ be such that   
$\vat y=\vat z =i$,  with $\al(y)\cap \al(z)=\nul$ and  
$[y,z]=1$. Let  $u\in X$ and  $j\in J$, where $i\neq j$.  Then 
\be[(i)]
\item\label{it:last2} 
$\tr_{x,u}^{-1}=\tr_{x,u^{-1}}$%
;
\item \label{it:last1}
$[\tr_{x,y},\tr_{x,z}]=1$;
\item\label{it:last3}
$\a_{X_j,u}^{-1}=\a_{X_j,u^{-1}}$
and 
\item\label{it:last4}
 $
[\a_{X_j,y},\a_{X_j,z}] =1.
$
\ee
\item \label{it:lastdash}
Let $y\in X^{\pm 1}$ and $\theta\in \cP^\G_\cmp\cup \cP_{\int}$ 
and let $y_1\cdots y_k$ be a word representing
$y\theta$, with $y_i\in X^{\pm 1}$. 
\be[(i)]
\item 
Let $x,z\in X_S^{\pm 1}$ such that  $z=x\theta$. Then
\[
\tr_{x,y}\theta =\theta \tr_{z,y_k}\cdots \tr_{z,y_1},
\]
 for 
all $\tr_{x,y}\in \Tre$.
\item
Let $i,j\in J$ and $\G_j\theta = \G_i$. If $\vat y\neq j$ then, with $C=V(\G_j)$ and $D=V(\G_i)$, 
\[
\a_{C,y}\theta =\theta \a_{D,y_k}^{\e_1}\cdots \a_{D,y_1}^{\e_1},
\]
 for all 
$\a_{C,y}\in \LInne$. 
\ee
\ee 
The proof of this theorem is left to the appendix.

In the case where $m_0=0$, that is, no component of $\G$ is an isolated vertex, the set $\Tre$ is empty and the 
the relations of this presentation reduce to the union of the sets  
$\cup_{i=1}^d\cR_{\sym,i}
$, $\cup_{i=1}^d\cR_i$, 
$\cW$ and  $\cD$, given in \ref{it:ourrels1}--\ref{it:ourrels4} of Definition \ref{defn:ourrels}, together
with 
\ref{it:R4}, \ref{it:R5}, \ref{it:last}\ref{it:last3}, 
\ref{it:last4} and 
\ref{it:lastdash}(ii). In this case $\Aut(G)$ decomposes as a semi-direct
product $\Aut(G)=\la\LInne\ra \rtimes \la \cP^\G_\cmp\cup \cP_{\int}\ra$; and 
$\la \LInne\ra$ is called the Fouxe-Rabinovitch kernel and 
denoted $\FR(G)$ (see \cite{CollinsGilbert} for
more details).
 The structure of 
$\Aut(G)$ is then given by 
the following (special case of a) theorem from \cite{CollinsGilbert}.
\begin{theorem}[{\it cf.} \cite{CollinsGilbert}, Theorem C]
\label{theorem:FRker}
Suppose that no component of $\G$ is an isolated vertex. 
Define $\bar G=G_1\times \cdots \times G_n$ and $\FR(G)= \la \LInne\ra$. 
Then 
$\FR(G)$ is the kernel of the
canonical map from $\Aut(G)$ to $\Aut(\bar G)$. Moreover $\FR(G)$ has a normal 
series 
\[
1<P_{n-1}<\cdots <P_2<\FR(G)
\]
such that, setting $\FR_i(G)=\FR(G)/P_i$, 
\be[(i)]
\item $\FR(G)=P_i \rtimes \FR_i(G)$, 
\item $\FR_i(G)=\FR(G_1\ast \cdots \ast G_i)$ and
\item all the $P_i$ are finitely generated.
\ee
\end{theorem}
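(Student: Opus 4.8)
The plan is to derive the theorem from the Fouxe--Rabinovitch theory of automorphisms of free products, as formulated by Collins and Gilbert, after checking that the hypothesis that no component of $\G$ is an isolated vertex places us precisely in the setting of their Theorem~C. First I would record the free product decomposition $G=G_1\ast\cdots\ast G_n$ arising from the connected components of $\G$. Since no $\G_i$ is an isolated vertex, each $\G_i$ is connected with at least two vertices; hence each factor $G_i=G(\G_i)$ is freely indecomposable (the free factors of a partially commutative group correspond to the connected components of its commutation graph) and is not infinite cyclic (as $|V(\G_i)|\ge 2$). These two properties are exactly the standing hypotheses of \cite{CollinsGilbert}, and this is the only place the assumption on isolated vertices is used: an isolated vertex would contribute a free factor isomorphic to $\ZZ$, which forces the extra transvections $\Tre$ into the generating set and destroys the clean semidirect decomposition discussed above.

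To identify the kernel I would use the semidirect decomposition $\Aut(G)=\FR(G)\rtimes\la\cP^\G_\cmp\cup\cP_{\int}\ra$ already recorded for the case $m_0=0$. Let $q\colon G\to\bar G$ be the canonical projection onto $\bar G=G_1\times\cdots\times G_n$, whose kernel is the normal closure of $\bigcup_{i\neq j}[G_i,G_j]$. By uniqueness of the free product decomposition into freely indecomposable factors, every automorphism of $G$ permutes the factors up to conjugacy, so $\ker q$ is characteristic and the canonical map $\Aut(G)\to\Aut(\bar G)$ of \cite{CollinsGilbert} is well defined. The inclusion $\FR(G)\subseteq\ker$ is immediate: a generator $\a_{C,y}\in\LInne$ conjugates an entire factor $G_j$ by a generator of a different factor, and distinct factors commute in $\bar G$, so $\a_{C,y}$ induces the identity on $\bar G$. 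For the reverse inclusion I would show that the complement $\la\cP^\G_\cmp\cup\cP_{\int}\ra$ meets $\ker$ trivially: each of its elements permutes the factors $G_i$ and acts on each factor by an automorphism, and this data is recorded faithfully by the induced automorphism of $\bar G$, so a nontrivial such element cannot lie in $\ker$. Together with the semidirect decomposition this yields $\FR(G)=\ker$.

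For the normal series I would appeal to the structural part of Collins and Gilbert's theorem, arguing by induction on the number of factors. For $2\le i\le n$, the retraction $G\to G_1\ast\cdots\ast G_i$ collapsing the remaining factors induces a surjection $\FR(G)\to\FR(G_1\ast\cdots\ast G_i)=\FR_i(G)$, obtained by deleting those partial conjugations that involve a discarded factor. I would set $P_i$ equal to its kernel; the retraction has an evident section (re-introducing the omitted factors without adding conjugations), which gives the split extension $\FR(G)=P_i\rtimes\FR_i(G)$. The subgroups $P_i$ are normal in $\FR(G)$ and, since a larger $i$ deletes fewer factors, they form the increasing chain $1=P_n<P_{n-1}<\cdots<P_2<\FR(G)$. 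Each $P_i$ is generated by the finitely many partial conjugations $\a_{X_j,y}$ killed by the retraction, whence finite generation.

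The main obstacle is not any single deduction but the faithful translation between the two frameworks: matching our Laurence-type generators $\LInne$ with the Fouxe--Rabinovitch generators of \cite{FR1,FR2,CollinsGilbert}, matching our projection $\Aut(G)\to\Aut(\bar G)$ with theirs, and pinning down exactly which partial conjugations generate each $P_i$ so that the splitting and finite generation are transparent. All of the substantive structural content---that the kernel of the natural map is the Fouxe--Rabinovitch group and that it carries this iterated split filtration by finitely generated normal subgroups---is supplied by \cite{CollinsGilbert}; the work specific to this paper is to confirm that ``no isolated vertex'' guarantees their freely-indecomposable, non-cyclic hypotheses and then to transcribe their conclusion into the present notation.
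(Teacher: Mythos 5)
Your proposal is correct and follows the same route as the paper, which offers no independent proof of this statement: it simply observes that when $m_0=0$ the set $\Tre$ is empty, records the resulting decomposition $\Aut(G)=\la\LInne\ra\rtimes\la\cP^\G_\cmp\cup\cP_{\int}\ra$, and then quotes Theorem C of \cite{CollinsGilbert}, exactly as you do. Your additional verifications (that connected components with at least two vertices give freely indecomposable, non-cyclic factors, and that the complement meets the kernel trivially) are sound elaborations of the translation step the paper leaves implicit.
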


In the light of the results of this section we may when necessary reduce to
the study of $\Aut^\G(G)$ where 
$\G$ is a connected graph. In particular, to give an explicit presentation
of $\Aut(G)$ it remains to determine the sets $\cR_i$ of Definition \ref{defn:factpres}.  
\subsection{Conjugating Automorphisms}\label{sec:conj}

The subgroup of basis-conjugating automorphisms, which we consider here,  plays an important role in the structure of $\Aut(G)$ and has a rich and complex structure, even in
the case of free groups: see for example 
\cite{mccool86,GutierrezKrstic98, orlandi00, Bardakov03}.  
We shall consider several subgroups of the  basis-conjugating
automorphisms $\Conj(G)=\la \LInn(G)\ra$  which we now define.

Let $x\in X$ and, as usual, denote by $\G_x$ the full subgraph of $\G$
generated by $X\bs \{x\}$ and note that 
if $y\in X$ lies in a connected component 
$C$ of $\G_x$ then $y^\perp \subseteq C\cup \{x\}$. 
\begin{defn}\label{defn:aConj}
Let $x\in X$ 
  and let $C$ 
 be a connected component of $\G_x$. 
Then the 
automorphism $\beta_{C,x}$ given by
\[
y\beta_{C,x}=
\left\{
\begin{array}{ll}
y^x,& \textrm{ if } y\in C\\
y, &\textrm{ otherwise }
\end{array}
\right.
\]
is called an {\em aggregate conjugating automorphism}. The
subgroup of $\Conj(G)$ generated by all aggregate conjugating automorphisms
is denoted $\aConj(G)$. 
\end{defn}
\begin{comment}
In the following definition, if $\G$ is not connected then isolated 
vertices need careful treatment: because if $x\in X_S$ then $\ad(x)=X$. 
To avoid the difficulties this introduces the condition of the next definition
is imposed only in the connected component containing each vertex. 
\end{comment}
\begin{defn}\label{defn:NConj}
An element $\phi\in \Conj(G)$ is said to be a \emph{normal conjugating
automorphism} 
if, 
for every element $x\in X$, there exists $f_x\in G$ such that
$y\phi=y^{f_x}$, for all $y\in \ad(x)$. 
The subgroup of all
normal conjugating automorphisms is denoted $\NConj(G)$. 
\end{defn}

\begin{defn}\label{defn:VConj}
An element $\phi\in \Conj(G)$ is said to be a \emph{vertex conjugating
automorphism} 
if, for every element $x\in X$ there exists $f_x\in G$ such that
$y\phi=y^{f_x}$, for all $y\in [x]$. The subgroup of all
vertex conjugating automorphisms is denoted $\VConj(G)$. 
\end{defn}

If $\G$ is compressed ($\G=\G^\cmp$) then $\VConj(G)=\Conj(G)$. 

\begin{defn}\label{defn:singular}
An
elementary conjugating automorphism $\a_{C,u}$, where $u=x^{\pm 1}$, for
some $x\in X$ is called an {\em elementary singular 
conjugating automorphism} if 
 $C=\{y\}$, for some $y\in X$, and the set of all such elementary
conjugating automorphisms is denoted $\SLInn=\SLInn(G)$. 
 The subgroup of $\Conj(G)$ generated by $\SLInn(G)$  
is called {\em singular} and denoted $\sConj(G)$. 
\end{defn}

\begin{defn}
Let $\Tr_\perp=\{\tr_{x^{\e},y^{\d}}\in \Tr|x\in y^\perp, \e,\d=\pm 1\}$ and 
$\Tr_\lk=\{\tr_{x^{\e},y^{\d}}\in \Tr|x\notin y^\perp, \e,\d=\pm 1\}$. 
\end{defn}

\begin{defn}\label{defn:isols}
\begin{itemize}
\item
If $x$ and $y$ are vertices of $X$ such that 
$x^\perp \cap y^\perp = y^\perp\bs y$ 
then we say that 
$x$ {\em dominates} $y$. 
\item
The set of all vertices dominated by $x$ is denoted 
$\Isol(x)=\{u\in X\,|\, x \textrm{ dominates } u\}$. 
\item 
The set of all dominated
vertices is denoted $\Isol(\G)=\cup_{x\in X}\Isol(x)$. 
\item
For fixed $y\in X$ the 
set of all $x$ such that $y\in \Isol(x)$ and $[y]\neq [x]$ 
is the \em{outer admissible} set
of $y$, denoted $\ado(y)$. 
\end{itemize}
\end{defn}
From the definition and Lemma \ref{lem:ad0} \ref{it:ad12}
it follows that $x$ dominates $y$ if and only if $[x,y]\neq 1$ and
$\ad(x)\subseteq \ad(y)$. Thus $\ado(y)=\{x\in \ad(y): x\notin [y]\cup y^\perp\}$.

If $\a_{C,x}\in \SLInn(G)$ then $C=\{y\}$ is a connected component
of $\G_{x^\perp}$ so $y^\perp\bs y\subseteq x^\perp$ and $y\notin x^\perp$.
Therefore $x$ dominates $y$ and   $\tr_{y,x}\in \Tr_\lk$ and 
$\a_{C,x}=\tr_{y,x}\tr_{y^{-1},x}$.
Hence $\sConj$ is the subgroup of $\Aut(G)$ generated by the set 
$\{\tr_{y,x}\tr_{y^{-1},x}|x \textrm{ dominates } y\}=\SLInn$.

\begin{defn}\label{defn:collectedconj}
Let $x,u\in X$ such that $x$ dominates $u$ and let $[u]\bs\{x\}=\{v_1,\ldots ,v_n\}$.
The conjugating automorphism 
\[\a_{[u],x}=\prod_{i=1}^n\a_{\{v_i\},x}\]
is called a {\em basic collected conjugating automorphism} and the
set of all    basic collected conjugating automorphisms is denoted 
$\CLInn=\CLInn(G)$. The subgroup of $\Conj(G)$ generated by
$\CLInn(G)$ is denoted $\CConj=\CConj(G)$. 
\end{defn}

\begin{defn}\label{defn:regvert}
\begin{itemize}
\item
The set of {\em regular elementary conjugating automorphisms}
is $\RLInn=\RLInn(G)=(\LInn(G)\cap \VConj(G))\backslash \SLInn(G)$. 
\item
The set of 
{\em basic  vertex 
conjugating automorphisms} is 
 $\VLInn=\VLInn(G)=\RLInn(G)\cup \CLInn(G)$. 
\end{itemize}
\end{defn}

We record some straightforward properties of  these definitions
in the following lemma. 
\begin{lemma}\label{lem:elconj} 
Let $\Gamma$ be a 
 graph. 
\be[(i)]
\item\label{it:elconj1} 
\be
\item If $\G$ has an isolated vertex then $\Inn=\NConj$ and 
\item if $\G$ has no isolated vertex then $\aConj\le \NConj$.
\ee
In all cases 
\[\Inn\le \aConj\le \VConj\le \Conj\]
and 
\[\Inn\le \NConj\le \VConj\le \Conj.\]
\item\label{it:elconj2} $\VLInn\subseteq \VConj$. 
\begin{comment} 
\item\label{it:elconj3} $\CConj\subseteq \sConj\cap \VConj$.
\end{comment}
\item\label{it:elconj4} 
If $\phi\in \sConj$ then $x\phi =x^{f_x}$, 
where $\al(f_x)\subseteq \ad(x)$, for all $x\in X$.
\ee
\end{lemma}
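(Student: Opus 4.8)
The plan is to prove the four assertions separately, in each case reducing to generators and then feeding the combinatorics of admissible sets (Lemmas \ref{lem:admot}, \ref{lem:ad0}, \ref{lem:ad1}) and the connectivity input of Lemma \ref{lem:fixad} into the defining conditions of the subgroups. For the containments in \ref{it:elconj1} I would first dispatch the easy links. The inclusion $\Inn\le\aConj$ follows by writing conjugation by a generator $x$ (which fixes $x$ and sends every other generator $w$ to $w^x$) as the product $\prod_C\beta_{C,x}$ over the connected components $C$ of $\G_x$; since $\Inn$ is generated by such conjugations this gives $\Inn\le\aConj$. The inclusion $\Inn\le\NConj$ is immediate, a global conjugator serving as every $f_x$ in Definition \ref{defn:NConj}, and $\VConj\le\Conj$ holds by definition. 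For $\NConj\le\VConj$ I would use that $[z]\subseteq\ad(z)$ (Lemma \ref{lem:ad1}\ref{it:adcl2}): applying the defining condition of $\NConj$ to $x=z$ produces a single $f_z$ conjugating all of $\ad(z)$, hence all of $[z]$. Finally \ref{it:elconj1}(a) is quick: an isolated vertex $v$ has $\ad(v)=X$, so any $\phi\in\NConj$ conjugates every generator by the one element $f_v$ and is inner; with $\Inn\le\NConj$ this yields $\Inn=\NConj$.

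The substantive link is $\aConj\le\NConj$ (whence $\aConj\le\NConj\le\VConj$ recovers the remaining inclusion of the first chain). Here I would reduce to a single generator $\beta_{C,x}$ and show it lies in $\NConj$ by proving, for every $z$, that $\beta_{C,x}$ restricts to a single conjugator on $\ad(z)$. The key combinatorial claim is the dichotomy: if $\ad(z)\cap C\neq\nul$ then $\ad(z)\subseteq C\cup x^\perp$. Granting this, the conjugator $x$ works uniformly, because any element of $\ad(z)$ lying outside $C$ then lies in $x^\perp$ and is fixed both by $\beta_{C,x}$ and by conjugation by $x$, while the elements inside $C$ are conjugated by $x$ by definition; and if $\ad(z)\cap C$ meets only $x^\perp$ or is empty the conjugator is $1$. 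To prove the dichotomy I would take $u\in\ad(z)\cap C$ and a putative $w\in\ad(z)\setminus(C\cup x^\perp)$, note that $u$ and $w$ lie in distinct components of $\G_x$, and use the description $\ad(z)=\{y:z^\perp\bs z\subseteq y^\perp\}$ of Lemma \ref{lem:admot}\ref{it:admot1} to force every element of $z^\perp\bs z$ to be a common neighbour of $u$ and $w$ and hence (being unable to connect them through $\G_x$) to reduce to $\{x\}$; applying Lemma \ref{lem:fixad} to the component of $\G_{x^\perp}$ inside $C$ containing $u$, and using that $\G$ has no isolated vertex, I expect to conclude $z^\perp\bs z=\{x\}$, so $\ad(z)=x^\perp$, contradicting $u\in\ad(z)\cap C\setminus x^\perp$. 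I expect this control of how an admissible set may straddle the components of $\G_x$ to be the main obstacle. It is also the precise point at which isolated vertices must be excluded: for two isolated vertices in a common class, $\beta_{\{y\},x}$ conjugates one and fixes the other, so the chain $\aConj\le\VConj$ needs the separate treatment signalled by \ref{it:elconj1}(a).

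For \ref{it:elconj2} I would observe that $\RLInn\subseteq\LInn\cap\VConj\subseteq\VConj$ directly from Definition \ref{defn:regvert}, so only the collected generators need attention. A basic collected conjugating automorphism $\a_{[u],x}=\prod_{v\in[u]\bs\{x\}}\a_{\{v\},x}$ conjugates every element of $[u]$ by $x$ and fixes every vertex outside $[u]$; here one uses $x\notin[u]$, which holds since $x\in\ado(u)$ and $\ado(u)\cap[u]=\nul$ by Definition \ref{defn:isols}. Thus $x$ is the class conjugator for $[u]$ and $1$ the conjugator for every other class, giving $\CLInn\subseteq\VConj$ and hence $\VLInn\subseteq\VConj$.

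Finally, for \ref{it:elconj4} I would induct on the length of a word in the generators $\SLInn$ of $\sConj$. For a single generator $\a_{\{y\},u}$, with $u$ dominating $y$, one has $y\mapsto y^u$ and all other generators fixed; since $u\in\ado(y)\subseteq\ad(y)$ by Definition \ref{defn:isols}, the support condition $\al(f_x)\subseteq\ad(x)$ holds for every $x$, and the inverse $\a_{\{y\},u^{-1}}$ has the same form. For the inductive step, if $x\phi=x^{f_x}$ with $\al(f_x)\subseteq\ad(x)$ and $\psi$ has the same property, then $x(\phi\psi)=(x\psi)^{f_x\psi}=x^{g_x(f_x\psi)}$, and it remains to see $\al(g_x(f_x\psi))\subseteq\ad(x)$. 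This follows because $\al(g_x)\subseteq\ad(x)$ by hypothesis and, for each $v\in\al(f_x)\subseteq\ad(x)$, one has $\ad(v)\subseteq\ad(x)$ by Lemma \ref{lem:ad0}\ref{it:ad8}, so $v\psi=v^{g_v}$ has support inside $\ad(x)$ and therefore so does $f_x\psi$. As the automorphisms satisfying the stated property contain $\SLInn\cup\SLInn^{-1}$ and are closed under composition, they contain all of $\sConj$.
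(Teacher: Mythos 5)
Most of your proof tracks the paper's own argument closely. The easy inclusions in \ref{it:elconj1}, the isolated-vertex case (a), part \ref{it:elconj2}, and the induction in \ref{it:elconj4} (single-generator base case, then propagating the support condition via $y\in\ad(x)\Rightarrow\ad(y)\subseteq\ad(x)$, Lemma \ref{lem:ad0}\ref{it:ad8}) are essentially identical to the paper. For $\aConj\le\NConj$ your ``dichotomy'' is, in substance, the paper's argument: the paper also reduces to $x\notin u^\perp\bs u$ and $x\notin\ad(u)$ and then observes that for $v,w\in\ad(u)$ and any $r\in u^\perp\bs u$ the path $v,r,w$ avoids $x$, so $\ad(u)$ sits in a single component of $\G_x$. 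One caution: your appeal to Lemma \ref{lem:fixad} is misplaced, since that lemma concerns components of $\G_{x^\perp}$ while $\beta_{C,x}$ is built from components of $\G_x$; but the direct path argument you sketch alongside it is sound and is what the paper uses, so nothing is lost.

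The one genuine gap is $\aConj\le\VConj$ ``in all cases.'' You obtain it only as a corollary of $\aConj\le\NConj\le\VConj$, which you prove only when $\G$ has no isolated vertex, and your fallback --- that the isolated-vertex case is ``signalled by (a)'' --- does not close the gap: (a) asserts $\Inn=\NConj$, which says nothing about where $\aConj$ sits relative to $\VConj$. Worse, the example you yourself exhibit (two isolated vertices $y,y'$, so $[y]=[y']$ while $\beta_{\{y\},x}$ conjugates $y$ and fixes $y'$) is precisely a counterexample to the paper's own one-line justification, namely the claim that $[y]\subseteq C\cup\{x\}$ for a single component $C$ of $\G_x$; for that $\beta_{\{y\},x}$ no single conjugator works on $[y]$, so the unqualified chain $\aConj\le\VConj$ fails for such graphs. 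You should therefore either restrict the first chain to graphs without (multiple) isolated vertices, or supply a genuinely different argument for that case; as written, neither your proposal nor the paper's proof establishes it.
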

\begin{proof} 

~\ref{it:elconj1}~
It is immediate from the definitions that $\Inn \le \aConj$,  
$\Inn \le \NConj$ and $\VConj\le \Conj$. 
That $\aConj \le \VConj$ follows from the fact  that, if $x,y\in X$ then
$[y]\subseteq C\cup{x}$, for some connected component $C$ of $\G_x$. As
$[x]\subseteq \ad(x)$, for all $x$, it follows that $\NConj\le \VConj$.

If $x$ is an isolated vertex then $\ad(x)=X$, so for $\phi\in \NConj$ there
exists $f_x\in G$ such that $y\phi=y^{f_x}$, for all $y\in X$. Hence, 
in this case  $\NConj=\Inn$. 
Assume then that $\G$ has no isolated vertex. In this case, for all $x\in X$,
the connected component of $\G$ containing $x$ also contains $\ad(x)$. 
To see that $\aConj\le \NConj$ suppose that $u\in X$ and
consider the aggregate conjugating automorphism 
$\b=\b_{C,x}$, where $x\in X$. 
If $x\in u^\perp\backslash u$ then $v\b=v$, for all 
$v\in \ad(u)$, so assume that this is not the case. 
If $x\in \ad(u)$ then $x\notin u^\perp\backslash u$ implies that 
$\ad(u)\subseteq C^\prime\cup \{x\}$, for some component $C^\prime$ of
$\G_x$, so we may also assume that $x\notin \ad(u)$.

Now let  $v$ and $w$ be distinct elements of  $\ad(u)$
 and $r$ be any element of $u^\perp\backslash u$. Then the 
path $v,r,w$ does not contain $x$; so $v$ and $w$ are  either 
both in $C$ or both outside $C$. 
Hence  $\b_{C,x}$ either fixes every element of $\ad(u)$, or acts as conjugation by $x$ on every element
of $\ad(u)$. Thus all elements of $\aConj$ are normal, as required.

\ref{it:elconj2}~ This follows directly from the definitions and the 
fact that the sets $[x]$ partition $X$, so that $\CLInn \subseteq \VConj$. 

\ref{it:elconj4}~ An induction on the length of $\phi$ as a word in the generators
$\SLInn$ is used. If $\phi$ is trivial
there is nothing to be proved, so assume inductively that the result
holds for words of length at most $n-1$ and that 
$\phi=\phi_0\phi_1$, where $\phi_0$ has length $n-1$ as a word
in $\SLInn^{\pm 1}$ and $\phi_1\in \SLInn^{\pm 1}$, say
$\phi_1=\a_{C,z}$, for some $z\in X^{\pm 1}$ and $C=\{y\}$. Then 
$x\phi_0=x^{f_x}$, where $\al(f_x)\subseteq \ad(x)$, for all $x \in X$. 
Let $x\in X$ and $u\in \ad(x)^{\pm 1}$. Then $u\phi_1=u$ unless $u=y^{\pm 1}$. In the
latter case $y\in \ad(x)$ so $z\in \ad(y)^{\pm 1}\subseteq\ad(x)^{\pm 1}$ and $u\phi_1=u^z$ implies
$\al(u\phi_1)\subseteq \ad(x)$. Thus we have $\al(f_x\phi_1)\subseteq \ad(x)$. 
Now $x\phi=(x\phi_1)^{f_x\phi_1}$
and since $x\phi_1=x^z$ if and only if $x=y^{\pm 1}$ it follows that 
$\al(x\phi)\subseteq \ad(x)$, as required. 
\end{proof}

We shall use the following definition of Laurence \cite{Laurence95}.
\begin{defn}\label{defn:conjlen}
Let $\phi$ be a conjugating automorphism and for each $x\in X$ let
$g_x\in G$ be such that $x\phi=g_x^{-1}\circ x\circ g_x$. The
{\em length} $|\phi|$ of $\phi$ is $\sum_{x\in X} \lg(g_x)$. 
\end{defn}
We shall prove, in Propositions \ref{prop:vconjgens} and 
in a subsequent paper, 
 versions of Theorem \ref{thm:laurence} (i.e. 
Theorem 2.2 of \cite{Laurence95}) appropriate to $\VConj$ and
$\NConj$ 
and to do so make use of Lemma 2.5 and Lemma 2.8 ({\em loc. cit.}) which we
state here for reference. 
\begin{lemma}[{\cite{Laurence95}[Lemma 2.5 \& Lemma 2.8]}]\label{lem:laurlem}
Let $\phi$ be a non-trivial element of $\Conj$ and, 
for each $x\in X$, 
let $g_x\in G$ such that 
$x\phi=g_x^{-1}\circ x \circ g_x$. Then 
\be[(i)]
\item
\label{it:laur25}
there exist $x,y\in X$ and  $\e\in \{\pm 1\}$ such that 
$x^{\e}g_x$ is a right divisor of $g_y$, and
\item
 \label{it:laur28}
if $y,z\in X\bs x^\perp$ such that $[y,z]=1$  and $x^{\e}g_x$ 
is a right divisor of $g_y$
then $x^{\e}g_x$ is a right divisor of $g_z$. 
\ee
\end{lemma}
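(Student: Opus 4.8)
The plan is to treat the two parts separately, using as the main engines Lemma~\ref{lem:comm}, the centraliser formula~\eqref{eq:centraliser} (in the special case $C(z)=\langle z\rangle\times G(z^\perp\bs z)$ for a single generator $z$), and the lattice structure of left and right divisors in $G$ from \cite{EKR}. Throughout I fix, for each $x\in X$, the form $x\phi=g_x^{-1}\circ x\circ g_x$; the geodesic condition on $x\phi$ means that neither $x$ nor $x^{-1}$ is a left divisor of $g_x$, so $x^\e\circ g_x$ is geodesic and the phrase ``$x^\e g_x$ is a right divisor of $g_y$'' is unambiguous. I will also use repeatedly that $\phi$ is an \emph{automorphism}, not merely a conjugating endomorphism: this is essential, since an arbitrary system $(g_x)$ need not satisfy the conclusion.

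For part~\ref{it:laur25} I would argue by an extremal/length argument. Choose $y$ with $\lg(g_y)$ maximal; since $\phi\neq 1$ this length is positive, and the claim is that among the right divisors of $g_y$ one of the shape $x^\e\circ g_x$ occurs. The lever I would use to extract it is that $\phi^{-1}$ is again basis-conjugating: writing $x\phi^{-1}=x^{h_x}$ and expanding $x=(x\phi^{-1})\phi$ gives $g_x\,(h_x\phi)\in C(x)$, so the ``long'' conjugator $g_y$ must be almost entirely cancelled by $h_y\phi$, which is a product of conjugates of the $x\phi$; confronting this with the maximality of $\lg(g_y)$ pins a terminal segment of $g_y$ to be of the required form. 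I expect this existence statement to be the main obstacle: it is the combinatorial heart of the lemma (Laurence's Lemma~2.5), it is the only point where surjectivity of $\phi$ is genuinely used, and the bookkeeping of cancellation in the partially commutative setting is delicate.

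For part~\ref{it:laur28} the route is cleaner and runs through Lemma~\ref{lem:comm}. Given $y,z\in X\bs x^\perp$ with $[y,z]=1$, applying $\phi$ yields $[y^{g_y},z^{g_z}]=1$. Put $d=\gd^r_X(g_y,g_z)$ and write $g_y=g_y'\circ d$, $g_z=g_z'\circ d$; stripping the common right divisor preserves the geodesic form of the conjugates, so conjugating by $d^{-1}$ reduces matters to the case $\gd^r_X(g_y',g_z')=1$ with $[y^{g_y'},z^{g_z'}]=1$. Lemma~\ref{lem:comm} then gives $[g_y',z]=[g_z',y]=1$, whence $\al(g_y')\subseteq z^\perp$ and $\al(g_z')\subseteq y^\perp$ by the centraliser of a single generator coming from~\eqref{eq:centraliser}. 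Since $y,z\notin x^\perp$ means $x\notin y^\perp\cup z^\perp$, it follows that $x\notin\al(g_y')\cup\al(g_z')$, so every occurrence of $x^{\pm1}$ in $g_y$ and in $g_z$ lies inside $d$. As occurrences of one generator are linearly ordered in every geodesic, the leading letter of the right divisor $x^\e\circ g_x$ of $g_y$ is one of these $x$-occurrences and hence lies in $d$; a short argument in the divisor lattice (any letter of $x^\e g_x$ falling in $g_y'$ would be forced to commute with $x$, and so can be absorbed into the suffix) then shows $x^\e\circ g_x$ is a right divisor of $d$, and therefore of $g_z$. The one technical point to nail down here is this final localisation of the whole segment $x^\e g_x$, not merely of its first letter, inside $d$.
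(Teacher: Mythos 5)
The paper offers no proof of this statement: it is imported verbatim from Laurence \cite{Laurence95} (his Lemmas 2.5 and 2.8) precisely so that its content can be used as a black box, so there is no in-paper argument to measure yours against. On its own terms, your part \ref{it:laur28} is essentially right. Splitting off $d=\gd^r_X(g_y,g_z)$, conjugating by $d^{-1}$ and applying Lemma \ref{lem:comm} does give $x\notin\al(g_y')\cup\al(g_z')$, and the ``localisation'' you flag as the remaining technical point does close up: if $e$ is the greatest common right divisor of $s=x^{\e}\circ g_x$ and $d$ inside $g_y$, and $s=s'\circ e$, then $s'\circ d$ is a right divisor of $g_y=g_y'\circ d$, so $s'$ is a right divisor of $g_y'$ and hence $x\notin\al(s')$; as $s'$ and $x^{\e}$ are left divisors of $s$ with trivial common left divisor, every letter of $s'$ commutes with $x$, so $s'$ is a left divisor of $g_x$ lying in $C(x)$ --- impossible unless $s'=1$, since $g_x^{-1}\circ x\circ g_x$ is geodesic. (So your parenthetical should not say such letters ``can be absorbed into the suffix''; they cannot exist at all.)

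Part \ref{it:laur25} is where the proposal genuinely fails. Choosing $y$ with $\lg(g_y)$ maximal and expanding $x=(x\phi^{-1})\phi$ to get $g_x(h_x\phi)\in C(x)$ only tells you that a long product of the conjugates $g_{x_i}^{-1}x_i^{\pm 1}g_{x_i}$ collapses to $g_y^{-1}c_y$; nothing you write explains why the surviving terminal segment of $g_y$ must have the very specific shape $x^{\e}\circ g_x$ for a \emph{single} generator $x$, rather than a proper right divisor of some $g_x$ or an interleaving of pieces of several $g_{x_i}$. ``Confronting this with maximality pins a terminal segment of the required form'' is a restatement of the conclusion, not a mechanism for reaching it; controlling that cancellation in the partially commutative setting is exactly the substance of Laurence's Lemma 2.5, and you acknowledge yourself that it is the main obstacle and then leave it unresolved. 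As written, the proposal establishes (ii) but not (i).
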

(As can be seen from the example $\phi=\a_{C,x}^{-1}$ the variable $\e$ taking 
values $\pm 1$ is a necessary part of this lemma.)
\begin{lemma}\label{lem:vertshift} 
Let $\phi\in \VConj$ and for each $y\in X$ let $g_y\in G$ be such that
$y\phi=g_y^{-1}\circ y\circ g_y$. 
\be[(i)]
\item\label{it:vertshift1} If $[y]=[y]_\perp$ then $g_u=g_y$, for all $u\in [y]$. 
\item\label{it:vertshift2} If $[y]=[y]_\lk$ and $|[y]|\ge 2$ then there exists $v\in [y]$ 
and $m_y\in \ZZ$ 
such that $g_u=v^{m_y}\circ g_v$, for all $u\in [y]\bs\{v\}$. 
Moreover if $m_y\neq 0$ then $v$ is the
unique element of $[y]$ with this property and, setting $\e=-m_y/|m_y|$,
$S=[y]\bs\{v\}$ and $\a=\prod_{u\in S} \a_{\{u\},v^{\e}}$ we have
$\a\in \CLInn^{\pm 1}$ and $|\a\phi|<|\phi|$.
\ee
\end{lemma}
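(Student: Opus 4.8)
The plan is to describe each reduced conjugator $g_u$ for $u\in[y]$ explicitly via greatest left divisors, and then read off both statements. First I would record that, for a single generator $u$, the block decomposition is $u=r(u)$ with $A(u)=G(u^\perp\bs u)$, so \eqref{eq:centraliser} gives $C(u)=\la u\ra\times G(u^\perp\bs u)=G(u^\perp)$. Since $\phi\in\VConj$, fix $f\in G$ with $u\phi=u^{f}$ for all $u\in[y]$ (the common conjugator on the class $[y]$). The set of all conjugators carrying $u$ to $u^{f}$ is $C(u)f=G(u^\perp)f$, and the reduced one $g_u$ is characterised by having no nontrivial left divisor in $G(u^\perp)$; equivalently $g_u=e_u^{-1}\circ f$, where $e_u=\gd^l_{u^\perp}(f)$ is the greatest left divisor of $f$ in $G(u^\perp)=C(u)$. (The equivalence of ``$g_u^{-1}\circ u\circ g_u$ is geodesic'' with ``$g_u$ has no left divisor in $G(u^\perp)$'' is the standard reduction property used in \cite{EKR}.)

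For \ref{it:vertshift1}, when $[y]=[y]_\perp$ every $u\in[y]$ satisfies $u^\perp=y^\perp$. Hence $e_u=\gd^l_{y^\perp}(f)$ is independent of $u$, and therefore so is $g_u=e_u^{-1}\circ f$, giving $g_u=g_y$ at once.

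For \ref{it:vertshift2}, set $Z=y^\perp\bs y$, which is common to all $u\in[y]_\lk$; distinct elements of $[y]_\lk$ are non-adjacent, so $G(u^\perp)=\la u\ra\times G(Z)$ with $u$ central in this parabolic. Put $e=\gd^l_Z(f)$ and write $f=e\circ f'$, so $f'$ has no nontrivial left divisor in $G(Z)$. Since $Z\subseteq u^\perp$, $e$ left-divides $e_u$, and the complement $e^{-1}e_u$ is the greatest left divisor of $f'$ in $\la u\ra\times G(Z)$; writing it as $u^{m_u}w$ with $w\in G(Z)$ and using that $u$ is central, any nontrivial $w$ would be a left divisor of $f'$ in $G(Z)$, a contradiction. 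Thus $e_u=e\circ u^{m_u}$ and $f'=u^{m_u}\circ g_u$ for some $m_u\in\ZZ$. If two distinct $u,u'\in[y]$ had $m_u,m_{u'}\neq0$, then $u$ and $u'$ would both be initial letters of $f'$ and hence commute, contradicting non-adjacency in $[y]_\lk$; so at most one $u\in[y]$ has $m_u\neq0$. Choosing $v$ to be this element (or any element if all $m_u=0$) and $m_y=m_v$, I get $g_u=f'=v^{m_y}\circ g_v$ for every $u\in[y]\bs\{v\}$. If $m_y\neq0$ and some $v'\neq v$ also had this property with integer $m'$, then combining $g_{v'}=v^{m_y}\circ g_v$ and $g_v=(v')^{m'}\circ g_{v'}$ forces $v^{m_y}(v')^{m'}=1$ in the free group $G([y])$ when $m'\neq0$, and forces $v^{m_y}\circ g_v=g_v$ when $m'=0$; both are impossible, so $v$ is unique. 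This structural step is the main obstacle; the remaining length bound is then a direct computation.

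Finally, with $\e=-m_y/|m_y|$, $S=[y]\bs\{v\}$ and $\a=\prod_{u\in S}\a_{\{u\},v^\e}$, each $u\in S$ satisfies $u^\perp\bs u=v^\perp\bs v\subseteq v^\perp$ and $u\notin v^\perp$, so $v$ dominates $u$ and $\{u\}$ is an isolated vertex of $\G_{v^\perp}$; hence each $\a_{\{u\},v}\in\SLInn$ and $\a=(\a_{[v],v})^{\e}\in\CLInn^{\pm1}$. For the length comparison, note $\a$ fixes every generator outside $S$ and fixes $v$, so the reduced conjugators of $\a\phi$ agree with those of $\phi$ off $S$. For $u\in S$ I would compute, using $g_u=v^{m_y}\circ g_v$,
\[
u(\a\phi)=(v\phi)^{-\e}\,(u\phi)\,(v\phi)^{\e}=(v^{m_y+\e}g_v)^{-1}\,u\,(v^{m_y+\e}g_v),
\]
so $v^{m_y+\e}g_v$ is a conjugator for $u$ under $\a\phi$. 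Since $|m_y+\e|=|m_y|-1$, the reduced conjugator of $u$ under $\a\phi$ has length at most $(|m_y|-1)+\lg(g_v)=\lg(g_u)-1$. Summing over the $|S|=|[y]|-1\ge1$ elements of $S$, and leaving all other conjugators unchanged, yields $|\a\phi|\le|\phi|-|S|<|\phi|$, as required.
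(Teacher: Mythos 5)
Your proof is correct and follows essentially the same route as the paper's: both identify each reduced conjugator $g_u$ by stripping from the common conjugator $f$ its greatest left divisor in $C(u)=G(u^\perp)$, observe that in the $\sim_\lk$ case this divisor is a power of a single non-commuting vertex $v$ times a factor common to the whole class, and then make the same length computation with $v^{m_y+\e}g_v$. The only (harmless) quibble is the expression $g_u=e_u^{-1}\circ f$, which as written violates the $\circ$ convention since lengths do not add there; you mean $f=e_u\circ g_u$.
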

\begin{proof}
Since $\phi\in \VConj$, for all $y\in X$, there exists $f_y\in G$ such
that $u\phi=u^{f_y}$, for all $u\in [y]$, and we may
choose an $f_y$ of minimal length with this property. Fix $y\in X$. 
Then $u^{f_y}=u\phi=u^{g_u}$ so
$g_uf_y^{-1}\in C_G(u)$, for all $u\in [y]$. Therefore there are 
$a,b,c\in G$ such that $g_u=a\circ b$, $f_y=c\circ b$ and 
$g_uf_y^{-1}=a\circ c^{-1}\in C_G(u)$. As $g_u$ has no left divisor
in $C_G(u)$ this means that $a=1$ and so $f_y=c_u\circ g_u$, for
$c=c_u\in C_G(u)$. If $[y]=[y]_{\perp}$ then $C_G(u)=C_G(y)$, for all
$u\in [y]$, so in this case $g_y=f_y=g_u$, for all $u\in [y]$. 

Assume
then that $[y]=[y]_{\lk}$, with $|[y]|\ge 2$, 
and let $u,v\in [y]$, $v\neq u$, so $[u,v]\neq 1$.
Suppose $v\in \al(f_y)$. 
Then $f_y=c_v\circ g_v=c_v^\prime\circ v^m\circ g_v$, where 
$c_v^\prime \in G(v^\perp\bs v)$ and $m\in \ZZ$. Then
$u^{f_y}=u^{v^m g_v}$, since $v^\perp\bs v=u^\perp\bs u$. As
$g_v$ has no left divisor in $C_G(v)$ and $[v,u]\neq 1$ we have
$u^{v^m g_v}=g_v^{-1}\circ v^{-m}\circ u \circ v^m\circ g_v$, so
$g_u= v^m\circ g_v$. By choice of $f_y$ we have $c_v^\prime=1$, and
if $m\neq 0$ then no element $u\in [y]$, $u\neq v$, 
can be a left divisor of $v^m\circ g_v$, so the first statement of 
\ref{it:vertshift2} 
 as well as the uniqueness of $v$ follow.  
Moreover $v$ dominates $u$, for all $u\in [y]$, so the 
final statement of \ref{it:vertshift2}  also holds.
\end{proof}
\begin{prop}\label{prop:vconjgens}
$\VConj$ is generated by $\VLInn=\RLInn\cup
\CLInn$ and 
$\VConj\cap \sConj=\CConj$. 
\end{prop}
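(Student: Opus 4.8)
The plan is to prove both equalities by peak reduction, inducting on the conjugator length $|\phi|$ of Definition \ref{defn:conjlen}; Laurence's proof of Theorem \ref{thm:laurence} (through Lemma \ref{lem:laurlem}) serves as the template, and Lemma \ref{lem:vertshift} is the new ingredient that keeps the reduction inside $\VConj$. Throughout I write $y\phi=g_y^{-1}\circ y\circ g_y$ with $y\phi$ cyclically minimal. The two easy inclusions come for free: $\la\VLInn\ra\subseteq\VConj$ is Lemma \ref{lem:elconj}\ref{it:elconj2}, and $\CConj\subseteq\VConj\cap\sConj$ holds because each generator $\a_{[u],x}$ of $\CConj$ is by definition a product of elements of $\SLInn$, hence lies in $\sConj$, while it lies in $\VConj$ again by Lemma \ref{lem:elconj}\ref{it:elconj2}.

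For $\VConj\subseteq\la\VLInn\ra$ I would, given a nontrivial $\phi\in\VConj$, produce $\a\in\la\VLInn\ra$ with $|\a\phi|<|\phi|$; since $\a\phi\in\VConj$, induction then finishes. If some class $[y]=[y]_\lk$ with $|[y]|\ge2$ carries a nonzero shift $m_y$, Lemma \ref{lem:vertshift}\ref{it:vertshift2} delivers such an $\a$ in $\CLInn^{\pm1}$ at once. Otherwise Lemma \ref{lem:vertshift} makes the conjugators constant on every $\sim$-class, and I run Laurence's reduction, conjugating by a suitable $x^{-\e}$ the set $D=\{z\notin x^\perp:x^\e g_x \text{ is a right divisor of } g_z\}$ so as to cancel the right divisor guaranteed by Lemma \ref{lem:laurlem} and strictly lower $|\phi|$. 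By Lemma \ref{lem:laurlem}\ref{it:laur28} $D$ is a union of connected components of $\G_{x^\perp}$, and by class-constancy it is a union of $\sim$-classes, so conjugating $D$ by $x^{-\e}$ lies in $\VConj$. The point is that this move already decomposes into $\VLInn$-generators: since $\lk$-class members have identical neighbourhoods, any non-isolated member of $D$ drags its whole class into the same component, so every multi-vertex component of $D$ is a union of full classes and contributes a factor of $\RLInn$, whereas the isolated vertices of $D$ are exactly the members of $\lk$-classes $[z]$ with $x\in\ad(z)$, and these collect class-by-class into factors of $\CLInn$.

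For $\VConj\cap\sConj\subseteq\CConj$ I would induct in the same way but force every reducer into $\CLInn$. The extra input is Lemma \ref{lem:elconj}\ref{it:elconj4}, giving $\al(g_z)\subseteq\ad(z)$ for all $z$. After removing nonzero $\lk$-shifts by the $\CLInn$-reducer of Lemma \ref{lem:vertshift}\ref{it:vertshift2}, I reach the class-constant case; here, for every $z\in D$ the right divisor forces $x\in\al(g_z)\subseteq\ad(z)$, so by Lemma \ref{lem:admot}\ref{it:admot1} $z^\perp\bs z\subseteq x^\perp$ and $z$ is isolated in $\G_{x^\perp}$. Thus $D$ has only singleton components, $x$ dominates each $z\in D$, and the reducing move collects class-by-class into a product of elements of $\CLInn$, i.e. lies in $\CConj\subseteq\VConj\cap\sConj$. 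Hence $\a\phi$ stays in the intersection and the induction goes through, giving $\VConj\cap\sConj=\CConj$.

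The hard part will be the class-coherence bookkeeping in the class-constant cases: showing the Laurence move always reassembles from $\RLInn$- and $\CLInn$-generators and never needs a bare element of $\SLInn$ that splits a class. This rests on marrying the component-coherence of Lemma \ref{lem:laurlem}\ref{it:laur28} to the class-constancy from Lemma \ref{lem:vertshift}, via the observation that the members of a $\lk$-class share a neighbourhood and so are either all isolated (when $x\in\ad$) or all in one component (when $x\notin\ad$). A second delicate point, used implicitly in the $\sConj$ argument, is that a dominating vertex cannot dominate a $\perp$-twin, since it commutes with it; together with Lemma \ref{lem:vertshift}\ref{it:vertshift1} this should force $\perp$-classes of size $\ge2$ to be conjugated trivially, leaving only $\lk$-classes to be collected. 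Once these coherence facts are in hand, the length estimates themselves are routine given Lemma \ref{lem:laurlem}.
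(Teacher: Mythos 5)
Your proposal is correct and follows essentially the same route as the paper: induction on $|\phi|$, using Lemma \ref{lem:vertshift}\ref{it:vertshift2} to kill nonzero $\lk$-shifts, then Lemma \ref{lem:laurlem} to find a right divisor $x^{\e}g_x$ and cancel it by an element of $\la\RLInn\cup\CLInn\ra$, with the same class/component coherence argument showing the reducer lies in $\VConj$ (and, via Lemma \ref{lem:elconj}\ref{it:elconj4}, in $\CLInn^{\pm1}$ for the $\sConj$ case). The only (harmless) deviation is that you conjugate the entire divisor set $D$ at once, where the paper uses only the components of $\G_{x^\perp}$ meeting $[y]$; both reduce length, and your decomposition of the move into $\RLInn$- and $\CLInn$-factors is justified by exactly the coherence facts you identify.
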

\begin{proof} That $\la \VLInn\ra\le \VConj$ is Lemma \ref{lem:elconj}
\ref{it:elconj2}. 
 For
the opposite inclusion  
 we use
induction on the length of an automorphism $\phi$ in $\VConj$. 
If $|\phi|=0$ then $\phi=1$ and there is nothing to prove. 
Assume that $|\phi|>1$ and that, for all 
conjugating automorphisms $\psi$ of shorter length, $\psi\in \VConj$ implies
$\psi \in \la\VLInn\ra$. 
If there exists $y\in X$ such that, $[y]=[y]_\lk$, $|[y]|\ge 2$ and, in  the notation of Lemma 
\ref{lem:vertshift},
$m_y\neq 0$, then it follows from that lemma and induction that $\phi\in \la \VLInn\ra$,
as claimed. Hence we assume that either $[y]=[y]_\perp$ or $m_y=0$, and so 
$g_y=g_u$, for all $u\in[y]$ and  for all $y\in X$. 
From Lemma \ref{lem:laurlem} \ref{it:laur25}  there exist $x, y\in X$, 
$\e\in\{\pm 1\}$  
such that $x\phi=g_x^{-1}\circ x\circ g_x$, $y\phi=g_y^{-1}\circ y\circ 
g_y$ and $x^\e g_x$ is a right divisor of $g_y$. 
Suppose that $[x,y]=1$. Then $[x\phi,y\phi]=1$; that is $[x^{g_x},y^{g_y}]=1$. 
If $g_y=a\circ x^\e\circ g_x$, for some $a\in G$, then this implies that $[x,y^{ax^\e}]=1$, from
which it follows that $[x,a]=1$. However, in this case $y^{g_y}$ is not reduced, a contradiction.
Therefore 
 $y\notin x^\perp$,  and so $u\notin x^\perp$, for all $u\in[y]$.

Let $[y] =\{v_1,\ldots ,v_r\}$ and let $C_1,\ldots ,C_s$ be the components of 
$\G_{x^\perp}$ containing $v_1, \ldots ,v_r$. Then, from Lemma 
\ref{lem:laurlem} \ref{it:laur28},  $x^{\e}g_x$ is a right divisor  
 of $g_c$ for all $c\in C_1\cup \cdots \cup C_s$. 
Let $\a=\prod_{i=1}^s\a_{C_i,x^{-\e}}$. Then $|\a\phi|<|\phi|$. 
We claim that $\a\in \VConj$. Suppose not, so there is some $z\in X$ and 
elements $u,v\in [z]$ such that $u\in C_i$, for some $i$, but
$v\notin \cup_{i=1}^s C_i\cup \{x^\perp\}$. This implies that $u^\perp\bs u=v^\perp \bs v
\subseteq x^\perp$ and, as $u\in C_i$ implies $x\notin u^\perp$, so $x$
dominates $u$. Then $C_i=\{u\}$ so $u\in [y]$ and $[z]=[y]\subseteq 
\cup_{i=1}^s C_i$, a contradiction. Thus no such $z$ exists and $\a\in\VConj$.

If 
 $s=1$ and $|C_1|\ge 2$ then 
$\a\in \RLInn^{\pm 1}$. If $s=1$ and $|C_1|=1$ 
 then $x$ dominates $y$ and $\a\in \CLInn^{\pm 1}$. If $s>1$ then
$x^\perp\supseteq y^\perp\backslash y$ and $x$ dominates every element
of $[y]$. In this case $\a\in \CLInn^{\pm 1}$ again. Hence 
 by
 induction  $\phi\in \la \LInn_R\cup \LInn_C\ra$. 

Suppose then that
  $\phi \in \VConj\cap \sConj$. The first paragraph of the argument above goes through
with $\CLInn$ in place of $\VLInn$ and $\VConj \cap \sConj$ in place of $\VConj$.  
In the second paragraph, from Lemma \ref{lem:elconj} \ref{it:elconj4}
it now follows that $\ad(x)\subseteq \ad(y)=\ad(v_i)$; so $x$ dominates
$v_i$, for $i=1,\ldots , r$. Therefore $\a\in \CLInn^{\pm 1} 
\subseteq \sConj$ and, by induction on $|\phi|$ again, $\phi\in 
\la \CLInn\ra=\CConj$, as claimed.
\end{proof}

\begin{pap2}
Our next aim is to describe generators for $\NConj$ 
and to do so we need some definitions. 
\begin{defn}\label{defn:conjclos}
Let $x\in X$ and let $Y\subseteq X$. Let $C_1,\ldots ,C_r$ be a complete
list  of 
components of $\G_{x^\perp}$ 
which have non-empty intersection with $Y$. Let $v_1,\ldots ,v_s$ be
a complete list of elements of $X$ such that $\ad(v_i)$ has non-empty
intersection with $(Y\bs x^\perp)$. Define 
\begin{align*}
\cJ_x(Y)&=Y\cup \cup_{i=1}^r C_i 
\textrm{ and } \\
\cK_x(Y)&=Y\cup \cup_{i=1}^s \ad(v_i).
\end{align*}
Define a sequence of subsets $H_0, \ldots ,H_n,\ldots $ of $X$, dependent
on $x$ and $Y$, by 
\begin{align*}
H_0&=Y\textrm{ and }\\
H_{i+1}&=\cK_x(\cJ_x(H_i)),\textrm{ for } i\ge 0.
\end{align*}
Define
\[\cH_x(Y)=H_n, \textrm{ where } n \textrm{ is minimal such that } 
H_n=H_{n+1}.\]
\end{defn}
Note that, for all $x\in X$ and $Y\subseteq X$
\begin{itemize}
\item as $X$ is finite $\cH_x(Y)$ is defined; 
\item $\cH_x(Y)=Z\cup Z^\prime$, 
where $Z$ is a 
union of connected components of 
$\G_{x^\perp}$ and $Z^\prime \subseteq x^\perp$; 
\item
 from Lemma \ref{lem:fixad} it follows that if $Y\bs x^\perp$ is
a union of connected components of $\G_{x^\perp}$ then $\cK_x(Y)$ is
contained in $Y\cup \bigcup_{v\in \Isol(x)} \ad(v)$; and 
\item for all $v\in X$ either $\ad(v)\cap \cH_x(Y)=\nul$ or 
$\ad(v)\subseteq \cH_x(Y)\cup x^\perp$. 
\end{itemize}
\begin{lemma}\label{lem:hel}
Let $x$ and $y$ be elements of $X$.
\be[(i)]
\item\label{it:hel1} If $y\in x^\perp$ then $\cH_x(y)=\{y\}$. 
\item\label{it:hel2} If $y\notin x^\perp$ and $y\le_\cK z$, where $z\neq x$ is a 
$\cK$-maximal element of $X$, then $\cH_x(y)=\cH_x(z)$.
\ee
\end{lemma}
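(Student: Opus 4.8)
The plan is to prove both parts directly from the definition of $\cH_x$, after first recording that $\cH_x$ is a genuine closure operator. I would begin by noting that both $\cJ_x$ and $\cK_x$ are extensive and monotone: enlarging $Y$ can only enlarge the list of components of $\G_{x^\perp}$ meeting $Y$, respectively the list of $v$ with $\ad(v)$ meeting $Y\bs x^\perp$. Hence the sequence $H_0\subseteq H_1\subseteq\cdots$ is increasing, and since it is eventually constant we have $\cH_x(Y)=\bigcup_i H_i$; from this, monotonicity of $\cH_x$ is immediate. Idempotence follows since, writing $W=\cH_x(Y)=H_n$ with $H_n=H_{n+1}$, recomputing from $H_0'=W$ gives $H_1'=\cK_x(\cJ_x(W))=H_{n+1}=W$. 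Thus $\cH_x$ is extensive, monotone and idempotent, and in particular any $\cH_x$-closed set $A\supseteq\{w\}$ satisfies $\cH_x(w)\subseteq A$.

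For part (i), suppose $y\in x^\perp$. Then $y$ is not a vertex of $\G_{x^\perp}$, so no component of $\G_{x^\perp}$ meets $\{y\}$ and $\cJ_x(\{y\})=\{y\}$; moreover $\{y\}\bs x^\perp=\nul$, so no set $\ad(v)$ qualifies in the definition of $\cK_x$ and $\cK_x(\{y\})=\{y\}$. Hence $H_1=H_0=\{y\}$, giving $\cH_x(y)=\{y\}$.

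For part (ii) I would first extract two consequences of the hypothesis $y\le_\cK z$, i.e. $\ad(y)\subseteq\ad(z)$. First, $y\in\ad(z)$, since $y\in\ad(y)\subseteq\ad(z)$ (using $y\in\ad(y)$ from Lemma \ref{lem:admot}\ref{it:admot1}). Second, $z\notin x^\perp$: arguing by contradiction, if $z\in x^\perp$ then, as $z\neq x$, we have $x\in z^\perp\bs z$, while $\ad(y)\subseteq\ad(z)$ gives $z^\perp\bs z\subseteq y^\perp$ by Lemma \ref{lem:ad0}\ref{it:ad12}, whence $x\in y^\perp$, contradicting $y\notin x^\perp$. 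With these in hand the core is short. Since $z\notin x^\perp$, the set $\cJ_x(\{y\})$ equals the connected component $C$ of $\G_{x^\perp}$ containing $y$, and $y\in C=C\bs x^\perp$. Because $y\in\ad(z)$, the vertex $z$ occurs among the elements $v$ defining $\cK_x(C)$ (as $\ad(z)$ meets $C\bs x^\perp$ in $y$), so $\ad(z)\subseteq\cK_x(C)=H_1\subseteq\cH_x(y)$; in particular $z\in\cH_x(y)$. The symmetric computation starting from $\{z\}$, using $z\in\ad(z)\cap C'$ for the component $C'$ of $z$, gives $\ad(z)\subseteq\cH_x(z)$ and hence $y\in\cH_x(z)$. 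Finally, since $\cH_x$ is a closure operator, $z\in\cH_x(y)$ forces $\cH_x(z)\subseteq\cH_x(y)$ and $y\in\cH_x(z)$ forces $\cH_x(y)\subseteq\cH_x(z)$; therefore $\cH_x(y)=\cH_x(z)$.

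I expect the only genuine subtlety to be the verification that $z\notin x^\perp$, as this is the single place where the hypotheses $z\neq x$ and $y\notin x^\perp$ are essentially used; the remainder is bookkeeping with the definitions of $\cJ_x$ and $\cK_x$ together with the closure-operator properties. It is worth remarking that $\cK$-maximality of $z$ is never invoked: the equality holds for \emph{any} $z\neq x$ with $y\le_\cK z$, so I would either state this more general form or simply refrain from using maximality in the proof.
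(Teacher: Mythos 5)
Your proof is correct and follows essentially the same route as the paper's: both parts reduce to showing $y\in\ad(z)$, deducing $z\notin x^\perp$ from Lemma \ref{lem:ad0}\ref{it:ad12}, and then obtaining the two inclusions $\ad(z)\subseteq\cK_x(\cJ_x(y))$ and $y\in\cK_x(\cJ_x(z))$; your explicit verification that $\cH_x$ is a closure operator simply spells out what the paper uses implicitly when it passes from these containments to $\cH_x(z)\subseteq\cH_x(y)$ and $\cH_x(y)\subseteq\cH_x(z)$. (The only blemish is the sentence attributing $\cJ_x(\{y\})=C$ to $z\notin x^\perp$ rather than to the hypothesis $y\notin x^\perp$, and your observation that $\cK$-maximality is not needed is accurate for the paper's proof as well.)
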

\begin{proof}
If $y\in x^\perp$ then $\cJ_x(y)=\cK_x(y)=\{y\}$, so also $\cH_x(y)=\{y\}$.

If $y\notin x^\perp$ and $y\le_\cK z$, where $z$ is $\cK$-maximal, then
$\ad(z)\cap (\{y\}\bs x^\perp)\neq \nul$, so 
$\ad(z)\subseteq \cK_x(\cJ_x(y))$. Hence $\cH_x(z)\subseteq \cH_x(y)$.
Moreover, for $z\neq x$, if $z\in x^\perp$ then 
$x\in z^\perp\bs z\subseteq y^\perp$
(Lemma \ref{lem:ad0} \ref{it:ad12}) 
so $y\in x^\perp$, a contradiction. Thus $z\notin x^\perp$ and
so $y\in \ad(z)$ and $\ad(z)\cap (\{z\}\bs x^\perp)\neq \nul$ which implies that 
$y\in \cK_x(\cJ_x(z))$ so $\cH_x(y)\subseteq \cH_x(z)$. 
\end{proof}
\begin{defn}\label{defn:NLInn}
Let $x\in X$ and $y\in X\bs x^\perp$. Let 
$\cH_x(y)\bs x^\perp=C_1\cup \cdots \cup C_t$, where $C_i$ is a connected component
of $\G_{x^\perp}$. The conjugating automorphism
\[
\atl_{y,x}=\prod_{i=1}^t \a_{C_i,x},
\]
is called a {\em basic normal   
conjugating automorphism}.  The set of all basic  normal conjugating 
automorphisms, as $x$ runs over $X$ and $y$ over $X\bs x^\perp$, 
is denoted $\NLInn=\NLInn(G)$. The set 
$\LInn(G)\backslash \NLInn(G)$ is denoted $\ULInn(G)$ and its elements
are called {\em unstable elementary conjugating automorphisms}.
\end{defn}

To describe the intersection of $\NConj$ and $\sConj$ some further
refinements of these definitions are necessary. 
\begin{defn}\label{defn:sol}
Let $x\in X$, and define  
\[
\Sol_0(x)=\{u\in X\,|\, \cH_x(u)\subseteq \Isol(x)\cup x^\perp\}\]
and 
\[
\Sol(x)=\{u\in \Sol_0(x)\,|\, u\textrm{ is } \textrm{ $\cK$-maximal and } 
u\notin x^\perp \}.
\] 
\end{defn}
\begin{defn}\label{defn:ILInn}
Let $\ILInn=\ILInn(G)$ denote the subset 
\[\{\atl_{y,x}\in \NLInn(G)\,|\, y\in\ \Sol(x)\}\]
of $\NLInn(G)$ 
and define $\iConj(G)=\la \ILInn(G)\ra$.
\end{defn}

\begin{lemma}\label{lem:normaldiv}
Let $\phi$ be a non-trivial element of  $\NConj(G)$ and for all $x\in X$ let $g_x$ be such that
$x\phi=g_x^{-1}\circ x \circ g_x$. Let $x,y\in X$ such that 
$x^{\e}g_x$ is a right divisor of $g_y$, for $\e\in \{\pm 1\}$. Then $x^{\e}g_x$ is a right divisor
of $g_z$, for all $z\in \cH_x(y)\bs x^\perp$.
\end{lemma}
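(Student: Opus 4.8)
The plan is to induct on the recursive construction of $\cH_x(y)$ given in Definition \ref{defn:conjclos}. Fix the $x$, $y$ and $\e$ of the hypothesis and call an element $z\in X\bs x^\perp$ \emph{good} if $x^\e g_x$ is a right divisor of $g_z$. By Lemma \ref{lem:hel}\ref{it:hel1} I may assume $y\notin x^\perp$, for otherwise $\cH_x(y)=\{y\}\subseteq x^\perp$ and there is nothing to prove; so $y$ is good by hypothesis. Writing $H_0=\{y\}$ and $H_{i+1}=\cK_x(\cJ_x(H_i))$ as in Definition \ref{defn:conjclos}, I would show by induction on $i$ that every element of $H_i\bs x^\perp$ is good; since $\cH_x(y)=H_n$ for some $n$ this is exactly the assertion. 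As the inductive step applies $\cJ_x$ and then $\cK_x$, it suffices to prove two propagation claims.

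Claim A (propagation through $\cJ_x$): if $w$ is good and $z$ lies in the same connected component of $\G_{x^\perp}$ as $w$, then $z$ is good. I would join $w$ to $z$ by a path $w=w_0,\dots,w_k=z$ inside that component. Every vertex of the component lies in $X\bs x^\perp$, and consecutive $w_j,w_{j+1}$ are joined by an edge of the full subgraph $\G_{x^\perp}$, hence of $\G$, so $[w_j,w_{j+1}]=1$; applying Lemma \ref{lem:laurlem}\ref{it:laur28} successively along the path (with the fixed $\e$) transports goodness from $w_0$ to $w_k$. Granting Claim A, any $z\in\cJ_x(H_i)\bs x^\perp$ either already lies in $H_i$, and is good by the inductive hypothesis, or lies in a component of $\G_{x^\perp}$ meeting $H_i$ in a vertex $w$, which is good by the inductive hypothesis; so $z$ is good.

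Claim B (propagation through $\cK_x$): if $v\in X$ and $w,z\in\ad(v)\bs x^\perp$ with $w$ good, then $z$ is good. This is where normality of $\phi$ enters. Since $\phi\in\NConj$ there is a single $f_v\in G$ with $t\phi=t^{f_v}$ for all $t\in\ad(v)$, and arguing as in the proof of Lemma \ref{lem:vertshift} we may write $f_v=c_t\circ g_t$ with $c_t\in C_G(t)=G(t^\perp)$, using that $g_t$ has no left divisor in $C_G(t)$. In particular $g_w$ is a right divisor of $f_v$, so goodness of $w$ gives that $x^\e g_x$ is a right divisor of $f_v$; it then remains to transfer this to $g_z$ via $f_v=c_z\circ g_z$, where $c_z\in G(z^\perp)$ and $x\notin z^\perp$ (as $z\notin x^\perp$). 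Granting this, Claim B combined with Claim A disposes of the $\cK_x$-step: any $z\in H_{i+1}\bs x^\perp$ either lies in $\cJ_x(H_i)\bs x^\perp$ and is already good, or lies in some $\ad(v)$ meeting $\cJ_x(H_i)\bs x^\perp$ in a good vertex $w$, whence $z$ is good by Claim B.

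The hard part will be this last divisor transfer inside Claim B: concluding that $x^\e g_x$ is a right divisor of $g_z$ merely from its being a right divisor of $f_v=c_z\circ g_z$ with $c_z\in G(z^\perp)$ and $x\notin z^\perp$. Naively the statement is false, since a right divisor of a product $c_z\circ g_z$ need not be a right divisor of $g_z$; the feature that rescues it is that $x\phi=g_x^{-1}\circ x\circ g_x$ forces $\gd^l_{x^\perp}(g_x)=1$, i.e. no non-trivial left divisor of $g_x$ commutes with $x$. Consequently the distinguished $x^\e$ at the front of $d=x^\e\circ g_x$ is the unique minimal letter of $d$, so as a right divisor of $f_v$ the element $d$ is the principal filter generated by that occurrence of $x^\e$. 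Since the complementary left factor $c_z=\gd^l_{z^\perp}(f_v)$ involves only letters of $z^\perp$ while $x\notin z^\perp$, this $x^\e$ cannot lie below any letter of $c_z$; hence $c_z$ is a left divisor of the cofactor $q$ in $f_v=q\circ d$, and $d$ lies entirely within $g_z$, giving that $x^\e g_x$ is a right divisor of $g_z$. I would formalise this final step through the greatest-left-divisor calculus of \cite{EKR} (equivalently, the heap-of-pieces description of geodesics), the key bookkeeping being to verify that $\gd^l_{z^\perp}(f_v)=c_z$ is a left divisor of $q$.
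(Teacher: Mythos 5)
Your proof is correct and follows essentially the same route as the paper's: the same reduction to propagation through $\cJ_x$ (via Lemma \ref{lem:laurlem} \ref{it:laur28} applied along paths inside components of $\G_{x^\perp}$) and through $\cK_x$ (via normality, the factorisation $f_v=c\circ g_w$ giving that $x^{\e}g_x$ right-divides $f_v$, and then the fact that $g_x$ has no left divisor in $C_G(x)$ while $x\notin z^\perp$). Your heap-of-pieces gloss on the final divisor transfer is just a more explicit rendering of the paper's terser cancellation argument.
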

\begin{proof}
It suffices to show that if $Y\subseteq X$  and $x^{\e}g_x$ is a right divisor
of $g_y$, for all $y\in Y\bs x^\perp$, then $x^{\e}g_x$ is a right divisor of 
$g_z$ given 
that either (i) $z\in \cJ_x(Y)\bs x^\perp$ or 
(ii) $Y\bs x^\perp$ is a union of connected components of $\G_{x^\perp}$ and 
$z\in \cK_x(Y)\bs x^\perp$. (i) follows from Lemma \ref{lem:laurlem}
\ref{it:laur28}. For (ii) suppose that $u\in X$, that 
$\ad(u)\cap (Y\bs x^\perp)\neq \nul$ with $w\in \ad(u)\cap Y$, 
$w\notin x^\perp$,  and that $z\in \ad(u)$, $z\notin Y\cup x^\perp$.
As $\phi\in \NConj$ there exists $f_u\in G$ such that $v\phi=v^{f_u}$, for
all $v\in \ad(u)$. 
 Then $w\phi=w^{f_u}=w^{g_w}$, 
where $g_w=g_w^\prime\circ x^{\e}\circ g_x$, for some $g_w^\prime \in G$. 
Therefore $g_wf_u^{-1}\in C_G(w)$; so $f_u=cg_w$, with $c\in C_G(w)$.
As $g_w$ has no left divisor in $C_G(w)$ we have $f_u=c\circ g_w=
c\circ g_w^\prime \circ x^{\e}\circ g_x$; 
so $x^{\e}g_x$ is a right divisor of $f_u$.
Now $z\phi=z^{f_u}=z^{g_z}$ so again $f_u=d\circ g_z$, where $d\in C_G(z)$.
Therefore $g_z=d^{-1}(cg_w^\prime x^{\e} g_x)$ and as $z\notin x^\perp$ we
have $x\notin \al(d)$, so that $x^{\e}$ does not cancel in reduction of 
the right hand side of this equality to a geodesic word. As $g_x$ has
no left divisor in $C_G(x)$ it follows that $x^{\e}g_x$ is a right divisor of
$g_z$, as required. 
\end{proof}
\begin{prop}\label{prop:nconjgens}
$\NConj(G)$ is generated by $\NLInn(G)$ and  
 $\NConj(G)\cap \sConj(G)=\iConj(G)$.
\end{prop}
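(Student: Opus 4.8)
The plan is to prove both assertions by induction on Laurence's conjugator length $|\phi|$ (Definition \ref{defn:conjlen}), following the template of Proposition \ref{prop:vconjgens} but with $\NLInn$, $\cH_x$ and Lemma \ref{lem:normaldiv} playing the roles taken there by $\VLInn$, $[x]$ and Lemma \ref{lem:laurlem}\ref{it:laur28}. Throughout, for $\phi$ in the group under consideration and $x\in X$ I write $x\phi=g_x^{-1}\circ x\circ g_x$. First I record the easy inclusions. That $\la\NLInn\ra\le\NConj$ is forced by the design of $\cH_x$: the fourth bullet following Definition \ref{defn:conjclos} says that for each $v\in X$ either $\ad(v)\cap\cH_x(y)=\nul$ or $\ad(v)\subseteq\cH_x(y)\cup x^\perp$, and in the two cases $\atl_{y,x}$ respectively fixes every element of $\ad(v)$ or acts as conjugation by $x$ on every element of $\ad(v)$ (elements of $\ad(v)\cap x^\perp$ being fixed by conjugation by $x$ anyway); hence $\atl_{y,x}\in\NConj$.

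For the first statement it remains to show $\NConj\le\la\NLInn\ra$. Take $\phi\in\NConj$ non-trivial and assume the claim for all shorter automorphisms. By Lemma \ref{lem:laurlem}\ref{it:laur25} there are $x,y\in X$ and $\e\in\{\pm1\}$ with $x^\e g_x$ a right divisor of $g_y$; exactly as in Proposition \ref{prop:vconjgens} the hypothesis $[x,y]=1$ would make $y\phi$ non-reduced, so $y\notin x^\perp$. Lemma \ref{lem:normaldiv} then shows $x^\e g_x$ is a right divisor of $g_z$ for every $z\in\cH_x(y)\bs x^\perp$, and writing this set as a union of components $C_1\cup\cdots\cup C_t$ of $\G_{x^\perp}$ I set $\a=\prod_{i=1}^t\a_{C_i,x^{-\e}}$, so that $\a\in\{\atl_{y,x},\atl_{y,x}^{-1}\}\subseteq\NLInn^{\pm1}$. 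As in Proposition \ref{prop:vconjgens}, removing the common right divisor $x^\e g_x$ gives $|\a\phi|<|\phi|$; and since $\a\in\NLInn\subseteq\NConj$ we get $\a\phi\in\NConj$ at once, so by induction $\a\phi\in\la\NLInn\ra$ and therefore $\phi\in\la\NLInn\ra$. (This step is genuinely simpler than its analogue, where one had to verify separately that the reducing automorphism lay in $\VConj$.)

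For the intersection $\NConj\cap\sConj=\iConj$, the inclusion $\iConj\subseteq\NConj\cap\sConj$ is direct: $\ILInn\subseteq\NLInn\subseteq\NConj$, and if $y\in\Sol(x)$ then $\cH_x(y)\bs x^\perp\subseteq\Isol(x)$ (Definition \ref{defn:sol}), so every component $C_i$ occurring in $\atl_{y,x}$ is a single dominated vertex; thus $\atl_{y,x}$ is a product of singular elementary conjugating automorphisms and lies in $\sConj$. Conversely let $\phi\in\NConj\cap\sConj$ be non-trivial; by Lemma \ref{lem:elconj}\ref{it:elconj4} we may take $\al(g_x)\subseteq\ad(x)$ for all $x$. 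Running the same induction, choose $x,y,\e$ and pass to $z\in\cH_x(y)\bs x^\perp$ as above. Now $x\in\al(x^\e g_x)\subseteq\al(g_z)\subseteq\ad(z)$, so $\ad(x)\subseteq\ad(z)$ by Lemma \ref{lem:ad0}\ref{it:ad8}; as $z\notin x^\perp$ this says $x$ dominates $z$. Hence $\cH_x(y)\subseteq\Isol(x)\cup x^\perp$, i.e. $y\in\Sol_0(x)$. Using Lemma \ref{lem:hel}\ref{it:hel2} I replace $y$ by a $\cK$-maximal $z'\ge_\cK y$ with $z'\neq x$ (such $z'$ exists because $x$ dominating $y$ forces $\ad(x)\subseteq\ad(y)$: if $\ad(x)<\ad(y)$ any $\cK$-maximal $z'\ge_\cK y$ has $\ad(z')>\ad(x)$, while if $\ad(x)=\ad(y)$ one may take $z'=y$ when $y$ is already $\cK$-maximal and otherwise any $\cK$-maximal $z'>_\cK y$; in every case $z'\neq x$ and, by the proof of Lemma \ref{lem:hel}\ref{it:hel2}, $z'\notin x^\perp$). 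Then $\cH_x(z')=\cH_x(y)$, so $z'\in\Sol(x)$ and $\a=\atl_{z',x^{-\e}}\in\ILInn^{\pm1}\subseteq\iConj$; since $\a\phi\in\NConj\cap\sConj$ with $|\a\phi|<|\phi|$, induction gives $\phi\in\iConj$.

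The two points I expect to require the most care are the length bookkeeping $|\a\phi|<|\phi|$, which must be carried out exactly as in Proposition \ref{prop:vconjgens} (tracking how conjugation by $x^{-\e}$ peels the right divisor $x^\e g_x$ off each $g_z$ without lengthening any other conjugator), and, in the intersection direction, the verification that a suitable $\cK$-maximal representative $z'\neq x$ of $y$ exists and lands in $\Sol(x)$. The borderline case $\ad(x)=\ad(y)$, where $y\in[x]_\lk$, must be treated by hand as indicated, and it is Lemma \ref{lem:hel}\ref{it:hel2} that guarantees replacing $y$ by $z'$ leaves $\atl_{y,x^{-\e}}$ unchanged.
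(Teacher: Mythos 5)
Your proof is correct and takes essentially the same route as the paper's: the same induction on Laurence's length using Lemma \ref{lem:laurlem}\ref{it:laur25} and Lemma \ref{lem:normaldiv}, and the same reduction of $y$ to a $\cK$-maximal representative via Lemma \ref{lem:hel}\ref{it:hel2} for the intersection statement. Your case analysis for producing the $\cK$-maximal $z'\neq x$ (via $\ad(x)\subseteq\ad(y)$) is organised slightly differently from the paper's (which argues that $y\le_\cK x$ with $x$ $\cK$-maximal forces $y$ itself to be $\cK$-maximal) but is equivalent.
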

\begin{proof}
To see that $\NLInn\subseteq \NConj$ suppose that $u,x,y\in X$  such that
$\ad(u) \cap \cH_x(y)\neq \nul$. Then $\ad(u)\subseteq \cH_x(y)\cup x^\perp$, 
by
definition, so $v\atl_{y,x}=v^x$, for all $v\in \ad(u)$. Thus $\atl_{y,x}
\in \NConj$, for all $x,y\in X$ with $y\notin x^\perp$. As 
 $\atl_{y,x^{-1}}= \atl_{y,x}^{-1}$ this implies that
 $\la \NLInn\ra\le \NConj$. 

Now 
assume that  $\phi\in \NConj$. 
If $|\phi|=0$ then $\phi=1$ and there is nothing to prove, so  
assume that $|\phi|>1$ and that for all 
conjugating automorphisms $\psi$ of shorter length $\psi\in \NConj$ implies
$\psi \in \la \NLInn \ra$.  
From Lemma \ref{lem:laurlem} \ref{it:laur25}  there exist $x,y\in X$, 
$\e\in\{\pm 1\}$  
such that $x\phi=g_x^{-1}\circ x\circ g_x$, $y\phi=g_y^{-1}\circ y\circ 
g_y$ and $x^{\e}g_x$ is a right divisor of $g_y$. Then 
 $y\notin x^\perp$ and, from Lemma \ref{lem:normaldiv}, $x^{\e}g_x$ is 
a right divisor of $g_z$, for all $z\in \cH_x(y)\bs x^\perp$. Therefore
$|\atl_{y,x^{-\e}}\phi|<|\phi|$ and so by induction  $\atl_{y,x^{-\e}}\phi
\in \la \NLInn\ra$, and the first statement of the proposition follows.

Now suppose that $\phi\in \NConj\cap \sConj$. Then $\al(g_z)\subseteq
\ad(z)$, for all $z\in X$ and, for all $z\in \cH_x(y)\bs x^\perp$, 
as $x^{\e}g_x$ is a right divisor of $g_z$, we have $x\in \ad(z)$. Therefore
$x$ dominates $z$, for all  $z\in \cH_x(y)\bs x^\perp$: that is 
$\cH_x(y)\subseteq \Isol(x)\cup x^\perp$. 
If $y\le_\cK x$ and $x$ is $\cK$-maximal then we claim that also $y$ is
$\cK$-maximal. To see this note that, since $[x,y]\neq 1$, if $\ad(y)\subseteq
\ad(x)$ then $x^\perp\bs x\subseteq y^\perp\bs y$. As $x$ dominates
$y$ we have $y^\perp\bs y\subseteq x^\perp\bs x$ in any case, so 
$y\le_\cK x$ implies $y^\perp\bs y=x^\perp\bs x$; so if $x$ is $\cK$-maximal
then so is $y$. In this case $\atl_{y,x}\in \ILInn$. Otherwise let 
$z\neq x$ be such that $y\le_\cK z$ and $z$ is $\cK$-maximal. Then 
$\cH_x(y)=\cH_x(z)$ (Lemma \ref{lem:hel} \ref{it:hel2}) and so $\atl_{y,x}=\atl_{z,x}\in \ILInn$. Thus in all cases $\atl_{y,x}\in \ILInn$. Moreover, 
by definition $\ILInn\subseteq \sConj$, 
so $\atl_{y,x}=\atl_{y,x}\in \NConj\cap \sConj$.
Hence $\atl_{y,x^{-\e}}\phi\in \NConj\cap \sConj$ and its length is less than
the length of $\phi$, so 
the second statement follows, again by induction on the length of $\phi$. 
\end{proof}
\end{pap2}
To describe the structure of  $\aConj(G)$ it is convenient to 
work with outer automorphisms. Denote the group 
$\Aut(G)/\Inn(G)$ of outer automorphisms by $\Out(G)$ as usual and 
given a subgroup $B$ of $\Aut(G)$ let $\overline{B}$ denote the group
$B\Inn(G)/\Inn(G)$. We write $\bar\b$ for the image of $\b\in \Aut(G)$ in $\Out(G)$ and 
$\g_x$ for the inner automorphism of $G$ mapping $g$ to $g^x$, for 
all $g\in G$. 
\begin{prop}
Let $G=G(\G)$, where $\G$ is a  connected graph.
Then $\aConj(G)$ is torsion-free and  $\aOonj(G)$ is free Abelian  and
a normal subgroup of  $\oOut(G)$. Moreover, if $c(x)$ is the number
of connected components of $\G_{x}$, for all $x\in X$, then 
the $\aOonj(G)$ has rank $\sum_{x\in X} (c(x)-1)$. 
\end{prop}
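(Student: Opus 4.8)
The plan is to work throughout in $\Out(G)$ and to build everything on the single identity
\[
\g_x=\prod_C\beta_{C,x},
\]
the product being taken over all connected components $C$ of $\G_x$: each vertex $y\neq x$ lies in exactly one component $C$ and is sent to $y^x$ by the corresponding factor, while $x$ is fixed by every factor. In particular every $\g_x$ lies in $\aConj(G)$, so $\Inn(G)\le\aConj(G)$ and hence $\aOonj(G)=\aConj(G)/\Inn(G)$; passing to $\Out(G)$ yields, for each $x\in X$, the relation $\prod_C\overline{\beta_{C,x}}=1$, and these $|X|$ relations are what will produce the stated rank.

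\emph{Commutativity.} First I would prove that $\aOonj(G)$ is abelian, i.e. that $[\beta_{C,x},\beta_{D,z}]\in\Inn(G)$ for all generators. If $x=z$ the two maps conjugate disjoint components by the same vertex and commute exactly. If $x\neq z$ one computes that $\beta_{C,x}\beta_{D,z}$ and $\beta_{D,z}\beta_{C,x}$ differ only by an inner automorphism: the sole source of discrepancy is a vertex $w\in(C\bs x^\perp)\cap(D\bs z^\perp)$, where the two composites give $w^{xz}$ against $w^{zx}$, and conjugation by $[x,z]$ repairs this uniformly on every generator (so that the commutator is $\g_{[x,z]}$, trivial when $x,z$ commute). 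Lemma \ref{lem:comm} is the natural tool for organising these cancellations.

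\emph{Torsion-freeness, freeness and normality.} For each ordered pair $(y,x)$ with $[x,y]\neq1$ and each $\phi\in\Conj(G)$ with $y\phi=g_y^{-1}\circ y\circ g_y$, let $n_{y,x}(\phi)$ be the exponent sum of $x$ in $g_y$. A conjugating automorphism acts trivially on $G^{\mathrm{ab}}$, so each $n_{y,x}$ is a homomorphism to $\ZZ$ and they assemble into $N\colon\aConj(G)\to\bigoplus_{(y,x)}\ZZ$ with $N(\beta_{C,x})=\sum_{y\in C\bs x^\perp}e_{(y,x)}$ and $N(\g_x)=\sum_C N(\beta_{C,x})$. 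Since $G$ is residually torsion-free nilpotent, a standard Magnus-type argument shows that $\Conj(G)$, and so $\aConj(G)$, is torsion-free; combined with the previous paragraph, $\aOonj(G)$ is a finitely generated torsion-free abelian group, hence free abelian. For normality in $\oOut(G)$ I would check that each standard generator of $\oAut(G)$ normalises $\aConj(G)\Inn(G)$: an inversion sends $\beta_{C,x}$ to $\beta_{C,x}^{\pm1}$, and for a transvection or an elementary conjugating automorphism the conjugate is expressed as a product of aggregate conjugations times an inner automorphism, exactly as in the computations preceding Proposition \ref{prop:autdecomp}.

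\emph{Rank.} Finally I would present $\aOonj(G)$ on the generators $\overline{\beta_{C,x}}$ and read off the rank. There are $\sum_{x\in X}c(x)$ generators, and for each $x$ the relation $\prod_C\overline{\beta_{C,x}}=1$; relations attached to distinct vertices involve disjoint generators and so are independent, whence, \emph{if these exhaust the relations}, the rank equals $\sum_{x\in X}c(x)-|X|=\sum_{x\in X}(c(x)-1)$. Proving that there are no further relations is, I expect, the main obstacle. Concretely it amounts to showing via $N$ that any $\phi\in\aConj(G)$ whose image $N(\phi)$ lies in $N(\Inn(G))$ is already inner, so that conjugations based at different vertices contribute nothing new; Laurence's divisor lemma (Lemma \ref{lem:laurlem}) together with the admissible-set lemmas of Section~\ref{sec:preliminaries} are the tools I would bring to bear here.
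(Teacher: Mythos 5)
Your outline has the right raw ingredients (the identity $\g_x=\prod_C\b_{C,x}$, exponent-sum homomorphisms for the rank), but three steps do not go through as written. First, the commutativity argument: the claim that $[\b_{C,x},\b_{D,z}]$ equals $\g_{[x,z]}$ ``uniformly on every generator'' is false. The commutator conjugates some generators by (roughly) $[x,z]$ while fixing others outright, so it is inner only when these two actions can be reconciled by a single element of $G$; the paper's Figure \ref{fig:aconj} gives a disconnected graph where $a[\b_{C,x},\b_{D,y}]=a^{[x,y]}$ but $b[\b_{C,x},\b_{D,y}]=b$, and $\aOonj(G)$ is non-Abelian. Your argument never invokes connectedness of $\G$, so it cannot be correct; even on connected graphs the value of the commutator depends on which components $C$ and $D$ are chosen, not just on $x$ and $z$. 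The paper instead proves a combinatorial interleaving statement (every component of $\G_x$ other than the one containing $z$ is contained in the component of $\G_z$ containing $x$, and vice versa --- this is where connectedness enters), and then uses $\prod_i\b_{C_i,x}=\g_x$ to replace the two generators by representatives modulo $\Inn(G)$ that commute exactly. Lemma \ref{lem:comm}, which you cite, concerns commuting conjugates of generators and is not the relevant tool here.

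Second, you deduce that $\aOonj(G)$ is free Abelian from ``finitely generated, abelian and torsion-free'', with torsion-freeness inherited from $\aConj(G)$. A quotient of a torsion-free group need not be torsion-free, so this inference is invalid (quite apart from the unproved ``Magnus-type'' claim that $\Conj(G)$ is torsion-free). The paper runs the logic in the opposite direction: it shows directly that the classes $\bar\b_{C_i,x}$, with one component omitted for each vertex, form a basis --- precisely the exponent-sum bookkeeping you package as $N$, applied inductively to kill the coefficients $m_i$ --- and only then concludes that $\aConj(G)$ is torsion-free, as an extension of the free Abelian group $\aOonj(G)$ by $\Inn(G)\cong G/Z(G)$, which is again a partially commutative group. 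That basis argument is exactly the step you defer (``proving that there are no further relations is \dots the main obstacle''), and it is where the substance of the proposition lies: without it neither the freeness, nor the torsion-freeness, nor the rank is established. The normality sketch is broadly in line with the paper's case analysis of inversions and transvections, but it too needs the explicit component bookkeeping (e.g.\ when $y=x$ a transvection forces $\G_x$ to be connected, making $\b_{C,x}$ inner) rather than an appeal to earlier computations.
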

\begin{proof}
First we show that $\aOonj(G)$ is  a free Abelian group. 
Let 
$x\in X$ and suppose that $\G_x$ has connected components
$C_1,\ldots ,C_r$. If $y\in X$, $y\neq x$, then there is some 
$i$ such that $y^\perp \subseteq C_i\cup \{x\}$. Assume that
$\G_y$ has components $D_1,\ldots ,D_s$. We claim that there
is a $j$ such that 
\be[(i)]
\item\label{it:aconji} $D_j \supseteq C_k\cup \{x\}$, for all $k\neq i$, and 
\item\label{it:aconjii} $C_i \supseteq D_k\cup \{y\}$, for all $k\neq j$.
\ee
To see this choose 
$j$ such that $x\in D_j$, so $x^\perp \subseteq D_j\cup \{y\}$.
Let $u\in C_k$, $k\neq i$. Then there exists a path in $\G$ from
$u$ to $x$ and, as $y\in C_i$, this path may be chosen so that 
none of its vertices is $y$. Hence
$u$ and $x$ belong to the same component of $\G_y$. Thus, if $D_j$ is the
component of $\G_y$ containing $x$ then $D_j\supseteq C_k\cup \{x\}$, 
for all $k\neq i$. This shows that the first statement of the claim 
holds and the second follows by symmetry. 

The subgroup $\aOonj(G)$ is generated by the images $\bar\b_{C,x}$ 
in $\Out(G)$ of 
aggregate conjugating automorphisms
$\b_{C,x}$, where $x$ ranges over $X$ and $C$ ranges over
the  connected components of $\G_x$. 
Let $\bar\b_{C,x}$ and $\bar\b_{D,y}$ be generators of $\aOonj(G)$.
If $x=y$ then these two generators commute, so we assume $x\neq y$ 
and that components  $C_i$ and $D_j$ of $\G_x$ and $\G_y$, respectively,
have been chosen as in the claim above; so $x\in D_j$ and $y\in C_i$. 
If $C=C_i$ then let
$\b_1=\g_{x^{-1}}  \b_{C,x}$,  so $\bar\b_{C,x}=\bar\b_1$ and
$\b_1=\prod_{C_k\neq C}\b_{C_k,x}^{-1}\in \aConj(G)$. Otherwise
let $\b_1= \b_{C,x}$. In either case $u\b_1=u$, for all $u\in C_i$.
Similarly we may choose a representative $\b_2$ of $\bar\b_{D,y}$ such that 
$u\b_2=u$, for all $u\in D_j$. Then, from \ref{it:aconji} and
\ref{it:aconjii} above, it follows
that $\b_1\b_2=\b_2\b_1$ and 
so $\bar\b_{C,x}\bar\b_{D,y} =\bar\b_{D,y} \bar\b_{C,x}$, 
and  $\aOonj(G)$ is Abelian as claimed. 

Now, for $i=1,\ldots ,r$, let $\bar\b_i=\bar\b_{C_i,x}\in \aOonj(G)$. As 
$\prod_{i=1}^{r}\bar\b_i=1$, given any element $\phi \in \aOonj(G)$ we may
write $\phi=\bar\g_0\bar\g_1$, where $\g_0=\prod_{i=1}^{r-1}\b_i^{m_i}$, for
some $m_i\in \ZZ$, and $\g_1$ is a product of generators $\b_{D,y}$, with
$y\neq x$. Let $y\in C_i$, where $1\le i\le r-1$. Then $y\g_0=y^{x^{m_i}}$ and
$y\g_1=y^h$, for some $h\in G$ such that $x\notin \al(h)$. Also $x\g_1=x^g$,
for some $g\in G$ such that $x\notin \al(g)$. Then
\[
y\g_0\g_1=(y^h)^{(x^{m_i})^g}.
\]
If $w$ is a geodesic word representing $h(x^{m_i})^g$ then the exponent
sum $|w|_{x}$ of $x$ in $w$ equals $m_i$; so $y\g_0\g_1=v^{-1}\circ y\circ v$,
where $|v|_{x}=m_i$. For $z\in C_r$ we have $z\g_0\g_1=z\g_1=u^{-1}\circ z
\circ u$, for some $u\in G$ such that $|u|_{x}=0$. If $\phi=1$ then 
$\g_0\g_1\in \Inn(G)$ and so it must be that $m_1=\cdots =m_{r-1}=0$. It 
follows by induction, on the minimal number of generators appearing
in a word representing $\phi$, that $\aOonj(G)$ is free Abelian
of rank $\sum_{x\in X} (c(x)-1)$, as claimed. 

To see that $\aConj(G)$ is torsion free it suffices  to note that 
$\Inn(G)$ is torsion free; since $\Inn(G)\cong G/Z(G)$, which is 
a partially commutative group. In fact $G/Z(G)\cong G(\G_Z)$, where 
$Z$ is the subset of $X$ consisting of vertices connected to all vertices
of $\G$ and $\G_Z$ is the full subgraph of $\G$ on $X\backslash Z$. 
As both $\Inn(G)$ and $\aOonj(G)=\aConj(G)/\Inn(G)$ are torsion-free,
 so is $\aConj(G)$.  

To show that $\aOonj(G)$ is normal in $\oOut(G)$ we shall show that 
if $\bar\b_{C,x}$ is an arbitrary generator of $\aOonj(G)$  
and  
$\phi$  is a generator of $\oOut(G)$, which
is not in  $\aOonj(G)$,  then $\phi^{-1}\bar\b_{C,x}\phi
\in \aOonj(G)$. We consider the cases where $\phi$ is 
the image in $\oOut(G)$ of 
an inversion and  
a transposition 
separately.

Let $\phi=\bar\i_z$; an inversion. Straightforward
checking shows that 
\be[(a)]
\item if $x=z$ then $\i_z^{-1}\b_{C,x}\i_z=\b_{C,x}^{-1}$ and
\item if $x\neq z$ then $\i_z^{-1}\b_{C,x}\i_z=\b_{C,x}$.
\ee
Hence the result holds in this case.

Next suppose that  $\phi=\bar\tr_{v,z}$, where $v=y$ or $y^{-1}$. If $y=x$ then we have
$x^\perp\bs x\subseteq z^\perp$ which implies that $\G_x$ is connected
and so $\b_{C,x}$ is an inner automorphism; as are all its conjugates. 
Thus we assume that 
$y\neq x^{\pm 1}$ and $\G_x$ is not connected. 
Let $C_1$ be the component of $\G_x$ containing $y$. Then
$y^\perp\bs y\subseteq z^\perp$ and $y^\perp\subseteq C_1\cup \{x\}$.
If $z\in C_2$, for some component $C_2$ of $\G_x$ with $C_2\neq C_1$ then
$z^\perp\subseteq C_2\cup \{x\}$ so $y^\perp\bs y\subseteq 
(C_1 \cup \{x\})\cap (C_2\cup \{x\})=\{x\}$; in which case 
$y^\perp=\{x,y\}$ and $x\in z^\perp$. These conditions imply
that $\tr_{v,z}^{-1}\b_{C,x}\tr_{v,z}=\b_{C,x}$, so we may 
now assume that $z\in C_1$. Assume in addition that the connected
components of $\G_x$ are $C_1,\ldots ,C_r$. If $C=C_i$, where 
$i\neq 1$  then  $\phi^{-1}\b_{C,x}\phi =\b_{C,x}$. If $C=C_1$ then
set $\b_1=\prod_{i=2}^r\b_{C_i,x}^{-1}$, so $\bar\b_{C,x}=\bar\b_1$ and
$\phi^{-1}\b_1\phi=\b_1$. Thus $\overline{\phi^{-1}\b_{C,x}\phi}=
\overline{\b_1}=\overline{\b_{C,x}}$, and the result follows.
\end{proof}

The previous proposition can not be extended to disconnected graphs or to 
$\Conj(G)/\Inn(G)$, in place of $\aConj(G)/\Inn(G)$, as the following
examples show. 
\begin{expl}
In the graph $\G$  of Figure \ref{fig:aconj}, let $C$ be the component
of $\G_x$ containing $a$ and let $D$ be the component of $\G_y$ containing $a$. 
 Then $\aConj$ contains $\b_{C,x}$ and $\b_{D,y}$ and $a[\b_{C,x},\b_{D,y}]=a^{[x,y]}$, while
$b[\b_{C,x},\b_{D,y}]=b$. Therefore $[\b_{C,x},\b_{D,y}]\notin \Inn$, so $\aOonj$ is non-Abelian.

\end{expl}
\begin{figure}
\begin{center}
\psfrag{a}{$a$}
\psfrag{b}{$b $}
\psfrag{c}{$c $}
\psfrag{d}{$d $}
\psfrag{e}{$e $}
\psfrag{f}{$f $}
\psfrag{g}{$g $}
\psfrag{h}{$h $}
\psfrag{x}{$x $}
\psfrag{y}{$y $}
\includegraphics[scale=0.2]{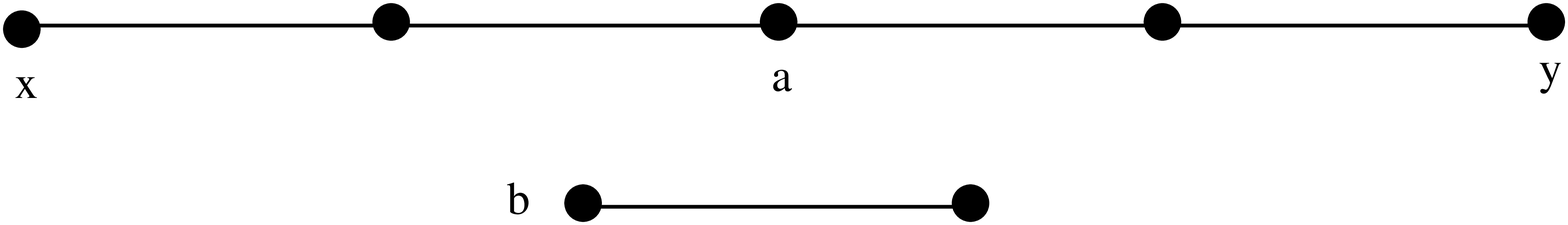}
\end{center}
\caption{$\aOonj(G)$ is non-Abelian}\label{fig:aconj}
\end{figure}

\begin{expl}
Let $\G$ be the graph of Figure \ref{fig:conj}. Then $\G_{x^\perp}$ has
a component $C=\{c,d,e,y\}$ and $\G_{y^\perp}$ has a component 
$D=\{a,b,c,x\}$. Let $\a=\a_{C,x}$ and $\b=\a_{D,y}$. The images of 
$c$ and $g$ under $[\a,\b]$ are $c^{[x,y]}$ and $g$, respectively.
Therefore $\a\b\neq \b\a$ modulo $\Inn(G)$. In this example 
$\Inn(G)=\aConj(G)$, so in general $\Conj(G)/\aConj(G)$ is also non-Abelian.
\end{expl}
\begin{figure}
\begin{center}
\psfrag{a}{$a$}
\psfrag{b}{$b $}
\psfrag{c}{$c $}
\psfrag{d}{$d $}
\psfrag{e}{$e $}
\psfrag{f}{$f $}
\psfrag{g}{$g $}
\psfrag{h}{$h $}
\psfrag{x}{$x $}
\psfrag{y}{$y $}
\includegraphics[scale=0.3]{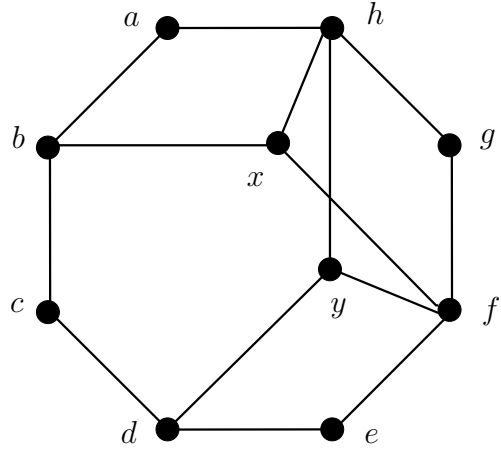}
\end{center}
\caption{$\Conj(G)/\Inn(G)$ is non-Abelian}\label{fig:conj}
\end{figure}

\section{The stabilisers $\St(\cK)$ and $\cSt(\cK)$.}\label{section:st}
\begin{defn}\label{defn:st}
Define 
\[
\St(\cK)=\{\phi\in \Aut(G)|G(Y)\phi=G(Y), \textrm{ for all } 
Y\in \cK\}.
\]
\end{defn}
Then, from Lemma \ref{lem:ad0} \ref{it:ad9} it follows that $\phi\in \St(\cK)$ if
and only if $G(\ad(x))\phi=G(\ad(x))$, for all $x\in X$. 
Also, from Remark \ref{rem:aut*}  the subgroup
of $\oAut(G)$ generated by $\Inv$ and $\Tr$ is contained in $\St(\cK)$.

\begin{defn}\label{defn:stconj}
 Define 
\[
\cSt(\cK)=\{\phi\in \Aut(G)|G(Y)^\phi=G(Y)^{f_Y}, 
\textrm{ for some } f_Y\in G, \textrm{ for all } 
Y\in \cK\}.
\]
\end{defn}
We shall make use of the following fact in the proof of the
next proposition.
\begin{lemma}\label{lem:paraint}
Let $H$ and 
$K$ be canonical parabolic subgroups of $G$ and let $\theta\in \Aut(G)$ be such that
$H\theta=H^f$ and $K\theta=K^g$, for some $f, g\in G$. 
Then there exists $h\in H$ such that 
 $(H\cap K)\theta=(H\cap K)^h$. 
\end{lemma}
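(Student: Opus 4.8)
The plan is to use the bijectivity of $\theta$ to turn the statement into an assertion about intersections of conjugates of canonical parabolic subgroups, and then to identify that intersection. Write $H=G(A)$ and $K=G(B)$ for $A,B\subseteq X$. Since $\theta$ is a bijection it preserves intersections, so $(H\cap K)\theta=H\theta\cap K\theta=H^f\cap K^g$. Also $H\cap K=G(A)\cap G(B)=G(A\cap B)$: an element lies in $G(Y)$ exactly when its support $\al(g)$ is contained in $Y$ (see \cite{EKR}), so lying in both $G(A)$ and $G(B)$ is the same as lying in $G(A\cap B)$. Hence it suffices to prove $H^f\cap K^g=G(A\cap B)^h$ for some $h\in H$.

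First I would pin down the isomorphism type of $H^f\cap K^g$. Using the normal form theory of \cite{EKR}, an intersection of two conjugates of canonical parabolic subgroups is again a conjugate of a canonical parabolic subgroup, say $H^f\cap K^g=G(D)^t$. On the other hand this subgroup equals $(H\cap K)\theta$, which is isomorphic to $G(A\cap B)$ via $\theta$; comparing the ranks of the abelianisations (both free abelian) gives $|D|=|A\cap B|$. The inclusion $H^f\cap K^g\subseteq H^f=G(A)^f$, after conjugating $G(D)$ back into $G(A)$, exhibits $G(D)$ as conjugate to a parabolic on a subset of $A$; since distinct standard parabolics are never conjugate (\cite{DKR3}), this forces $D\subseteq A$, and symmetrically $D\subseteq B$. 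With $|D|=|A\cap B|$ and $D\subseteq A\cap B$ we conclude $D=A\cap B$, so $H^f\cap K^g=G(A\cap B)^t$ is a conjugate of $H\cap K$.

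It remains --- and this is the crux --- to show that the conjugating element can be chosen inside $H=G(A)$. Here I would use the canonical retraction $\rho\colon G\to G(A)$ that fixes every vertex in $A$ and sends every other vertex to $1$; this is a homomorphism restricting to the identity on $G(A)$, and hence on any subgroup of $G(A)$. The idea is to exploit the containment $(H\cap K)\theta\subseteq H\theta=H^f$: transporting the intersection into $G(A)$ and applying $\rho$ replaces the conjugator by its $\rho$-image, which lies in $G(A)=H$, while $\rho(G(A\cap B))=G(A\cap B)$ (as $A\cap B\subseteq A$) guarantees that the parabolic factor is unchanged. The delicate point, which I expect to be the main obstacle, is to verify that this replacement preserves the subgroup $(H\cap K)\theta$ itself and does not merely produce another parabolic of the same type --- equivalently, that the hypothesis $H\theta=H^f$ (and not just that $H\theta$ is \emph{some} conjugate of $H$) forces the conjugator $t$ into a coset of $N_G(G(A\cap B))$ meeting $H$. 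Granting this, $(H\cap K)\theta=G(A\cap B)^h=(H\cap K)^h$ with $h\in H$, completing the proof.
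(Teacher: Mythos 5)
Your argument takes a genuinely different route from the paper's, and in its present form it has two weak points. The first is the black box in your second paragraph: that an intersection of two conjugates of canonical parabolic subgroups is again a conjugate of a canonical parabolic subgroup. This is a substantial theorem in its own right and is not part of the normal-form material in \cite{EKR}; it needs an honest source or proof. Granting it, the remaining bookkeeping (the rank count, and the containment $D\subseteq A\cap B$ --- which is better justified by noting that $d\in\al(d^s)\subseteq A$ for each $d\in D$ than by appealing to non-conjugacy of distinct standard parabolics, since a priori you only know a conjugate of $G(D)$ sits inside $G(A)$) does give $H^f\cap K^g=(H\cap K)^t$ for some $t\in G$. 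The paper avoids the intersection theorem entirely: it first reduces to the case $H\theta=H$ and $K\theta=K^g$ with $g$ having no left divisor in $K$ (replace $\theta$ by $\theta\g_{f^{-1}}$ and strip the $K$-part off $gf^{-1}$), then shows $H\cap K^g\subseteq(H\cap K)^g$ directly from \cite[Corollary 2.4]{DKR2} --- every element of $K^g$ has the form $b^{-1}\circ u\circ b$ with $u\in K$, and membership in $H$ forces $u\in H$ by support considerations --- and finally obtains the reverse inclusion by running the identical one-step argument on $\theta^{-1}$. That is both shorter and self-contained relative to the paper's toolkit.

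The second weak point is that the step you yourself identify as the crux --- placing the conjugator inside $H$ --- is not actually carried out: ``granting this'' leaves the proposal incomplete against the statement as written. You should know, however, that the paper's own proof also stops at a conjugator $bf\in G$ with no attempt to show it lies in $H$, and the only application of the lemma (in Theorem \ref{theorem:Stconj}) uses it in the form ``$(H\cap K)\theta=(H\cap K)^h$ for some $h\in G$''. So the operative content of the lemma is conjugacy by an element of $G$; your retraction idea, while plausible, is aimed at a refinement the paper neither proves nor needs. If you repair the first point (or adopt the paper's divisor argument), your first two paragraphs already establish everything that is used downstream.
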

\begin{proof}
First suppose that $f=1$ and that 
 $g$ has no left divisor in $K$. In this case it follows from 
\cite[Corollary 2.4]{DKR2}, that if $u\in K$  then there exist words
$a,b$ (dependent on $u$) such that 
$g=a\circ b$ and $u^g=b^{-1}\circ u\circ b$. Thus, if $w\in H\cap K^g$, then
for some $u\in K$, we have $w=u^g=b^{-1}\circ u\circ b$. This implies
that $u\in H\cap K$, so $w=u^g\in (H\cap K)^g$. That is, $H\cap K^g\subseteq
(H\cap K)^g$. 

Now $(H\cap K)\theta\subseteq H\cap K^g\subseteq (H\cap K)^g$. Moreover, from
the hypothesis on $H$, $K$ and $\theta$, we have $H\theta^{-1}=H$ and 
$K\theta^{-1}=K^h$, where $h=(g\theta^{-1})^{-1}$. Applying the previous
argument gives $(H\cap K)\theta^{-1}\subseteq (H\cap K)^h$, so 
$H\cap K\subseteq (H\cap K)\theta^{h\theta}$, from which we obtain 
$(H\cap K)^g\subseteq (H\cap K)\theta$.   

In the general case let $\g_{f^{-1}}$ denote conjugation by $f^{-1}$ and 
let $\phi =\theta \g_{f^{-1}}$. Then $H\phi=H$ and $K\phi=K^{gf^{-1}}$. 
Let $gf^{-1}$ have minimal form $gf^{-1}=a\circ b$, where $a\in K$ and 
$b$ has no left divisor in $K$. Then $K\phi=K^b$, so from the first case
above $(H\cap K)\phi =(H\cap K)^b$. Hence $(H\cap K)\theta=(H\cap K)^{bf}$, 
as required. 
\begin{comment}
As $H\cap K$ is a canonical parabolic subgroup it suffices to show that, for all 
 $x\in X\cap H\cap K$ there is some $s\in H\cap K$ such that  
$x\theta=s^g$. Given such an $x$ there are elements $z\in H$ and 
$w\in K$ such that $x\theta =z=w^g$. 
 From \cite[Corollary 2.4]{DKR2} there
are elements $a,b,c,d,e, h, u,v\in G$ such that
$g=a\circ b\circ c\circ h$, $w=u^{-1}\circ v\circ u$, $u=d\circ a^{-1}$, 
$dh=d\circ h$ and $w^g=(dh)^{-1}\circ e\circ (dh)$, with $\al(e)=\al(v)$, 
$v$ cyclically minimal, $e=v^b$, $\al(b)\subseteq \al(v)$ and 
$[\al(b\circ c),\al(d)]=[\al(c),\al(v)]=1$. As $g$ has no   left divisor
in $K$ we have $a=b=1$, so $e=v$, $g=ch$ and $x\theta =(d^{-1}vd)^{g}$. 
Thus $(H\cap K)\theta =(H\cap K)^g$, as required.
\end{comment}
\end{proof}
\begin{theorem}\label{theorem:Stconj}
$\cSt(\cK)=\oAut(G)$ and 
the group $\Aut(G)$
can be decomposed into the internal semi-direct product of 
$\cSt(\cK)$ and the finite subgroup $\Aut^\G_\cmp(G)$:  
\[\Aut(G)=\cSt(\cK)\rtimes \Aut^\G_\cmp(G).\]
\end{theorem}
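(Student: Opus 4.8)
The plan is to establish the identity $\cSt(\cK)=\oAut(G)$ first, from which the semi-direct product decomposition drops out immediately via Proposition~\ref{prop:autdecomp}. As a preliminary I would verify that $\cSt(\cK)$ is actually a subgroup of $\Aut(G)$: if $\phi_1,\phi_2\in\cSt(\cK)$ with $G(Y)\phi_i=G(Y)^{f_{i,Y}}$ for all $Y\in\cK$, then $G(Y)\phi_1\phi_2$ and $G(Y)\phi_1^{-1}$ are again conjugates of $G(Y)$, so closure under products and inverses holds.

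For the inclusion $\oAut(G)\subseteq\cSt(\cK)$ I would start from Proposition~\ref{prop:aut*}, which already supplies, for each $\phi\in\oAut(G)$ and each $x\in X$, an element $f_x$ with $G(\ad(x))\phi=G(\ad(x))^{f_x}$. Every admissible set $Y\in\cK$ is of the form $Y=\ad(U)=\cap_{u\in U}\ad(u)$ for some $U\subseteq X$, and for canonical parabolic subgroups one has $G(A)\cap G(B)=G(A\cap B)$, so $G(Y)=\cap_{u\in U}G(\ad(u))$. I would then pass from single vertices to arbitrary admissible sets by iterating Lemma~\ref{lem:paraint}: given $G(\ad(u_1))\phi=G(\ad(u_1))^{f_1}$ and $G(\ad(u_2))\phi=G(\ad(u_2))^{f_2}$, the lemma produces an $h$ with $(G(\ad(u_1))\cap G(\ad(u_2)))\phi=(G(\ad(u_1))\cap G(\ad(u_2)))^{h}$; since the intersection is again canonical parabolic and again mapped to a conjugate of itself, the process may be fed back in and repeated over all of $U$, yielding $G(Y)\phi=G(Y)^{f_Y}$. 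Hence $\phi\in\cSt(\cK)$.

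The reverse inclusion rests on showing $\cSt(\cK)\cap\Aut^\G_\cmp(G)=\{1\}$. I would argue exactly as in the proof of Proposition~\ref{prop:autdecomp}: a non-trivial $\theta\in\Aut^\G_\cmp(G)$ carries some $x\in X$ (after composing with a suitable permutation) to a vertex $y$ with $y\notin\ad(x)$. If moreover $\theta\in\cSt(\cK)$, then $G(\ad(x))\theta=G(\ad(x))^{f}$ for some $f$, and since $x\in\ad(x)$ this forces $y=x\theta=w^{f}$ with $w\in G(\ad(x))$. Passing to the abelianization, where conjugation acts trivially, the image of $y$ must equal the image of $w$, which lies in the span of $\ad(x)$ — impossible since $y\notin\ad(x)$. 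Thus $\theta=1$. Combining this with the decomposition $\Aut(G)=\oAut(G)\rtimes\Aut^\G_\cmp(G)$ of Proposition~\ref{prop:autdecomp}, any $\phi\in\cSt(\cK)$ factors as $\phi=\psi\theta$ with $\psi\in\oAut(G)\subseteq\cSt(\cK)$ and $\theta\in\Aut^\G_\cmp(G)$; then $\theta=\psi^{-1}\phi\in\cSt(\cK)\cap\Aut^\G_\cmp(G)=\{1\}$, so $\phi=\psi\in\oAut(G)$, giving $\cSt(\cK)=\oAut(G)$.

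The semi-direct decomposition $\Aut(G)=\cSt(\cK)\rtimes\Aut^\G_\cmp(G)$ is then obtained simply by substituting $\cSt(\cK)$ for $\oAut(G)$ in Proposition~\ref{prop:autdecomp}. I expect the inclusion $\oAut(G)\subseteq\cSt(\cK)$ to be the main obstacle: Proposition~\ref{prop:aut*} gives only per-vertex information, and promoting it to all of $\cK$ requires both the description of admissible sets as intersections of the $\ad(x)$ and careful control of the conjugating elements through Lemma~\ref{lem:paraint}, together with the standard fact that an intersection of canonical parabolic subgroups is canonical parabolic on the intersection of the defining vertex sets.
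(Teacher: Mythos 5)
Your proposal is correct and follows essentially the same route as the paper: the inclusion $\oAut(G)\subseteq\cSt(\cK)$ via Proposition~\ref{prop:aut*} together with Lemma~\ref{lem:paraint} (used to reduce membership in $\cSt(\cK)$ to the sets $\ad(x)$, $x\in X$), and the reverse inclusion by factoring $\phi=\alpha\beta$ through Proposition~\ref{prop:autdecomp} and showing $\cSt(\cK)\cap\Aut^\G_\cmp(G)=\{1\}$. The paper compresses the intersection/iteration step and the exponent-sum (abelianization) argument that you spell out, but the underlying reasoning is identical.
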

\begin{vnr}
\begin{proof}
That $\Aut(G)=\cSt(\cK)\rtimes \Aut^\G_\cmp(G)$ follows immediately 
from Proposition \ref{prop:autdecomp} once the first statement has been proved. 
From Lemma \ref{lem:paraint} and the definition of admissible sets,   
   $\phi\in \cSt(\cK)$ if
and only if $G(\ad(x))\phi=G(\ad(x))^{f_x}$, for some $f_x\in G$, for all $x\in X$. 
It follows then from Proposition \ref{prop:aut*} that 
$\oAut(G)\subseteq \cSt(\cK)$. 

For the reverse inclusion note that from Proposition \ref{prop:autdecomp} every
element $\phi$ of $\Aut$ can be expressed as $\phi=\a\b$, with $\a\in \oAut$ and
$\b\in \Aut^\G_\cmp(G)$. If $\phi\in \cSt(\cK)$ then, since $\oAut(G)\subseteq \cSt(\cK)$
we have $\a^{-1}\phi \in \cSt(\cK)\cap \Aut^\G_\cmp(G)$. However, from the
definitions of $\cSt(\cK)$ and $\Aut^\G_\cmp(G)$ this means that $\a^{-1}\phi=\beta
=1$, so 
$\phi=\a\in \oAut(G)$, as required. 
\end{proof}
\end{vnr}

\begin{theorem}\label{theorem:nconjnorm}
$\NConj(G)$ is a normal subgroup of $\cSt(\cK)$.
\end{theorem}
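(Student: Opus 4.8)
The plan is to reformulate membership in $\NConj(G)$ so that it meshes with the description of $\cSt(\cK)$ obtained in Theorem \ref{theorem:Stconj}. Since $G(\ad(x))$ is generated by $\ad(x)$ and conjugation is a homomorphism, the defining condition of Definition \ref{defn:NConj} says precisely that, for each $x\in X$, the restriction $\phi|_{G(\ad(x))}$ is conjugation by a single element $f_x\in G$; that is, $w\phi=w^{f_x}$ for every $w\in G(\ad(x))$. Taking $x$ to be an arbitrary generator (and using $x\in\ad(x)$) this already forces $\phi\in\Conj(G)$, and since conjugation sends $G(\ad(x))$ to a conjugate of itself it also gives $\NConj(G)\subseteq\cSt(\cK)$, so that the phrase ``normal subgroup of $\cSt(\cK)$'' makes sense. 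It then remains to show $\theta^{-1}\phi\theta\in\NConj(G)$ for all $\phi\in\NConj(G)$ and $\theta\in\cSt(\cK)$.

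First I would fix $x\in X$, write $H=G(\ad(x))$, and abbreviate $f=f_x$, so that $w\phi=w^{f}$ for all $w\in H$. Since $\cSt(\cK)$ is a group, $\theta^{-1}\in\cSt(\cK)$, so by the characterization of $\cSt(\cK)$ established in the proof of Theorem \ref{theorem:Stconj} (via Lemma \ref{lem:paraint}) there is $s\in G$ with $H\theta^{-1}=H^{s}$. The key step is to push a general $w\in H$ through $\theta^{-1}\phi\theta$ while tracking the conjugator. Writing $w\theta^{-1}=s^{-1}as$ with $a\in H$ and applying $\theta$ gives $a\theta=w^{(s\theta)^{-1}}$; then, using $a\phi=a^{f}$,
\[
w(\theta^{-1}\phi\theta)=\bigl((s^{-1}as)\phi\bigr)\theta=\bigl((s\phi)^{-1}a^{f}(s\phi)\bigr)\theta=w^{d},
\qquad d=(s\theta)^{-1}(f\theta)(s\phi\theta).
\]
The point is that $d\in G$ is independent of $w$, so $\theta^{-1}\phi\theta$ restricts to conjugation by the single element $d$ on $H=G(\ad(x))$.

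Since $x\in X$ was arbitrary, the reformulation of the first paragraph shows $\theta^{-1}\phi\theta\in\NConj(G)$, which completes the argument; normality in $\Conj(G)$ then follows from $\NConj(G)\subseteq\Conj(G)\subseteq\cSt(\cK)$. I expect the only real subtlety to be the bookkeeping in the central computation: one must resist treating $\theta$ itself as a conjugation (it is not --- only its restriction carries $H$ into a \emph{conjugate} of $H$), and instead use the single fact $H\theta^{-1}=H^{s}$ together with the substitution $a\theta=w^{(s\theta)^{-1}}$ to collapse the three maps into one inner action. The conceptual content --- that $\phi$ acts as an \emph{inner} automorphism on each $G(\ad(x))$, while $\cSt(\cK)$ merely permutes the conjugates of $G(\ad(x))$ --- is exactly what makes the conjugate lie in $\NConj(G)$ again; the remainder is a routine, if delicate, tracking of conjugators.
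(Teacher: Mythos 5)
Your argument is correct and is essentially the paper's own proof: both fix $x$, use that $\theta^{-1}\in\cSt(\cK)$ carries $G(\ad(x))$ to a conjugate $G(\ad(x))^{s}$, and then track the conjugator through $\theta^{-1}\phi\theta$ to arrive at the same element $d=(s\theta)^{-1}(f\theta)(s\phi\theta)$ (the paper's $f_x=(h_x^{-1}g_x(h_x\phi))\psi$). The only cosmetic difference is that the paper first disposes of the isolated-vertex case via Lemma \ref{lem:elconj}, a reduction your computation does not actually need.
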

\begin{proof}
In the light of Lemma \ref{lem:elconj} we may assume that $\G$ has
no isolated vertex. 
Let $\phi\in \NConj(G)$ and $\psi\in \cSt(\cK)$. Let $x\in X$ and let
$g_x$ and $h_x$ be elements of $G$ such that $u\phi=u^{g_x}$, for
all $u\in \ad(x)$, and $G(\ad(x))\psi^{-1}=G(\ad(x))^{h_x}$. 
For each $u\in \ad(x)$
let $w_u$ be the minimal form of an element of $G$  such that 
$w_u^{h_x}=u\psi^{-1}$, so $\al(w_u)\subseteq \ad(x)$
 and $w_u\psi=u^{h_x^{-1}\psi}$. Then 
$u\psi^{-1}\phi\psi=u^{f_x}$, where $f_x=(h_x^{-1}g_x(h_x\phi))\psi$, so 
$f_x$ is dependent only on $x$ and $\psi^{-1}\phi\psi\in \NConj(G)$. 
\end{proof}

\begin{lemma}\label{lem:stintconj}
\be[(i)]
\item
$\St(\cK)\cap \Conj(G)=\sConj(G)$. 
\item $\St(\cK)\cap \VConj(G)=\CConj(G)$.
\item
If $\phi\in \Aut(G)$ then $\phi\in \sConj(G)$ if and only if 
$x\phi=x^{f_x}$ where $\al(f_x)\subseteq \ad(x)$, for all $x\in X$. 
\ee
\end{lemma}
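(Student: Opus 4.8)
The plan is to prove part (iii) first, since parts (i) and (ii) reduce to it together with results already in hand. The forward implication of (iii) is exactly Lemma \ref{lem:elconj}\ref{it:elconj4}, so the substance of (iii) is the converse: if $\phi\in\Aut(G)$ satisfies $x\phi=x^{f_x}$ with $\al(f_x)\subseteq\ad(x)$ for every $x\in X$, then $\phi\in\sConj(G)$. Throughout I will say that an automorphism $\psi$ \emph{meets the support condition} if $z\psi=z^{h_z}$ with $\al(h_z)\subseteq\ad(z)$ for all $z\in X$. A useful preliminary observation is that any such $\psi$ maps $G(\ad(z))$ into itself: for $u\in\ad(z)$ we have $u\psi=u^{h_u}$ with $\al(h_u)\subseteq\ad(u)\subseteq\ad(z)$ by Lemma \ref{lem:ad0}\ref{it:ad8}, so $u\psi\in G(\ad(z))$. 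Consequently the automorphisms meeting the support condition are closed under composition, since $z(\psi_1\psi_2)=(z^{h_z})\psi_2=z^{k_z(h_z\psi_2)}$ with $\al(k_z)\subseteq\ad(z)$ and $\al(h_z\psi_2)\subseteq\ad(z)$.

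For the converse of (iii) I would induct on the length $|\phi|=\sum_{x\in X}\lg(f_x)$, where $x\phi=f_x^{-1}\circ x\circ f_x$. If $|\phi|=0$ then $\phi=1$. Otherwise Lemma \ref{lem:laurlem}\ref{it:laur25} supplies $x,y\in X$ and $\e\in\{\pm1\}$ with $x^\e f_x$ a right divisor of $f_y$, say $f_y=b\circ x^\e\circ f_x$. Then $x\in\al(f_y)\subseteq\ad(y)$, so $\ad(x)\subseteq\ad(y)$ by Lemma \ref{lem:ad0}\ref{it:ad8}; and the commutator computation of Proposition \ref{prop:vconjgens} (applied to $[x\phi,y\phi]=[x,y]\phi$) shows $[x,y]\neq1$, for otherwise the reducedness of $f_y=b\circ x^\e\circ f_x$ fails. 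Hence $x$ dominates $y$, so $y^\perp\bs y\subseteq x^\perp$ and $y\notin x^\perp$; therefore $\{y\}$ is an isolated vertex of $\G_{x^\perp}$, i.e.\ a connected component, and $\a=\a_{\{y\},x^{-\e}}$ lies in $\SLInn\subseteq\sConj$.

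I then form $\a\phi$ and check two points. First, the length strictly drops: the only generator whose image changes is $y$, and a direct computation gives $y(\a\phi)=y^{bf_x}$ with $\lg(bf_x)\le\lg(f_y)-1<\lg(f_y)$, while every other conjugator is unchanged, so $|\a\phi|<|\phi|$. Second, $\a\phi$ again meets the support condition, because both $\a$ (by Lemma \ref{lem:elconj}\ref{it:elconj4}) and $\phi$ do, and the class is closed under composition by the preliminary observation. By induction $\a\phi\in\sConj$, whence $\phi=\a^{-1}(\a\phi)\in\sConj$, completing (iii).

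Part (i) follows quickly. The inclusion $\sConj\subseteq\St(\cK)\cap\Conj$ holds since every $\phi\in\sConj$ meets the support condition (forward (iii)), hence fixes each $G(\ad(z))$ — applying the preliminary observation to $\phi$ and to $\phi^{-1}\in\sConj$ — so lies in $\St(\cK)$, and visibly $\sConj\subseteq\Conj$. Conversely, if $\phi\in\St(\cK)\cap\Conj$ then $x\phi=g_x^{-1}\circ x\circ g_x\in G(\ad(x))$ forces $\al(g_x)\subseteq\ad(x)$, so the support condition holds and (iii) gives $\phi\in\sConj$. For part (ii), Proposition \ref{prop:vconjgens} gives $\CConj=\VConj\cap\sConj$, so $\CConj\subseteq\VConj$ and $\CConj\subseteq\sConj\subseteq\St(\cK)$, whence $\CConj\subseteq\St(\cK)\cap\VConj$; conversely $\VConj\subseteq\Conj$ yields $\St(\cK)\cap\VConj\subseteq\St(\cK)\cap\Conj=\sConj$ by (i), and intersecting with $\VConj$ gives $\St(\cK)\cap\VConj\subseteq\sConj\cap\VConj=\CConj$. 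The main obstacle is the converse of (iii): establishing that the vertex $y$ delivered by Lemma \ref{lem:laurlem}\ref{it:laur25} is genuinely dominated by $x$ (so that the automorphism peeled off is singular), and then verifying both that the length truly decreases and that $\a\phi$ inherits the support condition.
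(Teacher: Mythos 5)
Your proof is correct and takes essentially the same route as the paper's: an induction on the length $|\phi|$ driven by Lemma \ref{lem:laurlem} \ref{it:laur25}, identifying a dominated pair of vertices and peeling off a singular elementary conjugating automorphism to shorten $\phi$. The only difference is organisational --- you run the induction on the support condition of part (iii) and deduce (i) from it, whereas the paper inducts on $\St(\cK)\cap \Conj(G)$ and deduces (iii); since for a basis-conjugating automorphism the two conditions coincide, the arguments are interchangeable.
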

\begin{proof}~
If $\a\in \SLInn$ then $G(\ad(x))\a=G(\ad(x))$, for 
all $x\in X$, so $\sConj\subseteq \St(\cK)\cap \Conj(G)$. For the converse 
 use induction on $|\phi|$, where 
 $\phi\in \St(\cK)\cap \Conj(G)$. If $|\phi|=0$ then $\phi=1$ and 
so belongs to $\sConj$. Assume then that $|\phi|>0$. 
In this case, from Lemma \ref{lem:laurlem} \ref{it:laur25},  there exist $u_1,u_2\in X$ such that
$u_i\phi=u_i^{w_i}$, reduced as written, for some $w_1,w_2\in G$, and
$u_1w_1$ is a right divisor of $w_2$. It follows, as in the proof 
of Proposition \ref{prop:vconjgens}, that 
$u_1\notin u_2^\perp$. 
 As $\phi\in \St(\cK)$
we have $w_2\in G(\ad(u_2))$ so $u_1w_1\in G(\ad(u_2))$. In particular
$u_1\in \ad(u_2)$ so $u_2^\perp\bs u_2\subseteq u_1^\perp$. Therefore
$\tr_{u_2,u_1}\in \Tr_\lk$ and $\b=\tr_{u_2,u_1}\tr_{u_2^{-1},u_1}\in \sConj
\subseteq \St(\cK)\cap \Conj(G)$. Therefore $\b\phi\in \St(\cK)\cap \Conj(G)$ and
 $|\b\phi|<|\phi|$  so, by induction, $\b\phi\in \sConj(G)$. 
This gives $\phi\in \sConj(G)$, 
as required. From this and Lemma \ref{lem:elconj} \ref{it:elconj4}
 the last statement of the 
lemma follows immediately.

That  $\St(\cK)\cap \VConj(G)=\CConj(G)$ 
follows immediately from Proposition \ref{prop:vconjgens}
\begin{comment}
Now $\St(\cK)\cap \NConj(G)\subseteq \sConj(G)$ so equals 
$\NConj(G)\cap \sConj(G)$. If $\phi=\a_{C,x}$, where $C=\{y\}$ and 
$\phi\in \NConj(G)$ then, for all $u\in \ad(y)$, $u\phi=u^g$, for some 
$g\in G$. As $y\phi=y^x$ and $x^\phi =x$ and $[x,y]\neq 1$ it follows that
$g=vx$, for some $v\in C(x,y)$. Then $v\in C(u)$, for all $u\in \ad(y)$, and 
hence $v$ may be taken to be $1$ and  $g=x$. 
Therefore every other element of $\ad(y)$ must commute with
$x$ and so $\phi\in \iConj(G)$.
\end{comment}   
\end{proof}

The following question now arises naturally.  
\begin{que}\label{que:aut*decomp} Let $\G$ be a connected graph. Is 
$\cSt(\cK)=\St(\cK)\NConj(G)$? 
\end{que}
It seems on first sight very plausible that the answer is ``yes'', but, as the 
subsequent example shows, it turns out to be ``no'' and in fact, 
in general $\St(\cK)\Conj(G)\subsetneq \cSt(\cK)$.   
\begin{expl}\label{ex:diagex}
Take $G$ to be the group $G(\G)$ where $\G$ is the graph of 
Figure \ref{fig:diagex}. 
\begin{figure}
\begin{center}
\psfrag{a}{$a$}
\psfrag{b}{$b$}
\psfrag{c}{$c$}
\psfrag{r}{$r$}
\psfrag{s}{$s$}
\psfrag{t}{$t$}
\psfrag{v}{$v$}
\includegraphics[scale=0.4]{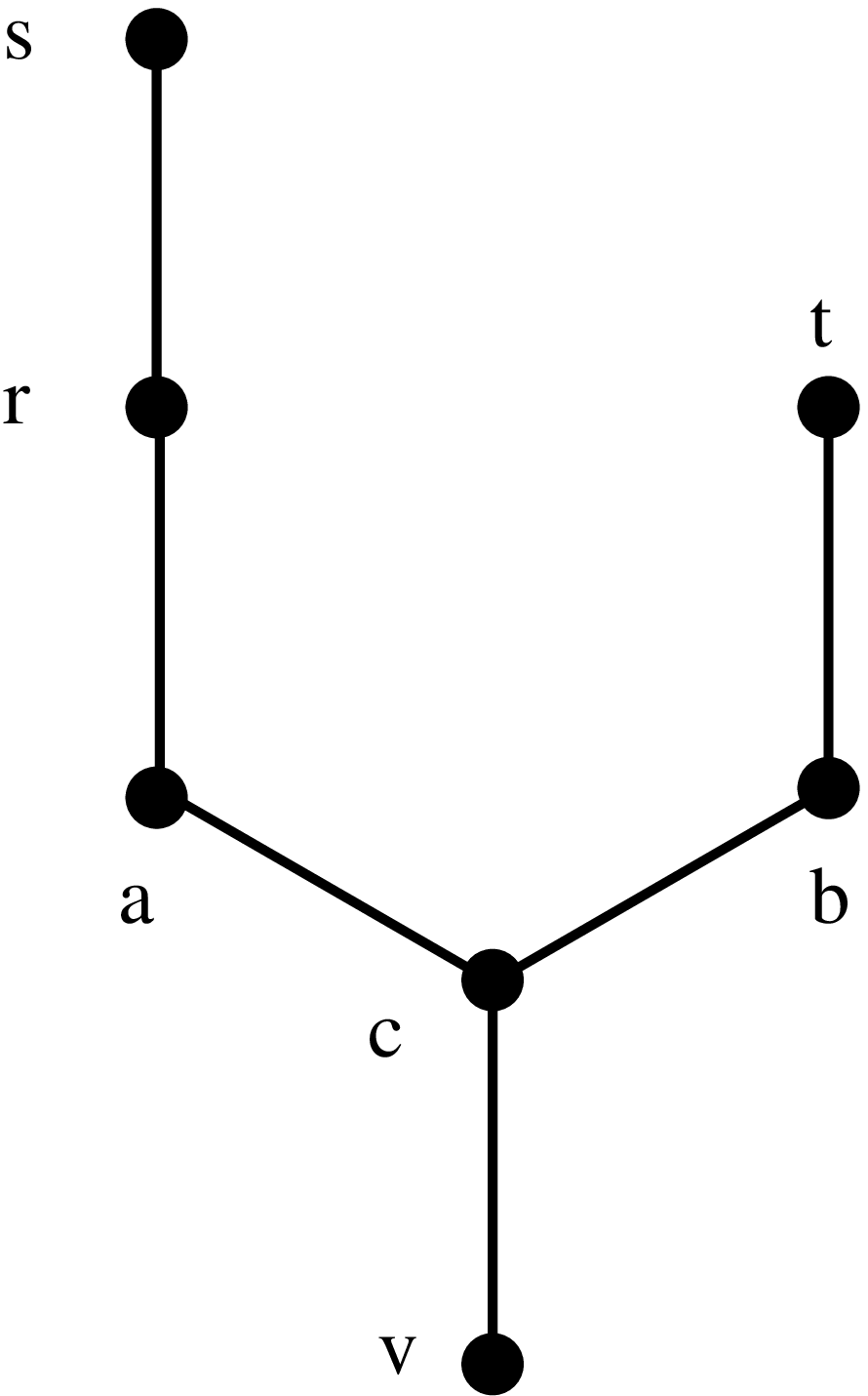}
\end{center}
\caption{Example \ref{ex:diagex}}\label{fig:diagex}
\end{figure}
Denote the components of $\G_{v^\perp}$ 
by  $C=\{a,r,s\}$ and $D=\{b,t\}$, 
 let $\a=\a_{C,v}$, $\tr=\tr_{v,a}\tr_{v,b}\tr_{v,a^{-1}}$, and set
$\phi=\a\tr$. 
\begin{equation*}
z\phi=
\left\{
\begin{array}{ll}
z, & \textrm{ if } z=b, c, t\\
vb^a, &\textrm{ if } z=v\\
z^{vb^a} & \textrm{ if } z\in C
\end{array}
\right.
\end{equation*}

The answer  to question \ref{que:aut*decomp} above is ``no'' in 
this example, as  $\phi$ cannot be written as $\g\d$, where $\d\in \Conj$ 
and $\g\in St(\cK)$, as we shall demonstrate. 
 Suppose then that $\phi=\g\d$, where  $\d\in \Conj$ 
and $\g\in St(\cK)$. 
The set $\cK$ consists of 
$\ad(v)=\{a,b,c,v\}$, $\ad(s)=\{a,r,s\}$, $\ad(t)=\{b,c,t\}$ and four more
sets $\ad(z)=\{z\}$, where $z=a, b, c$ and $r$. As $\g$ maps the subgroup generated by
$\ad(a)=\{a\}$ to itself  we have $a\g=a^{\pm 1}$. As $a\d=a^g$, for some
$g\in G$, it must be that $a\g=a$. Similarly $b\g=b$, $c\g=c$ and $r\g=r$.  
Combined with the expression for $\phi$ above  we obtain 
$a\d=a^{vb^a}$, 
$b\d=b$ and $c\d=c$.  As $c\d=c$, we have $C_G(c)\d=C_G(c\d)=C_G(c)$,  so
 $G(c^\perp)\d=G(c^\perp)$: that is $G(a,b,c,v)\d=G(a,b,c,v)$. Moreover,  
 as $\d$ acts on generators by conjugation,  
$\d$ must map $G(a,b,v)$ to itself; so $v\d=v^g$, for some $g\in G(a,b,v)$,
and $\d$ restricts to an automorphism of $G(a,b,v)$.
Applying Lemma \ref{lem:laurlem} to the restriction of $\d$ 
to $G(a,b,v)$ we see that
either $b^\e$ or $a^\e{vb^a}$ is a right divisor of $g$, or that $v g$ is a right
divisor of $vb^a$, in which case $g=b^a$. In the latter case consider
the automorphism $\a_{C,v^{-1}}\d$. This maps $a$ to $a^{b^a}$, $b$ to 
itself, $c$ to itself  and $v$ to $v^{b^a}$. Restricting to 
$G(a,b,v)$ again gives a contradiction to Lemma \ref{lem:laurlem}. 
Thus we may assume that either $a^\e vb^a$ or $b^\e$ is
a right divisor of $g$. Suppose that $m$ is maximal 
such that 
$a^{\e m}vb^a$ is a right divisor
of $g$; say $g=g_0\circ a^{\e m}vb^a$. If $m\ge 1$ then
 $\d_0=\a_{v,a^\e}^{-m}\d$ maps $a$ to $a^{vb^a}$, fixes $b$ and $c$ and 
maps $v$ to $v^{g_0vb^a}$. As this contradicts  Lemma \ref{lem:laurlem}
we have $m=0$. Similarly, if  
$b^{\e}$ is a right divisor of $g$ then we obtain a  contradiction. 
 Hence no such conjugating automorphism $\d$ exists. 

It is also possible to show that $\tr\a\notin \Conj(G) \St(\cK)$. Moreover the 
example shows that replacing $\cSt(\cK)$, $\St(\cK)$ and $\NConj(G)$ with
their canonical images in $\Out(G)$ the equality of Question 
\ref{que:aut*decomp} still
fails.
\end{expl}

If there are no  dominated vertices in $\G$, that is $\Isol(\G)=\nul$, then following holds. Here  
\[
\St(\cL)=\{\phi\in \Aut(G)|G(Y)\phi=G(Y), \textrm{ for all } 
Y\in \cL\},
\]
a subgroup of $\Aut(G)$ 
defined originally in \cite{DKR5}, where it was shown to be an arithmetic
group.

\begin{lemma}\label{lem:isolnul}
 Let $\G$ be a graph such that  $\Isol(\G)=\nul$. Then 
\be[(i)]
\item\label{it:solnul1} $\Conj(G)\cap \St(\cK)=\sConj(G)=\{1\}$ and 
$\Conj(G)=\VConj(G)=\NConj(G)$ is normal in $\cSt(\cK)$ and
\item\label{it:solnul2} $\St(\cL)=\St(\cK)$.
\ee
\end{lemma}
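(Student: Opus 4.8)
The plan is to isolate the two consequences of $\Isol(\G)=\nul$ that do the real work, and then read off both parts. First I would record that no dominated vertices forces $\ad(x)\subseteq x^\perp$, hence $\ad(x)=\cl(x)$, for every $x\in X$: if some $z\in\ad(x)\bs x^\perp$ existed then $x^\perp\bs x\subseteq z^\perp$ (Lemma \ref{lem:admot}) and $[x,z]\neq1$, so $z$ would dominate $x$; the equality $\ad(x)=\cl(x)$ then follows from Lemma \ref{lem:ad0}\ref{it:ad2}. Second, I would note $\SLInn=\nul$: an element $\a_{C,x}\in\SLInn$ has $C=\{y\}$ a component of $\G_{x^\perp}$, which (as remarked after Definition \ref{defn:singular}) forces $x$ to dominate $y$. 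Hence $\sConj(G)=\la\SLInn\ra=\{1\}$, and combined with Lemma \ref{lem:stintconj} this already gives $\Conj(G)\cap\St(\cK)=\sConj(G)=\{1\}$.

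For the chain $\Conj=\VConj=\NConj$, using $\Inn\le\NConj\le\VConj\le\Conj$ from Lemma \ref{lem:elconj}\ref{it:elconj1} and $\Conj=\la\LInn\ra$ (Theorem \ref{thm:laurence}), I would reduce everything to showing $\LInn\subseteq\NConj$; then $\Conj=\la\LInn\ra\subseteq\NConj$ collapses the chain. So I would fix $\a_{C,x}\in\LInn$ with $C$ a component of $\G_{x^\perp}$, and for each $u\in X$ produce $f_u$ with $v\a_{C,x}=v^{f_u}$ for all $v\in\ad(u)$. If $\ad(u)\cap C=\nul$ take $f_u=1$; if $\ad(u)\subseteq C\cup x^\perp$ take $f_u=x$ (elements of $\ad(u)\cap x^\perp$ commute with $x$, so are also conjugated by $x$). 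The one remaining possibility is $\ad(u)\cap C\neq\nul$ together with $\ad(u)\nsubseteq C\cup x^\perp$, and the heart of the argument is to exclude this.

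To exclude it I would apply Lemma \ref{lem:fixad} (with the lemma's conjugating vertex taken to be $x$ and its tested vertex $u$), obtaining $x\in\ad(u)$ and so $u^\perp\bs u\subseteq x^\perp$. Choosing $y\in\ad(u)\cap C$, so $y\notin x^\perp$, I would then split on the position of $y$ relative to $u$: if $y=u$ then $x\in\ad(y)$ gives $y^\perp\bs y\subseteq x^\perp$, so $x$ dominates $y$; if $y\in u^\perp\bs u$ then $y\in x^\perp$, contradicting $y\notin x^\perp$; and if $y\neq u$ with $[u,y]\neq1$ then $u^\perp\bs u\subseteq y^\perp$ exhibits $y$ as dominating $u$. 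Each branch contradicts $\Isol(\G)=\nul$, so the mixed case never occurs and $\a_{C,x}\in\NConj$. This yields $\Conj=\VConj=\NConj$, and its normality in $\cSt(\cK)$ is then exactly Theorem \ref{theorem:nconjnorm}, finishing (i).

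For (ii), $\St(\cL)\subseteq\St(\cK)$ is immediate since $\cK\subset\cL$. For the reverse I would first establish two reductions: $\phi\in\St(\cK)$ iff $\phi$ stabilises every $G(\ad(x))$ (as already remarked after Definition \ref{defn:st}), and, by the analogous argument applied to the identity $Y=\cup_{y\in Y}\cl(y)$ (valid for any closed $Y$, since $y\in Y$ closed gives $\cl(y)\subseteq Y$), that $\phi\in\St(\cL)$ iff $\phi$ stabilises every $G(\cl(x))$. Since $\ad(x)=\cl(x)$ for all $x$ by the first observation, these conditions coincide, giving $\St(\cK)\subseteq\St(\cL)$ and hence equality. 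The main obstacle throughout is the exclusion of the mixed case in (i): that is the only place where the no-domination hypothesis is used in an essential rather than formal way, and arranging the three sub-cases for the position of $y$ so that each terminates in a dominated pair is the crux; the rest is bookkeeping against results already proved.
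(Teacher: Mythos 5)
Your proof is correct and follows essentially the same route as the paper's: $\sConj(G)=\{1\}$ because $\SLInn=\nul$, the identity $\ad(x)=\cl(x)$ for all $x$, Lemma \ref{lem:fixad} to show each elementary conjugating automorphism lies in $\NConj(G)$, and the union-of-closures decomposition of closed sets for part (ii). The only local difference is that where you exclude the mixed case of Lemma \ref{lem:fixad} by exhibiting a dominated vertex in each of three subcases, the paper dispatches it in one line: $x\in\ad(u)=\cl(u)$ forces $\cl(u)\subseteq x^\perp$ (as $\cl(u)$ is a simplex), hence $\ad(u)\cap C=\nul$, contradicting the assumption.
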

\begin{proof}~ 
\ref{it:solnul1}
In this case $\CConj(G)=\sConj(G)=\{1\}$ so $\VConj\cap \St(\cK)=1$. 
To see that $\NConj=\Conj$, note first that,  
from Lemma \ref{lem:ad0} \ref{it:ad2}, 
it follows that $\ad(x)=\cl(x)$, 
for all $x\in X$. Let $x,y\in X$ and let $C$ be a component of 
$\G_{y^\perp}$. If $\ad(x)\cap C\neq \nul$ then, from Lemma \ref{lem:fixad},
 either $\ad(x)\subseteq C\cup y^\perp$ or $y\in \ad(x)$. If $y\in \ad(x)
=\cl(x)$ then $\cl(x)\subseteq y^\perp$; so either $\ad(x)\cap C=\nul$ or
 $\ad(x)\subseteq C\cup y^\perp$. Therefore, either 
$u\a_{C,y}=u^y$, for all $u\in\ad(x)$, or $u\a_{C,y}=u$, for all $u\in \ad(x)$;
and it follows that  $\NConj=\Conj$. 

\ref{it:solnul2} From \cite[Lemma 2.4]{DKR3} it follows that 
 if $Y\in \cL$ then  $Y=\cup_{y\in Y}\cl(y)$. 
Therefore, for all $\phi\in \Aut(G)$, 
$\phi\in \St(\cL)$ if and only if 
 and $G(\cl(y))\phi=G(\cl(y))$, for all $y\in Y$. 
Given that $\ad(x)=\cl(x)$, for all $x\in X$, the result follows from the remark following the definition of $\St(\cK)$ above. 
\end{proof}
\begin{theorem}\label{theorem:nolocisol} 
The following are equivalent. 
\be[(i)]
\item $\Isol(\G)=\nul$.
\item  
$\cSt(\cK)=\NConj(G)\rtimes \St(\cL).$
\item  
$\cSt(\cK)=\Conj(G)\rtimes \St(\cL).$
\item
 $\cSt(\cK)=\Conj(G)\rtimes \St(\cK)$. 
\ee
\end{theorem}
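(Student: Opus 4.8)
The plan is to prove that (i) implies (ii), (iii) and (iv) simultaneously, and then that each of (ii), (iii), (iv) implies (i), arguing the converses by contraposition. For the forward implication I would assume $\Isol(\G)=\nul$ and invoke Lemma \ref{lem:isolnul}, which gives $\NConj(G)=\VConj(G)=\Conj(G)$, $\St(\cL)=\St(\cK)$ and $\Conj(G)\cap\St(\cK)=\sConj(G)=\{1\}$. Under these equalities the three asserted decompositions coincide, so it suffices to prove the single statement $\cSt(\cK)=\Conj(G)\rtimes\St(\cK)$. Here $\Conj(G)$ is normal in $\cSt(\cK)$ (Lemma \ref{lem:isolnul}, or Theorem \ref{theorem:nconjnorm}), so $\Conj(G)\St(\cK)$ is a subgroup; recalling that $\cSt(\cK)=\oAut(G)=\la\Inv\cup\Tr\cup\LInn\ra$ (Theorem \ref{theorem:Stconj}), that $\Inv\cup\Tr\subseteq\St(\cK)$ (Remark \ref{rem:aut*}) and that $\LInn\subseteq\Conj(G)$ (Theorem \ref{thm:laurence}), this subgroup contains every generator of $\cSt(\cK)$ and hence equals it. Since $\Conj(G)\cap\St(\cK)=\{1\}$ the product is an internal semidirect product, establishing (ii)--(iv) at once.

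For the converse I would assume $\Isol(\G)\neq\nul$ and fix $x,y\in X$ with $x$ dominating $y$, so that $y^\perp\bs y\subseteq x^\perp$ while $y\notin x^\perp$, and show each decomposition fails. Statement (iv) is the easiest: the singular automorphism $\a_{\{y\},x}=\tr_{y,x}\tr_{y^{-1},x}$ lies in $\SLInn\subseteq\sConj(G)=\Conj(G)\cap\St(\cK)$ (Lemma \ref{lem:stintconj}) and is nontrivial since $[x,y]\neq 1$. Thus $\Conj(G)\cap\St(\cK)\neq\{1\}$, so the product in (iv) cannot be an internal semidirect product.

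For (ii) and (iii) I would pass to the abelianisation, considering the induced homomorphism $\pi\colon\Aut(G)\to\GL(|X|,\ZZ)$. Every basis-conjugating automorphism, in particular every element of $\NConj(G)$ and $\Conj(G)$, lies in $\ker\pi$, whereas the transvection $\tr_{y,x}\in\Tr_\lk\subseteq\St(\cK)\subseteq\cSt(\cK)$ maps to the elementary transvection $y\mapsto y+x$. If $\tr_{y,x}=cs$ with $c$ basis-conjugating and $s\in\St(\cL)$, then $\pi(s)=\pi(\tr_{y,x})$ is this same elementary transvection. But $\cl(y)\in\cL$ contains $y$ and not $x$ — indeed $y\notin x^\perp$ forces $x\notin\cl(y)$ — so $s\in\St(\cL)$ preserves $G(\cl(y))$, whence $\pi(s)$ preserves the coordinate subspace spanned by $\cl(y)$; the map $y\mapsto y+x$ does not, as $x\notin\cl(y)$. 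This contradiction rules out (ii) and (iii).

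The hard part will be the converse for (ii) and (iii): unlike (iv) it cannot be settled by a triviality-of-intersection argument, because every admissible set containing $y$ also contains $x$ (indeed $x\in\ad(y)$ whenever $x$ dominates $y$), so $\St(\cK)$ does not detect the obstruction and $\tr_{y,x}$ genuinely lies in $\St(\cK)$. It is precisely the finer invariance under the closed set $\cl(y)$, respected by $\St(\cL)$ but not by $\St(\cK)$, that breaks the decomposition, and the abelianisation is the cleanest device to expose it. A routine point to confirm is that a dominated pair really yields a transvection of $\lk$-type, which follows from $y^\perp\bs y\subseteq x^\perp$ together with $y\notin x^\perp$.
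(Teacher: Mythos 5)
Your proposal is correct, and most of it coincides with the paper's proof. The forward direction is the same: the paper deduces (i)$\Rightarrow$(ii)--(iv) from Lemma \ref{lem:isolnul}, and your generation argument (that $\Conj(G)\St(\cK)$ is a subgroup containing $\Inv\cup\Tr\cup\LInn$, hence all of $\cSt(\cK)=\oAut(G)$) is exactly the detail the paper leaves implicit there. Likewise your refutation of (iv) via a nontrivial element of $\SLInn\subseteq\Conj(G)\cap\St(\cK)$ is the paper's argument verbatim. The genuine divergence is in showing (ii) or (iii) implies (i). The paper works inside $G$: writing $\tr_{y,x}=\a\l$ with $\a\in\Conj(G)$ and $\l\in\St(\cL)$, it first proves $z\a=z\l=z$ for every $z\neq y$ (using that $G(\cl(z))$ is free abelian, that conjugates have equal exponent sums, and a centraliser computation forcing $g\in G(z^\perp)$), and only then compares the exponent of $x$ in $y\tr_{y,x}=yx$ with that in $(y^{\e}w)^{h\l}$. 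You instead pass at once to the abelianisation $\pi:\Aut(G)\maps\GL(|X|,\ZZ)$, which kills the $\Conj(G)$-factor outright and reduces the contradiction to the single observation that $\pi(s)$ for $s\in\St(\cL)$ preserves the coordinate subspace spanned by $\cl(y)$ whereas $y\mapsto y+x$ does not, since $x\notin y^\perp\supseteq\cl(y)$. Both arguments ultimately exploit exponent sums, but yours bypasses the normal-form and centraliser manipulations and is shorter; the paper's version yields the additional (unneeded) information that the putative factors would have to fix every generator other than $y$. Your route also sidesteps a harmless slip in the paper, which states the two exponent sums of $x$ the wrong way round in its final line.
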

\begin{proof}
In view of Lemma \ref{lem:isolnul} it suffices to show that each of the 
last 
 three statements implies the first.
To see that the second or third statement implies the first, 
suppose  $\cSt(\cK)$ decomposes as the given internal semi-direct
product. If $y\in \Isol(x)$, for some $x,y\in X$, then
$\tr=\tr_{y,x}\in \cSt(\cK)$, so
 $\tr=\a\l$, for some $\a\in \Conj$ and
$\l\in \St(\cL)$. Then, for $z\in X\bs y$ we have
 $z=z\tr = z\a\l=z^g\l=z\l^{g\l}$, for some $g\in G$. 
As $z\l\in G(\cl(z))$ it follows that $z\l=z\circ w$, for some
$w\in G(\cl(z))$, so $(zw)^{g\l}=z$, 
from which, counting exponents of letters, we infer
that $w=1$. 
Hence $g\l\in G(z^\perp)$, so
$g\in G(z^\perp)$, which implies that $z\a=z$, and consequently $z\l=z$. 
Now $y\a=y^h$ and $z\a=z\l=z$, for some $h\in G$ and all $z\in X\bs y$. 
 As $\l\in \St(\cL)$ we have $y\l\in G(\cl(y))$ and, since $z\l=z$ for all  
$z\neq
y$, we have $y\l=y^\e w$, for some $w\in G(\cl(y))$, $\e=\pm 1$. 
However this means that 
$yx=y\tr=y\a\l=(y^{\e}w)^{h\l}$ and, as $x\notin y^\perp$, 
the exponent sum of $x$ on the left hand side of this expression is zero, 
while on the right it is one. Hence no such $x,y$ exist and $\Isol(\G)=\nul$.

To see that the fourth statement implies the first: from the fourth statement
it follows that $\Conj(G)\cap \St(\cK)=\{1\}$, so $\SLInn=\nul$ and this
implies that $\Isol(\G)=\nul$. 
\end{proof}
\subsection{Balanced graphs}\label{section:bal}
 Although $\Isol(\G)=\nul$ is a necessary condition for 
the intersection of $\Conj(G)$ and $\St(\cK)$ to be trivial,  
the class of graphs 
for which $\cSt(\cK)=\Conj(G)\St(\cK)$ is much wider than those without
dominated vertices: it can, as we shall show, be characterised using
 the following definition.
\begin{defn}
A graph $\G$ is called {\em balanced} if 
the following condition holds for all $v\in \Isol(\G)$. 
Either
\be
\item $\ado(v)=\nul$, or
\item there exists a connected component $C_v$ of $\G_{v^\perp}$ such that
$\ado(v)\subseteq C_v$. 
\ee
\end{defn}

In this section we shall use the following extensions  of the terminology
for   
 transvections and conjugating automorphisms. 

\begin{defn}\label{defn:comptr}
If $\tr_{x,y_i}$ is a transvection, for $x, y_i\in X^{\pm 1}$, 
and $w=y_1\cdots y_n$ 
is a geodesic word in $G$ then $\tilde{\tr}_{x,w}=\tr_{x,y_n}\cdots \tr_{x,y_1}$ is
called a {\em composite transvection} and the set of all composite  
transvections is denoted $\Trt=\Trt(G)$.
\end{defn}
\begin{defn}
\begin{itemize}
\item
If $L$ consists of a union $L=\cup_{i=1}^r$ of  connected components $C_i$ 
of $\G_{x^\perp}$ then
 $\a_{L,x^\e}=\prod_{i=1}^r \a_{C_i,x^\e}$ is called an {\em extended  
elementary conjugating automorphism}. 
 The set of all extended elementary conjugating automorphisms  is denoted 
$\WLInn=\WLInn(G)$.
\item
Let $y\in X^{\pm 1}$ and 
 $\a_{L,y}\in \WLInn(G)$. If  $\ad(y)\cap L=\nul$ then 
$\a_{L,y}$ is called a {\em tame} elementary
conjugating automorphism of $G$. The set of 
all tame elementary conjugating automorphisms is denoted $\TLInn(G)$. 
\end{itemize} 
\end{defn}

\begin{lemma}\label{lem:xycomps}
Let $x$ and $y$ be elements of $X$ such that $x$ dominates $y$ and let $C$ 
be a component of $\G_{y^\perp}$. 
\be[(i)]
\item\label{it:xycomps1} If $x\notin C$ then $C$ is a component of $\G_{x^\perp}$.
\item\label{it:xycomps2}  If $x\in C$ and the components of $\G_{x^\perp}$ which meet $C$ are 
$C_1,\ldots, C_r$ then $C=[(C_1\cup \cdots \cup C_r)\cup x^\perp]\bs y^\perp$. 
\ee
\end{lemma}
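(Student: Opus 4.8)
The plan is to work directly with the combinatorial content of domination. The hypothesis that $x$ dominates $y$ unpacks, via Definition \ref{defn:isols} together with Lemma \ref{lem:ad0}\ref{it:ad12}, into two facts I shall use throughout: first, $y\notin x^\perp$ and hence $x\notin y^\perp$, so that $x$ is itself a vertex of $\G_{y^\perp}$ and the dichotomy $x\in C$ / $x\notin C$ is exhaustive; second, $y^\perp\bs y\subseteq x^\perp$, equivalently $y^\perp\subseteq x^\perp\cup\{y\}$. This second inclusion is the engine of both parts: every neighbour of $y$ in $\G$ lies in $x^\perp$, so any walk avoiding $x^\perp$ can meet $y^\perp$ only at $y$ itself, and in fact cannot reach $y$ at all unless $y$ is one of its endpoints.

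For part \ref{it:xycomps1}, assuming $x\notin C$, I would verify the three conditions needed to recognise $C$ as a component of $\G_{x^\perp}$. Disjointness $C\cap x^\perp=\nul$ follows because a vertex $u\in C\cap x^\perp$ with $u\ne x$ would be adjacent to $x$, giving an edge joining $u$ and $x$ inside $\G_{y^\perp}$ (both endpoints avoid $y^\perp$), which forces $x\in C$, a contradiction. Connectivity is immediate, since the full subgraph on $C$ is literally the same graph in $\G_{y^\perp}$ and in $\G_{x^\perp}$. Maximality is the only case split: if some $w\notin x^\perp$ with $w\notin C$ were adjacent to a vertex $u\in C$, then either $w\in y^\perp$, forcing $w=y$ (as $y^\perp\bs y\subseteq x^\perp$) and hence $u$ adjacent to $y$, contradicting $u\in C\subseteq X\bs y^\perp$; or $w\notin y^\perp$, so the edge joining $w$ and $u$ lies in $\G_{y^\perp}$ and pulls $w$ into $C$, again a contradiction.

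For part \ref{it:xycomps2}, with $x\in C$, I would prove the equality $C=[(C_1\cup\cdots\cup C_r)\cup x^\perp]\bs y^\perp$ by double inclusion, writing $U=C_1\cup\cdots\cup C_r$. The inclusion $C\subseteq(U\cup x^\perp)\bs y^\perp$ is routine: $C$ avoids $y^\perp$, and any $u\in C$ either lies in $x^\perp$ or lies in a component of $\G_{x^\perp}$ which, meeting $C$ at $u$, must be some $C_i$. For the reverse inclusion take $w\in(U\cup x^\perp)\bs y^\perp$. If $w\in x^\perp$ then $w=x\in C$, or else $w$ is adjacent to $x$, giving an edge in $\G_{y^\perp}$ that places $w$ in $C$. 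The substantive case is $w\in C_i$ with $w\notin y^\perp$: fixing $v\in C_i\cap C$, any path from $w$ to $v$ inside $C_i$ avoids $x^\perp$, hence avoids $y^\perp\bs y$; moreover it cannot pass through $y$, since an internal occurrence of $y$ would place both of its path-neighbours in $y^\perp\bs y\subseteq x^\perp$, impossible inside $C_i$, while $y$ cannot be an endpoint because $w,v\notin y^\perp$. Thus the path lies entirely in $\G_{y^\perp}$ and $w\in C$.

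I expect the genuine obstacle to be exactly this last path-lifting step in \ref{it:xycomps2}: one must argue not merely that the connecting path inside $C_i$ avoids $x^\perp$, but that it cannot slip through $y$, which is the single vertex of $y^\perp$ not governed by $y^\perp\bs y\subseteq x^\perp$. Everything else is bookkeeping with connected components, and the cleanest write-up keeps the inequality $y^\perp\subseteq x^\perp\cup\{y\}$ in the foreground at every step.
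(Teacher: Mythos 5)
Your proof is correct and rests on exactly the two facts the paper's own argument uses ($x\notin y^\perp$ and $y^\perp\bs y\subseteq x^\perp$, so that edges and paths avoiding $x^\perp$ and not passing through $y$ survive into $\G_{y^\perp}$ and vice versa). The only differences are organizational: you establish part (i) by directly verifying disjointness from $x^\perp$, connectivity and maximality rather than the paper's case split on whether $|C|=1$ or $|C|\ge 2$, and you prove part (ii) by a self-contained double inclusion with a path-lifting step, whereas the paper obtains the key inclusion $C_i\subseteq C$ by applying part (i) to the component of $\G_{y^\perp}$ containing a given $u\in C_i$.
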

\begin{proof} ~
\ref{it:xycomps1} 
If $C=\{u\}$, for some $u\in X$, 
then $y$ dominates $u$ so $u^\perp\bs u\subseteq y^\perp$. In this case,
if $x\in u^\perp$ then $x=u$, since $x\notin y^\perp$, but this contradicts
$x\notin C$. Thus $x\notin u^\perp$ and
 $u^\perp \cap x^\perp=y^\perp\cap u^\perp\cap x^\perp=u^\perp\bs u$;  
so $x$ also dominates $u$ and $C$ is
 a component of $\G_{x^\perp}$. If $C$ contains two elements $u$ and $v$ then there
is a path $p$ 
from $u$ to $v$ which does not meet $y^\perp$. 
If $u$ and $v$ belong to different components of $\G_{x^\perp}$ then $p$ meets
$x^\perp$, and as $x\notin y^\perp$ this means that $x\in C$, a contradiction. Hence
$C\subseteq C^\prime$, for some component $C^\prime$ of $\G_{x^\perp}$. 
As $y^\perp\bs y \subseteq x^\perp$, every component of $\G_{x^\perp}$ containing 
at least $2$ elements is contained in
some component of $\G_{y^\perp}$, so $C=C^\prime$. 

\ref{it:xycomps2}  Suppose that $u\in C_i$, for some 
$i\in \{1,\ldots ,r\}$. Either $u$ belongs to $y^\perp$ or to
some component of $\G_{y^\perp}$. However $y^\perp\bs y\subseteq x^\perp$ and 
$u\notin x^\perp$, so 
$u\notin y^\perp\bs y$. 
As $\{y\}$ is a connected component of $\G_{x^\perp}$, which does
not meet any component of $\G_{y^\perp}$, the vertex $u\neq y$. 
 Hence $u$ belongs to some component $C^\prime$
of $\G_{y^\perp}$. If $x\notin C^\prime$ then, from \ref{it:xycomps1}, $C^\prime=
C_i$, in which case $C^\prime=C$ and $x\in C^\prime$, a contradiction. Hence $x\in
C^\prime$ and $C=C^\prime$; so $C_i\subseteq C$, for all $i$. By definition $C\subseteq
\cup_{i=1}^r C_i\cup x^\perp$, and the  result follows. 
\end{proof}

\begin{lemma}\label{lem:trlinncom}
Let $y\in X$, $v,x\in X^{\pm 1}$,  $\a=\a_{L,y}\in \WLInn$ and 
$\tr=\tr_{v,x}\in \Tr$. 
\be[(i)]
\item\label{it:trlinncom1} If either $v\in L$ and $x\in L\cup y^\perp$ or $v\notin L$, 
$v\neq y^{\pm 1}$ and $x\notin L$ then
\[\a\tr=\tr\a.\]
\item\label{it:trlinncom2} If $v\in L$ and $x\notin L\cup y^\perp$ then $v\in \Isol(y)$ and 
\[\a\tr=\tr_{v,y}\tr\tr_{v,y}^{-1}\a.\]
\item\label{it:trlinncom3} If  $v\notin L$, 
$v\neq y^{\pm 1}$ and $x\in L$  then $v\in \Isol(y)$ and 
\[\a\tr=\tr_{v,y}^{-1}\tr\tr_{v,y}\a.\]
\item\label{it:trlinncom4} If $y=v^{\pm 1}$ and  $x\notin L$ then $L$ is  a union of connected 
components of $\G_{x^\perp}$ and, setting 
$\b=\a_{L,x}$, 
 
\[\a\tr=
\begin{cases}
\tr\b\a,& \textrm{ if } v=y\\
\tr\a\b^{-1}, &\textrm{ if } v=y^{-1}
\end{cases}
.\]
\ee
\end{lemma}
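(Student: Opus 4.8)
The plan is to prove each of the four identities by evaluating both sides on an arbitrary generator $z\in X$: an endomorphism of $G$ is determined by the images of the generators, so it suffices to check the two sides agree on each $z$. For this I would partition $X$ according to whether $z$ lies in $L$, lies in $y^\perp$, or equals (the underlying vertex of) one of $v,x,y$, and in each cell read off the image under $\a=\a_{L,y}$ and $\tr=\tr_{v,x}$ straight from the definitions. The bulk of each verification is mechanical; for instance in \ref{it:trlinncom1} one checks $z(\a\tr)=z(\tr\a)$ cell by cell, using $[x,y]=1$ precisely in the sub-case $x\in y^\perp$ in order to move a letter $x$ past a letter $y$. The genuine content is not these computations but the two auxiliary assertions that make the right-hand sides well defined: that $v\in\Isol(y)$ in \ref{it:trlinncom2} and \ref{it:trlinncom3}, and that $L$ is a union of connected components of $\G_{x^\perp}$ in \ref{it:trlinncom4}. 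I would establish these first.

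For the membership $v\in\Isol(y)$ in \ref{it:trlinncom2} and \ref{it:trlinncom3} the key point is that $v$ and $x$ lie in different connected components of $\G_{y^\perp}$ (one inside $L$, the other outside), combined with the transvection condition $v^\perp\bs v\subseteq x^\perp$. First I would observe that $v\notin y^\perp$: in \ref{it:trlinncom2} this is automatic since $v\in L\subseteq X\bs y^\perp$, while in \ref{it:trlinncom3} it holds because $v\in y^\perp$ would force $y\in v^\perp\bs v\subseteq x^\perp$, hence $x\in y^\perp$, contradicting $x\in L$. Then for any neighbour $w\in v^\perp\bs v$ one has $w\in x^\perp$; if moreover $w\notin y^\perp$ then $w$ is a vertex of $\G_{y^\perp}$ adjacent to $v$, and either $w=x$ or $w$ is adjacent to $x$, so in either case $v$ and $x$ lie in the same component of $\G_{y^\perp}$, contradicting that exactly one of them lies in the union of components $L$. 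Hence $w\in y^\perp$, giving $v^\perp\bs v\subseteq y^\perp$ and so $v\in\Isol(y)$. This is exactly what makes $\tr_{v,y}$ an automorphism, legitimising the conjugated transvections in the stated formulas.

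For part \ref{it:trlinncom4} I would use $y=v^{\pm1}$ and $x\notin L$ and split on whether $[x,y]=1$. When $x\notin y^\perp$, the transvection condition $y^\perp\bs y\subseteq x^\perp$ together with $x\notin y^\perp$ says exactly that $x$ dominates $y$; since no component of $\G_{y^\perp}$ contained in $L$ contains $x$, Lemma \ref{lem:xycomps}\ref{it:xycomps1} shows each such component is already a component of $\G_{x^\perp}$, so $L$ is a union of components of $\G_{x^\perp}$ as claimed and $\b=\a_{L,x}\in\WLInn$. When $x\in y^\perp$ one has $y^\perp\subseteq x^\perp$, so $\G_{x^\perp}$ is a full subgraph of $\G_{y^\perp}$; every component of $\G_{x^\perp}$ is then contained in a single component of $\G_{y^\perp}$, whence $L\cap V(\G_{x^\perp})$ is a union of components of $\G_{x^\perp}$ and $\b=\a_{L,x}$, which fixes the vertices of $L$ lying in $x^\perp$, is again a well-defined element of $\WLInn$. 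With $\b$ in hand the two displayed formulas follow from the generator-by-generator check, the sign $v=y$ versus $v=y^{-1}$ (i.e. $y\mapsto yx$ versus $y\mapsto x^{-1}y$) determining which of $\tr\b\a$ or $\tr\a\b^{-1}$ appears.

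The step I expect to be the main obstacle is the component bookkeeping in \ref{it:trlinncom4}: checking that passing from $\G_{y^\perp}$ to $\G_{x^\perp}$ does not break the components making up $L$. The dominating case is handled cleanly by Lemma \ref{lem:xycomps}, but the commuting case $x\in y^\perp$ needs the separate full-subgraph argument above and, strictly, shows only that $L\bs x^\perp$ (not $L$ itself) is a union of components of $\G_{x^\perp}$; since the vertices of $L$ in $x^\perp$ are fixed by conjugation by $x$, the automorphism $\b=\a_{L,x}$ is unaffected and the stated identity still holds.
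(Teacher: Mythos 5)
Your proposal is correct and follows essentially the same route as the paper: a generator-by-generator verification of each identity, with the substantive content concentrated in showing $v\in\Isol(y)$ for parts \ref{it:trlinncom2} and \ref{it:trlinncom3} (via the observation that $v$ and $x$ lie in distinct components of $\G_{y^\perp}$ while $v^\perp\bs v\subseteq x^\perp$) and in the component bookkeeping of part \ref{it:trlinncom4} via Lemma \ref{lem:xycomps}. The one point of divergence is in your favour: the paper's proof of \ref{it:trlinncom4} simply asserts that $x$ dominates $y$, which fails in the sub-case $x\in y^\perp$ (domination requires $x\notin y^\perp$), and that sub-case is not excluded by the hypotheses $y=v^{\pm1}$, $x\notin L$; your separate full-subgraph argument, showing that only $L\bs x^\perp$ is a union of components of $\G_{x^\perp}$ but that $\a_{L,x}$ is unaffected since vertices of $L\cap x^\perp$ are fixed by conjugation by $x$, correctly patches this small gap.
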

\begin{proof}~ 
\ref{it:trlinncom1}
 If $v\in L$ and  $x\in L\cup y^\perp$ then  
\begin{equation}\label{eq:vinL}
z\a\tr=z\tr\a=
\left\{
\begin{array}{ll}
z, &\textrm{ if } z\notin L\\
(vx)^y, &\textrm{ if } z=v \\
z^y, &\textrm{ if } z\in L \textrm{ and } z\neq v^{\pm 1} 
\end{array}
\right.
.
\end{equation}
If  $v\notin L$, 
$v\neq y^{\pm 1} $ and $x\notin L$ then 
\begin{equation}\label{eq:yneqv}
z\a\tr=z\tr\a=
\left\{
\begin{array}{ll}
z, &\textrm{ if } z\notin L, z\neq v^{\pm 1} \\
vx, &\textrm{ if } z=v \\
z^y, &\textrm{ if } z\in L
\end{array}
\right. 
.
\end{equation}

\ref{it:trlinncom2}
 In this case $[x,v]\neq 1$, as $v\in L$ and $x\notin y^\perp$; 
so  $x\in \ado(v)$. As
$v\in L$ and $x\notin L\cup y^\perp$, all paths from $v$ to $x$ must 
intersect $y^\perp$, so $v^\perp\bs v\subseteq y^\perp$; and $v\notin y^\perp$,
so $v\in \Isol(y)$. 
 Then $z\a\tr$ is as given in \eqref{eq:vinL}, and is  equal to
$z\tr_{v,y}\tr\tr_{v,y}^{-1}\a$, for all $z\in X$.

\ref{it:trlinncom3}
 If $y\in v^\perp$ then, as $v\neq y^{\pm 1}$, $x\in y^\perp$, a contradiction.
Thus, as in the previous case, $y$ dominates $v$.  Then $z\a\tr$ 
is as given in \eqref{eq:yneqv}, and is equal to
$z\tr_{v,y}^{-1}\tr\tr_{v,y}\a$, for all $z\in X$.

\ref{it:trlinncom4}
 In this case $y$ is dominated by $x$ 
so, from Lemma \ref{lem:xycomps}, $L$ 
is a union of connected components of $\G_{x^\perp}$. Suppose 
$v=y^\e$, where $\e=\pm 1$. Then 
\[
z\a\tr=
\left\{
\begin{array}{ll}
z, &\textrm{ if } z\notin L, z\neq v^{\pm 1}\\
vx, &\textrm{ if } z=v \\
z^{(vx)^\e}, &\textrm{ if } z\in L
\end{array}
\right. 
\]
and this is equal to $z\tr(\b\a^\e)^\e$, for all $z\in X$. 
\end{proof}

\begin{corol}\label{cor:atta}
Let $y\in X$ and $v\in X^{\pm 1}$, $v\neq y^{\pm 1}$. 
 Let $\a=\a_{L,y}\in \TLInn$ and let $\trt_{v,a}\in \Trt$. Then
\[\a\trt_{v,a}=\trt_{v,b}\a,\]
for some $\trt_{v,b}\in \Trt$.   
\end{corol}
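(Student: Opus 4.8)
The plan is to factor the composite transvection into its constituent single transvections and commute $\a$ past each one in turn using Lemma \ref{lem:trlinncom}, collecting the resulting factors into a single composite transvection on the other side. Write $a = a_1 \cdots a_n$ with each $a_j \in X^{\pm 1}$, so that $\trt_{v,a} = \tr_{v,a_n}\cdots \tr_{v,a_1}$ and each $\tr_{v,a_j}$ is a genuine transvection; in particular $a_j \in \ad(v)$ by Lemma \ref{lem:admot}\ref{it:admot1}. Since $\a = \a_{L,y}$ and $v \neq y^{\pm 1}$, case \ref{it:trlinncom4} of Lemma \ref{lem:trlinncom} never arises, so for each $j$ exactly one of cases \ref{it:trlinncom1}--\ref{it:trlinncom3} governs the pair $\a,\tr_{v,a_j}$.

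First I would record, for each $j$, the effect of moving $\a$ to the right of $\tr_{v,a_j}$. In case \ref{it:trlinncom1} we have $\a\tr_{v,a_j} = \tr_{v,a_j}\a$ and I set $b_j = a_j$. In case \ref{it:trlinncom2} (where $v \in L$ and $a_j \notin L\cup y^\perp$) the lemma gives $\a\tr_{v,a_j} = \tr_{v,y}\tr_{v,a_j}\tr_{v,y}^{-1}\a$; checking the image of $v$ identifies the conjugated factor as the composite transvection $\trt_{v,b_j}$ with $b_j = y^{-1}a_jy$, and symmetrically case \ref{it:trlinncom3} yields $\trt_{v,b_j}$ with $b_j = ya_jy^{-1}$. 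In both cases \ref{it:trlinncom2} and \ref{it:trlinncom3} the lemma also asserts $v \in \Isol(y)$, i.e. $y$ dominates $v$, so $v^\perp \bs v \subseteq y^\perp$ and hence $y \in \ad(v)$ by Lemma \ref{lem:admot}\ref{it:admot1}; together with $[y,a_j]\neq 1$ (valid since $a_j\notin y^\perp$ in each of these cases) this shows $b_j$ is a geodesic word with $\al(b_j)\subseteq \ad(v)$, whence $\trt_{v,b_j}\in \Trt$. Thus in every case $\a\tr_{v,a_j} = \trt_{v,b_j}\a$ for some $\trt_{v,b_j}\in\Trt$.

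Next I would iterate, pushing $\a$ past the factors $\tr_{v,a_n},\ldots,\tr_{v,a_1}$ one at a time, to obtain
\[
\a\,\trt_{v,a} = \trt_{v,b_n}\trt_{v,b_{n-1}}\cdots \trt_{v,b_1}\,\a .
\]
Since each $\trt_{v,b_j}$ fixes every generator other than $v$ and each $\al(b_j)$ avoids $v$, the product on the right fixes all generators except $v$ and sends $v \mapsto v\,b_1b_2\cdots b_n$; letting $b$ be a geodesic form of $b_1b_2\cdots b_n$ we therefore have $\trt_{v,b_n}\cdots\trt_{v,b_1} = \trt_{v,b}$. Finally $\al(b) \subseteq \al(b_1)\cup\cdots\cup\al(b_n) \subseteq \{a_1,\ldots,a_n\}\cup\{y\} \subseteq \ad(v)$, the vertex $y$ occurring only when some instance of case \ref{it:trlinncom2} or \ref{it:trlinncom3} was used, which as above forces $y\in\ad(v)$. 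Hence every letter of $b$ lies in $\ad(v)$, so $\trt_{v,b}\in\Trt$ and $\a\trt_{v,a} = \trt_{v,b}\a$, as required.

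I expect the main obstacle to be the bookkeeping that certifies each intermediate object is a legitimate composite transvection: verifying that the conjugated single transvections $\tr_{v,y}^{\pm1}\tr_{v,a_j}\tr_{v,y}^{\mp1}$ are genuinely composite transvections $\trt_{v,b_j}$, and that the support of the final word $b$ remains inside $\ad(v)$. The crucial input making this work is the conclusion $v\in\Isol(y)$ furnished by cases \ref{it:trlinncom2} and \ref{it:trlinncom3} of Lemma \ref{lem:trlinncom}, which is exactly what guarantees $y\in\ad(v)$ and so licenses the appearance of $y^{\pm1}$ in the transvection words.
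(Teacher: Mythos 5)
Your proposal is correct and follows essentially the same route as the paper's proof: decompose $\trt_{v,a}$ into the single transvections $\tr_{v,a_i}$, use Lemma \ref{lem:trlinncom} (noting that case \ref{it:trlinncom4} is excluded by $v\neq y^{\pm1}$) to obtain $\a\tr_{v,a_i}=\tr_{v,y}^{-\e_i}\tr_{v,a_i}\tr_{v,y}^{\e_i}\a$ with $\e_i\in\{0,\pm1\}$, and set $b$ equal to the reduction of $\prod_i y^{\e_i}a_iy^{-\e_i}$. Your additional bookkeeping — using $v\in\Isol(y)$ from cases \ref{it:trlinncom2} and \ref{it:trlinncom3} to conclude $y\in\ad(v)$ and hence that the resulting word genuinely defines an element of $\Trt$ — is a correct filling-in of details the paper leaves implicit.
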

\begin{proof}
Let $a=a_1\cdots a_n$, where $a_i\in X^{\pm 1}$, be a geodesic word 
representing $a$. 
 Then $\trt_{v,a}=
\tr_{v,a_n}\cdots \tr_{v,a_1}$. As $v\neq y^{\pm 1}$, 
$\a\tr_{v,a_i}=\tr_{v,y}^{-\e_i}\tr_{v,a_i}\tr_{v,y}^{\e_i}\a$, with 
$\e_i=0$ or $\pm 1$, for all $i$.  The corollary follows on setting $b$ 
equal to the word obtained by freely reducing  
$\prod_{i=1}^{n}y^{\e_i}a_iy^{-\e_i}$.
\end{proof}

\begin{corol}\label{cor:attab}
Let $v\in X$,  
  $\a=\a_{L,v}\in \TLInn$ and let $\tr=\trt_{v,a}\in \Trt$. Then
\[\a\tr=\tr\b,\]
for some $\b\in \la \TLInn\ra$.   
\end{corol}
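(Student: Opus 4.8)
The plan is to compute the conjugate $\tr^{-1}\a\tr$ in one step and to recognise it directly as a product of tame elementary conjugating automorphisms. Write $a=a_1\cdots a_n$ as a geodesic word with each $a_i\in X^{\pm 1}$, so that $\tr=\trt_{v,a}=\tr_{v,a_n}\cdots\tr_{v,a_1}$ carries $v$ to $va$ and fixes every other generator, and note that $v\notin L$ since $L$ is a union of connected components of $\G_{v^\perp}$.

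First I would record two facts about the letters $a_i$. Since $\tr_{v,a_i}$ is a genuine transvection we have $v^\perp\bs v\subseteq a_i^\perp$, i.e. $a_i\in\ad(v)$ by Lemma \ref{lem:admot} \ref{it:admot1}; hence $\ad(a_i)\subseteq\ad(v)$ by Lemma \ref{lem:ad0} \ref{it:ad8}. As $\a=\a_{L,v}$ is tame we have $\ad(v)\cap L=\nul$, so $\ad(a_i)\cap L=\nul$ too, and in particular $a_i\notin L$. Applying Lemma \ref{lem:trlinncom} \ref{it:trlinncom4} to the pair $\a_{L,v}$ and $\tr_{v,a_i}$ (which is exactly case $y=v$ of that lemma, with $a_i\notin L$) shows that $L$ is a union of connected components of $\G_{a_i^\perp}$. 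Consequently $\a_{L,a_i}$ is a well-defined extended elementary conjugating automorphism, and by the intersection $\ad(a_i)\cap L=\nul$ just established it is tame: $\a_{L,a_i}\in\TLInn$ for every $i$.

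Next I would evaluate $\b=\tr^{-1}\a\tr$ on generators. For $z\in L$ one has $z\tr^{-1}=z$ (as $v\notin L$), then $z\a=z^v$, and finally $(z^v)\tr=z^{va}$, since $\tr$ fixes $z$ and every $a_i$ while sending $v\mapsto va$; for $z=v$ and for $z\notin L\cup\{v\}$ a direct check gives $z\b=z$. Thus $\b$ is precisely conjugation of $L$ by $va$, fixing all other generators. Because each of $\a_{L,a_1},\dots,\a_{L,a_n},\a_{L,v}$ fixes $v$ and all the $a_j$ (none of which lie in $L$), the elementary fact that composing conjugations of $L$ by $g_1,\dots,g_m$ produces conjugation of $L$ by $g_m\cdots g_1$ yields
\[
\b=\a_{L,a_n}\a_{L,a_{n-1}}\cdots\a_{L,a_1}\a_{L,v},
\]
a product of elements of $\TLInn$. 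Hence $\b\in\la\TLInn\ra$ and $\a\tr=\tr\b$, as required.

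I expect the only genuine subtlety to be that the naive strategy of pushing $\a$ rightward through $\tr$ one transvection at a time does not work: Lemma \ref{lem:trlinncom} \ref{it:trlinncom4} deposits a new conjugator $\a_{L,a_i}$ between consecutive transvections, and moving that conjugator back out by Corollary \ref{cor:atta} replaces $\tr_{v,a_{i-1}}$ by a \emph{different} composite transvection, so the transvection part fails to reassemble to $\tr$. Computing $\tr^{-1}\a\tr$ in a single pass sidesteps this bookkeeping completely; the only remaining points requiring care are the order of the factors (conjugations compose in reversed order) and the verification that every factor $\a_{L,a_i}$ is tame, for which the inclusion $\ad(a_i)\subseteq\ad(v)$ coming from the existence of the transvection $\tr_{v,a_i}$ is the crucial ingredient.
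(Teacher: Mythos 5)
Your proof is correct, and it reaches exactly the factorisation the paper obtains, namely $\b=\a_{L,a_n}\cdots\a_{L,a_1}\a_{L,v}$; the essential inputs are the same in both arguments (each $a_i\in(\ad(v)\bs\{v\})^{\pm 1}$, hence $a_i\notin L$ and $\ad(a_i)\cap L=\nul$, and Lemma \ref{lem:trlinncom} \ref{it:trlinncom4} to certify that $L$ is a union of components of $\G_{a_i^\perp}$ so that $\a_{L,a_i}\in\TLInn$). The difference is one of bookkeeping: you evaluate $\tr^{-1}\a\tr$ in a single pass on generators and recognise it as conjugation of $L$ by $va$, whereas the paper rewrites the word of automorphisms, pushing $\a$ through the transvections one at a time. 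Your closing remark, however, is mistaken: the ``naive'' one-at-a-time strategy is precisely the paper's proof, and it does work. The factor $\a_{L,a_i}$ deposited by Lemma \ref{lem:trlinncom} \ref{it:trlinncom4} commutes with every remaining transvection $\tr_{v,a_j}$ by Lemma \ref{lem:trlinncom} \ref{it:trlinncom1} (the case $v\notin L$, $v\neq a_i^{\pm 1}$, $a_j\notin L$), so all the deposited conjugators slide to the right end and the transvection part reassembles to $\tr$ unchanged; Corollary \ref{cor:atta}, with its possibly nontrivial correction, is simply not needed here because its correction exponent vanishes under these hypotheses. So your proof is a valid, slightly more computational variant, but it is not avoiding any genuine obstruction in the paper's route.
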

\begin{proof}
Let $a=a_1\cdots a_n$, where $a_i\in X^{\pm 1}$, be a geodesic word 
representing $a$. By definition of $\Trt$ we have 
$a_i\in (\ad(v)\bs \{v\})^{\pm 1}$, for all $i$. 
Hence, by definition of $\TLInn$, $a_i\notin L$, for all $i$. 
 Thus 
\[\a\tr_{v,a_i}=\tr_{v,a_i}\a_{L,a_i}\a.\]
 Since 
$v\neq a_i^{\pm 1}$, $v\notin L$ and $a_j\notin L$, also
\[\a_{L,a_i}\tr_{v,a_j}=\tr_{v,a_j}\a_{L,a_i}\]
when $i\neq j$. 
Therefore 
\begin{align*}
\a\tr&=\a\tr_{v,a_n}\cdots \tr_{v,a_1}\\
&=\tr_{v,a_n}\cdots \tr_{v,a_1}\a_{L,a_n}\cdots \a_{L,a_1}\a\\
&=\tr\a_{L,a_n}\cdots \a_{L,a_1}\a.
\end{align*}
As $\a\in \TLInn$, we have $\ad(v)\cap L=\nul$ 
and, as $\tr_{v,a_i}\in \Tr$,  we have $\ad(a_i)\subseteq \ad(v)$ 
so $\ad(a_i)\cap L=\nul$. Hence
$\a_{L,a_i}\in \TLInn$; and the result follows. 
\end{proof}

\begin{prop}\label{prop:blintr}
Let $\G$ be a connected graph. Then 
$\la \Tr\cup \TLInn\ra=\la\Tr\ra\la \TLInn\ra$. 
\end{prop}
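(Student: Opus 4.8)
The plan is to show that the set product $\la\Tr\ra\la\TLInn\ra$ is in fact a subgroup of $\Aut(G)$; since it visibly contains $\Tr\cup\TLInn$ and is contained in $\la\Tr\cup\TLInn\ra$, this yields the claimed equality. Write $H=\la\Tr\ra$ and $K=\la\TLInn\ra$. I would first record that $H=\la\Trt\ra$, since each transvection is a one-letter composite transvection and, conversely, every composite transvection is a product of transvections; moreover $\Trt$ is closed under inverses because $\trt_{v,a}^{-1}=\trt_{v,a^{-1}}$, and $\TLInn$ is closed under inverses because $\a_{L,y}^{-1}=\a_{L,y^{-1}}\in\TLInn$ (recall $\ad(y^{-1})=\ad(y)$ and $(y^{-1})^\perp=y^\perp$). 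A standard fact says $HK$ is a subgroup precisely when $HK=KH$, and for this it suffices to prove $KH\subseteq HK$ (taking inverses then gives the reverse inclusion). Thus the whole proposition reduces to the single inclusion $KH\subseteq HK$.

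The engine of the argument is the following atomic commutation statement: for every $\a=\a_{L,y}\in\TLInn$ and every $\trt_{v,a}\in\Trt$ one has $\a\,\trt_{v,a}=\trt'\,k$ with $\trt'\in\Trt$ and $k\in K$. This is exactly what Corollaries \ref{cor:atta} and \ref{cor:attab} deliver, according to the relation between the source $v$ and the index $y$. If $v\neq y^{\pm1}$, Corollary \ref{cor:atta} gives $\a\,\trt_{v,a}=\trt_{v,b}\,\a$. If $v=y$, Corollary \ref{cor:attab} gives $\a\,\trt_{v,a}=\trt_{v,a}\,\b$ with $\b\in K$. The remaining case $v=y^{-1}$ is not literally covered, and handling it is the one genuinely delicate point: here I would write $\a=(\a_{L,v})^{-1}$, apply Corollary \ref{cor:attab} to the tame automorphism $\a_{L,v}$ (whose index now equals the source $v$) to obtain $\a_{L,v}\,\trt_{v,a}=\trt_{v,a}\,\b$, and rearrange to $\a\,\trt_{v,a}=\trt_{v,a}\,\b^{-1}$. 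The point to check is that Corollaries \ref{cor:atta} and \ref{cor:attab}, through Lemma \ref{lem:trlinncom}, are insensitive to whether the index lies in $X$ or $X^{-1}$, which holds because they depend only on $y^\perp$, $\ad(y)$ and domination, all unchanged on replacing $y$ by $y^{-1}$.

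With the atomic statement in hand I would prove $KH\subseteq HK$ by two nested inductions, the crucial feature being that working with composite transvections (rather than single ones) keeps the bookkeeping finite: pushing a tame automorphism past one composite transvection returns a single composite transvection, with no growth in the number of transvection factors. First a sub-lemma: for $k\in K$ and $\trt\in\Trt$ one has $k\,\trt=\trt^{*}k^{*}$ with $\trt^{*}\in\Trt$ and $k^{*}\in K$, proved by induction on the number of $\TLInn$-factors of $k$, peeling off the rightmost factor and invoking the atomic statement. Then the main inclusion: for $k\in K$ and $h\in H$ one has $kh\in HK$, proved by induction on the number of $\Trt$-factors of $h$, writing $h=\trt\,h'$, using the sub-lemma to replace $k\trt$ by $\trt^{*}k^{*}$, and applying the inductive hypothesis to $k^{*}h'$. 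Both inductions strictly reduce their parameters, so they terminate and are non-circular, giving $KH\subseteq HK$ and hence the result. The main obstacle is therefore not the assembly, which is routine once the corollaries are available, but rather the sign case $v=y^{-1}$ together with the observation that the composite-transvection formulation is essential to avoid the factor-count blow-up that a naive single-transvection induction would produce.
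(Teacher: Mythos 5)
Your proposal is correct and follows essentially the same route as the paper: both arguments replace $\Tr$ by $\Trt$, use Corollaries \ref{cor:atta} and \ref{cor:attab} to push a tame elementary conjugating automorphism past a composite transvection (returning a single composite transvection, so no blow-up), and then assemble the normal-form $\la\Trt\ra\la\TLInn\ra$ by induction on the number of generator factors; your "show $HK$ is a subgroup via $KH\subseteq HK$" framing is just a repackaging of the paper's induction on word length. Your explicit treatment of the sign case $v=y^{-1}$ is a welcome extra care that the paper glosses over, but it does not change the argument.
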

\begin{proof}
It suffices to prove the proposition holds with $\Trt$ in place 
of $\Tr$.  
First suppose that $u$ is a word on the generators $\TLInn$ and their
inverses and that $\tr\in \Trt$. It follows by a straightforward
induction on $|u|$ and Corollary \ref{cor:attab} that $u\tr=\tr u^\prime$
in $\cSt(\cK)$, for
some word $u^\prime$ over $\TLInn^{\pm 1}$. 

Now let $w$ be a word in the generators of $\la \Trt\cup \TLInn\ra$ and 
their inverses. If $|w|\le 1$ then $w\in \la\Trt\ra\la \TLInn\ra$. 
Assume inductively that for some $k\ge 1$ all words $w$ of length at most $k$
can be expressed as elements of $\la\Trt\ra\la \TLInn\ra$. 

Let $w$ be a word of length $k+1$ (in the given generators). Then
$w=w_0\xi$, for some word $w_0$ of length $k$ and generator $\xi\in 
(\Trt\cup \TLInn)^{\pm 1}$. By induction there exists words $w_1\in \la \Trt\ra$ and 
$w_2\in \la \TLInn\ra$ such that $w_0=w_1w_2$, in $\cSt(\cK)$. 
If $\xi\in \TLInn^{\pm 1}$ the proof is complete. 
Otherwise $\xi\in \Trt^{\pm 1}$ and, from the first part of the proof we
may rewrite $w_2\xi$ to a word $\xi^\prime w_2^\prime$, 
with $\xi^\prime\in \Trt^{\pm 1}$
and $w_2^\prime\in \la \TLInn\ra$, 
such that $w_2\xi=\xi^\prime w_2^\prime$ in $\cSt(\cK)$. 
Then $w=w_1\xi^\prime w_2^\prime\in \la\Trt\ra\la \TLInn\ra$, as required.
\end{proof}

\begin{theorem}\label{theorem:balstcon}
Let $\G$ be a connected graph and $G=G(\G)$. Then $\cSt(\cK)=\St(\cK)\Conj(G)$
if and only if $\G$ is a balanced graph. 
\end{theorem}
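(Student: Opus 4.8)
The plan is to prove the two implications separately; the forward implication (balanced $\Rightarrow$ factorisation) is the substantial one, and the converse I would handle by the contrapositive.

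For the forward implication I would first record that $\St(\cK)\Conj(G)\subseteq\cSt(\cK)$ holds unconditionally: $\St(\cK)\subseteq\cSt(\cK)$ by definition, and $\Conj(G)=\la\LInn\ra\subseteq\oAut(G)=\cSt(\cK)$ by Theorem \ref{theorem:Stconj}. So it is enough to produce the reverse inclusion, and for this I would reduce to showing that the product set $\la\Tr\ra\Conj(G)$ is a subgroup. Indeed, an inversion $\i_z$ normalises $\la\Tr\ra$ (since $\i_z^{-1}\tr_{x,y}\i_z\in\Tr^{\pm1}$) and normalises $\Conj(G)$ (since $\i_z^{-1}\a_{C,x}\i_z\in\LInn^{\pm1}$); hence if $\la\Tr\ra\Conj(G)$ is a subgroup then so is $\la\Inv\ra\la\Tr\ra\Conj(G)$, which contains every generator of $\cSt(\cK)$ and therefore equals $\cSt(\cK)$. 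As $\la\Inv\ra\la\Tr\ra=\la\Inv\cup\Tr\ra\subseteq\St(\cK)$ (Remark \ref{rem:aut*}), this gives $\cSt(\cK)=\la\Inv\cup\Tr\ra\Conj(G)\subseteq\St(\cK)\Conj(G)$.

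The mechanism that makes $\la\Tr\ra\Conj(G)$ a subgroup is Proposition \ref{prop:blintr}, which asserts that $\la\Tr\cup\TLInn\ra=\la\Tr\ra\la\TLInn\ra$ is already a subgroup. The key step, and the only place the balanced hypothesis enters, is therefore the claim
\[
\Conj(G)\subseteq\la\Tr\cup\TLInn\ra,
\]
from which $\la\Tr\ra\Conj(G)=\la\Tr\ra\la\TLInn\ra$ follows and the reduction above closes. For the three types of elementary conjugating automorphism this claim splits up as follows. A singular $\a_{C,x}$ with $C=\{y\}$ equals $\tr_{y,x}\tr_{y^{-1},x}\in\la\Tr\ra$ (the identity noted before Definition \ref{defn:collectedconj}); a tame $\a_{C,x}$ lies in $\TLInn$ by definition; the work is entirely in the remaining non-tame case $\ad(x)\cap C\neq\nul$ with $|C|\ge2$. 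Here I would first observe that any $v\in\ad(x)\cap C$ dominates $x$, so $x$ is a dominated vertex, and discarding the degenerate case $[v]=[x]$ we have $v\in\ado(x)$; the balanced hypothesis then places all of $\ado(x)$ in a single component of $\G_{x^\perp}$, which must be $C$ itself. Using Lemma \ref{lem:xycomps} to compare the components of $\G_{x^\perp}$ with those of $\G_{u^\perp}$ for a dominator $u\in\ado(x)\subseteq C$, together with the transvection relations of Lemma \ref{lem:trlinncom} and the availability of the transvection $\tr_{x,u}$, I would rewrite $\a_{C,x}$ as a product of transvections and tame conjugating automorphisms. The confinement of $\ado(x)$ to the one component $C$ is exactly what prevents the obstruction exhibited in Example \ref{ex:diagex}, and this non-tame case is the main obstacle of the whole argument.

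For the converse I would argue the contrapositive. If $\G$ is not balanced there is a dominated vertex $v$ with two proper dominators $x_1\in C_1$ and $x_2\in C_2$ lying in distinct components $C_1\neq C_2$ of $\G_{v^\perp}$. Imitating Example \ref{ex:diagex}, I would set $\phi=\a_{C_1,v}\,\trt_{v,w}$, where $\trt_{v,w}\in\Trt$ is a composite transvection moving $v$ by a word built from $x_2$ and conjugated using $x_1$, so that $\phi\in\cSt(\cK)$ while $v\phi$ genuinely involves the second component. Supposing $\phi=\gamma\delta$ with $\gamma\in\St(\cK)$ and $\delta\in\Conj(G)$, I would use that $\gamma$ fixes each $G(\ad(z))$ and that $\delta$ conjugates each generator in order to pin down the images of the relevant vertices, and then derive a contradiction from Lemma \ref{lem:laurlem} by an exponent count on $v\delta$, exactly as in Example \ref{ex:diagex}. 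This shows $\cSt(\cK)\neq\St(\cK)\Conj(G)$ and completes the contrapositive.
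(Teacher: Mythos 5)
Your converse is in the right spirit (it is essentially the paper's argument: build $\phi=\a_{C,v}\tr_{v,a}\tr_{v,b}$ from two dominators of $v$ in distinct components and rule out a factorisation $\g\d$), though be aware that the "exponent count" you invoke is nowhere near sufficient on its own: the paper needs a careful induction over conjugating automorphisms satisfying a list of constraints (the ``$v$-unlikely'' ones), with $a,b$ chosen $\cK$-minimal in $\ado(v)$ and repeated appeals to Lemma \ref{lem:laurlem}. The real problem, however, is in your forward direction. The claim on which everything rests, $\Conj(G)\subseteq\la \Tr\cup\TLInn\ra$, is false, and with it the identity $\la\Tr\ra\Conj(G)=\la\Tr\ra\la\TLInn\ra$. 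Take $\G$ to be the path $a-b-c-d$, which is connected and balanced ($\ado(a)=\{c\}$ and $\ado(d)=\{b\}$ each lie in the single component of $\G_{a^\perp}$, resp.\ $\G_{d^\perp}$). Here no transvection has $b$ or $c$ as its first index (no $y\neq b$ satisfies $\{a,c\}\subseteq y^\perp$, and likewise for $c$), and the only tame extended conjugating automorphisms are $\a_{\{a\},c}$ and $\a_{\{d\},b}$; hence every element of $\la\Tr\cup\TLInn\ra$ fixes $b$ and $c$. But the non-tame, non-singular generator $\a_{\{c,d\},a}=\g_a\in\Inn(G)\subseteq\Conj(G)$ sends $c$ to $c^a\neq c$. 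So your plan to ``rewrite $\a_{C,x}$ as a product of transvections and tame conjugating automorphisms'' in the non-tame case cannot succeed: no combination of Lemmas \ref{lem:xycomps} and \ref{lem:trlinncom} will produce an automorphism that moves $c$.

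The missing ingredient is $\Inn(G)$. What balancedness actually buys is that for a dominated vertex $x$ with $\ado(x)\neq\nul$, the unique offending component $C_x\supseteq\ado(x)$ satisfies $\a_{C_x,x}=\g_x\,\a_{L,x}^{-1}\,\b_x^{-1}$, where $L$ is the union of the remaining components outside $[x]$ (so $\a_{L,x}\in\TLInn$) and $\b_x$ is a product of singular generators $\tr_{v,x}\tr_{v^{-1},x}$ over $v\in[x]\bs x$. This gives only $\la\Tr\cup\LInn\ra=\la\Tr\cup\Inn\cup\TLInn\ra$, not your stronger containment; one then uses normality of $\Inn(G)$ in $\Aut(G)$ together with $\Inn(G)\subseteq\Conj(G)$ to push the inner part to the right, and Proposition \ref{prop:blintr} to get $\la\Tr\ra\la\TLInn\ra\subseteq\St(\cK)\Conj(G)$. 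With that correction your reduction closes, but as written the key step of your forward implication fails on the simplest balanced graph with a dominated vertex.
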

\begin{proof}
First suppose that $\G$ is a balanced graph. Let $\i=\i_y\in \Inv$, let
$\a_{L,x}\in \LInn$ and $\tr=\tr_{v,x}\in \Tr$, where $y\in X$ and 
$x, v\in X^{\pm 1}$.  Then $\a\i=\i\a$ unless $y=x^{\pm 1}$, in which case 
$\a\i=\i\a^{-1}$. Also $\tr\i=\i\tr$, unless $y=x^{\pm 1}$, in which case 
$\tr\i=\i\tr^{-1}$, or $v=y^{\pm 1}$, in which case $\tr\i=\i\tr_{v^{-1},x}$. 
It therefore suffices to show that elements of $\la \Tr\cup \LInn\ra$ belong
to $\St(\cK)\Conj(G)$.

First we show that, as $\G$ is balanced, $\la\Tr \cup \LInn\ra$ is generated by $\Tr\cup \Inn\cup\TLInn$. To see this 
suppose that $y\in \Isol(\G)$, $\ado(y)\neq \nul$ and $C_y$  is the component 
of $\G_{y^\perp}$ meeting $\ado(y)$. Let $L=X\bs (C_y\cup y^\perp\cup [y])$, so
$L$ is a union of connected components of $\G_{y^\perp}$ and $\a_{L,y}\in
\TLInn$. Let $D_y=[y]\bs y$ and note that, since $y\in \Isol(\G)$, we have
$[y]\bs y=[y]\bs y^\perp$ so $D_y$ is a union of connected components
of $\G_{y^\perp}$. If $D_y=\nul$ define $\b_y=1$ and otherwise define
$\b_y=\a_{D_y,y}$. Then   
$\a_{C_y,y}\b_y\a_{L,y}=\g_y\in \Inn(G)$, so the generators
$\a_{C_y,y}$ of this form are contained in the subgroup generated by
$\Inn$, $\TLInn$ and the set $\{\b_y\in \WLInn : y\in \Isol(\G),
 \out(y)\neq \nul\}$. 

Now suppose that $y\in \Isol(\G)$ and $[y]\neq \{y\}$. Then for all $v\in[y]$, 
$v\neq y$, we have
$\a_{v,y}=\tr_{v^{-1},y}\tr_{v,y}$, so $\b_y=\a_{D_y,y}=\prod_{v\in D_y} 
 \tr_{v^{-1},y}\tr_{v,y}$. Thus 
all generators $\b_{y}$ are contained in the subgroup generated by 
$\Tr$. It follows that every word on generators $\Tr\cup \LInn$ 
and their inverses may 
be replaced by a word on $\Tr\cup \Inn\cup\TLInn$ and their inverses. Thus
$\la \Tr\cup \LInn\ra=\la  \Tr\cup \Inn\cup\TLInn\ra$. As $\la \Inn \ra$ is
normal in $\Aut(G)$ it suffices to show that elements of $\la \Tr\cup \TLInn\ra$ belong
to $\St(\cK)\Conj(G)$: and this follows from Proposition \ref{prop:blintr}.

For the converse suppose that $\G$ is not a balanced graph. We shall show that 
the obstruction of Example \ref{ex:diagex} is also manifested in $\cSt(\cK)$.
 Indeed the argument is a generalised version of that example.  
 If $\G$ is not balanced then there exists a vertex $v\in\Isol(\G)$ such that 
$\ado(v)\neq \nul$ and there is no component $C$ of $\G_{v^\perp}$ such
that $\ado(v)\subseteq C$.  Let $v$ be such a vertex and let $a,b\in\ado(v)$
such that $a$ and $b$ are in different components $C$ and $B$, respectively,
of $\G_{v^\perp}$. 

Suppose that $a_0\in \ado(v)$ and $a_0<_\cK a$. 
 Then $a_0<_\cK a$ implies that $a^\perp\bs a\subseteq a_0^\perp$ and $a\in \ado(v)$ 
implies that there exists $u\in a^\perp\bs a$ such that $u\notin v^\perp$. Thus 
$a_0\in u^\perp$, so $a_0\in C$.  We may therefore assume that $a$ is 
$\cK$-minimal among elements of $\ado(v)$. Similarly we may assume $b$ is
$\cK$-minimal among elements of $\ado(v)$. 

Define $\phi=\a_{C,v}\tr_{v,a}\tr_{v,b}$ so 
\begin{equation*}
z\phi=
\left\{
\begin{array}{ll}
z, & \textrm{ if } z\notin C\cup\{v\}\\
vba, &\textrm{ if } z=v\\
z^{vba} & \textrm{ if } z\in C
\end{array}
\right.
.
\end{equation*}
Assume that there exist $\g\in \St(\cK)$ and $\d\in \Conj$ such that 
$\phi=\g\d$. 

Note that $Z(G(\ad(v)))$ is generated by $\ad(v)\cap (v^\perp\bs v)$ (which
may be empty). Let $c\in \ad(v)\cap (v^\perp\bs v)$. 
Then $\ad(c)\subseteq \ad(v)\cap (v^\perp\bs v)$ 
 and so if $z\in \ad(c)$ there exists $w_z\in G(\ad(c))$ such that 
 $z\g=w_z$.  
As $Z(G(\ad(v)))$ is Abelian so is $G(\ad(c))$ and  so
$w_z$ is cyclically reduced. 
From Lemma \ref{lem:comm}, 
there exists $g\in G$ such that $z\d=z^g$,  for all $z\in \ad(c)$. 
 Hence, if  $z\in \ad(c)$ then $z=z\phi=z\g\d=w_z\d=w_z^g$, 
with $w_z\in
G(\ad(c))$, so $g=1$ and $w_z=z$. 
 Therefore
$z\g=z\d=z$, for all $z\in Z(G(\ad(v)))$. 
 
As $a$ is $\cK$-minimal among  elements of $\ado(v)$ we have 
$\ad(a)\bs [a]\subseteq \ad(v)\cap (v^\perp\bs v)$. 
As $\d\in \cSt(\cK)$, there exists $g\in G$ such that 
$G(\ad(a))\d=G(\ad(a))^g$ and we may assume that $g$ has no left divisor in $G(\ad(a))$
or $G(\ad(a)^\perp)$. 
Let $z\in [a]$, so $z\phi=z^{vba}$. We have $z\g\in G(\ad(a))$ and so 
$z\g\d=u_z^g$, for some $u_z\in G(\ad(a))$. Therefore $u_z^g=z^{vba}$ and
so $z^{vbag^{-1}}\in G(\ad(a))$. As neither $v$ nor $b$ commute with $a$ or $z$
it follows that $g=g_1\circ vba$, and then $z^{g_1^{-1}}\in G(\ad(a))$. This holds
for all $z\in [a]$, and for any $u\in \ad(a)\bs [a]$ we have $u\d=u$, from
the paragraph above, so  $[u,g]=1$. Since
$g$ has no left divisor in $G(\ad(a)\cup \ad(a)^\perp)$, \cite[Corollary 2.5]{DKR4} 
implies 
that $g_1=1$ and $g=vba$. Now $z\d=z^{g_z}$, for some $g_z\in G$, so we
have $z^{g_z}=w_z^{vba}$, for some $w_z\in G(\ad(a))$. Again $z=w_z^{vbag_z^{-1}}$,
so $z\in \a(w_z)$ and $v,b\notin \ad(a)$, so $g_z=h_z\circ vba$, for some $h_z\in G(\ad(a))$,   and $w_z=z^{h_z}$. As
elements of $\ad(a)\bs [a]$ belong to the centre of $G(\ad(a))$, moreover
$h_z\in G[a]$. Therefore, for all $z\in[a]$,  $z\d=z^{h_zvba}$, for
some $h_z\in G[a]$. 

Similarly, we have 
$\ad(b)\bs [b]\subseteq \ad(v)\cap (v^\perp\bs v)$ and 
 there exists $g\in G$ such that
$G(\ad(b))\d=G(\ad(b))^g$ and  $g$ has no left divisor in $G(\ad(b))$
or $G(\ad(b)^\perp)$.  Let $z\in [b]$, so $z\phi=z$. 
We have $z\g\in G(\ad(b))$ and so 
$z\g\d=u_z^g$, for some $u_z\in G(\ad(b))$. Therefore $u_z^g=z$ and
$z^{g^{-1}}\in G(\ad(b))$.  Thus  $[z,g]=1$, 
which implies $[[b],g]=1$. 
For any $u\in \ad(b)\bs [b]$ we have $u\d=u$, 
so  $[u,g]=1$ and therefore  $g=1$.  
Now $z\d=z^{g_z}$, for some $g_z\in G$, so we
have $z^{g_z}=w_z$, for some $w_z\in G(\ad(b))$. Thus $g_z\in G(\ad(b))$,   and 
as elements of $\ad(b)\bs [b]$ belong to the centre of $G(\ad(b))$, moreover
$g_z\in G[b]$. Therefore, for all $z\in[b]$,  $z\d=z^{g_z}$, for
some $g_z\in G[b]$. 

Now let $z\in v^\perp\bs v$. Then $z\in C_G(a,b)$ so $z\d\in 
C_G(a^{h_avba})=C_G(a^{h_a})^{vba}\subseteq C_G(a)^{vba}=G(a^\perp)^{vba}$ and   
$z\d\in 
C_G(b^{g_b})\subseteq C_G(b)=G(b^\perp)$. If $w\in G(b^\perp)$ and $w=u^{vba}$, where
$u\in G(a^\perp)$,  then $a\notin b^\perp$ implies $a\notin \al(w)$
so $a$, and therefore also $b$ and $v$, 
cancel in reducing $u^{vba}$ to $w$. Neither $b$ nor $v$ belong 
to $\al(u)$, hence $[u,b]=[u,v]=1$ and 
$u\in G(v^\perp\bs v)$. Thus $u^{vba}=u$. It follows that 
$G(a^\perp)^{vba}\cap G(b^\perp)=G(v^\perp\bs v)$ and so  $z\d\in 
G(v^\perp\bs v)$, for all $z\in v^\perp\bs v$.

As $G(v^\perp\bs v)\d=G(v^\perp\bs v)$, for all $z\in \ad(v)$, we have 
$z\d\in G(\ad(v))$, so
$z^\d=z^{g_z}$, for some $g_z\in G(\ad(v))$. Now $\d$ satisfies the following
(with $w=vb$)  
\be
\item\label{it:vun1}  $z\d=z$, for all $z\in \ad(v)\cap v^\perp\bs v$. 
\item $z\d=z^{g_z}$, with $g_z\in G[b]$, for all $z\in [b]$. 
\item $z\d=z^{h_z w a}$, with $w=vb$ or $b$ and $h_z\in G[a]$, for all $z\in [a]$.
\item\label{it:vun4} $z\d=z^{g_z}$, with $g_z\in G(\ad(v))$, for all $z\in \ad(v)$.  
\ee
Let us call an element of $\Conj$ $v${\em -unlikely} if it satisfies all 
of these four properties. Amongst all $v$-unlikely basis conjugating automorphisms
choose one, which we shall now also call $\d$, of minimal length. As usual, 
for 
each $x\in X$ let $g_x\in G$ be such that $x\d=x^{g_x}$. 

From condition \ref{it:vun4}, $\ad(v)\d\subseteq G(\ad(v))$ and 
by direct calculation $\ad(v)\phi^{-1}\subseteq G(\ad(v))$. As $\g\in \St(\cK)$ 
this implies $\ad(v)\phi^{-1}\g=\ad(v)\d^{-1}\subseteq G(\ad(v))$. Hence
 $\d$ restricts to an automorphism
of $G(\ad(v))$ and, applying Lemma \ref{lem:laurlem} to this restriction, there exist elements
$x,y\in \ad(v)$ 
such that $x^{\e}g_x$ is  a right divisor of $g_y$. Moreover, 
$x,y\in \ado(v)\cup [v]$,  as 
the centre of $G(\ad(v))$, which is pointwise fixed by $\d$, 
is generated by $\ad(v)\cap (v^\perp\bs v)$. 
Suppose that $x,y\in C$ and let $D$ be the component of $\G_{x^\perp}$ 
containing $y$. As $x,y\in \ad(v)$, Lemma \ref{lem:xycomps} \ref{it:xycomps2} implies
that  $D\subseteq C$.  
Define $\d_0=\a_{D,x}^{-\e}\d$. For all $z\in X\bs D$ we
have $z\d_0=z\d$ and (applying Lemma \ref{lem:laurlem} again) $|\d_0|<|\d|$. 
If $a\notin D$ then clearly $\d_0$ is $v$-unlikely, contrary to the choice
of $\d$. If $a\in D$ then, for all $z\in [a]\cap D$, $z\neq x$, it follows that
 $x^\e g_x$ is  a right divisor of $h_zwa$,
  which implies $h_z=h_z^\prime x^\e h_z^{\prime\prime}wa$. 
Therefore $z\d_0=z^{h_z^\prime h_z^{\prime\prime}wa}$, 
for all $z\in ([a]\cap D)\bs\{x\}$, and again $\d_0$ is $v$-unlikely, a contradiction.
We may therefore assume that $\{x,y\} \nsubseteq C$. 

Assume that $y\notin C$ and that $D$ is the component of $\G_{x^\perp}$ 
containing $y$. Then $D\cap C=\nul$ and $g_z=g_z^\prime x^\e g_x$, 
for all $z\in D$. Again set  $\d_0=\a_{D,x}^{-\e}\d$ and $\d_0$ is 
$v$-unlikely with $|\d_0|<|\d|$. This contradiction shows that we
may assume $y\in C$ and $x\notin C$. Then $C$ is a component of $\G_{x^\perp}$
 and $x^\e g_x$ is  a right divisor of $h_zwa$, for all $z\in [a]$, 
as $a,y\in C$. As $\al(h_z)\subseteq [a]$ and 
 $x\notin C$ this implies $x=v$ or $b$. If $x=b$ then
$g_b=a$, a contradiction, so we have $x=v$, $w=vb$ and  $g_v=ba$. 

Let 
$\d_0=\a_{C,v}^{-1}\d$, so $z\d_0=z^{h_zba}$, for $z\in [a]$, and $z\d_0
=z\d$, for $z\notin C$. Again $\d_0$ is $v$-unlikely, contrary to minimality
of the length of $\d$.
In all cases we obtain a contradiction, so there exists no $v$-unlikely
automorphism $\d$, completing the proof that $\phi\notin \St(\cK) \Conj$. 

\end{proof}


\section{Appendix}
\subsection{A presentation for the graph automorphisms of $G$}
A presentation for $\Aut_\cmp^\G(G)$ may be constructed using the wreath product structure, of the factors of the 
direct sum, in the decomposition of  Proposition 
\ref{prop:graphaut}\ref{it:graphaut2}. 
 First we establish presentations for these factors. 

Recall from Definition \ref{defn:graphautgens} that $\Aut^\G_\cmp(G_{j,1})$ has 
generating set  $\cP^\G_{\cmp,j}$. By definition $\Aut^\G_\cmp(G_{j,1})\cong
\Aut(\Omega_j)$, and so we may construct a presentation
\[\la \cP^\G_{\cmp,j}| \cR^\G_{\cmp,j}\ra \textrm{ for } \Aut^\G_\cmp(G_{j,1}).\]
Also $\cP^\G_{\sym,j}
=\{\w^j_{a,b}|1\le a<b\le m_j\}$ is a generating set for $\Aut^\G_{\sym}(G_{j,*})$,
and   so we may choose a presentation 
\[
\la
\cP^\G_{\sym,j}|\cR^\G_{\sym,j}\ra \textrm{ for }\Aut^\G_{\sym}(G_{j,*}).
\]
Let
\begin{align*}
\cW^\G_j&=\{[\w^j_{a,b},p]:p\in \cP^\G_{\cmp,j},2\le a<b\le m_j\}\\
&\cup \{[p, \w^j_{1,a}q\w^j_{1,a}]: p,q\in \cP^\G_{\cmp,j},2\le a\le m_j \}\\
&\cup \{[\w^j_{1,a}p\w^j_{1,a},\w^j_{1,b}q\w^j_{1,b}]: 
p,q\in \cP^\G_{\cmp,j},2\le a<b\le m_j  \}.
\end{align*}
Let 
\[\cP^\G_j= \cP^\G_{\cmp,j}\cup \cP^\G_{\sym,j}\textrm{ and }
\cR^\G_j= \cR^\G_{\cmp,j}\cup \cR^\G_{\sym,j}\cup \cW^\G_j.
\]
\begin{prop}\label{prop:gaut_fac}
$\prod_{k=1}^{m_j}\Aut^\G_\cmp(G_{j,k})
\rtimes \Aut^\G_{\sym}(G_{j,*})$
 has presentation $\la \cP^\G_j| \cR^\G_j\ra$. 
\end{prop}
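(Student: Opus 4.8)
The plan is to recognise $H:=\prod_{k=1}^{m_j}\Aut^\G_\cmp(G_{j,k})\rtimes\Aut^\G_{\sym}(G_{j,*})$ as the wreath product $\Aut^\G_\cmp(G_{j,1})\wr S_{m_j}$, whose base group is the direct power $\prod_k\Aut^\G_\cmp(G_{j,k})$ (each factor a copy of $\Aut^\G_\cmp(G_{j,1})$) and whose top group is $\Aut^\G_{\sym}(G_{j,*})\cong S_{m_j}$ permuting the factors; and to verify that $W:=\la\cP^\G_j|\cR^\G_j\ra$ is the standard presentation of this wreath product assembled from presentations of the two factors. Since all the groups in sight are automorphism groups of finite graphs, hence finite, I would establish the isomorphism by producing a surjection $W\to H$ and then bounding $|W|$ from above by $|H|$.

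First I would check that every relator of $\cR^\G_j=\cR^\G_{\cmp,j}\cup\cR^\G_{\sym,j}\cup\cW^\G_j$ holds in $H$: the sets $\cR^\G_{\cmp,j}$ and $\cR^\G_{\sym,j}$ hold because they are the defining relators of the factors $\Aut^\G_\cmp(G_{j,1})$ and $\Aut^\G_{\sym}(G_{j,*})\cong S_{m_j}$; and each relator of $\cW^\G_j$ expresses either that a transposition $\w^j_{a,b}$ with $a\ge 2$ fixes the component $\G_{j,1}$ pointwise, so commutes with $\Aut^\G_\cmp(G_{j,1})$, or that automorphisms supported on two distinct components of $\G$ commute. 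This yields a homomorphism $\pi\colon W\to H$, which is surjective because $\pi(\cP^\G_{\cmp,j})$ generates the first factor, $\pi(\cP^\G_{\sym,j})$ generates $S_{m_j}$, and conjugating the first factor by the $\w^j_{1,k}$ produces all the remaining factors.

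For the reverse bound I would work inside $W$. Let $Q=\la\cP^\G_{\sym,j}\ra$ and $N_1=\la\cP^\G_{\cmp,j}\ra$ be the subgroups of $W$ generated by the two families of generators, and for $2\le k\le m_j$ set $N_k=\w^j_{1,k}N_1\w^j_{1,k}$. As $\cR^\G_{\sym,j}$ is a presentation of $S_{m_j}$ and holds in $W$, von Dyck's theorem gives a surjection $S_{m_j}\twoheadrightarrow Q$, while $\pi$ restricts to a surjection $Q\twoheadrightarrow S_{m_j}$; finiteness then forces $Q$ to be isomorphic to $S_{m_j}$ via a map $\rho\colon Q\to S_{m_j}$, so that I may manipulate the $\w^j_{a,b}$ exactly as transpositions. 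Likewise $N_1$ is a quotient of $\Aut^\G_\cmp(G_{j,1})$, so $|N_1|\le|\Aut^\G_\cmp(G_{j,1})|$. The crucial step is the normalisation identity $\w^j_{a,b}N_k\w^j_{a,b}=N_{(a\,b)(k)}$ for every generator: I would prove it by noting that, with $l=(a\,b)(k)$, the element $\w^j_{1,l}\w^j_{a,b}\w^j_{1,k}$ maps under $\rho$ to a permutation fixing $1$, hence lies in the subgroup of $Q$ generated by the $\w^j_{c,d}$ with $2\le c<d$; by the first family of relators in $\cW^\G_j$ each such generator centralises $N_1$, so this element centralises $N_1$ and the identity follows (with the degenerate indices treated directly). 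Combined with the second and third families of $\cW^\G_j$, which show the $N_k$ pairwise commute, this gives that $\tilde N:=\la N_1,\dots,N_{m_j}\ra$ is normal in $W$, is a quotient of $N_1\times\cdots\times N_{m_j}$, and satisfies $W=\tilde N Q$. Hence
\[
|W|\le|\tilde N|\,|Q|\le|\Aut^\G_\cmp(G_{j,1})|^{m_j}\,m_j!=|H|,
\]
and together with the surjection $\pi$ this forces $|W|=|H|$, so $\pi$ is an isomorphism.

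I expect the normalisation identity $\w^j_{a,b}N_k\w^j_{a,b}=N_{(a\,b)(k)}$ to be the main obstacle, since it is precisely where the wreath-product structure must be recovered from the relations: one has to track the conjugation action of $Q$ on the family of factor subgroups and dispose of the degenerate cases, namely when an index equals $1$ or when $(a\,b)(k)=1$. Everything else — checking the relators in $H$, the two von Dyck surjections, and the final order count — is routine.
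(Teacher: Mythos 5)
Your proposal is correct and follows essentially the same route as the paper: the paper's proof simply observes that the group is the wreath product $\Aut^\G_\cmp(G_{j,1})\wr S_{m_j}$ (with $\Aut^\G_{\sym}(G_{j,*})$ permuting the factors) and that $\la\cP^\G_j|\cR^\G_j\ra$ is the standard presentation of such a wreath product. What you have done is supply, via von Dyck plus the order count exploiting finiteness, the details of that ``standard construction'' which the paper leaves implicit; the normalisation identity you isolate is exactly the content of that standard argument and your treatment of it is sound.
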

\begin{proof}
$\Aut^\G_{\cmp}(G_{j,k})\cong \Aut^\G_{\cmp}(G_{j,1})$, for $k=2,\ldots ,m_j$, 
and $\Aut^\G_{\sym}(G_{j,*})$ acts on $\prod_{k=1}^{m_j}\Aut^\G_\cmp(G_{j,k})$ by permuting the factors. 
Hence the group in question is
 a wreath product; and the given presentation is obtained from a standard
 construction. 
\end{proof}
As $\Aut^\G_\cmp(G)$ is a direct sum of the groups of the previous lemma 
a presentation can be written down immediately. In order to do so define
\[
\cD^\G=\{[p,q]:p\in \cP^\G_i, q\in \cP^\G_j, 1\le i<j\le d\}.
\]
From Proposition \ref{prop:graphaut}\ref{it:graphaut2} we obtain
the next corollary.
\begin{corol}\label{cor:gaut}
$\Aut^G_\cmp(G)$ has presentation $\la \cP^\G_\cmp
 |
\cR^\G_\cmp
\ra$, 
where $\cP^\G_\cmp=\cup_{j=1}^d \cP^\G_j$ and $\cR^\G_\cmp=\cup_{j=1}^d \cR^\G_j
\cup \cD^\G$.
\end{corol}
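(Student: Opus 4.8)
The plan is to deduce the presentation directly from the decomposition already in hand together with the standard construction of a presentation for a finite direct product. First I would record, from Proposition \ref{prop:graphaut}\ref{it:graphaut2}, that $\Aut^\G_\cmp(G)$ is the internal direct product of the subgroups
\[
H_j=\prod_{k=1}^{m_j}\Aut^\G_\cmp(G_{j,k})\rtimes \Aut^\G_{\sym}(G_{j,*}),\qquad j=1,\ldots ,d,
\]
and that, by Proposition \ref{prop:gaut_fac}, each factor $H_j$ carries the presentation $\la \cP^\G_j| \cR^\G_j\ra$ on the generating set $\cP^\G_j=\cP^\G_{\cmp,j}\cup \cP^\G_{\sym,j}$. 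Regarding the alphabets $\cP^\G_j$ as pairwise disjoint (their generators are supported on the components isomorphic to distinct $\Omega_j$), this puts us in exactly the situation handled by the generic direct-product presentation lemma.

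Next I would invoke that lemma: if a group $K$ is the internal direct product of subgroups $H_1,\ldots ,H_d$, and each $H_j$ has presentation $\la \cP^\G_j| \cR^\G_j\ra$ on pairwise disjoint alphabets, then $K$ has presentation
\[
\Big\la\ \bigcup_{j=1}^d \cP^\G_j\ \Big|\ \bigcup_{j=1}^d \cR^\G_j\ \cup\ \{[p,q]:p\in \cP^\G_i,\,q\in \cP^\G_j,\,i<j\}\ \Big\ra.
\]
The adjoined set of commutators is precisely $\cD^\G$ as defined immediately before the corollary, so setting $\cP^\G_\cmp=\bigcup_{j=1}^d \cP^\G_j$ and $\cR^\G_\cmp=\bigcup_{j=1}^d \cR^\G_j\cup \cD^\G$ yields the claimed presentation $\la \cP^\G_\cmp| \cR^\G_\cmp\ra$.

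The only step requiring genuine verification is the input to the direct-product lemma, namely that generators of distinct factors $H_i,H_j$ in fact commute in $\Aut^\G_\cmp(G)$. This holds because a compressed graph automorphism supported on the components isomorphic to $\Omega_i$ fixes every vertex lying in a component isomorphic to $\Omega_j$ when $i\ne j$ (since $\Omega_i\not\cong\Omega_j$ by minimality of $d$ in Definition \ref{defn:comps}), and symmetrically; hence the two automorphisms commute as permutations of $X$, which justifies both the directness of the product and the validity of the relators in $\cD^\G$. The remainder — that adjoining these commutators to the disjoint union of the factor presentations gives a complete set of relations — is the routine Tietze/normal-form argument for direct products, and I expect this bookkeeping, though entirely standard, to be the only part needing care, everything else being a direct appeal to Propositions \ref{prop:graphaut} and \ref{prop:gaut_fac}.
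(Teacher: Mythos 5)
Your argument is correct and is exactly the route the paper takes: the paper simply observes that $\Aut^\G_\cmp(G)$ is the direct product of the factors presented in Proposition \ref{prop:gaut_fac} and writes down the standard direct-product presentation, adjoining the commutators $\cD^\G$. Nothing further is needed.
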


\subsection*{Proof of Theorem \ref{prop:presentation}}

\begin{proof}[Proof of Proposition \ref{prop:presentation}]
Let $\AA$ be the group with presentation $\la \cP|\cR\ra$. Identifying
each generator of $\cP$ with the elements of the same name in $\Aut(G)$
straightforward computation shows that  all the 
relators in $\cR$ hold in $\Aut(G)$; giving a canonical homomorphism 
$\Theta$ 
from $\AA$ to $\Aut(G)$. We shall use the presentation of $\Aut(G)$
given in \cite{Gilbert87}, which we shall call 
$\la \cQ|\cS\ra$,  to construct an inverse to $\Theta$. 

To define $\la \cQ|\cS\ra$ the automorphisms of a free product are divided into four types, in
\cite{Gilbert87}. The first two types are the permutation and factor autormorphisms.
The \emph{permutation automorhpisms} are those belonging to the subgroup
 $\Aut^\G_{\sym}(G)$. The \emph{factor automorphisms} are those automorphisms 
$\a$ such that 
$\a$ restricted to $G(\G_{j,k})$ is an automorphism of $G(\G_{j,k})$, 
for all $(j,k)\in S\cup J$.   Let $\Psi$ be the subgroup
generated by the permutation and factor automorphisms. 

The first step in the definition of $\la \cQ|\cS\ra$ 
is to choose a presentation for $\Psi$. In our case  
we extend the notation of Definition \ref{defn:factgen} to 
denote by $\Aut(G_{j,k})$ the subgroup of automorphisms $\phi$ such that 
$x\phi=x$, if $x\in X\bs X_{j,k}$ and $X_{j,k}\phi \subseteq G(\G_{j,k})$. 
Then 
$\Psi$ is generated by the subgroups $\Aut(G_{j,k})$ and 
$\Aut^\G_\sym(G_{j,*})$, for $0\le j\le d$,
$1\le k\le m_j$ (see Definition \ref{defn:graphaut}).
For fixed $j$, with $0\le j\le d$, let $\Psi_j$ be the subgroup of $\Psi$
generated by  $\Aut(G_{j,k})$, for 
$1\le k\le m_j$, and 
$\Aut^\G_\sym(G_{j,*})$. Then 
\[\Psi_j = \prod_{k=1}^{m_j}\Aut(G_{j,k})
\rtimes \Aut^\G_{\sym}(G_{j,*}).\] 
Let
 \begin{align*}
\cW_j&=\{[\w^j_{a,b},p]:p\in \cP_{j},2\le a<b\le m_j\}\\
&\cup \{[p, \w^j_{1,a}q\w^j_{1,a}]: p,q\in \cP_{j},2\le a\le m_j \}\\
&\cup \{[\w^j_{1,a}p\w^j_{1,a},\w^j_{1,b}q\w^j_{1,b}]: 
p,q\in \cP_{j},2\le a<b\le m_j  \}.
\end{align*}
(Thus $\cW_j\supseteq \cW^\G_j$.) Then
\begin{prop}\label{prop:aut_fac}
$\Psi_j$ has a presentation 
\[\la \cP_{j}\cup \cP^\G_{\sym,j} |
\cR_{j}\cup \cR^\G_{\sym,j}\cup \cW_j\ra.
\]
\end{prop}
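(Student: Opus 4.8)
The plan is to recognise $\Psi_j$ as a permutational wreath product and then read off the stated presentation from the standard construction for such products; the argument is word for word that of Proposition \ref{prop:gaut_fac}, the only change being that the base factor is now the full factor-automorphism group $\Aut(G_{j,1})$ in place of its graph-automorphism subgroup $\Aut^\G_\cmp(G_{j,1})$. First I would record the wreath structure underlying the semidirect decomposition $\Psi_j = \prod_{k=1}^{m_j}\Aut(G_{j,k}) \rtimes \Aut^\G_{\sym}(G_{j,*})$ already in hand. Since $\w^j_{1,k}$ is the involution induced by the graph isomorphism interchanging $\G_{j,1}$ with $\G_{j,k}$ and fixing every other component, conjugation by $\w^j_{1,k}$ carries $\Aut(G_{j,1})$ onto $\Aut(G_{j,k})$; thus each factor $\Aut(G_{j,k})$ is generated by $\{\w^j_{1,k}\,p\,\w^j_{1,k} : p \in \cP_j\}$. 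Distinct factors $\Aut(G_{j,a})$ and $\Aut(G_{j,b})$ are supported on disjoint free factors of $G$, so they commute elementwise and their product is the internal direct product forming the base group; and $\Aut^\G_{\sym}(G_{j,*}) \cong S_{m_j}$ acts by permuting these coordinates, $\w^j_{a,b}$ inducing the transposition $(a\,b)$. Hence $\Psi_j \cong \Aut(G_{j,1}) \wr S_{m_j}$.

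Next I would assemble the presentation from the standard semidirect-product construction, starting from the presentations $\la \cP_j | \cR_j\ra$ and $\la \cP^\G_{\sym,j} | \cR^\G_{\sym,j}\ra$ fixed in Definition \ref{defn:factpres}. Introducing temporary symbols $p^{(k)}$ for the generators of the $k$th copy (so $p^{(1)}=p$), the base direct product is presented by a copy of $\cR_j$ in each coordinate together with relations $[p^{(a)},q^{(b)}]=1$ for $a\neq b$; adjoining $\cR^\G_{\sym,j}$ and the relations recording the coordinate permutation gives a presentation of $\Psi_j$. I would then apply Tietze transformations to eliminate every $p^{(k)}$ with $k\ge 2$ using the defining relation $p^{(k)} = \w^j_{1,k}\,p\,\w^j_{1,k}$. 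After this elimination the surviving generators are exactly $\cP_j \cup \cP^\G_{\sym,j}$, and the surviving relations are $\cR_j$, $\cR^\G_{\sym,j}$, and the three families of $\cW_j$: the commutator relations $[p^{(a)},q^{(b)}]=1$ split into the case $a=1<b$ (the second family of $\cW_j$) and the case $2\le a<b$ (the third family), while the relations saying that a transposition $\w^j_{a,b}$ with $a,b\ge 2$ fixes the first copy pointwise become the first family $[\w^j_{a,b},p]=1$.

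The step requiring the most care --- and the one I expect to be the main obstacle --- is the completeness check: verifying that $\cR_j \cup \cR^\G_{\sym,j} \cup \cW_j$ really implies all conjugation relations of the wreath product, in particular $\w^j_{a,b}\,p^{(c)}\,\w^j_{a,b} = p^{((a\,b)\cdot c)}$ for arbitrary $a,b,c$, and not merely the special instances written out in $\cW_j$. For this I would write $p^{(c)} = \w^j_{1,c}\,p\,\w^j_{1,c}$ and compute the conjugate by pushing $\w^j_{a,b}$ across $\w^j_{1,c}$ using only the symmetric-group relations $\cR^\G_{\sym,j}$, then absorbing the resulting transpositions that fix the point $1$ into $p$ by the first family of $\cW_j$; this bookkeeping, which is precisely what makes the standard construction go through, shows that no further relations are needed. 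Granting completeness, the canonical map from $\la \cP_j \cup \cP^\G_{\sym,j} | \cR_j \cup \cR^\G_{\sym,j} \cup \cW_j\ra$ to $\Psi_j$ --- well defined because every listed relator holds in $\Psi_j$ --- is surjective, and its inverse is constructed coordinatewise from the direct-product-by-$S_{m_j}$ structure, yielding the required isomorphism.
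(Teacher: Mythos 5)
Your proposal is correct and follows essentially the same route as the paper: the paper's proof simply observes that $\Psi_j$ is the wreath product $\Aut(G_{j,1})\wr S_{m_j}$ (with $\Aut^\G_{\sym}(G_{j,*})$ permuting the isomorphic factors $\Aut(G_{j,k})$) and cites the standard presentation construction, exactly as in Proposition \ref{prop:gaut_fac}. Your version merely writes out the Tietze eliminations and the completeness check that the paper leaves implicit under the phrase ``standard construction.''
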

\begin{proof}
$\Aut(G_{j,k})\cong \Aut(G_{j,1})$, for $k=2,\ldots ,m_j$, 
and $\Aut^\G_{\sym}(G_{j,*})$ acts on $\prod_{k=1}^{m_j}\Aut(G_{j,k})$ by permuting the factors. The result follows as in the proof of Proposition
\ref{prop:gaut_fac}. 
\end{proof}
As $\Psi=\prod_{j=0}^d\Psi_j$ we have
\begin{corol}\label{cor:psiaut}
$\Psi$ has presentation $\la \cQ_\Psi| \cS_\Psi\ra$ 
where $\cQ_\Psi=\cP_\int\cup \cP^\G_\cmp$ (see Definitions
\ref{defn:graphautgens} and \ref{defn:ourgens}) and 
 $\cS_\Psi=\cup_{j=0}^d (\cR_j \cup\cR^\G_{\sym,j}\cup \cW_j)\cup \cD$.
\end{corol}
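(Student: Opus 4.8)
The plan is to read off the presentation of $\Psi$ from those of its direct factors $\Psi_j$ by means of the standard presentation of a direct product. The starting point is the decomposition $\Psi=\prod_{j=0}^d\Psi_j$ recorded immediately above the corollary. I would first observe that this is an \emph{internal} direct product with pairwise commuting factors: every automorphism in $\Psi_j$ fixes each generator outside the components $\G_{j,k}$, $1\le k\le m_j$, so automorphisms drawn from distinct factors $\Psi_i,\Psi_j$ (with $i\neq j$) modify disjoint subsets of $X$ and hence commute. In particular the generating sets $\cP_j\cup\cP^\G_{\sym,j}$ attached to distinct indices $j$ are themselves pairwise disjoint, since their elements act nontrivially only on the components labelled by $j$.

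Next I would apply Proposition \ref{prop:aut_fac}, which presents each factor as $\Psi_j=\la \cP_j\cup\cP^\G_{\sym,j}\mid \cR_j\cup\cR^\G_{\sym,j}\cup\cW_j\ra$. Taking the union of these generating sets over $0\le j\le d$ recovers $\cQ_\Psi$ exactly. Indeed $\cup_{j=0}^d(\cP_\int\cap\Aut(G_{j,1}))=\cP_\int$, because every inversion, transvection and elementary conjugating automorphism in $\cP_\int$ lies in precisely one $\Aut(G_{j,1})$; while $\cup_{j=1}^d\cP^\G_{\cmp,j}$ together with the sets $\cP^\G_{\sym,j}$ (for $0\le j\le d$) is exactly $\cP^\G_\cmp$. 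Hence $\cup_{j=0}^d(\cP_j\cup\cP^\G_{\sym,j})=\cP_\int\cup\cP^\G_\cmp=\cQ_\Psi$.

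Finally I would assemble the relators. The intra-factor relations supply $\cup_{j=0}^d(\cR_j\cup\cR^\G_{\sym,j}\cup\cW_j)$, and the only further relations needed to present a direct product of groups on disjoint generating sets are the commutators asserting that generators from distinct factors commute; these constitute exactly the set $\cD=\{[p,q]\mid p\in\cP_i\cup\cP^\G_{\sym,i},\ q\in\cP_j\cup\cP^\G_{\sym,j},\ 0\le i<j\le d\}$ of Definition \ref{defn:ourrels}. Since all relators of $\cS_\Psi$ hold in $\Psi$, there is a canonical surjection from $\la\cQ_\Psi\mid\cS_\Psi\ra$ onto $\Psi$; and the relations $\cD$ let one sort any word into a product of subwords in the separate factors, so that $\la\cQ_\Psi\mid\cS_\Psi\ra$ collapses to $\prod_{j=0}^d\Psi_j=\Psi$, yielding the claimed presentation. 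The only step demanding care, rather than a genuine difficulty, is confirming that the factors commute on the nose so that no cross-relations beyond $\cD$ are required; everything else is the routine direct-product-of-presentations construction combined with Proposition \ref{prop:aut_fac} and the identification of generating sets.
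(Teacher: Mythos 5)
Your argument is correct and follows the same route as the paper: both deduce the presentation from the internal direct product decomposition $\Psi=\prod_{j=0}^d\Psi_j$, the factor presentations of Proposition \ref{prop:aut_fac}, and the identification $\cup_{j=0}^d(\cP_j\cup\cP^\G_{\sym,j})=\cP_\int\cup\cP^\G_\cmp$, with $\cD$ supplying the cross-factor commutators. The paper's proof is simply a terser version of the same computation.
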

\begin{proof}
This follows from Proposition \ref{prop:aut_fac} as 
\begin{align*}
\cup_{j=0}^d (\cP_{j}\cup \cP^\G_{\sym,j}) &= 
\cup_{j=0}^d (\cP_\int \cap \Aut(G_{j,1})) \cup \cup_{j=0}^d \cP^\G_{\cmp,j}
\cup \cup_{j=0}^d  \cP^\G_{\sym,j}\\
&=\cP_\int\cup \cP^\G_\cmp.
\end{align*}
\end{proof}

The generators $\cQ$  consist of $\cQ_\Psi$ together with  a 
set $\cQ_{\WH}$ of elements of $\la \LInne \cup \Tre\ra$,
called{\em Whitehead automorphisms},  which 
we now define. First, for $a\in \cup_{j\in J} G(\G_j)\cup X_S^{\pm 1}$  
we define 
\[\hat a= 
\left\{
\begin{array}{ll}
j & \textrm{ if } a\in G(\G_j)\\
x & \textrm{ if } a=x^\e, \textrm{ where } x\in X_S, \e={\pm 1}
\end{array}
\right. 
.
\] 
(Thus, in comparison to the notation of page \pageref{eq:vat}, 
 if $a\in G(G_j)$ or $a\in X_S$ 
then $\hat a=\vat a$, whereas if $a\in X_S^{-1}$
then $\hat a= \vat a^{-1}$.)
For $i, j\in J$ with $i\neq j$,  $a\in G(\G_j)\cup X^{\pm 1}_S$ and  $x\in X_S^{\pm 1}$,
 with $x^{\pm 1}\neq a$,  
extend the notation for transvections and locally inner automorphisms
to denote by 
\be
\item\label{it:trext} $\tr_{x,a}$ the automorphism $\tau$ such that $x\tau =xa$ and 
$y\tau=y$, for all $y\in X$, $y\neq x$ and 
\item\label{it:linnext}
 $\a_{X_i,a}$ the automorphism $\a$ such that $u\a=u^a$, for 
all $u\in X_i$ and $z\a=z$, for all $z\in X\backslash {X_i}$. 
\ee
A Whitehead automorphism is an element of 
$\la \LInne \cup \Tre\ra$, determined by an ordered pair $(A,a)$, where
$A$ is a subset of $J\cup X_S\cup X_S^{-1}$ and 
$a\in \cup_{j\in J}G(\G_j)\cup X_S^{\pm 1}$, satisfying
the condition that 
\begin{itemize}
\item 
$\hat a\in A$ and 
\item
if $a\in X^{\pm 1}_S$ then $a^{-1}\notin A$.
\end{itemize} 
Partitioning $A\bs \{\hat a\}$ as $A\bs \{\hat a\}=A_J\cup A_S$, where 
$A_J=(A\cap J)\bs \{\hat a\}$ and 
$A_S=(A\cap X_S^{\pm 1})\bs\{ a\}$, the pair $(A,a)$ determines
the automorphism 
\begin{equation}\label{eq:whaut}
\prod_{j\in A_J} \a_{X_j,a}\prod_{y\in A_S} \tr_{y,a}.
\end{equation}
The set $\cQ_{\WH}$ consists of all Whitehead automorphisms and 
the set $\cQ$ of generators of $\Aut(G)$ is the 
union $\cQ=\cQ_\Psi\cup \cQ_{\WH}$. 

The relators $\cS$ consist of the relations $\cS_\Psi$ together with
relators $\cS 1$--$\cS 9$ below, for which we need to introduce
some terminology. Recall that, 
for $h\in G(\G_i)$, where $i\in J$, we have  defined $\g_h(i)$ to be the automorphism
mapping $g$ to $g^h$, for all $g\in G(\G_i)$, 
and fixing all elements of $X_j$, 
where $j\neq i$. Clearly $\g_h(i)$ is a product of elements of 
$\LInni\subseteq \cQ_\Psi$.  
We use the same sublabelling, J, S, L, 
of relators as \cite{Gilbert87} and these relators apply to
all possible Whitehead automorphisms. In particular relators involving
elements of $X_S$ are defined only if $m_0>0$, in Definition 
\ref{defn:comps}. Given a set $W$, subsets $U,V$ of  $W$ and $x\in W$,
 we  write $U+V$, $V+x$ and $V-x$ to denote $U\cup V$, 
$V\cup \{x\}$ and $V\bs \{x\}$ respectively. 
\begin{description}
\item[$\cS 1$] ~
\begin{description}
\item[J]
$(A,a)^{-1}=(A,a^{-1})$, if $\hat a\in J$, and 
\item[S] $(A,a)^{-1}=(A-a+a^{-1},a^{-1})$, if $a\in X_S^{\pm 1}$.
\end{description}
\item[$\cS 2$] $(A,a)(B,b)=(B,b)(A,a)$, if $A\cap B=\nul$ and 
either 
\begin{description}
\item[J] $\hat a, \hat b\in J$, or 
\item[S]  $a, b\in X_S^{\pm 1}$, 
$a^{-1}\notin B$, $b^{-1}\notin A$, or
\item[L] $\hat a\in J$, $b\in X_S^{\pm 1}$, $b^{-1}\notin A$.  
\end{description} 
\item[$\cS 3$] $(A,a)(B,b)=(B,b)(A+ B-b,a)$, 
if $A\cap B=\nul$ and 
either 
\begin{description}
\item[S]  $a, b\in X_S^{\pm 1}$, 
$a^{-1}\notin B$, $b^{-1}\in A$, or
\item[L] $\hat a\in J$, $b\in X_S^{\pm 1}$, $b^{-1}\in A$.
\end{description} 
\item[$\cS 4$]  $(A,a)(B,a)=(A+ B,a)$, 
if $A\cap B=\{a\}$ and $a\in X_S^{\pm 1}$. 
\item[$\cS 5$] If $\hat a=\hat b\in J$ and $A\cap B=\{\hat a\}$ then 
\be[(i)]
\item $(A,a)(B,b)=(B,b)(A,a)$,
\item $(A,a)(B,a)=(A+ B,a)$ and
\item $(A,a)(A,b)=(A,ba)$.
\ee
\item[$\cS 6$] $\phi^{-1}(A,a)\phi=(A\phi,a\phi)$, for all $\phi\in \Psi$, 
with the natural interpretation of $A\phi$. 
\item[$\cS 7$] 
If $a, b\in X_S^{\pm 1}$, $a\in X_{0,s}$, $b\in X_{0,t}$, $s\neq t$, $b\in A$, $b^{-1}\notin A$, 
 $v$ is the unique element of $X_{0,1}$ and $\rho$ denotes
 the word $\w^0_{1,s}\i_v\w^0_{1,s}\w^0_{s,t}$ in the generators $\cQ_\Psi$ 
(so $\rho$ is the cyclic permutation $(a,b^{-1},a^{-1},b)$) then 
\[(A,a)(A-a+a^{-1},b)=\rho (A-b+b^{-1},a).\]
\item[$\cS 8$] $(A,a)(B,b)=(B,b)(A,a)$, if $A\subseteq B$ 
and  $\hat b\notin A$ 
and either 
\begin{description}
\item[J]  $\hat a\in J$, or 
\item[S] $a\in X_S^{\pm 1}$, $a^{-1}\in B$.
\end{description}
\item[$\cS 9$] If $A\subseteq B$, 
$\hat a\in J$, $b\in X_S^{\pm 1}$ 
and  $b \in A$ then 
\[(A,a)(B,b)=(B,b)(B-A+\hat a+b^{-1},a^{-1})\g_{a^{-1}}(\hat a).\] 

\end{description}
In fact in \cite{Gilbert87} a larger set of generators is used involving
certain products of Whitehead automorphisms. However these
 additional generators  can all be
 removed, using Tietze transformations, to give the presentation 
$\la \cQ|\cS\ra$ above for $\Aut(G)$. 

Now let  $\Phi$ be the map from $\cQ$ to
$\AA$ defined as follows. Each element of $\cQ_\Psi$ is mapped 
to the element of the 
same name in the generators of 
$\AA$.   
 For $i\in J$ and  $x,y\in X^{\pm 1}$, with 
$x\neq y$, $x\notin X_i^{\pm 1}$ and $y\in X_S^{\pm 1}$,  the 
Whitehead automorphisms $(\{i,\hat x\},x)$ and $(\{\hat x, y\},x)$ map to 
$\a_{X_i,x}\in \LInne$ and $\tr_{y,x}\in \Tre$, respectively. 
To  define $\Phi$ on general Whitehead automorphisms 
first choose a geodesic word representing each element of $G(\G_i)$, 
for each $i\in J$.   If $g$ is represented by the geodesic 
word $a_1\cdots a_m$, with $a_i\in X_j^{\pm 1}$, $j\in J$, then 
the Whitehead automorphisms 
 $(\{i,j\},g)$ and $(\{j, y\},g)$ map to the words 
$\a_{X_i,a_m}\cdots \a_{X_i,a_1}$ and $\tr_{x,a_m}\cdots \tr_{x,a_1}$,
over $\cP$, 
which we write as $\tilde{\a}_{X_i,g}$ and $\tilde{\tr}_{x,g}$, respectively, 
\emph{cf.} Definition \ref{defn:comptr}. 
(The $\tilde{}$ indicates that these are words over $\cP$, in the 
presentation of $\AA$, as opposed to elements of $\Aut(G)$.)  
 Finally $\Phi$ maps the Whitehead automorphism $(A,a)$ to
\begin{equation}\label{eq:a-whaut}
\prod_{j\in A_J} \tilde{\a}_{X_j,a}\prod_{y\in A_S} \tilde{\tr}_{y,a}
\end{equation}
(cf.  \eqref{eq:whaut}).
To see that this is a well defined map note that from 
\ref{it:R1}, \ref{it:R4} and \ref{it:R6} it follows that all terms of the product
\eqref{eq:a-whaut} commute with each other: so the order in which the
elements of $A$ appear in this product  does not affect the image
$(A,a)\Phi$ of $(A,a)$ in $\AA$. 

We claim that the natural extension of this map to $\Aut(G)$ determines
 a homomorphism $\Phi: \Aut(G)\maps \AA$. 
Clearly all the relators of $\cS_\Psi$ map to the identity of $\AA$. 
To prove the claim we need to check that the same is true of the 
relators $\cS$1--$\cS$9. 
First we 
establish a useful consequence of the relators \ref{it:last} 
of $\AA$.
\be[{$\cR$}1.,ref={$\cR$}\arabic*]
\setcounter{enumi}{11}
\item\label{it:R12} Let $i,j\in J$, with $i\neq j$, and 
 let $x\in X_S^{\pm 1}$. If $a_1\cdots a_m=b_1\cdots b_n$ 
are geodesic words 
in $G(\G_i)$, 
with $a_i, b_i\in X_i^{\pm 1}$, then
\[ 
\a_{X_j,a_m}\cdots \a_{X_j,a_1}
=\a_{X_j,b_n}\cdots \a_{X_j,b_1}\quad
\textrm{ and }\quad\tr_{x,a_m}\cdots \tr_{x,a_1}=
\tr_{x,b_n}\cdots \tr_{x,b_1},
\] 
(where elements $\tr_{x,y}^{-1}$ of $\Tre^{-1}$ are written as 
$\tr_{x,y^{-1}}$).
\ee
(Therefore the definition of $\Phi$
is in fact independent of the choice of geodesic word for each element
of $G(\G_i)$.)

We now check $\cS$1--$S$9 in turn to see that they become relations of 
$\AA$. To fix notation let us assume that, whenever $(A,a)$ and $(B,b)$
 are Whitehead automorphims we have 
\begin{align*}
(A,a)\Phi&= \prod_{j\in A_J} \tilde{\a}_{X_j,a}\prod_{y\in A_S} \tilde{\tr}_{y,a}\textrm{ and }\\
(B,b)\Phi &= \prod_{k\in B_J} \tilde{\a}_{X_k,b}\prod_{z\in B_S} \tilde{\tr}_{z,b}. 
\end{align*}
As the terms of these products are products $\tilde \tr_{.,.}$ and 
$\tilde \a_{.,.}$  of  transvections
and locally inner automorphisms, it 
useful to establish versions of the relators \ref{it:R1}--\ref{it:lastdash}
for such automorphisms. With this in mind consider the analogues 
 of these
relators where generators $\tr_{a,s}$ and 
$\a_{X_n,s}$, with $a\in X_S^{\pm 1}$, $n\in J$ and $s\in X_J^{\pm 1}$,  are replaced by $\tilde \tr_{a,w}$ and $\tilde \a_{X_n,w}$, 
where $w$ may be any element of $G(\G_{\vat s})$ 
(and the conditions on the relators remain otherwise unchanged). 
This affects   $y$ and $v$ in \ref{it:R1}; $y$ in \ref{it:R2},
\ref{it:R7}, \ref{it:R9}
 and \ref{it:lastdash}; 
$x$ and $y$ in \ref{it:R4} and  \ref{it:R5}; 
$y$ and $z$ in \ref{it:R6} and  \ref{it:R8}; and $u$, $y$ and $z$ in \ref{it:last}.

Denote the $\tilde{}$ version of $\cR$j by $\cR$j\~{}. 
Then \ref{it:R1}\~{}, \ref{it:R4}\~{}, \ref{it:R6}\~{}, 
\ref{it:last}\~{} and \ref{it:lastdash}\~{} follow directly 
from
 the original versions. \ref{it:R2}\~{} follows using \ref{it:R2} and 
\ref{it:R1}. Similarly,  \ref{it:R5}\~{} follows from \ref{it:R5} and 
\ref{it:R4}; \ref{it:R7}\~{} follows from \ref{it:R7} and 
\ref{it:R6}; \ref{it:R8}\~{} follows from \ref{it:R8} and 
\ref{it:R6}. \ref{it:R9}\~{} follows from \ref{it:R9} and \ref{it:lastdash}. 
Therefore we may now assume each relator $\cR j$ is in fact the relator 
$\cR j\tilde{}$ (and drop the \~{}). 

Given the comment following \eqref{eq:a-whaut}, we have in $\AA$ 
\[
\left((A,a)\Phi
\right)^{-1} =
\prod_{j\in A_J} \tilde{\a}_{X_j,a^{-1}}\prod_{y\in A_S} \tilde{\tr}_{y,a^{-1}}.
\]
Therefore 
the relator $\cS$1 follows from 
\ref{it:R12}. 

To verify 
relators $\cS$2 we must check that, for all $j\in A_J$, $k\in B_J$, 
$y\in A_S$ and $z\in B_S$, we have 
\[
[\tilde{\a}_{X_j,a},\tilde{\a}_{X_k,b}]
=[\tilde{\tr}_{y,a},\tilde{\a}_{X_k,b}]=[\tilde{\a}_{X_j,a},\tilde{\tr}_{z,b}]
=[\tilde{\tr}_{y,a},\tilde{\tr}_{z,b}]=1,
\]
in $\AA$. Assume the conditions of $\cS$2 hold. 
 As $A\cap B=\emptyset$ we have in all cases $j\neq k$ and $y\neq z$;
 so $y=z^{-1}$ or $\vat y\neq \vat z$. 
In case \textbf{J} we have 
 $a\notin G(\G_k)$ and $b\notin G(\G_j)$, so $\vat a,\vat b\notin
\{k,j\}$, and $y,z\notin \{\vat a ,\vat b\}$. In 
case \textbf{S} we have again $\vat a,\vat b\notin  \{j, k\}$, $y\neq b$, and 
$y\neq b^{-1}$, as $b^{-1}\notin A$, and similarly $z^{\pm 1} \neq a$. Hence
  $y,z\notin \{\vat a ,\vat b\}$. 
 In case \textbf{L} we have $\vat a,\vat b\notin \{j,k\}$ and 
$y,z\notin \{\vat a ,\vat b\}$, as before.  
 Therefore relation \ref{it:R4} implies that $[\tilde{\a}_{X_j,a},\tilde{\a}_{X_k,b}]=1$;  relation \ref{it:R6} implies that 
$[\tilde{\tr}_{y,a},\tilde{\a}_{X_k,b}]=[\tilde{\a}_{X_j,a},\tilde{\tr}_{z,b}]
=1$ 
 and \ref{it:R1}(i) \& (ii) imply that $[\tilde{\tr}_{y,a},\tilde{\tr}_{z,b}]=1$.

 To see $\cS$3 holds let $A^\prime=A\bs\{b^{-1}\}$, so 
$(A,a)\Phi=\tilde{\tr}_{b^{-1},a}((A^\prime,a)\Phi)$. Since $\cS$2 maps to the a
relation of $\AA$, 
 $(A^\prime,a)\Phi(B,b)\Phi=(B,b)\Phi(A^\prime, a)\Phi$ and 
hence it suffices to show that 
\[\tilde{\tr}_{b^{-1},a}(B,b)\Phi=(B,b)\Phi (B-b+a,a)\Phi\tilde{\tr}_{b^{-1},a},\] 
that is
\[\tilde{\tr}_{b^{-1},a}\left(\prod_{k\in B_J}\tilde{\a}_{X_k,b}\prod_{z\in B_S}\tilde{\tr}_{z,b}
\right)
=\left(\prod_{k\in B_J}\tilde{\a}_{X_k,b}\prod_{z\in B_S}\tilde{\tr}_{z,b}
\prod_{k\in B_J}\tilde{\a}_{X_k,a}\prod_{z\in B_S}\tilde{\tr}_{z,a}\right)\tilde{\tr}_{b^{-1},a}.\]
From \ref{it:R7} and the conditions of $\cS$3 we have 
$\tilde{\tr}_{b^{-1},a}\tilde{\a}_{X_k,b}=\tilde{\a}_{X_k,b}\tilde{\a}_{X_k,a}\tilde{\tr}_{b^{-1},a}$ and 
using \ref{it:R4} we obtain 
\[\tilde{\tr}_{b^{-1},a}\left(\prod_{k\in B_J}\tilde{\a}_{X_k,b}\right)
=\left(\prod_{k\in B_J}\tilde{\a}_{X_k,b}\prod_{k\in B_J}\tilde{\a}_{X_k,a}
\right)\tilde{\tr}_{b^{-1},a}.\]
Similarly, using \ref{it:R2} and \ref{it:R1} we have
\[\tilde{\tr}_{b^{-1},a}\left(\prod_{z\in B_S}\tilde{\tr}_{z,b}
\right)
=\left(\prod_{z\in B_S}\tilde{\tr}_{z,b}\prod_{z\in B_S}\tilde{\tr}_{z,a}\right)
\tilde{\tr}_{b^{-1},a}.\]
Finally, \ref{it:R6} may be applied to give the required result. 

That  $\cS$4 holds after mapping to $\AA$ is another consequence of 
the  the remarks following \eqref{eq:a-whaut}. 

If the conditions of $\cS$5 hold then,
for all $y\in A_S$, $j\in A_J$ and $k\in B_J$ 
we have $\vat a,\vat b\notin\{j, k\}$ and
$\vat y\neq \vat b$; so from \ref{it:R6}, $[\tilde{\a}_{X_k,b},\tilde{\tr}_{y,a}]=1$. 
As also $\vat j\neq \vat k$, 
\ref{it:R4} applies to give $[\tilde{\a}_{X_j,a},\tilde{\a}_{X_k,b}]=1$. For
all $y\in A_S$ and $z\in B_S$ we have also $\vat y,\vat z\notin 
\{\vat \,\vat b\}$, and either $y=z^{-1}$ or $\vat y\neq \vat z$. Hence,
 from \ref{it:R1},
$[\tilde{\tr}_{y,a},\tilde{\tr}{z,b}]=1$. Therefore $\cS$5 (i)  holds after mapping
to $\AA$. $\cS$5 (ii) is dealt with using the remarks following \ref{eq:a-whaut}.
From the above the conditions of \ref{it:R6} also apply to give 
$[\tilde{\a}_{X_j,b},\tilde{\tr}_{y,a}]=1$, so $\cS$5 (iii)  holds after mapping
to $\AA$. 

If $\phi$ is a generator of  $\Psi$ then we have, from \ref{it:lastdash}, 
$\phi^{-1} \tilde{\a}_{X_j,a}\phi=\tilde{\a}_{X_j\phi,a\phi}$ and 
$\phi^{-1} \tilde{\tr}_{y,a}\phi = \tilde{\tr}_{y\phi,a\phi}$. Consequently the equality
of $\cS$6 holds on mapping to $\AA$. 

In the case  of $\cS$7, let $A_S^\prime=A_S\bs \{b\}$. Then 
we must show that 
\begin{equation*}
\begin{split}
\left(\prod_{j\in A_J} \tilde{\a}_{X_j,a}\prod_{y\in A_S} \tilde{\tr}_{y,a}
\right)&
\left(
\prod_{j\in A_J} \tilde{\a}_{X_j,b}\prod_{y\in A_S^\prime} \tilde{\tr}_{y,b}
\right)\tilde{\tr}_{a^{-1},b}=\\[.5em]
&\rho
\left(
\prod_{j\in A_J} \tilde{\a}_{X_j,a}\prod_{y\in A_S^\prime} \tilde{\tr}_{y,a}
\right)\tilde{\tr}_{b^{-1},a}.
\end{split}
\end{equation*}
If the conditions of $\cS$7 hold then we may 
apply \ref{it:R7}, \ref{it:R6}
\ref{it:R4}, \ref{it:R2} and \ref{it:R1} to the left hand side to obtain
\begin{align*}
\left(\prod_{j\in A_J} \right.&\left.\tilde{\a}_{X_j,a}\prod_{y\in A_S} \tilde{\tr}_{y,a}
\right)
\left(
\prod_{j\in A_J} \tilde{\a}_{X_j,b}\prod_{y\in A_S^\prime} \tilde{\tr}_{y,b}
\right)\tilde{\tr}_{a^{-1},b}\\[.5em]
&= \left(\prod_{j\in A_J} \tilde{\a}_{X_j,a}\prod_{y\in A_S^\prime} \tilde{\tr}_{y,a}
\right)\tilde{\tr}_{b,a}
\left(
\prod_{j\in A_J} \tilde{\a}_{X_j,b}\prod_{y\in A_S^\prime} \tilde{\tr}_{y,b}
\right)\tilde{\tr}_{a^{-1},b}\\[.5em]
&=\left(\prod_{j\in A_J} \tilde{\a}_{X_j,b}\prod_{y\in A_S^\prime} \tilde{\tr}_{y,b}
\right)\tilde{\tr}_{b,a}\tilde{\tr}_{a^{-1},b}.
\end{align*}
From \ref{it:R3} we have $\tilde{\tr}_{a,b}^{-1}
\tilde{\tr}_{b,a}\tilde{\tr}_{a^{-1},b}= \rho$, so 
$\tilde{\tr}_{b,a}\tilde{\tr}_{a^{-1},b}=\tilde{\tr}_{a,b}\rho$, and 
from \ref{it:lastdash} then 
$\tilde{\tr}_{b,a}\tilde{\tr}_{a^{-1},b}=\rho\tilde{\tr}_{b^{-1},a}$. 
 A final application of \ref{it:lastdash} then gives the required 
result. 

If the conditions of $\cS$8 hold then $A\subseteq B$ and from $\cS$4 and
$\cS$5 it follows that $(B,b)=(A+\hat b,b)(B\bs A,b)$. In case \textbf{S}
of $\cS$8, this means that
\begin{align*}
(B,b)(A,a) & = (A+\hat b,b)(B\bs A,b)(A,a)\\
& = (A+\hat b,b)(A,a)(B\bs A +A-a,b),\textrm{ using $\cS$3},\\
&=(A+\hat b,b)(A,a)(B-a,b).
\end{align*}
Now $\cS$1 implies that $(A,a)^{-1}=(A-a+a^{-1},a^{-1})$ and $\cS$3 implies
that  
\begin{align*}
(\{a,\hat b\},b)(A-a+a^{-1},a^{-1})&=(A-a+a^{-1},a^{-1})(A+\hat b,b),
\end{align*}
so 
\[(A+\hat b,b)(A,a)=(A,a)(\{a,\hat b\},b).\]
Therefore 
\begin{align*}
(B,b)(A,a)&=(A,a)(\{a,\hat b\},b)(B-a,b)\\
&= (A,a)(B,b),\textrm{ using $\cS4$ and $\cS$5}.
\end{align*}
Thus, case \textbf{S} of $\cS$8 follows from $\cS$1, $\cS$3, $\cS$4 and 
$\cS5$. Since the latter all hold after mapping into $\AA$, the 
same is true of $\cS$8, \textbf{S}. Hence it remains to consider 
$\cS$8, \textbf{J}. In this case we have, from $\cS$2, that 
\begin{align*}
(B,b)(A,a)&= (A+\hat b)(B\bs A,b)(A,a)\\
&=(A+\hat b,b)(A,a)(B\bs A, b).
\end{align*}
 As $\cS$2 holds in $\AA$ it therefore suffices to check
that 
\[(A+\hat b,b)(A,a)=(A,a)(A+\hat b,b)\] holds after mapping to $\AA$; that is 
\[
\begin{split}
\left(\tilde{\a}_{X_i,b}\prod_{j\in A_J} \tilde{\a}_{X_j,b}
\prod_{y\in A_J} \tilde{\tr}_{y,b}\right)&
\left(\prod_{j\in A_J} \tilde{\a}_{X_j,a}
\prod_{y\in A_J} \tilde{\tr}_{y,a}\right)\\[.5em]
&=
\left(\prod_{j\in A_J} \tilde{\a}_{X_j,a}
\prod_{y\in A_J} \tilde{\tr}_{y,a}\right)
\left(\tilde{\a}_{X_i,b}\prod_{j\in A_J} \tilde{\a}_{X_j,b}
\prod_{y\in A_J} \tilde{\tr}_{y,b}\right),
\end{split}
\] 
where $i=\hat a \in J$ and $\hat b\notin A$. Let $w$ denote the 
left hand side of the above expression. 
From \ref{it:R6}, we have $[\tilde{\tr}_{y,b},\tilde{\a}_{j,a}]=1$, for $y\in A_S$, 
$j\in A_J$; 
from \ref{it:R4}, we have $[\tilde{\a}_{j_1,b},\tilde{\a}_{j_2,a}]=1$, for $j_1\neq j_2\in 
A_J$; and from \ref{it:R1}, we have 
$[\tilde{\tr}_{y_1,b},\tilde{\tr}_{y_2,a}]=1$, for $y_1\neq y_2\in A_S$. Hence
\[w=
\tilde{\a}_{X_i,b}\prod_{j\in A_J} \tilde{\a}_{X_j,b}\tilde{\a}_{X_j,a}
\prod_{y\in A_J} \tilde{\tr}_{y,b}\tilde{\tr}_{y,a}.\]
From \ref{it:R5}, we have 
$\tilde{\a}_{X_i,b}\tilde{\a}_{X_j,b}\tilde{\a}_{X_j,a}=\tilde{\a}_{X_j,a}\tilde{\a}_{X_i,b}\tilde{\a}_{X_j,b}$ and 
from \ref{it:R4}, we have $[\tilde{\a}_{X_i,b},\tilde{\a}_{X_j,b}]=1$, for $j\in A_J$. 
Thus
\[w=
\left(\prod_{j\in A_J} \tilde{\a}_{X_j,a}\right)
\left(\prod_{j\in A_J} \tilde{\a}_{X_j,b}\right)
\tilde{\a}_{X_i,b}
\left(\prod_{y\in A_J} \tilde{\tr}_{y,b}\tilde{\tr}_{y,a}\right).
\]
Using \ref{it:R8} and \ref{it:R6} in a similar fashion, we finally obtain
\[w=\left(\prod_{j\in A_J} \tilde{\a}_{X_j,a}
\prod_{y\in A_J} \tilde{\tr}_{y,a}\right)
\left(\tilde{\a}_{X_i,b}\prod_{j\in A_J} \tilde{\a}_{X_j,b}
\prod_{y\in A_J} \tilde{\tr}_{y,b}\right),
\]
as required.

To establish that $\cS$9 maps to an equality in $\AA$, first consider
the special case where $\hat a \in J$,
$b\in X^{\pm 1}_S$, $\hat a\in B$ and   
$A=\{\hat a, b\}$, in which case $\cS$9 reduces to the statement
\[(\{\hat a, b\},a)(B,b)=(B,b)(B-b+b^{-1},a^{-1})\g_{a^{-1}}(\hat a),\]
which we call $\cS\textrm{9}'$. Then $\cS$9 follows from $\cS\textrm{9}'$
and $\cS$1--$\cS$8. To see this, let  $(A,a)$ and  $(B,b)$  be such that the 
conditions of $\cS$9 hold. Then 
$(A,a)=(\{\hat a,b\},a)(A-b,a)$, from $\cS$5, so
\begin{align*}
(A,a)(B,b)&=(\{\hat a,b\},a)(A-b,a)(B,b)\\
&=(\{\hat a,b\},a)(B,b)(A-b,a),\textrm{ from $\cS$8},\\
&=(B,b)(B-b+b^{-1},a^{-1})\g_{a^{-1}}(\hat a)(A-b,a), 
\textrm{ from $\cS\textrm{9}'$},\\
&=(B,b)(B-b+b^{-1},a^{-1})(A-b,a)\g_{a^{-1}}(\hat a),\textrm{ from $\cS$6},\\
&=(B,b)(B-A+\hat a +b^{-1},a^{-1})(A-b,a^{-1})(A-b,a)\g_{a^{-1}}(\hat a),\\
&\hspace{1em}\textrm{ from $\cS$5},\\
&=(B,b)(B-A+\hat a +b^{-1},a^{-1})\g_{a^{-1}}(\hat a),\textrm{ from $\cS$1}.
\end{align*}

Therefore it suffices to check that $\cS\textrm{9}'$ maps to an equality
in $\AA$. Suppose then that $\hat a \in J$,  
$b\in X^{\pm 1}_S$ and $\hat a \in B$. 
From \ref{it:R7}, for all $k\in B_J-\hat a$,  
\[\tilde{\tr}_{b^{-1},a}\tilde{\a}_{k,b^{-1}}=\tilde{\a}_{k,a}^{-1}\tilde{\a}_{k,b^{-1}}\tilde{\tr}_{b^{-1},a},\]
so 
\[\tilde{\a}_{k,b^{-1}}\tilde{\tr}_{b^{-1},a}=\tilde{\a}_{k,a}\tilde{\tr}_{b^{-1},a}\tilde{\a}_{k,b^{-1}}\]
and
\begin{align*}
\tilde{\tr}_{b^{-1},a}^{-1}\tilde{\a}_{k,b}&= \tilde{\a}_{k,b}\tilde{\tr}_{b^{-1},a}^{-1}\tilde{\a}_{k,a}^{-1}\\
&=\tilde{\a}_{k,b}\tilde{\a}_{k,a}^{-1}\tilde{\tr}_{b^{-1},a}^{-1},
\end{align*}
using \ref{it:last} and  \ref{it:R6}.
Similarly, for all $z\in B_S$, \ref{it:R2} implies that
\[\tilde{\tr}_{b^{-1},a}\tilde{\tr}_{z,b^{-1}}= \tilde{\tr}_{z,a}^{-1}\tilde{\tr}_{z,b^{-1}}\tilde{\tr}_{b^{-1},a},\]
so, using \ref{it:last} and \ref{it:R1}, we obtain
\[\tilde{\tr}_{b^{-1},a}^{-1}\tilde{\tr}_{z,b}= \tilde{\tr}_{z,b}\tilde{\tr}_{z,a}^{-1}\tilde{\tr}_{b^{-1},a}^{-1}.\]
Then, setting $i=\hat a$, 
\begin{align*}
(\{i, b\},a)&\Phi(B,b)\Phi =
\tilde{\tr}_{b,a}\tilde{\a}_{i,b}
\left(\prod_{k\in B_J-i} \tilde{\a}_{k,b} \prod_{z\in B_S} \tilde{\tr}_{z,b}
\right)\\
&=\tilde{\a}_{i,b}\tilde{\tr}_{b^{-1},a}^{-1}
\left(
\prod_{k\in B_J-i} \tilde{\a}_{k,b} \prod_{z\in B_S} \tilde{\tr}_{z,b}
\right)
\g_a(i)^{-1},\textrm{ from \ref{it:R9} and \ref{it:lastdash}}\\
&=\tilde{\a}_{i,b}
\left(
\prod_{k\in B_J-i} \tilde{\a}_{k,b}  \tilde{\a}_{k,a}^{-1}\prod_{z\in B_S}\tilde{\tr}_{z,b} \tilde{\tr}_{z,a}^{-1}
\right)
\tilde{\tr}_{b^{-1},a}^{-1}
\g_a(i)^{-1}, \textrm{ using the above},\\
&=\tilde{\a}_{i,b}\left(\prod_{k\in B_J-i} \tilde{\a}_{k,b} \prod_{z\in B_S} \tilde{\tr}_{z,b}
\right)\left(
\prod_{k\in B_J-i} \tilde{\a}_{k,a}^{-1}\prod_{z\in B_S}\tilde{\tr}_{z,a}^{-1}
\right)\tilde{\tr}_{b^{-1},a}^{-1}
\g_a(i)^{-1}, \\
&\hspace{1em}\textrm{ using \ref{it:R4} and \ref{it:R6}},\\
&=(B,b)\Phi \,(B+b^{-1}+b,a^{-1})\Phi\, \g_a(i)^{-1}\Phi,
\end{align*}
as required.

This concludes the proof that  
substitution of $q\Phi$ for $q$ in $s$, for all $q\in \cQ$ and 
all $s\in \cS$ results in the trivial element of $\AA$; so 
$\Phi$ is a homomorphism. From the definitions, 
$\Theta\Phi$ is the identity of $\Aut(G)$ and 
$\Phi\Theta$ is the identity of $\AA$, so $\AA\cong \Aut(G)$ and 
$\la \cP|\cR\ra$ is a presentation of $\Aut(G)$.
\end{proof}

%
%
\bibliographystyle{plain}
\bibliography{aut}

\newcommand{\Ju}{Ju}\newcommand{\Ph}{Ph}\newcommand{\Th}{Th}\newcommand{\Yu}{Y%
u}
\begin{thebibliography}{10}

\bibitem{Bardakov03}
V.G. Bardakov.
\newblock Structure of a conjugating automorphism group.
\newblock {\em Algebra and Logic}, 42:287--303, 2003.
\newblock 10.1023/A:1025913505208.

\bibitem{baudisch77}
A~Baudisch.
\newblock Kommutationsgleichungen in semifrien gruppen.
\newblock {\em Acta Math. Acad. Sci. Hungaricae}, 29:235--249, 1977.

\bibitem{BorelTits73}
Armand Borel and Jacques Tits.
\newblock Homomorphismes ``abstraits'' de groupes alg\'ebriques simples.
\newblock {\em Ann. of Math. (2)}, 97:499--571, 1973.

\bibitem{bunina10}
E.~Bunina.
\newblock Automorphisms of chevalley groups of types $a_l$, $d_l$, $e_l$ over
  local rings without 1/2.
\newblock {\em Journal of Mathematical Sciences}, 169:589--613, 2010.
\newblock 10.1007/s10958-010-0062-3.

\bibitem{BuxCharneyVogtmann09}
Kai-Uwe Bux, Ruth Charney, and Karen Vogtmann.
\newblock Automorphisms of two-dimensional {RAAGS} and partially symmetric
  automorphisms of free groups.
\newblock {\em Groups Geom. Dyn.}, 3(4):541--554, 2009.

\bibitem{caprace}
Pierre-Emmanuel Caprace.
\newblock {\em "Abstract" homomorphisms of split Kac-Moody groups}, volume 294
  of {\em Memoirs AMS}.
\newblock Amer. Math. Soc., Providence, RI, 2009.

\bibitem{Charney07}
Ruth Charney.
\newblock An introduction to right-angled artin groups.
\newblock {\em Geometriae Dedicata}, 125:141--158(18), 2007.

\bibitem{CharneyCrispVogtmann07}
Ruth Charney, John Crisp, and Karen Vogtmann.
\newblock Automorphisms of 2-dimensional right-angled {A}rtin groups.
\newblock {\em Geom. Topol.}, 11:2227--2264, 2007.

\bibitem{CharneyFarber}
Ruth Charney and Michael Farber.
\newblock Random groups arising as graph products.
\newblock {\em Algebraic and Geometric Topology}.
\newblock To appear.

\bibitem{CharneyVogtmann09}
Ruth Charney and Karen Vogtmann.
\newblock Finiteness properties of automorphism groups of right-angled {A}rtin
  groups.
\newblock {\em Bull. Lond. Math. Soc.}, 41(1):94--102, 2009.

\bibitem{CharneyVogtmann10}
Ruth Charney and Karen Vogtmann.
\newblock Subgroups and quotients of automorphism groups of {RAAGS}.
\newblock In {\em Proceedings of 2009 Georgia Topology Conference}, Geometry
  and Topology Monographs, To appear.

\bibitem{CollinsGilbert}
Donald~J. Collins and N.~D. Gilbert.
\newblock Structure and torsion in automorphism groups of free products.
\newblock {\em Quart. J. Math. Oxford Ser. (2)}, 41(162):155--178, 1990.

\bibitem{Day09}
Matthew~B. Day.
\newblock Peak reduction and finite presentations for automorphism groups of
  right-angled {A}rtin groups.
\newblock {\em Geom. Topol.}, 13(2):817--855, 2009.

\bibitem{Day11}
Matthew~B. Day.
\newblock Finiteness of outer automorphism groups of random right-angled artin
  groups.
\newblock \url{http://arxiv.org/abs/1103.0479v2}, 2011.

\bibitem{Day10}
Matthew~B. Day.
\newblock On solvable subgroups of automorphism groups of right-angled artin
  groups.
\newblock {\em IJAC: Proceedings of the 2009 International Conference on
  Geometric \& Combinatorial Methods in Group Theory \& Semigroup Theory},
  21(1-2):61--70, 2011.

\bibitem{dk93b}
G\'{e}rard Duchamp and Daniel Krob.
\newblock Partially commutative {M}agnus transformations.
\newblock {\em International Journal of Algebra and Computation}, 3(1):15--41,
  1993.

\bibitem{DKR2}
A.J. Duncan, I.V. Kazachkov, and V.N. Remeslennikov.
\newblock {Centraliser Dimension of Partially Commutative Groups}.
\newblock {\em Geometriae Dedicata}, 120(1):73--97, 2006.

\bibitem{DKR3}
A.J. Duncan, I.V. Kazachkov, and V.N. Remeslennikov.
\newblock {Orthogonal Systems in Finite Graphs}.
\newblock {\em Sib. \`Elektron. Mat. Izv.}, 5:151--176, 2008.

\bibitem{DKR5}
A.J. Duncan, I.V. Kazachkov, and V.N. Remeslennikov.
\newblock {Automorphisms of partially commutative groups I: Linear subgroups}.
\newblock {\em Groups Geometry and Dynamics}, 4(4):739--757, 2010.

\bibitem{DKR4}
Andrew~J. Duncan, Ilya~V. Kazachkov, and Vladimir~N. Remeslennikov.
\newblock Parabolic and quasiparabolic subgroups of free partially commutative
  groups.
\newblock {\em Journal of Algebra}, 318(2):918 -- 932, 2007.

\bibitem{EKR}
Evgenii~S. Esyp, Ilia~V. Kazatchkov, and Vladimir~N. Remeslennikov.
\newblock Divisibility theory and complexity of algorithms for free partially
  commutative groups.
\newblock In {\em Groups, languages, algorithms}, volume 378 of {\em Contemp.
  Math.}, pages 319--348. Amer. Math. Soc., Providence, RI, 2005.

\bibitem{FR1}
D.~I. Fouxe-Rabinovitch.
\newblock \"{U}ber die {A}utomorphismengruppen der freien {P}rodukte. {I}.
\newblock {\em Rec. Math. [Mat. Sbornik] N.S.}, 8 (50):265--276, 1940.

\bibitem{FR2}
D.~I. Fouxe-Rabinovitch.
\newblock \"{U}ber die {A}utomorphismengruppen der freien {P}rodukte. {II}.
\newblock {\em Rec. Math. [Mat. Sbornik] N. S.}, 9 (51):183--220, 1941.

\bibitem{Gilbert87}
N.~D. Gilbert.
\newblock {Presentations of the Automorphism Group of a Free Product}.
\newblock {\em Proc. London Math. Soc.}, s3-54(1):115--140, 1987.

\bibitem{Golubchik92}
I.~Z. Golubchik.
\newblock Isomorphisms of the general linear group {${\rm GL}_n(R),\; n\geq
  4$}, over an associative ring.
\newblock In {\em Proceedings of the {I}nternational {C}onference on {A}lgebra,
  {P}art 1 ({N}ovosibirsk, 1989)}, volume 131 of {\em Contemp. Math.}, pages
  123--136, Providence, RI, 1992. Amer. Math. Soc.

\bibitem{GutierrezKrstic98}
M.~Guti{\'e}rrez and S.~Krsti{\'c}.
\newblock {Normal forms for basis-conjugating automorphisms of a free group}.
\newblock {\em International Journal of Algebra and Computation}, 8:631--669,
  1998.

\bibitem{GutierrezPiggotRuane07}
Mauricio Gutierrez, Adam Piggott, and Kim Ruane.
\newblock On the automorphisms of a graph product of abelian groups.
\newblock {\em Groups, Geometry and Dynamics}, 6(1):125--153, 2012.

\bibitem{Humphreys69}
J.~F. Humphreys.
\newblock {On the automorphisms of infinite Chevalley groups}.
\newblock {\em Canad. J. Math.}, 21:908--911, 1969.

\bibitem{Laurence95}
Michael~R. Laurence.
\newblock {A Generating Set for the Automorphism Group of a Graph Group}.
\newblock {\em J. London Math. Soc.}, 52(2):318--334, 1995.

\bibitem{LohreySchleimer}
Markus Lohrey and Saul Schleimer.
\newblock Efficient computation in groups via compression.
\newblock In Volker Diekert, Mikhail Volkov, and Andrei Voronkov, editors, {\em
  Computer Science -- Theory and Applications}, volume 4649 of {\em Lecture
  Notes in Computer Science}, pages 249--258. Springer Berlin / Heidelberg,
  2007.
\newblock 10.1007/978-3-540-74510-5\_26.

\bibitem{mccool86}
J.~McCool.
\newblock {On basis-conjugating automorphisms of free groups}.
\newblock {\em Canadian J. Math}, 38(6):1525--1529, 1986.

\bibitem{Minasyan}
Ashot Minasyan.
\newblock Hereditary conjugacy separability of right angled artin groups and
  its applications.
\newblock {\em Groups Geometry and Dynamics}, 6(2):335--388, 2012.

\bibitem{Noskov10}
G.A. Noskov.
\newblock The image of the automorphism group of a graph group under
  abelianization map.
\newblock {\em Vestnik NGU, Mat., Mekh., Inf.}
\newblock To appear.

\bibitem{orlandi00}
L.A. Orlandi-Korner.
\newblock {The Bieri-Neumann-Strebel invariant for basis-conjugating
  automorphisms of free groups}.
\newblock {\em Proceedings of the American Mathematical Society},
  128(5):1257--1262, 2000.

\bibitem{RemeslennikovTreier}
V.~Remeslennikov and A.~Treier.
\newblock Structure of the automorphism group for partially commutative class
  two nilpotent groups.
\newblock {\em Algebra and Logic}, 49:43--67, 2010.
\newblock 10.1007/s10469-010-9078-5.

\bibitem{servatius89}
Herman Servatius.
\newblock Automorphisms of graph groups.
\newblock {\em J. Algebra}, 126(1):34--60, 1989.

\bibitem{steinberg60}
R.~Steinberg.
\newblock Automorphisms of finite linear groups.
\newblock {\em Canad. J. Math.}, 12(4):606--615, 1960.

\end{thebibliography}
\end{document}